\documentclass[11pt]{amsart}

\usepackage{mathtools}
\usepackage{amsmath, amsthm, amssymb, amsfonts, enumerate}
\usepackage{enumitem}
\usepackage{dsfont}
\usepackage{color}
\usepackage{geometry}
\usepackage{todonotes}
\usepackage{graphicx}
\usepackage{caption}
\usepackage{float}
\usepackage{soul}

\usepackage[unicode,colorlinks=true,linkcolor=blue,urlcolor=blue, citecolor=blue]{hyperref}
\usepackage[euler]{textgreek}

\mathtoolsset{showonlyrefs}

\def \cA{\mathcal{A}}
\def \cB{\mathcal{B}}
\def \cC{\mathcal{C}}
\def \cD{\mathcal{D}}

\def \cF{\mathcal{F}}
\def \cG{\mathcal{G}}
\def \cH{\mathcal{H}}

\def \cJ{\mathcal{J}}
\def \cK{\mathcal{K}}
\def \cL{\mathcal{L}}
\def \cM{\mathcal{M}}
\def \cN{\mathcal{N}}
\def \cO{\mathcal{O}}
\def \cP{\mathcal P}

\def \cS{\mathcal{S}}

\def \cZ{\mathcal{Z}}
\def \P{\mathsf P}

\def \E{\mathsf E}

\def \N{\mathbb{N}}
\def \R{\mathbb{R}}
\def \F{\mathbb{F}}
\def \G{\mathbb{G}}
\def \H{\mathbb{H}}
\def \bQ{\mathbb{Q}}

\def \ud{\mathrm{d}}
\def \e{\mathrm{e}}

\newcommand{\eps}{\varepsilon}

\newtheorem{theorem}{Theorem}[section]
\newtheorem{lemma}[theorem]{Lemma}
\newtheorem{corollary}[theorem]{Corollary}
\newtheorem{proposition}[theorem]{Proposition}

\newtheorem{definition}[theorem]{Definition}
\newtheorem{remark}[theorem]{Remark}

\theoremstyle{definition}

\DeclareMathOperator*{\argmax}{arg\,max}

\usepackage{xcolor}

\makeatletter
\@namedef{subjclassname@2020}{\textup{2020}Mathematics Subject Classification}
\makeatother

\title[Dynamic contracting with limited liability]{A continuous-time dynamic contracting problem \\ with limited liability and finite horizon}
\author[Bovo]{Andrea Bovo}
\author[De Angelis]{Tiziano De Angelis}
\author[Villeneuve]{Stephane Villeneuve}
\subjclass[2020]{93E20,91B41,35G20,60J60}
\keywords{fully nonlinear and fully degenerate HJB equations, finite horizon, time-changed diffusions, dynamic contracting, limited liability}
\address{A.\ Bovo: Dept.\ Economics and Management, University of Brescia, Via San Faustino 74/B, 25122, Brescia, Italy.}
\email{\href{mailto:andrea.bovo@unito.it}{andrea.bovo@unibs.it}}
\address{T.\ De Angelis: School of Management and Economics, Dept.\ ESOMAS, University of Torino, Corso Unione Sovietica, 218 Bis, 10134, Torino, Italy; Collegio Carlo Alberto, Piazza Arbarello 8, 10122, Torino, Italy.}
\email{\href{mailto:tiziano.deangelis@unito.it}{tiziano.deangelis@unito.it}}
\address{S.\ Villeneuve: Toulouse School of Economics, University of Toulouse Capitole, 31000 Toulouse, France.}
\email{\href{mailto:stephane.villeneuve@tse-fr.eu}{stephane.villeneuve@tse-fr.eu}}
\date{\today}

\numberwithin{equation}{section}

\begin{document}

\begin{abstract}
We perform a detailed study of a principal--agent problem in a continuous time version of the celebrated Holmstr\"om--Milgrom model (Econometrica 55 (2), 1987) where we add limited liability for the Agent. We develop a probabilistic methodology to prove that the Principal's value function is the unique bounded classical solution to a fully nonlinear and fully degenerate partial differential equation (PDE) with Cauchy--Dirichlet boundary conditions on $[0,T]\times[0,\infty)$. Indeed, we also prove infinite continuous differentiability of the solution in the interior of the domain. The strength of our regularity result is such that we can ensure existence of optimal controls in strong form---a rare occurrence in dynamic contracting---and we obtain fine properties of the optimal control map, including a characterisation via a further nonlinear degenerate PDE.
\end{abstract}

\maketitle
\tableofcontents

\section{Introduction}

In dynamic contracting a Principal has the financial means to run a project but lacks the expertise. She can hire an Agent who is capable of running the project but requires a suitable compensation package in return. Mathematically the problem can be formulated as a nonzero-sum Stackelberg game in which first the Principal offers a contract, then the Agent determines her optimal level of effort (given the contract) and, finally, the Principal chooses the contract that strikes the optimal balance between expected output and associated costs. 

The output produced by the Agent is the result of a combination of the Agent's own effort and of random fluctuations generated by external factors. This creates the so-called {\em moral hazard} problem whereby the Agent may exert lower effort than what the Principal believes is actually exerted. To avoid this issue the Principal must restrict the class of contracts she is willing to offer to the class of so-called Incentive Compatible (IC) contracts. Such restriction effectively determines the Agent's optimal actions and reduces the whole Stackelberg game to a problem for the Principal of selecting an optimal IC contract.

Within this framework we consider the combination of finite-time horizon and limited liability for the Agent. The former means that the contract is resolved at a fixed maturity $T\in(0,\infty)$ and the latter means that the Agent does not accept contracts that may incur them a financial loss at the terminal time. The limited liability acts as a constraint in the contract making the finite-horizon problem very challenging and not well-understood. Recent work by Kr\v{s}ek and Possama\"i \cite{krvsek2026randomisation} proves existence of an optimal contract in a general framework that can accommodate the limited liability constraint as a special case. The existence result is obtained within a weak formulation in which the Principal and the Agent may use relaxed (randomised) controls. The methods rely on a special class of BSDEs driven by a martingale measure and compactness arguments. This approach is remarkable in its generality but it offers little insight into the form of the optimal contract. As the authors of \cite{krvsek2026randomisation} explain in their Introduction, solving the Principal's problem is essentially equivalent to finding a {\em classical} solution to a Hamilton-Jacobi-Bellman (HJB) equation for which ``{\em regularity results remain in general elusive [...] and one can at most hope for some form of Sobolev regularity, which is usually too weak for our purpose}''. 

Our paper tackles the latter issue. We investigate in detail the solvability of the HJB equation arising from a continuous-time dynamic contracting model with finite-time horizon and limited liability. Although the limited liability is a constraint on the terminal value of the contract, in fact it creates a dynamic constraint on the whole dynamics of the contract's value, which is bound to stay non-negative at all times. As a result, the Principal solves a stochastic control problem on the horizon $[0,T]$, in which she controls drift and diffusion of an arithmetic Brownian motion and the process is absorbed upon hitting zero for the first time. As a consequence the Principal's HJB equation is posed on the half-line with Cauchy condition at the terminal time $T$ and Dirichlet condition at zero. We prove that the Principal's value function is the unique bounded classical solution to the following fully nonlinear Cauchy-Dirichlet problem
\begin{equation}\label{eq:nonlinPDE}
\begin{cases}
&-\frac{1}{4\partial_{t}\psi(t,x)}+\tfrac{1}{2}\sigma^2\partial_{xx}\psi(t,x)+c_A\partial_{x}\psi(t,x)=c_A,\quad (t,x)\in[0,T)\times(0,\infty),\\
&\psi(t,0)=0,\qquad\ \, t\in[0,T),\\
&\psi(T,x)=0,\qquad x\in[0,\infty),
\end{cases}
\end{equation}
where $\sigma$ and $c_A$ are parameters specified in the model.
We also prove that such solution belongs to the class $C^\infty([0,T)\times(0,\infty))\cap C([0,T]\times[0,\infty))$ of continuous functions on $[0,T]\times[0,\infty)$ which are infinitely continuously differentiable in $[0,T)\times(0,\infty)$. We are then able to construct the dynamics of the optimal contract in strong formulation (i.e., adapted to the filtration observed by the Principal) and we prove fine properties of the Agent's optimal effort level as a function of time and wealth. Finally, we also obtain a probabilistic representation of the Agent's optimal effort map and we prove that it is a classical solution of another nonlinear PDE. 

The whole analysis is performed developing new probabilistic ideas for the direct study of the Principal's value function, setting our work apart from the majority of the existing literature on dynamic contracting. As we discuss later in this Introduction, our results also shed new light on the economic impact of the interplay between finite maturity of the contract and limited liability. The two features have not yet been considered simultaneously in the economic literature. 

Before delving deeper into our contributions to the literature, we would like to emphasise that our model starts from the continuous-time version of the classical model by Holmstr\"om and Milgrom \cite{holmstrom1987aggregation} and it adds to it the limited liability feature. This change alone produces a wealth of new challenges in the mathematical analysis and strong nonlinearities in the structure of the resulting optimal contracts. We should also mention that our Agent is risk-averse (CARA utility maximiser) with a quadratic cost of effort and the Principal is risk-neutral.

\subsection{Our mathematical contribution} 
In broad generality the modern approach to dynamic contracting problems was rigorously established by Cvitani\'c, Possama{\"\i} and Touzi \cite{cvitanic2018dynamic}, who show that all IC contracts can be written in terms of second-order BSDEs describing the dynamics over time of the contract's value. Their results formalise in rigorous mathematics and expand ideas that were proposed in Sannikov \cite{sannikov2008continuous} for an infinite-time horizon problem. The mathematical community has recently adopted this methodology in a variety of situations. Lin et al.\ \cite{lin2022random} and Possama{\"\i} and Touzi \cite{possamai2025golden} study contracts with random-horizon.
Elie and Possama{\"\i} \cite{ elie2019tale} consider principal--agent problems with many agents and mean-field
interactions. Mastrolia and Possama{\"\i} \cite{mastrolia2018moral} include ambiguity in principal--agent problems. A{\"\i}d, Possama{\"\i} and Touzi \cite{aid2022optimal} and Elie et al.\ \cite{elie2021mean} use dynamic contracting theory to design electricity demand response mechanisms.
The regulation of market making is considered in El Euch et al.\ \cite{eleuch2021optimal} and, finally, Gaussian outputs with memory are studied in
Abi Jaber and Villeneuve \cite{abijaber2025gaussian}.

It is well-known that in the Markovian setting, BSDEs arising from stochastic control are linked to viscosity solutions of HJB equations. For the particular HJB equations arising in dynamic contracting it is often difficult to obtain solutions that are sufficiently regular to justify a rigorous construction of the optimal contract, because the Agent's optimal effort should be in Markovian form as a function of the observed output. The function itself would normally involve the gradient and Hessian of the Principal's value function, which however may not enjoy the required regularity.

As mentioned above, in our model the Principal controls drift and volatility of an arithmetic Brownian motion. Because of the structure of the economic model, at each time $t$ the control exerted in the volatility corresponds to the square root of the control exerted in the drift. On the one hand this provides a good scaling property of the controlled dynamics, which we use in the solution of the problem. On the other hand, the problem is completely degenerate, in the sense that the second order differential operator associated to the controlled diffusion process does not satisfy the usual ellipticity condition required from the theory of linear and nonlinear PDEs (cf., \cite[Ch.\ 6]{evans10partial} and \cite[Ch.\ 10 and 12]{krylov2018sobolev}). Taking a leap of faith, at the level of the HJB we can calculate the maximiser in the Hamiltonian and plug it back into the equation. This yields \eqref{eq:nonlinPDE}, where the diffusive part of the operator is non-degenerate but a nasty nonlinearity appears for the time-derivative of the solution. 

In summary, the HJB is a fully nonlinear and fully degenerate PDE with Cauchy and Dirichlet boundary conditions. 
The form of the nonlinearity combined with the degeneracy of the diffusive operator in the HJB place 
our problem outside the scope of the existing PDE theory for classical solutions and $L^p$-viscosity solutions (as in, e.g., \cite{crandall1992user}). 
To make this point clearer, we notice that even after we make substantial effort to prove Sobolev regularity of the Principal's value function via direct probabilistic estimates (Proposition \ref{prop:W12p}), we cannot continue the analysis leveraging the theory of $L^p$-viscosity solutions for fully nonlinear PDEs (cf.\ \cite[Def.\ 2.1]{caffarelli1996viscosity}), in order to conclude that our HJB equation admits a strong solution (in the sense of Sobolev spaces). Ellipticity assumptions needed in the $L^p$-viscosity theory and expressed via the Pucci extremal operators are not satisfied in our setting (see \cite[Eq.\ (SC) in Sec.\ 2]{caffarelli1996viscosity} and \cite[Eq.\ (SC) in Sec.\ 1]{crandall2000Lptheory}). 

Other general results on viscosity solutions for non-linear PDEs do not apply either (cf.\ \cite{krylov2018sobolev} and \cite{taylor2023nonlinear}). By treating the $t$-variable as a spatial variable we can interpret our problem \eqref{eq:nonlinPDE} as a semi-linear elliptic problem which is completely degenerate in the $t$-variable (due to the absence of a term of the form $\partial_{tt}\psi$). In that case, the theory from \cite[Ch.\ 10]{krylov2018sobolev} does not seem to be applicable because, for example, Assumptions 10.1.6 and 10.1.11 concerning ellipticity and Lipschitz continuity of the nonlinearity (and necessary for Theorem 10.1.14) fail. Likewise the ellipticity property required in \cite[Ch.\ 14.3]{taylor2023nonlinear} fails (see also \cite[Eq.\ (0.3)]{crandall1999existence}). 
If instead we consider the $t$-variable as time, we rewrite \eqref{eq:nonlinPDE} as the fully non-linear parabolic PDE
\begin{equation*}
\partial_t\psi(t,x)-\frac{1}{4[\frac{\sigma^2}{2}\partial_{xx}\psi(t,x)+c_A\partial_{x}\psi(t,x)-c_A]}=0.
\end{equation*} 
In this case we are unable to use the results from \cite[Ch.\ 12]{krylov2018sobolev} because, for example, Assumptions 12.1.4 and 12.1.8 fail (these are the analogue in the parabolic case of Assumptions 10.1.6 and 10.1.11 mentioned earlier).

Given the difficulties illustrated above we propose a novel methodology, based entirely on the direct probabilistic analysis of the value function of the control problem. First we reformulate the original problem in terms of a family of auxiliary problems obtained via a time-change and parametrised by a constraint on the class of admissible controls (Section \ref{sec:aux}). Initially, this reformulation is heuristic but it will be rigorously established after some preliminary regularity results. In the new formulation the Brownian dynamics is uncontrolled and instead the control is somewhat transferred to the time-horizon of the problem. We perform several probabilistic bounds on the value function of the auxiliary problem that eventually lead us to showing its regularity in the Sobolev class $W^{1,2;p}$ of functions with weak derivatives in $L^p$ (one in time and two in space; cf.\ Proposition \ref{prop:W12p}). 

Later, using Proposition \ref{prop:W12p} and Theorem \ref{thm:DPP_time} we show that the value function of the auxiliary control problem solves \eqref{eq:nonlinPDE}, first in the Sobolev sense (Theorem \ref{thm:HJB}) and then in the classical sense (Theorem \ref{thm:smooth}). This is the step in our analysis where we realised that a direct use of the theory for $L^p$-viscosity solutions from \cite[Thm.\ 2.10]{caffarelli1996viscosity} or \cite[Prop.\ 2.10]{crandall2000Lptheory} does not seem legitimate under our assumptions. It is worth noticing that the proof of Theorem \ref{thm:smooth} is the only one based on PDE methods.
Finally, in Section \ref{sec:proofthm} we use a verification argument to rigorously establish the equivalence between the value functions of the original and of the auxiliary problem, and to construct the optimal contract in strong formulation. 

After completing the study of the Principal's value function we turn our attention to the specific properties of the optimal contract and of the Agent's optimal effort map $\beta^*$ (also referred to as pay-performance sensitivity (PPS)). In Section \ref{sec:econ} we show monotonicity and asymptotic properties of the optimal effort as function of time and wealth (Proposition \ref{prop:beta}). We prove that the function $\beta^*$ solves another nonlinear PDE (Proposition \ref{prop:betaP}) and we find an explicit probabilistic representation for it (Proposition \ref{prop:betaexact}). This analysis allows us to show that (Theorem \ref{thm:stop}) with positive probability the Agent stops exerting effort prior to the maturity of the contract---hence benefiting from her outside option---and that the Principal optimally pays an initial signing bonus to the Agent (Theorem \ref{thm:GH}). The economic implications of these results are discussed next.

\subsection{Our contribution to economic modelling}
Models of moral hazard and its applications have been recognised as very important in
understanding sharecropping contracts, delegated portfolio management and executive
compensation to cite just a few examples. 
Mirrlees \cite{mirrlees1976optimal} has shown that, when the Principal can use arbitrarily large
carrots and sticks as incentive tools, the optimal contract prevents the Agent from
shirking. In practice, limited liability laws curtail the size of penalties in
response to poor performance---in real life the Agent receives a minimum legal wage regardless of the performance. As a consequence, the
Principal is forced to give the Agent additional rents to provide incentive to make an
effort, thus increasing the size of bonuses while
limiting that of penalties.

This paper develops a tractable continuous-time principal--agent model that studies the
implications of a limited liability constraint when the contract has a fixed maturity
$T$. Work by Sappington \cite{sappington1983limited} and
Innes \cite{innes1990limited} pioneered the study of the impact of limited liability constraints
on optimal contracts. Focusing on limited liability, both papers have assumed that the
Principal and the Agent are risk-neutral and considered a single period model. We
consider here a continuous-time model, which enables a characterisation of the
optimal contract using partial differential equations and it can be used to compare
theoretical findings to corporate finance data that are inherently dynamic. 

Our model adds limited liability for the Agent in the framework of Holmstr\"om and Milgrom \cite{holmstrom1987aggregation}. In \cite{holmstrom1987aggregation} optimal
contracts are linear in the output process and the Agent is therefore exposed to
unbounded losses with a Gaussian distribution. Limited liability is a standard ingredient
of continuous-time contracting models, in which it usually takes the form of a
non-negativity constraint on the flow of payments to the Agent (see, e.g.,
 De Marzo and Sannikov \cite{demarzo2006optimal}, Sannikov \cite{sannikov2008continuous},
Biais et al.\ \cite{biais2010large}). However, so far there is no study concerning how
such a constraint interacts with a deadline (the finite maturity of a contract).

We show that combining limited liability with a finite horizon changes the picture in a
way that neither one produces alone. Early in the contract, an adverse shock can still
push the Agent's continuation utility down to the point of inefficient termination, so
the Principal must guard against this risk. As the deadline approaches, there is simply
less time left for such a shock to occur, and this precautionary concern fades on its
own. The horizon itself, rather than any decision of the Principal, is what relaxes the
constraint. This single mechanism, worked out in a finite-horizon model with CARA
preferences and limited liability, is what generates the paper's three economic results, 
illustrated in mathematical formalism in Section \ref{sec:econ}.

First, the optimal pay-performance sensitivity (PPS) rises monotonically over the life of the
contract and converges at maturity to the Holmstr\"om--Milgrom (HM) optimal level, for
any positive continuation value of the Agent (Proposition \ref{prop:beta}-(i)). Second, away from
maturity, PPS is increasing and concave in the Agent's continuation value (Proposition \ref{prop:beta}-(ii)). Because all contracts converge to the same terminal benchmark, the gap
between the PPS of the wealthier and less wealthy agents shrinks to zero at maturity.
This pattern connects to a broader concern in the empirical literature that PPS, as conventionally measured, does not distinguish incentives meant to operate over different horizons (see, e.g., Edmans and Gabaix \cite{edmans2016executive} on the divergence between wealth- and percentage-based PPS measures and practitioner discussions of vesting-schedule horizon). Rising incentives near the end of a contract are usually attributed to career concerns---the loss of reputational discipline as retirement approaches---which the Principal must offset with stronger contractual incentives (Gibbons and Murphy \cite{gibbons1992optimal}) or to short-termism, where managers close to departure have a high incentive to manipulate short-term performance measures (Marinovic and Varas \cite{marinovic2019ceo}). Our result identifies a third, independent mechanism: the same qualitative pattern arises purely as an optimal contracting response to the declining relevance of the liability boundary as the horizon shortens, with no role for reputation or performance manipulation.

Our third result concerns the start of the contract (Theorem \ref{thm:GH}): the Principal optimally grants the Agent an initial promised utility above the outside option, a signing premium that buffers against inefficient early termination. We name it the Golden Hello, and we show that it grows with the length of the contract. This rules out an explanation of the Golden Hello based solely on bargaining power, according to which its amount would be determined only by the Agent's external option and independently of the time horizon. Here, the premium corresponds to the capital committed from the outset to move away from 0 for the entire duration of the commitment. Thus, a longer commitment requires proportionally more capital.

\subsection{Structure of the paper}
The paper is organised as follows. In Section \ref{sec:formulation} we formulate the economic problem of dynamic contracting with limited liability and characterise the class of IC contracts using the methods from \cite{cvitanic2018dynamic}. In Section \ref{sec:problem} we specify the stochastic control problem for the Principal, we state the main results concerning the Principal's value function and the dynamics of the optimal contract in strong formulation; we also provide an informal outline of the main ideas that motivate the introduction of a family of auxiliary problems in the subsequent Section \ref{sec:aux}. In Sections \ref{sec:lipschitz} and \ref{sec:phixx} we perform a detailed probabilistic analysis of the value function for the auxiliary problems and we obtain an initial Sobolev regularity. The latter is then employed in Section \ref{sec:HJB} to solve the HJB equation first in the strong sense (a.e.) and then, thanks to a lift of the regularity, in the classical sense. In Section \ref{sec:proofthm} we prove that the solution of the HJB equation is indeed the value function of the original problem and derive the dynamics of the optimal contract. In Section \ref{sec:econ} we derive precise regularity results concerning the Agent's optimal effort map, which we characterise as a bounded solution of another PDE, and we obtain a probabilistic representation thereof. The paper is completed by a technical appendix.


\section{Formulation of the dynamic contracting problem}\label{sec:formulation}

Let $(\Omega,\mathcal{F},\P_0)$ be a probability space equipped with a Brownian motion $\bar B\coloneqq(\bar B_{t})_{{t \in [0,\infty)}}$. We denote the Brownian filtration completed with $\P_0$-null sets by $\bar \F=(\bar \cF_t)_{t\in[0,\infty)}$. Fix $T\in (0,\infty)$. The dynamics under $\P_0$ of the output process $(Y_t)_{t\in[0,T]}$ reads as
\begin{equation*}
Y_t = y +\sigma \bar B_t, \quad t\in [0,T],
\end{equation*}
for $y\in\R$. Both the Principal and the Agent observe the output process and therefore they have access to the filtration $\bar \F$.

In keeping with the existing literature (see, e.g. Cvitani\'c, Possama{\"\i} and Touzi \cite{cvitanic2018dynamic}) we formulate the problem in weak form: the Agent's actions induce a change of probability measure that adds a drift to the dynamics of the output. However, we point out immediately that we will find a solution of the problem in strong form. 

 The class of Agent's admissible actions is denoted by $\cA$ and it contains $\bar\F$-predictable, processes $(a_t)_{t\in[0,T]}$ such that 
\begin{equation*}
\E_0\Big[\exp\Big( \int_0^T \frac{a_s}{\sigma}\,\ud \bar B_s-\frac{1}{2}\int_0^T \Big\vert\frac{a_s}{\sigma}\Big\vert^2\,\ud s\Big)\Big]=1,
\end{equation*}
where $\E_0[\,\cdot\,]$ is the expectation under the measure $\P_0$.
Clearly $\cA\neq \varnothing$ because bounded processes are admissible. Associated to the class $\cA$ we have a family of equivalent probability measures $\{\P_a,\,a\in\cA\}$ given by
\begin{equation*}
\frac{\ud\P_a}{\ud\P_0}\Big|_{\bar\cF_T}=\exp\Big( \int_0^T \frac{a_s}{\sigma}\,\ud \bar B_s-\frac{1}{2}\int_0^T \Big\vert\frac{a_s}{\sigma}\Big\vert^2\,\ud s\Big).
\end{equation*}
Under $\P_a$, the process $B_t^a=\bar B_t-\int_0^t \sigma^{-1}a_s\,\ud s$ is a Brownian motion and $Y_t$ evolves as 
\begin{equation}\label{eq:dynX:Pa}
Y_t=y+\int_0^t a_s\,\ud s +\sigma B_t^a,\quad t\in[0,T].
\end{equation}

The Principal offers the Agent a contract based on the observed output. A contract is described mathematically by a $\bar\cF_T$-measurable function $\xi$. Because we are assuming the Agent's limited liability, we further restrict the class of admissible contracts to non-negative functions $\xi\ge 0$. In summary, the class of admissible contracts $\cC$ is given by
\begin{equation*}
\cC\coloneqq\big\{\xi:\Omega\to[0,\infty)\text{ s.t.\ $\xi$ is $\bar\cF_T$-measurable}\}.
\end{equation*}
Notice that there is asymmetry of information between Principal and Agent, because the former does not directly observe the latter's effort. That is, the Principal only observes the trajectory of the output process $(Y_t)_{t\in[0,T]}$ but does not observe the decomposition \eqref{eq:dynX:Pa}. In Economics, this asymmetry of information is known as {\em moral hazard}, because the Agent may ``pretend'' to exert more effort than she actually does in order to obtain a larger payoff.
 
The Principal-Agent problem is formulated as a Stackelberg game in which both players are utility maximisers. The Principal acts as the leader by choosing $\xi\in\cC$ and then the Agent picks a best response (if it exists) by choosing an optimal level of effort $(a^*_t(\xi))_{t\in[0,T]}$. That induces a probability measure $\P_{a^*}$. Finally, the Principal maximises her expected utility by choosing an optimal contract $\xi^*$. A contract $\xi\in\cC$ is said to be {\em implementable} if there is a best response $a^*(\xi)\in\cA$ for the Agent. 

For the Agent we follow the Holmstr\"om and Milgrom's model \cite{holmstrom1987aggregation}. The Agent adopts a utility function $U_A(x)$ with Constant Absolute Risk Aversion (CARA) and with risk-aversion coefficient $\gamma_A>0$. That is,
\begin{equation*}
U_A(x)\coloneqq-\exp(-\gamma_Ax),\quad \forall x\in\R.
\end{equation*}
Exerting effort is costly for the Agent. The cost of effort is modelled via a strictly convex, non-negative, twice continuously differentiable function $k:\R\to[0,\infty)$ satisfying $k(0)=0$. 

Given a contract $\xi\in\cC$, the Agent's promised utility at time zero is defined as
\begin{equation*}
V_0^A(\xi)=\sup_{a\in\cA} \E_a \Big[ U_A\Big(\xi - \int_0^T k(a_s) \,\ud s \Big)\Big],
\end{equation*}
where $\E_a[\,\cdot\,]$ is the expectation under the measure $\P_a$. If a contract $\xi$ is {\it implementable}, there exists $a^*(\xi)\in\cA$ such that 
\begin{equation*}
V_0^A(\xi)=\E_{a^*(\xi)}\Big[ U_A\Big(\xi - \int_0^T k(a^*_s(\xi)) \,\ud s \Big)\Big].
\end{equation*}
The optimal effort level is a mapping $\cC\ni\xi\mapsto a^\ast(\xi)\in\cA$ and indeed we will show in Section \ref{sec:IC} that strict convexity of the cost of effort guarantees that $a^*(\xi)$ exists and it is unique for each $\xi\in\cC$.

The Agent has a so-called {\em participation constraint}. That is, she only accepts contracts for which $V_0^A(\xi)\ge -R_0$, where $R_0 > 0$, and $-R_0$ is the Agent's reservation utility. A contract $\xi\in\cC$ is said to be {\em incentive compatible} if it is implementable and it satisfies the participation constraint. The class of incentive compatible contracts is denoted $\cC_{IC}\subset\cC$ and we will show in Section \ref{sec:IC} that $\cC_{IC}\neq\varnothing$. In fact, following \cite{cvitanic2018dynamic} we are going to {\em characterise all contracts} $\xi\in\cC_{IC}$.

The Principal is risk-neutral, i.e., she adopts a linear utility function $U_P(x)=x$, for $x\in\R$. Even though the Principal does not directly observe the Agent's effort, she can compute the mapping $\cC\ni\xi\mapsto a^\ast(\xi)\in\cA$. The Agent has no profitable deviations from $a^*(\cdot)$ and therefore the Principal anticipates the Agent's actions. Then, the Principal's problem reduces to solving
\begin{equation*}
V_0^P\coloneqq\sup_{\xi\in\cC_{IC}} \E_{a^*(\xi)}\big[Y_T-\xi \big],
\end{equation*}
and identifying an optimal contract $\xi^*\in\cC_{IC}$.

\subsection{Incentive-compatible contracts}\label{sec:IC}

Following Sannikov (see \cite{sannikov2008continuous}), we now characterise implementable contracts by means of the martingale optimality principle. 
\begin{lemma} \label{martingaleoptimality} 
Given $\xi\in\cC$, assume there is a family of stochastic processes $\{U^a(\xi),\,a\in\cA\}$ with $U^a(\xi)\coloneqq(U_t^a)_{t\in [0,T]}$ such that
\begin{itemize}
\item[i)] $U_T^a = U_A\big(\xi - \int_0^T k(a_s) \ud s\big)$, $\forall a \in \cA$,
\item[ii)] $(U_{t}^a)_{t\in[0,T]}$ is a $((\bar \cF_t)_{t\in [0,T]},\P_a)$-supermartingale $\forall a\in\cA$,
\item[iii)] $U_0^a=U_0\ge -R_0$, $\forall a\in\cA$,
\item[iv)] there exists $a^*\in\cA$, such that $(U^{a^*}_t)_{t\in[0,T]}$ is a $((\bar \cF_t)_{t\in [0,T]},\P_{a^*})$-martingale.
\end{itemize}
Then, $\xi\in\cC_{IC}$ and $a^*$ is an optimal effort for the Agent.
\end{lemma}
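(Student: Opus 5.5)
The argument is a direct application of the martingale optimality principle. We compare the Agent's expected utility under an arbitrary effort with the common initial value $U_0$, and then show that $a^*$ attains this value.

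\emph{Upper bound for every effort.} Fix any $a\in\cA$. By (i), $\E_a\big[U_A\big(\xi-\int_0^T k(a_s)\,\ud s\big)\big]=\E_a[U^a_T]$. The supermartingale property (ii) under $\P_a$ then gives $\E_a[U^a_T]\le \E_a[U^a_0]$. Since $\bar\cF_0$ is $\P_0$-trivial (Brownian filtration completed by null sets) and $\P_a\sim\P_0$, the random variable $U^a_0$ is a.s.\ constant. By (iii) this constant is $U_0$ for all $a$, so $\E_a[U^a_T]\le U_0$. Taking the supremum over $a\in\cA$ yields
\begin{equation*}
V^A_0(\xi)\le U_0 .
\end{equation*}

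\emph{Attainment and participation.} For $a^*$ from (iv), the martingale property under $\P_{a^*}$ together with (i) gives
\begin{equation*}
\E_{a^*}\Big[U_A\Big(\xi-\int_0^T k(a^*_s)\,\ud s\Big)\Big]=\E_{a^*}[U^{a^*}_T]=U^{a^*}_0=U_0 .
\end{equation*}
Combining this with the upper bound shows that $V^A_0(\xi)=U_0$ and that the supremum is attained at $a^*$. Hence $\xi$ is implementable with best response $a^*(\xi)=a^*$. By (iii), $V^A_0(\xi)=U_0\ge -R_0$, so the participation constraint holds and $\xi\in\cC_{IC}$.

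\emph{Technical point.} The only step that needs some care is that the expectations above are well defined. The terminal value $U_A(\cdot)$ is negative, so each expectation exists in $[-\infty,0)$. The supermartingale and martingale properties in (ii) and (iv) include integrability of $U^a_t$ under $\P_a$, so the inequality $\E_a[U^a_T]\le \E_a[U^a_0]$ is legitimate. No further obstacle arises.
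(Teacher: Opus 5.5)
Your proposal is correct and follows the paper's proof essentially verbatim. Both use (i)--(iv) to obtain the chain $\E_a[U^a_T]\le U^a_0=U_0=\E_{a^*}[U^{a^*}_T]$, which yields $V_0^A(\xi)=U_0\ge -R_0$ with $a^*$ attaining the supremum. Your remarks on the triviality of $\bar\cF_0$ and on integrability are harmless refinements that the paper leaves implicit.
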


\begin{proof}
For any $a\in\cA$, conditions $i)-iv)$ immediately imply 
\begin{equation*}
\begin{aligned}
\E_a\Big[U_A\Big(\xi-\int_0^{T} k(a_s) \ud s\Big)\Big]&\overset{i)}{=}\E_a[U_T^a] \overset{ii)}{\leq} U_0^a \overset{iii)}{=} U_0^{a^*} \overset{iv)}{=} \E_{a^*}[U_T^{a^*}]\overset{i)}{=} \E_{a^*}\Big[U_A\Big(\xi-\int_0^{T} k(a_s^*) \ud s\Big)\Big].
\end{aligned}
\end{equation*}
This implies 
\begin{equation*}
V_0^A(\xi)=\E_{a^*}\Big[U_A\Big(\xi-\int_0^{T} k(a_s^*) \ud s\Big)\Big]=U_0\ge -R_0,
\end{equation*}
which shows optimality of $a^*$ and the participation constraint holds. 
\end{proof} 

Given a contract $\xi$, the process $(U_t^{a^\ast})_{t\in [0,T]}$ describes the Agent's utility along the optimal effort policy $a^*(\xi)$. Next we are going to construct a family $\{U^a(\xi),\,a\in\cA\}$ and an optimal effort policy $a^*(\xi)$ using the theory of BSDEs as in \cite{cvitanic2018dynamic}.

\begin{proposition}\label{prop:IC}
For $\xi\in\cC$ suppose that the pair $(\bar W,\bar Z)$ is solution to the following BSDE:
\begin{equation}
\label{BSDE:agent}
\bar W_t = \xi - \int_{t}^T f^*(\bar Z_s) \ud s -\int_t^T \bar Z_s \sigma \ud \bar B_s, \quad t\in [0,T], 
 \end{equation}
with $\bar W_0\ge -1/\gamma_A\log R_0$ and 
\begin{equation}\label{BSDEdrift}
 f^*(z)\coloneqq \frac{\gamma_A}{2} \sigma^2 |z|^2 + \inf_{a \in \R} \big[k(a)-a z\big].
 \end{equation}
Let $a^*(z)=(k^\prime)^{-1}(z)$ and suppose $a^*_t\coloneqq a^*(\bar Z_t)\in\cA$ and
\begin{equation}\label{eq:martZ}
\E_{a^*}\Big[\exp\Big(-\sigma\gamma_A\int_0^T \bar Z_s\ud B^{a^*}_s-\tfrac12\sigma^2\gamma^2_A\int_0^T \bar Z^2_s\ud s\Big)\Big]=1.
\end{equation}

Then $\xi\in\cC_{IC}$ and $a^*_t$ is the Agent's unique optimal action.
\end{proposition}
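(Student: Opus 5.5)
The plan is to verify the four conditions of Lemma \ref{martingaleoptimality} for the family
\begin{equation*}
U^a_t\coloneqq U_A\Big(\bar W_t-\int_0^t k(a_s)\,\ud s\Big)=-\exp\Big(-\gamma_A\Big(\bar W_t-\int_0^t k(a_s)\,\ud s\Big)\Big),\qquad a\in\cA .
\end{equation*}

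Condition i) holds because $\bar W_T=\xi$. Condition iii) holds because $U^a_0=-\e^{-\gamma_A\bar W_0}$ does not depend on $a$, and the bound $\bar W_0\ge -\gamma_A^{-1}\log R_0$ is exactly equivalent to $U_0\ge -R_0$.

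For ii) and iv), we rewrite the BSDE under $\P_a$ using $\ud\bar B_s=\ud B^a_s+\sigma^{-1}a_s\,\ud s$. This gives
\begin{equation*}
\ud \bar W_t=\big(f^*(\bar Z_t)-a_t\bar Z_t\big)\ud t+\sigma\bar Z_t\,\ud B^a_t .
\end{equation*}
Set $X^a_t\coloneqq \bar W_t-\int_0^tk(a_s)\,\ud s$ and apply It\^o's formula to $-\e^{-\gamma_AX^a_t}$. We obtain
\begin{equation*}
\ud U^a_t=-\gamma_A U^a_t\Big[\big(f^*(\bar Z_t)-a_t\bar Z_t-k(a_t)-\tfrac{\gamma_A}{2}\sigma^2\bar Z_t^2\big)\ud t+\sigma\bar Z_t\,\ud B^a_t\Big].
\end{equation*}
By the definition \eqref{BSDEdrift}, the bracketed drift equals $\inf_{b}[k(b)-b\bar Z_t]-[k(a_t)-a_t\bar Z_t]\le 0$. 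Since $-\gamma_AU^a_t>0$, the drift of $U^a$ is non-positive.

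Solving this linear SDE explicitly gives
\begin{equation*}
U^a_t=U_0\,\exp\Big(-\gamma_A\int_0^t\delta^a_s\,\ud s\Big)\,\mathcal E^a_t,
\end{equation*}
where
\begin{equation*}
\delta^a_s\coloneqq \inf_b[k(b)-b\bar Z_s]-[k(a_s)-a_s\bar Z_s]\le 0,
\qquad
\mathcal E^a_t\coloneqq\exp\Big(-\sigma\gamma_A\int_0^t\bar Z_s\,\ud B^a_s-\tfrac12\sigma^2\gamma_A^2\int_0^t\bar Z_s^2\,\ud s\Big).
\end{equation*}
This factorisation is the cleanest route to both the supermartingale and the martingale property.

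Take $a=a^*$. Then $\delta^{a^*}\equiv0$, since $a^*(z)=(k')^{-1}(z)$ is the unique minimiser of the strictly convex map $b\mapsto k(b)-bz$. Hence $U^{a^*}=U_0\,\mathcal E^{a^*}$. The process $\mathcal E^{a^*}$ is a positive local martingale, hence a supermartingale, and condition \eqref{eq:martZ} states $\E_{a^*}[\mathcal E^{a^*}_T]=1$. Together these make it a true martingale, which gives iv).

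For a general $a\in\cA$ the main obstacle appears: $\mathcal E^a$ is only a positive local martingale under $\P_a$, and no integrability of $\bar Z$ under $\P_a$ is assumed. I would argue as follows. Since $U_0<0$ and $\e^{-\gamma_A\int\delta^a}\ge1$ is nondecreasing, we have $-U^a_t=|U_0|\,\e^{-\gamma_A\int_0^t\delta^a}\mathcal E^a_t$. This is the product of a nondecreasing positive process and a positive local martingale, so $-U^a$ is a nonnegative local submartingale. Localising with stopping times $\tau_n$ gives $\E_a[U^a_{t\wedge\tau_n}\mid\bar\cF_s]\le U^a_{s\wedge\tau_n}$.

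To pass to the limit $n\to\infty$ one needs uniform integrability of $\{U^a_{t\wedge\tau_n}\}_n$. Fatou's lemma goes the wrong way for the negative process $U^a$, so this step is genuinely delicate. I would use the equality chain of Lemma \ref{martingaleoptimality} only in the weak form actually needed: $\E_a[U^a_T]\le U_0$. To reach it, I would exploit the $\P_a$-integrability of $U_A(\xi-\int_0^T k(a_s)\,\ud s)$ wherever it is finite; if it equals $-\infty$ the inequality is trivial. If a true supermartingale proof is required, I would impose or derive a uniform integrability condition, for instance via $\E_a[\sup_t|U^a_t|]<\infty$. Failing that, I would restrict $\cA$ to efforts for which $\mathcal E^a$ is a true martingale, as is customary in \cite{cvitanic2018dynamic}.

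Once ii) is secured, Lemma \ref{martingaleoptimality} gives $\xi\in\cC_{IC}$ and optimality of $a^*$. For uniqueness, suppose $a\in\cA$ is also optimal, so $\E_a[U^a_T]=U_0$. Equality in the supermartingale argument forces $\gamma_A\int_0^T\delta^a_s\,\ud s=0$, $\P_a$-a.s. Since $\delta^a\le0$, this gives $\delta^a_s=0$ for $\ud s\otimes\ud\P_a$-a.e.\ $(s,\omega)$. Strict convexity of $k$ then yields $a_s=(k')^{-1}(\bar Z_s)=a^*_s$ a.e.
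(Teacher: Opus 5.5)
Your route is the paper's. You feed the CARA guess $U^a_t=-\exp\bigl(-\gamma_A(\bar W_t-\int_0^tk(a_s)\,\ud s)\bigr)$ into Lemma \ref{martingaleoptimality}, read off the sign of the drift from \eqref{BSDEdrift}, and use \eqref{eq:martZ} to upgrade $U^{a^*}=U_0\,\mathcal{E}^{a^*}$ to a true martingale. Conditions i), iii) and iv) are handled correctly.

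One slip: since $\ud\bar B_t=\ud B^a_t+\sigma^{-1}a_t\,\ud t$, the $\P_a$-dynamics are $\ud\bar W_t=(f^*(\bar Z_t)+a_t\bar Z_t)\,\ud t+\sigma\bar Z_t\,\ud B^a_t$. With this sign the bracket in your It\^o formula is exactly $\delta^a_t$, so your factorisation $U^a_t=U_0\,\e^{-\gamma_A\int_0^t\delta^a_s\ud s}\mathcal{E}^a_t$ stands.

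The paper settles ii) in one line, deducing the supermartingale property from $U^a\le 0$ and the nonpositive drift. You are right that this is not a proof: a local supermartingale bounded above, rather than below, need not be a supermartingale.

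The genuine gap is that your proposal does not close this step either. Lemma \ref{martingaleoptimality} only needs $\E_a[U^a_T]\le U_0$, i.e.\ $\E_a\bigl[\e^{-\gamma_A\int_0^T\delta^a_s\ud s}\mathcal{E}^a_T\bigr]\ge 1$. However,
\[
\E_a\big[\mathcal{E}^a_T\big]=\E_0\Big[\exp\Big(\int_0^T\theta_s\,\ud\bar B_s-\tfrac12\int_0^T\theta_s^2\,\ud s\Big)\Big]\le 1,\qquad \theta_s\coloneqq\tfrac{a_s}{\sigma}-\sigma\gamma_A\bar Z_s,
\]
and \eqref{eq:martZ} forces equality only for $a=a^*$. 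If $\mathcal{E}^a$ is a strict local martingale under $\P_a$, nothing guarantees that the factor $\e^{-\gamma_A\int\delta^a}\ge 1$ compensates the lost mass.

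Integrability of $U_A(\xi-\int_0^Tk(a_s)\ud s)$ does not help. A nonnegative local submartingale with integrable terminal value can still have $\E[Y_T]<Y_0$; any strict local martingale is an example. So that remedy fails, and your other two (a uniform integrability condition, or restricting $\cA$) add hypotheses that are not in the statement.

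Your uniqueness step rests on the same point. The equality $\E_a[\e^{-\gamma_A\int\delta^a}\mathcal{E}^a_T]=1$ forces $\delta^a=0$ a.e.\ only if you already know $\E_a[\mathcal{E}^a_T]=1$. If you add the assumption that $\mathcal{E}^a$ is a $\P_a$-martingale for every $a\in\cA$, your factorisation gives ii) and uniqueness at once.

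For the quadratic cost used in the rest of the paper, with $\eta>0$, you can close the gap without new hypotheses.
\begin{itemize}
\item By \eqref{eq:martZ}, $\e^{-\eta\bar W_t/\sigma^2}$ is a true $\P_0$-martingale, so $U_0=-\bigl(\E_0[\e^{-\eta\xi/\sigma^2}]\bigr)^p$ with $p=\gamma_A\sigma^2/\eta>1$.
\item Set $G\coloneqq\frac{\ud\P_a}{\ud\P_0}\big|_{\bar\cF_T}\e^{\frac{\gamma_A}{2}\int_0^Ta_s^2\ud s}$ and write $\e^{-\eta\xi/\sigma^2}=(\e^{-\gamma_A\xi}G)^{1/p}G^{-1/p}$.
\item H\"older's inequality with exponents $p$ and $p/(p-1)$ gives $(\E_0[\e^{-\eta\xi/\sigma^2}])^p\le\E_a[\e^{-\gamma_A(\xi-\frac12\int_0^Ta_s^2\ud s)}]\,(\E_0[G^{-\eta}])^{p-1}$.
\item The conjugate factor is $G^{-\eta}=\exp\bigl(-\frac{\eta}{\sigma}\int_0^Ta_s\ud\bar B_s-\frac{\eta^2}{2\sigma^2}\int_0^Ta_s^2\ud s\bigr)$, which has $\P_0$-expectation at most $1$.
\end{itemize}
Hence $\E_a[U^a_T]\le U_0$ for every $a\in\cA$, and the equality case of H\"older's inequality also recovers uniqueness.
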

\begin{proof}
Leveraging the CARA assumption for the Agent's utility function, for any $a\in\cA$ we guess 
\begin{equation}\label{eq:Rat}
U^{a}_t\coloneqq-\exp\Big(-\gamma_A \Big(\bar W_t - \int_0^t k(a_s) \ud s \Big)\Big), \quad t\in [0,T]. 
\end{equation}
Observe that the pair $(\bar W_t,\bar Z_t)$ does not depend on $a\in\cA$ and thus $U_0^a=-\exp(-\gamma_A\bar W_0)\ge -R_0$ is independent of $a\in\cA$ as required in iii) of Lemma \ref{martingaleoptimality}. The process $(U^a_t)_{t\in[0,T]}$ is an It\^o process with dynamics 
\begin{equation*}
\ud U_t^a=-\gamma_A U_t^a \bar Z_t \sigma \ud B_t^a+\gamma_A U_t^a\Big( -f^*(Z_t)+k(a_t)+\frac{\gamma_A}{2} \sigma^2 |\bar Z_t|^2-a_t\bar Z_t\Big)\ud t,
\end{equation*}
where we recall that $B_t^a=\bar B_t-\int_0^t \sigma^{-1}a_s\,\ud s$ is a Brownian motion under $\P_a$.
Noticing that $U^a_t\le 0$ we deduce that $(U_t^a)_{t\in [0,T]}$ is a supermartingale for every $a \in \cA$ by definition of $f^*$.
 
Because $a\mapsto k(a)$ is strictly convex, there is a unique minimiser in \eqref{BSDEdrift}, denoted $a^*(z)=(k^\prime)^{-1}(z)$. Taking $a^*_t=a^*(\bar Z_t)$, by assumption we have $a^*\in\cA$ and 
\begin{equation*}
\ud U_t^{a^*}=-\gamma_A U_t^{a^*} \bar Z_t \sigma \ud B_t^{a^*},\quad t\in[0,T],
\end{equation*}
is a martingale under $\P_{a^*}$, thanks to \eqref{eq:martZ}. The family of processes $\{(U^a_t)_{t\in[0,T]},\,a\in\cA\}$ defined via \eqref{eq:Rat} satisfies (i)---(iv) of Lemma \ref{martingaleoptimality}.
Then $\xi\in\cC_{IC}$ and $a^*$ is the Agent's unique optimal effort, by uniqueness of the minimiser in \eqref{BSDEdrift}. 
\end{proof}

The above proposition shows that finding $\xi\in\cC_{IC}$ boils down to solving \eqref{BSDE:agent} and proving $a^*(\bar Z_t)\in\cA$ with \eqref{eq:martZ}. A tractable situation arises when the Agent's cost of effort is quadratic, i.e., $k(a)=a^2/2$. In that case the optimal effort reads $a^*(z)=z$ and plugging $f^\ast(z)=( \frac{\gamma_A\sigma^2-1}{2})z^2$ into \eqref{BSDE:agent}, we get
\begin{equation}\label{eq:BSDE}
\bar W_t(\xi)=\xi-\int_t^T \frac{\eta}{2}\bar Z^2_s(\xi)\,\ud s-\int_t^T \bar Z_s (\xi) \sigma \ud \bar B_s,
\end{equation}
with $\eta=\gamma_A\sigma^2-1$. When $\eta\neq 0$ we have a quadratic BSDE which is amenable to explicit solution arguing as in \cite[Thm.\ 3.1]{briand2007one} and assuming that
\medskip
 
\begin{itemize}
\item[\bf(H1)]\label{H1} If $\eta > 0$ then $\E_0[\xi^2]<\infty$, and if $\eta<0$ then $\E_0[\exp(-\eta/\sigma^2 \xi)]<\infty$. 
\end{itemize}
\medskip

\noindent We provide full details in the appendix (cf.\ Proposition \ref{prop:bsde}) and here we discuss further implications. Throughout we are going to use that $\E_0[\int_0^T\bar Z^2_t(\xi)\ud t]<\infty$, as shown in Proposition \ref{prop:bsde}.

Setting 
\begin{equation}\label{eq:defwidehatW}
\widehat W_t(\xi)\coloneqq \E_0\Big[ \e^{-\frac{\eta}{\sigma^2} \xi} \Big| \bar \cF_t\Big],\quad t\in[0,T],
\end{equation}
the integrability assumptions {\bf(H1)} imply that $(\widehat W_t(\xi))_{t\in[0,T]}$ is a martingale under $\P_0$. Moreover, it is shown in the proof of Proposition \ref{prop:bsde} that 
\begin{equation}\label{eq:CH}
\widehat W_t(\xi)=\exp\Big(-\frac{\eta}{\sigma^2}\bar W_t(\xi)\Big)
\end{equation}
and so, by It\^o's formula,
\begin{equation}\label{eq:dWhat}
\ud\widehat{W}_t(\xi)=-\frac{\eta}{\sigma} \widehat{W}_t(\xi)\bar Z_t(\xi)\ud\bar B_t.
\end{equation}
We notice that, because $a_t^*=\bar Z_t(\xi)$, condition \eqref{eq:martZ} reads equivalently as
\begin{equation*}
\E_{0}\Big[\exp\Big(-\frac\eta\sigma\int_0^T \bar Z_s(\xi)\ud \bar B_s-\frac12\frac{\eta^2}{\sigma^2}\int_0^T \bar Z^2_s(\xi)\ud s\Big)\Big]=1.
\end{equation*}
Then, \eqref{eq:martZ} is satisfied because
\begin{equation*}
\widehat W_t(\xi)=\widehat W_0(\xi)\exp\Big(-\frac\eta\sigma\int_0^t\bar Z_s(\xi)\ud \bar B_s-\frac12\frac{\eta^2}{\sigma^2}\int_0^t \bar Z^2_s(\xi)\ud s\Big),
\end{equation*}
is a $\P_0$-martingale by construction. Finally, we find the explicit form of $\bar W_t(\xi)$ as
\begin{equation}\label{eq:formulaforY}
\bar W_t(\xi)=-\frac{\sigma^2}{\eta} \log\Big( \E_0 \Big[ \e^{-\frac{\eta}{\sigma^2} \xi} \Big| \bar \cF_t\Big]\Big),\quad t\in[0,T].
\end{equation}

Such an explicit formula allows us to make sense more concretely of the incentive compatibility constraints.
Given $\xi\in\cC_{IC}$ satisfying {\bf(H1)}, the Agent's optimal expected utility and optimal actions are fully determined by the pair $(\bar W(\xi),\bar Z(\xi))$.
As a consequence, a contract $\xi\in\cC_{IC}$ satisfies the participation constraint (i.e., condition (iii) in Lemma \ref{martingaleoptimality}), if 
\begin{equation*}
U^a_0=-\exp\big(-\gamma_A \bar W_0(\xi)\big)=-\Big(\E_0 \Big[ \e^{-\frac{\eta}{\sigma^2} \xi} \Big]\Big)^\frac{\gamma_A\sigma^2}{\gamma_A\sigma^2-1}\ge -R_0.
\end{equation*} 
Equivalently we can write $\bar W_0(\xi) \ge (-1/\gamma_A)\log R_0$. Moreover, limited liability $\xi\ge 0$ is equivalent to $\bar W_t(\xi) \ge 0$, for every
$t \in [0,T]$. It is interesting to note that the limited liability constraint, which is a constraint at the date of payment of the contract, induces a dynamic constraint on the Agent's expected wage.

Finally, it is also useful to observe that 
the dynamics of the promised utility $\bar W_t(\xi)$ can be written as a forward SDE 
\begin{equation*}
\bar W_t(\xi)= \bar W_0(\xi)+\int_0^t \frac{\eta}{2}\bar Z^2_s(\xi)\,\ud s+\int_0^t \bar Z_s (\xi) \sigma \ud \bar B_s,
\end{equation*}
where $\bar W_0(\xi)=-\frac{\sigma^2}{\eta} \log( \E_0 [ \exp(-(\eta/\sigma^2) \xi)])$ is deterministic.
Letting $\bar \tau(\xi)\coloneqq\inf\{t\ge 0:\bar W_t(\xi)=0\}$ with $\inf\varnothing=\infty$ we notice 
\begin{equation*}
\mathds{1}_{\{\bar \tau(\xi)<T\}}\bar W_{\bar \tau(\xi)}(\xi)=0\iff \mathds{1}_{\{\bar \tau(\xi)<T\}}\widehat W_{\bar \tau(\xi)}(\xi)=\mathds{1}_{\{\bar \tau(\xi)<T\}}.
\end{equation*}
Therefore, on the event $\{\bar \tau(\xi)<T\}$ it must be 
$1=\widehat W_{\bar \tau(\xi)}(\xi)=\E_0\big[\e^{-\frac{\eta}{\sigma^2}\xi}\big|\cF_{\bar \tau(\xi)}\big]$, by definition of $\widehat W_{t}(\xi)$. The latter is equivalent to $\xi=0$, $\P_0$-a.s.\ on $\{\bar \tau(\xi)<T\}$. Hence, on the event $\{\bar \tau(\xi)<T\}$ we have $\widehat W_{s}(\xi)=1$, for all\footnote{For any pair of random times $\rho\le \tau$ we denote closed/open random intervals by $[\![\rho,\tau]\!]$ and $(\!(\rho,\tau)\!)$, respectively. Notice that in the general theory of stochastic processes $[\![\rho,\tau]\!]=\{(t,\omega):\rho(\omega)\le t\le \tau(\omega)\}$. Thus, when we write $t\in[\![\rho,\tau]\!]$ we are slightly abusing notation as we should write $(t,\omega)\in[\![\rho,\tau]\!]$, although the meaning does not change.}
 $s\in[\![\bar\tau(\xi),T]\!]$. Then, the quadratic variation of the process $(\widehat W_{s}(\xi))_{s\in [\bar\tau(\xi),T]}$ vanishes. That is, from \eqref{eq:dWhat} we deduce
\begin{equation*}
\mathds{1}_{\{\bar\tau(\xi)<T\}}\int_{\bar\tau(\xi)}^T\big(\widehat{W}_t(\xi)\big)^2 \bar Z^2_t(\xi)\ud t=0,\quad\P_0\text{-a.s.}
\end{equation*} 
Since $\widehat{W}_t(\xi)>0$ for all $t\in[0,T]$, the above equation implies that $\P_0$-a.s.
\begin{equation}\label{eq:Z0}
\bar Z_t(\xi)=0,\quad\text{for a.e.\ $t\in[\![\bar\tau(\xi),T]\!]$ on the event $\{\bar \tau(\xi)<T\}$.}
\end{equation}

If $\eta=0$ in \eqref{eq:BSDE} then the BSDE is linear and $\bar W_t(\xi)=\E_0[\xi|\bar\cF_t]$. It is not hard to see that also in this case $\bar W_t\ge 0$ for $t\in[0,T]$ and \eqref{eq:Z0} continues to hold, provided that $\xi\ge 0$ a.s. The case $\eta=0$ is less interesting and can be treated with the same methods developed in this paper. To simplify the exposition, from now on we only consider $\eta\neq 0$.

So far we have determined necessary conditions for a contract $\xi\in\cC$ to be incentive-compatible via properties of the dynamics of the pair $(\bar W(\xi),\bar Z(\xi))$. It turns out that those conditions are indeed also sufficient. 
\begin{proposition}\label{prop:allIC}
Suppose $k(a)=a^2/2$ and $\eta\neq 0$. We have $\xi\in\cC_{IC}$ satisfying {\em \bf(H1)} if and only if there is a pair of $\bar \F$-adapted processes $(\bar W^\beta_t,\beta_t)_{t\in[0,T]}$ such that $\bar W^\beta_T=\xi$ satisfies {\em \bf(H1)} and, for $x \ge \max[0,(-1/\gamma_A)\log R_0]$,
\begin{equation}\label{eq:dyna}
\bar W^\beta_t=x+\int_0^t \frac{\eta}{2}\beta^2_s \ud s+\int_0^t\beta_s \sigma\ud \bar B_s,\quad t\in[0,T],
\end{equation}
with $\beta_s=0$ for $s\in[\![\bar \tau_\beta,\infty)\!)$
where $\bar \tau_\beta\coloneqq\inf\{t\ge 0: \bar W_t^{\beta}\le 0\}$ with $\inf\varnothing=\infty$.
\end{proposition}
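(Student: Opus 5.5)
The proof splits into the two implications. Both directions use the explicit structure of the quadratic BSDE \eqref{eq:BSDE} (with $k(a)=a^2/2$ and $\eta\neq0$), Proposition \ref{prop:bsde} from the appendix, and Proposition \ref{prop:IC}.

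\emph{Necessity.} Let $\xi\in\cC_{IC}$ satisfy {\bf(H1)}.
\begin{enumerate}[label=(\roman*)]
\item Proposition \ref{prop:bsde} gives a solution $(\bar W(\xi),\bar Z(\xi))$ of \eqref{eq:BSDE} with $\E_0[\int_0^T\bar Z_t^2\ud t]<\infty$. Set $\beta_t\coloneqq\bar Z_t(\xi)$ and $x\coloneqq\bar W_0(\xi)$.
\item The forward form of \eqref{eq:BSDE} is exactly \eqref{eq:dyna}, and $\bar W^\beta_T=\xi$.
\item The Agent's optimal utility is $-\exp(-\gamma_A\bar W_0(\xi))$, as in the proof of Proposition \ref{prop:IC}; uniqueness of the BSDE solution identifies it with $V^A_0(\xi)$. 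Hence the participation constraint gives $x\ge(-1/\gamma_A)\log R_0$.
\item Limited liability gives $\bar W_t(\xi)\ge0$ for all $t$ through \eqref{eq:formulaforY}, since $\xi\ge0$ makes $\widehat W_t\le1$ if $\eta>0$ and $\widehat W_t\ge1$ if $\eta<0$. In particular $x\ge0$.
\item On $\{\bar\tau_\beta\le T\}$ we have $\bar\tau_\beta=\bar\tau(\xi)$, because $\bar W\ge0$ so first reaching $\le0$ means hitting $0$. Property \eqref{eq:Z0} then gives $\beta=0$ a.e.\ on $[\![\bar\tau_\beta,T]\!]$. Modifying $\beta$ on a $\ud t\otimes\P_0$-null set, and extending it by $0$ after $T$, yields the stated form.
\end{enumerate}

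\emph{Sufficiency.} Let $(\bar W^\beta,\beta)$ satisfy \eqref{eq:dyna} with $x\ge\max[0,(-1/\gamma_A)\log R_0]$, $\beta=0$ after $\bar\tau_\beta$, and $\xi=\bar W^\beta_T$ satisfying {\bf(H1)}.
\begin{enumerate}[label=(\roman*)]
\item \emph{Admissibility, $\xi\ge0$.} Since $\beta\equiv0$ after $\bar\tau_\beta$, the process is frozen at $\bar W^\beta_{\bar\tau_\beta}=0$ by continuity. Before $\bar\tau_\beta$ it is positive, or $x=0$ and $\bar\tau_\beta=0$. Hence $\bar W^\beta_t\ge0$ for all $t$, and $\xi\in\cC$.
\item \emph{Identification with the BSDE.} The pair $(\bar W^\beta,\beta)$ solves \eqref{eq:BSDE} with terminal condition $\xi$. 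To conclude $\bar W^\beta=\bar W(\xi)$ and $\beta=\bar Z(\xi)$ I would invoke uniqueness from Proposition \ref{prop:bsde} (after \cite{briand2007one}). This requires the pair to lie in the uniqueness class, which is not given a priori.
\item \emph{Fallback for step (ii).} If the uniqueness class is an issue, apply It\^o's formula to $M_t\coloneqq\exp(-\frac{\eta}{\sigma^2}\bar W^\beta_t)$. Using \eqref{eq:dyna} gives $\ud M_t=-\frac{\eta}{\sigma}M_t\beta_t\ud\bar B_t$, so $M$ is a positive local martingale. Its terminal value $M_T=\e^{-\frac{\eta}{\sigma^2}\xi}$ is integrable by {\bf(H1)}.
\item \emph{Martingale property of $M$.} If $\eta>0$, then $\bar W^\beta\ge0$ gives $0<M\le1$, so $M$ is a bounded local martingale and hence a true martingale. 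If $\eta<0$, then $M\ge1$ is only a positive supermartingale, and more is needed. I would localize at $\bar\tau_\beta\wedge\rho_n$, with $\rho_n$ the exit times of $\bar W^\beta$ from $[0,n]$, and use the terminal integrability in {\bf(H1)}. Where this fails, I would add to the statement the implicit requirement that $\beta$ belong to the class in which \eqref{eq:BSDE} has a solution, namely $\E_0[\int_0^T\beta^2_t\ud t]<\infty$ together with the martingale property of $M$. I expect this step, for $\eta<0$, to be the main technical obstacle.
\item \emph{Conclusion.} Once $M$ is a martingale, $M_t=\widehat W_t(\xi)$, so $\bar W^\beta=\bar W(\xi)$ by \eqref{eq:formulaforY} and $\beta=\bar Z(\xi)$ by matching the martingale parts. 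Condition \eqref{eq:martZ} is the martingale property of $M/M_0$, as shown before the statement. Admissibility $a^*=\beta\in\cA$ follows in the same way: under $\P_0$ the density $\exp(\int\frac{\beta}{\sigma}\ud\bar B-\frac12\int\frac{\beta^2}{\sigma^2}\ud s)$ can be written through $\bar W^\beta$ and handled with the same bounds. Proposition \ref{prop:IC}, together with $\bar W_0=x\ge-\frac1{\gamma_A}\log R_0$, then gives $\xi\in\cC_{IC}$.
\end{enumerate}
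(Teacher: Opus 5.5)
Your route is the paper's. Necessity comes from Proposition \ref{prop:bsde} and the argument leading to \eqref{eq:Z0}. Sufficiency comes from checking $\bar W^\beta\ge0$ and that $M_t=\exp(-\frac{\eta}{\sigma^2}\bar W^\beta_t)$ is a true $\P_0$-martingale, so that \eqref{eq:martZ} holds and Proposition \ref{prop:IC} applies. For $\eta>0$ your bounded-local-martingale argument is exactly what is needed.

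For $\eta<0$ the obstacle you isolate is genuine, and the paper does not remove it either. It asserts that {\bf (H1)} makes $\widehat W$ a true martingale by pointing to the proof of Proposition \ref{prop:bsde}. But there $\widehat W$ is \emph{defined} as a conditional expectation, whereas here it comes from the forward equation and is only a positive local martingale.

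Your localisation plus integrability of $M_T$ cannot close this, because it gives no uniform integrability of the stopped values. Here is a counterexample. Let $R$ solve $\ud R_t=\ud\bar B_t+R_t^{-1}\ud t$, $R_0=r_0>0$ (a BES(3), strong solution, hence $\bar\F$-adapted), and put $N=1/R$, so $\ud N=-N^2\ud\bar B$. Take
\[
M=1+N,\qquad \bar W^\beta=\tfrac{\sigma^2}{|\eta|}\log M,\qquad \beta=-\tfrac{\sigma N^2}{|\eta|(1+N)}.
\]
Then:
\begin{itemize}
\item \eqref{eq:dyna} holds with $x=\frac{\sigma^2}{|\eta|}\log(1+1/r_0)$;
\item $\bar W^\beta>0$, so the condition after $\bar\tau_\beta$ is void;
\item $\E_0[\e^{-\eta\xi/\sigma^2}]=\E_0[M_T]<\infty$, so {\bf (H1)} holds;
\item even $\E_0[\int_0^T\beta_t^2\ud t]<\infty$.
\end{itemize}
Yet $\E_0[M_T]<M_0$, because $N$ is a strict local martingale. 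Hence \eqref{eq:martZ} fails, $\bar W^\beta\neq\bar W(\xi)$, and the Agent's actual certainty equivalent $\bar W_0(\xi)=\frac{\sigma^2}{|\eta|}\log\E_0[M_T]$ is strictly below $x$. So for $\eta<0$ the martingale property of $M$ has to be imposed as a hypothesis, as in your fallback. Square-integrability of $\beta$ contributes nothing to it.

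Your last step also contains an error: $a^*=\beta\in\cA$ does not follow ``with the same bounds''. Using \eqref{eq:dyna} and $\eta+1=\gamma_A\sigma^2$,
\[
\exp\Big(\int_0^t\frac{\beta_s}{\sigma}\ud\bar B_s-\frac12\int_0^t\frac{\beta_s^2}{\sigma^2}\ud s\Big)=\exp\Big(\frac{\bar W^\beta_t-x}{\sigma^2}-\frac{\gamma_A}{2}\int_0^t\beta_s^2\ud s\Big).
\]
Controlling this density requires an upper bound on $\bar W^\beta$. The bound $0<M\le1$ (for $\eta>0$) only gives the lower bound $\bar W^\beta\ge0$, and for $\eta<0$ you have no bound at all. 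Since Proposition \ref{prop:IC} takes $a^*\in\cA$ as a hypothesis, this must also be assumed. The paper's proof skips this check too, and it is precisely what condition (i) of Definition \ref{def:B} later builds into the class of controls.
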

\begin{proof}
For the only if part of the statement we construct $\widehat W(\xi)$ as in \eqref{eq:defwidehatW} and we obtain the solution of the BSDE from a formula analogous to \eqref{eq:CH} (cf.\ proof of Proposition \ref{prop:bsde}). The discussion following \eqref{eq:formulaforY} guarantees the properties of the $\beta$-process in \eqref{eq:dyna}. 

For the if part, we start from the pair $(\bar W^\beta,\beta)$ that solves the SDE \eqref{eq:dyna}.
We set $\xi=\bar W^\beta_T$ and we observe that $\bar W_T^{\beta}$ is almost surely positive and even more, 
$\bar W_t^{\beta}\geq 0$, for $t\in [0,T]$, $\P_0$-a.s.
Then, $\xi\in\cC$. Moreover, $\bar W^\beta_T=\xi$ satisfies {\bf (H1)} by assumption and therefore the process $\widehat W$ defined in \eqref{eq:CH} with $\bar Z_s=\beta_s$ is a true martingale (cf.\ again the construction in the proof of Proposition \ref{prop:bsde}). As a result \eqref{eq:martZ} holds. Then Proposition \ref{prop:IC} guarantees $\xi\in\cC_{IC}$.
\end{proof}

\section{The Principal's problem: statement and main results}\label{sec:problem}

In this section we formulate the Principal's problem following the insight of Proposition \ref{prop:allIC}. Throughout the section we assume $k(a)=a^2/2$ so that all results from the previous sections hold. We recall $Y_t=y+\sigma\bar B_t$ and define the following class of admissible controls for the Principal.
\begin{definition}\label{def:B}
The class $\bar \cB$ contains $\bar \F$-progressively measurable processes $(\beta_t)_{t\in[0,T]}$ such that 
\begin{itemize}
\item[(i)] the integrability conditions hold
\begin{equation*}
\E_0\Big[\exp\Big( \int_0^T \frac{\beta_s}{\sigma}\,\ud \bar B_s-\frac{1}{2}\int_0^T \frac{\beta_s^2}{\sigma^2}\,\ud s\Big)\Big]=1\quad\text{and}\quad\E_\beta\Big[\int_0^T\beta^2_s\ud s\Big]<\infty,
\end{equation*}
\item[(ii)] given the It\^o process, for $x\ge 0$,
\begin{equation*}
\begin{aligned}
\bar W^\beta_t&=x+\int_0^t \frac{\eta}{2}\beta_s^2\mathrm{d}s+\int_0^t\beta_s \sigma\mathrm{d} \bar B_s,\quad t\in[0,T],
\end{aligned}
\end{equation*}
and the stopping time $\bar \tau_\beta\coloneqq\inf\{t\ge 0: \bar W_t^{\beta}\le 0\}$ with $\inf\varnothing=\infty$,
we have $\beta_s=0$ for $s\in[\![\bar \tau_\beta,T]\!]$ on the event $\{\bar\tau_\beta<T\}$. 
\end{itemize}
\end{definition}
In the above definition, for simplicity we do not include integrability conditions for $\bar W^\beta_T$ that mirror {\bf (H1)} for $\xi$. However, it turns out in our Theorem \ref{thm:main} that the optimal control $(\beta^*_t)_{t\in[0,T]}$ is bounded so that the optimal contract $\bar W^{\beta^*}_T=\xi$ satisfies {\bf (H1)}.

Given $\beta\in\bar \cB$, we define the measure $\P_\beta$ via its Radon-Nikodym derivative
\begin{equation}\label{eq:Pbeta}
\frac{\ud \P_\beta}{\ud \P_0}\Big|_{\bar\cF_T}\coloneqq\exp\Big( \int_0^T \frac{\beta_s}{\sigma}\,\ud \bar B_s-\frac{1}{2}\int_0^T \Big(\frac{\beta_s}{\sigma}\Big)^2\,\ud s\Big).
\end{equation}
Under $\P_\beta$ the process $B^\beta_t=\bar B_t-\int_0^t\frac{\beta_s}{\sigma}\ud s$ is a Brownian motion and, setting $Y=Y^\beta$, the previous dynamics read for $t\in[0,T]$
\begin{equation}\label{eq:SDEYW}
Y_t^{\beta}=y+\int_0^t\beta_s\, \ud s+ \sigma B^\beta_{t},\quad
\bar W_t^{\beta}=x+\int_0^t c_A \beta_s^2\,\ud s+\int_0^t\beta_s\sigma \,\ud B^\beta_s,
\end{equation}
where we set $c_A=c_A(\sigma)=(\gamma_A\sigma^2+1)/2$, to simplify the notation. Sometimes we denote $Y^{y;\beta}$, $\bar W^{x;\beta}$ and $\bar \tau_\beta(x)$ to account explicitly for the initial position of the processes $\bar W^\beta$ and $Y^\beta$. 

In order to embed the Principal's problem in a dynamic framework, given $t\in[0,T]$, the remaining time horizon of the Principal is $T-t$. The Principal's payoff from using a control $\beta\in\bar \cB$, evaluated taking expectation $\E_\beta[\,\cdot\,]$ under the measure $\P_\beta$, reads as 
\begin{equation*}
\begin{aligned}
\cJ_{t,x,y}( \beta)\coloneqq& \E_\beta\Big[Y_{T-t}^{y;\beta}- \bar W_{T-t}^{x;\beta}\Big]=\E_{\beta}\Big[ Y_{T-t}^{y;\beta}-\bar W_{(T-t)\wedge\bar \tau_{\beta}}^{x;\beta}\Big]\\
=&\E_{\beta}\Big[Y_{(T-t)\wedge\bar \tau_{\beta}}^{y;\beta}+ \sigma\big(B^\beta_{T-t}- B^\beta_{(T-t)\wedge\bar \tau_{\beta}}\big)-\bar W_{(T-t)\wedge\bar \tau_{\beta}}^{x;\beta}\Big]=\E_{\beta}\Big[ Y^{y;\beta}_{(T-t)\wedge\bar \tau_{\beta}}-\bar W_{(T-t)\wedge\bar \tau_{\beta}}^{x;\beta}\Big],
\end{aligned}
\end{equation*}
where the first and second equalities hold because $\beta_s=0$ for $s\in[\![\bar \tau_\beta,\infty)\!)$ and the third one holds by optional sampling. Finally, using the explicit dynamics of $Y^\beta$ and $\bar W^\beta$ the payoff reduces to 
\begin{equation*}
\cJ_{t,x,y}(\beta)=y-x+\E_\beta\Big[\int_0^{(T-t)\wedge\bar \tau_{\beta}}\beta_s\big(1-c_A\beta_s\big)\ud s\Big],
\end{equation*}
where the quantity under expectation depends on $x$ via the stopping time $\bar\tau_\beta=\bar\tau_\beta(x)$, it depends explicitly on the remaining time-horizon $T-t$ and it is independent of the initial output level $y$.
Then, the value function of the Principal's problem reads as
\begin{equation*}
u(t,x,y)\coloneqq\sup_{\beta\in\bar{\cB}}\cJ_{t,x,y}(\beta)=y-x+\bar\varphi(t,x),
\end{equation*}
with 
\begin{equation}\label{eq:orig_value}
\bar{\varphi}(t,x)\coloneqq \sup_{\beta\in\bar{\cB}}\E_\beta\Big[\int_0^{(T-t)\wedge\bar \tau_\beta(x)}\beta_s\big(1-c_A\beta_s\big)\,\ud s\Big].
\end{equation}
\begin{remark}\label{rem:barphi}
The notation in \eqref{eq:orig_value} is mildly redundant. In particular, by definition of the class $\bar\cB$ the dynamics $\bar W^{\beta}$ is absorbed at zero and therefore 
\begin{equation*}
\int_0^{(T-t)\wedge\bar \tau_\beta}\beta_s\big(1-c_A\beta_s\big)\,\ud s=\int_0^{T-t}\beta_s\big(1-c_A\beta_s\big)\,\ud s.
\end{equation*}
Then, if we keep the notation with the stopping time $\bar\tau_\beta$ in \eqref{eq:orig_value}, we can also remove the constraint $\beta_s=0$ for $s\in[\![\bar \tau_\beta,\infty)\!)$ from the definition of the class $\bar \cB$.
\end{remark}
\begin{remark}\label{rem:timehomo}
It is useful in some parts of our analysis to keep track of the maturity of the contract and denote the function in \eqref{eq:orig_value} as $\bar\varphi(t,x;T)$. Because the Brownian motion is time-homogeneous and the set of admissible controls is time-invariant, it is easy to deduce from \eqref{eq:orig_value} that $\bar\varphi(t,x;T)=\bar\varphi(0,x;T-t)$. This will be used in the proof of Theorem \ref{thm:GH}.
\end{remark}

One of our key results is a characterisation of the Principal's value function and of the Agent's optimal effort. These results are stated in the next theorem, whose proof is given in Section \ref{sec:proofthm} after several steps are performed in the next two sections. Later, in Section \ref{sec:econ} we analyse in further detail the structure of the Agent's optimal effort, in order to gain deeper economic insight. 
\begin{theorem}\label{thm:main}
The function $\bar{\varphi}$ belongs to $C^{\infty}((0,T)\times (0,\infty))\cap C([0,T]\times[0,\infty))$. It is concave and decreasing in $t$, with $\partial_t\bar\varphi$ taking values in $[-1/(4c_A),0]$, it is concave and increasing in $x$ and it is the unique bounded classical solution of:
\begin{equation*}
\begin{cases}\displaystyle\partial_{t}\psi+\sup_{\beta\in\R}\Big[\tfrac{1}{2}\sigma^2\beta^2\partial_{xx}\psi+c_A\beta^2\partial_{x}\psi+(1-c_A\beta)\beta\Big]=0,& \text{on }[0,T)\times(0,\infty),\\
\psi(t,0)=0,& \forall t\in[0,T],\\
\psi(T,x)=0,& \forall x\in[0,\infty).
\end{cases}
\end{equation*}

The dynamics $\bar W^*=\bar W^{*;t,x}$ of the optimal contract (starting at $(t,x)$) is the unique $\bar \F$-adapted solution of 
\begin{equation}\label{eq:W*tx}
\bar W_s^{*}=x+4c_A\int_0^s\mathds{1}_{\{u<\bar \tau_*\}}\big(\partial_{t}\bar{\varphi}(t+u,\bar W_u^{*})\big)^2 \,\ud u-2\int_0^s\mathds{1}_{\{u<\bar \tau_*\}}\partial_{t}\bar{\varphi}(t+u,\bar W_u^{*})\sigma \,\ud B^*_u,
\end{equation}
for $s\in[0,T-t]$, where\footnote{Notice that indeed $\bar \tau_*=\bar\tau^{t,x}_*$ depends on $(t,x)$.} 
$\bar\tau_*\coloneqq\bar \tau_{\beta^*}=\inf\big\{s\in[0,\infty):\bar W^{*}_s\le 0\big\}$, with $\inf\varnothing=+\infty$,
and $B^*_t=B^{\beta^*}_t$ is a Brownian motion under $\P_*=\P_{\beta^*}$, given by 
$B^*_t=\sigma^{-1}(Y_t-Y_0-\int_0^t\beta^*_s\ud s)$. 

Finally, the Agent's optimal effort reads as $\beta^*_{s}=-2\mathds{1}_{\{s<\bar \tau_*\}}\partial_t\bar{\varphi}(t+s,\bar W^*_s)$, hence $\beta^*_s\in [0,1/(2c_A)]$, for $s\in[0,T-t]$.
\end{theorem}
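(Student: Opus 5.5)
The plan is to avoid attacking the HJB for $\bar\varphi$ directly and to go through a family of auxiliary, time-changed problems in which the Brownian dynamics is uncontrolled, as announced in the Introduction.

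\textbf{Heuristic.} Maximising the Hamiltonian over $\beta$ gives $\beta^*=-1/(2\Delta)$, with $\Delta=\tfrac12\sigma^2\partial_{xx}\psi+c_A\partial_x\psi-c_A$. The HJB then becomes $\partial_t\psi=1/(4\Delta)$, so $\beta^*=-2\partial_t\psi$; this is exactly \eqref{eq:nonlinPDE}.

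\textbf{Auxiliary problem.} Since $\ud\langle\bar W^\beta\rangle=\sigma^2\beta^2\ud s$, I time-change by $A_s=\int_0^s\beta_u^2\ud u$. Then $\bar W^\beta$ becomes $x+c_A A+\sigma\widetilde B_A$, a drifted Brownian motion. The control reduces to choosing the clock rate subject to the budget $\int_0^{T-t}\ud s$, i.e.\ $\int \beta^{-2}\ud A = T-t$, and the running reward becomes $(\beta^{-1}-c_A)\ud A$.

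I would parametrise by a bound $\beta\le n$ (or $\beta\in[1/n,n]$) to obtain a family $\varphi_n$.

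\textbf{A priori estimates.} For each $\varphi_n$ I prove by direct coupling the following:
\begin{itemize}
\item boundedness, with $0\le\varphi_n\le (T-t)/(4c_A)$, since the integrand satisfies $\beta(1-c_A\beta)\le 1/(4c_A)$;
\item monotonicity in $t$ and $x$;
\item concavity in $x$, by mixing controls, and concavity in $t$, by rescaling time;
\item Lipschitz bounds, with $\partial_t\varphi_n\in[-1/(4c_A),0]$ obtained by comparing horizons $T-t$ and $T-t-h$;
\item continuity up to $x=0$ and $t=T$, uniformly in $n$, via barrier estimates for the exit time of drifted BM.
\end{itemize}
Then, using a dynamic programming principle in the time-changed form, I bound $\partial_{xx}\varphi_n$ in $L^p_{loc}$. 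This rests on the identity $\tfrac12\sigma^2\partial_{xx}\varphi+c_A\partial_x\varphi-c_A=1/(4\partial_t\varphi)$: a two-sided bound on $\partial_t\varphi$ away from $0$ on compacts gives a bound on the spatial operator. This yields $W^{1,2;p}$ regularity and, passing to the limit in $n$, a strong (a.e.) solution of \eqref{eq:nonlinPDE}.

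\textbf{Main obstacle: the lift to $C^\infty$.} I expect the hard part to be establishing a strictly negative upper bound $\partial_t\varphi\le -c(K)<0$ on compacts $K$, which is what makes the nonlinearity non-singular. I would try first to get it probabilistically, from the strict gain obtained by a short extra horizon when $x>0$.

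Given this bound, write $g=-1/(4\partial_t\varphi)$. Then for each fixed $t$, $\varphi$ solves the linear ODE $\tfrac12\sigma^2\partial_{xx}\varphi+c_A\partial_x\varphi=c_A+g$ in $x$. Differentiating the equation in $t$, $v=\partial_t\varphi$ satisfies
\[
\partial_t v=\frac{4v^2}{1}\Big(\tfrac12\sigma^2\partial_{xx}v+c_A\partial_x v\Big),
\]
which, thanks to the bounds $v\in[-1/(4c_A),-c]$, is a uniformly parabolic quasilinear equation (note that $t$ runs backward). Interior Schauder theory and bootstrapping then give $v\in C^\infty$, and hence $\varphi\in C^\infty$.

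\textbf{Verification and uniqueness.} Uniqueness among bounded classical solutions follows from a verification argument with Itô's formula applied up to $\bar\tau_\beta\wedge(T-t)$, using localisation and boundedness to handle the boundaries. This gives $\psi\ge \cJ(\beta)$ for every $\beta\in\bar\cB$, with equality for the feedback control.

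For $\beta^*=-2\partial_t\bar\varphi\in[0,1/(2c_A)]$, the coefficients of \eqref{eq:W*tx} are bounded and locally Lipschitz in the interior, so there is a unique strong solution up to $\bar\tau_*$ that is absorbed at $0$. Since $\beta^*$ is bounded, $\beta^*\in\bar\cB$ and {\bf (H1)} holds. This identifies $\bar\varphi=\varphi$ and gives all the stated properties.
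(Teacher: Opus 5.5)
Your overall architecture is the paper's: time-change to an uncontrolled drifted Brownian motion, probabilistic estimates, a strong solution of the HJB, a lift to $C^\infty$, then verification. You also correctly single out the truncation-independent bound $\partial_t\varphi\le -c<0$ and the right probabilistic way to get it (this is Proposition~\ref{prop:d_time}). There are, however, genuine gaps in the two regularity steps that carry the proof.

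\textbf{The second-order estimate is circular.} Your $L^p_{\ell oc}$ bound on $\partial_{xx}\varphi_n$ ``rests on the identity'' $\tfrac12\sigma^2\partial_{xx}\varphi+c_A\partial_x\varphi-c_A=1/(4\partial_t\varphi)$. That identity is the HJB equation itself. In the paper it is obtained, via It\^o--Krylov and Krylov's Cor.~2.4.8 applied to the time-changed DPP, only after $\varphi\in W^{1,2;\infty}_{\ell oc}$ has been proved, so it cannot be the source of that regularity. For your truncated $\varphi_n$ it does not even hold wherever the floor $\beta\ge 1/n$ binds. The missing idea is Proposition~\ref{prop:twice_diff}: run the DPP up to the exit time $\nu_I$ of the uncontrolled $X^x$ from $I=(a,b)$. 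With $\alpha\equiv c_A$ and $\E[B_{\nu_I}]=0$, one gets that $\varphi(t,\cdot)-x/4$ is concave with respect to the scale function of $X$. The cap $\alpha\le 1/(2K_*(a))$ on $I$ gives that $\varphi(t,\cdot)+x/(2c_AK_*(a))$ is convex with respect to it. Hence $\varphi(t,\cdot)$ is semiconcave and semiconvex on $I$ with constants independent of $t$, and $\partial_{xx}\varphi\in L^\infty_{\ell oc}$ follows. The semiconvexity constant (and the local Lipschitz constant in $x$) is governed by the cap on $\alpha=1/\beta$. That is why the paper imposes the state-dependent cap $1/(2K_*(x))$, shown not to bind thanks to Proposition~\ref{prop:d_time}, and never passes to a limit. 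With your constant truncation these constants grow with $n$. Moreover, even uniform $W^{1,2;\infty}$ bounds would give only weak-$*$ convergence of $\partial_t\varphi_n$, which is not enough to pass to the limit in the nonlinear term $1/\partial_t\varphi_n$.

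\textbf{The lift to $C^\infty$ has no starting point.} At the $W^{1,2;\infty}_{\ell oc}$ stage, $v=\partial_t\varphi$ is only bounded. So you cannot differentiate the equation in $t$, and the coefficient $4v^2$ in $\partial_tv+4v^2(\tfrac12\sigma^2\partial_{xx}v+c_A\partial_xv)=0$ (note the sign) is merely measurable; Schauder theory does not apply. The paper instead uses the difference quotients $v^h=(\varphi(t+h,\cdot)-\varphi(t,\cdot))/h$. These solve a linear equation with bounded measurable coefficients $a^h=2\sigma^2\partial_t\varphi(t+h,\cdot)\partial_t\varphi(t,\cdot)$ and $b^h=4c_A\partial_t\varphi(t+h,\cdot)\partial_t\varphi(t,\cdot)$, uniformly parabolic on compacts by the strict negativity of $\partial_t\varphi$. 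The Krylov--Safonov estimate (Lemma~\ref{lem:krylov}) gives H\"older bounds uniform in $h$, hence $\partial_t\varphi\in C^\delta_{\ell oc}$. Only then can Schauder estimates be bootstrapped.

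\textbf{Two stated concavity properties are not covered.}
\begin{itemize}
\item Concavity in $x$ does not follow by mixing controls. With $\beta=\tfrac12(\beta_1+\beta_2)$, the convex drift $c_A\beta^2$ puts the state below the average of the two states, so it may be absorbed earlier. The paper reads concavity off the equation at the end: $\tfrac{\sigma^2}{2}\partial_{xx}\bar\varphi=c_A+\tfrac{1}{4\partial_t\bar\varphi}-c_A\partial_x\bar\varphi\le 0$, using monotonicity in $x$ and $-1/(4\partial_t\bar\varphi)\ge c_A$.
\item Concavity in $t$ does not follow by rescaling time. Brownian scaling maps horizon $\theta=T-t$ to a fixed horizon $\theta_1$ with running reward $\sqrt{\theta/\theta_1}\,\beta-c_A\beta^2$. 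This only shows that the value is a convex function of $\sqrt{\theta}$. The paper needs a midpoint-concavity argument that averages the clocks of two controls in the time-changed problem and bounds the excess by the deterministic problem $\phi$.
\end{itemize}
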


Recalling that the Principal only observes the output process $Y$, we notice that indeed the dynamics of the optimal contract is neatly expressed in terms of such output process:
\begin{equation*}
\bar W_s^{*}=x+4(c_A-1)\int_0^s\mathds{1}_{\{u<\bar \tau^{t,x}_*\}}\big(\partial_{t}\bar{\varphi}(t+u,\bar W_u^{*})\big)^2 \,\ud u-2\int_0^s\mathds{1}_{\{u<\bar \tau^{t,x}_*\}}\partial_{t}\bar{\varphi}(t+u,\bar W_u^{*}) \,\ud Y_u.
\end{equation*}

\subsection{Heuristics of a time-change}

In this section we illustrate ideas that lead us to consider an auxiliary stochastic control problem. The latter turns out to be amenable to solution and is equivalent to the original problem. The first observation is that the only relevant dynamics for the study of \eqref{eq:orig_value} is the dynamics of $\bar W^\beta$. Then we notice that at least formally we can perform a time change using $T_\beta(t)=\int_0^t\beta^2_s\ud s$ to write
\begin{equation*}
\bar W^\beta_t=x+c_A T_\beta(t)+\sigma \widetilde B_{T_\beta(t)}\eqqcolon X_{T_{\beta}(t)},
\end{equation*} 
where $\widetilde B$ is a new Brownian motion and $X_s=x+c_A s+\sigma \widetilde B_s$. Assuming that the inverse $S_\beta(t)$ of $T_\beta(t)$ is well-defined and changing variable of integration in the cost function we obtain
\begin{equation*}
\begin{aligned}
\E_\beta\Big[\int_0^{T\wedge \bar \tau_\beta}\!\!\beta_s\big(1\!-\!c_A \beta_s\big)\ud s\Big]=\E_\beta\Big[\int_0^{T\wedge \bar \tau_\beta}\!\!\Big(\frac{1}{\beta_s}\!-\!c_A \Big)\ud T_\beta(s)\Big]=\E_\beta\Big[\int_0^{T_\beta(T\wedge \bar \tau_\beta)}\!\!\Big(\alpha_u\!-\!c_A \Big)\ud u\Big],
\end{aligned}
\end{equation*}
with $\alpha_u=1/\beta_{S_\beta(u)}$. The advantage of this formulation is that the dynamics of $X$ is uncontrolled and extremely simple. The time-horizon rewrites as $T_\beta(T\wedge \bar \tau_\beta)=T_\beta(T)\wedge T(\bar \tau_\beta)$, where $T(\bar \tau_\beta)=\inf\{t\ge 0:X_t\le 0\}$ and $T_\beta(T)=\int_0^T\alpha^2_s\ud s\eqqcolon Z^\alpha_T$ (not to be confused with the process $\bar Z$ in \eqref{BSDE:agent}). Thus we have a new controlled process $Z^\alpha$ which effectively determines the length of the time horizon in the optimisation via the choice of the control process $\alpha_s$. Intuitively one should not choose $\alpha_s<c_A$ because that generates a net cost. Thus we expect $\alpha_s\ge c_A$. Likewise it makes sense that when $X$ is large, it should be suboptimal to use a large amount of control which would rapidly bring about the maturity. In particular, it will be shown that if we drop the stopping time $T(\bar \tau_\beta)$, then the optimal control is constant and equal to $2c_A$. On the contrary, when $X$ is close to zero, there is an incentive to use a large amount of control because shortening the time-horizon becomes irrelevant in the face of the high probability of $X$ hitting zero. This suggests that the class of admissible controls can be restricted to those taking values in an interval $\alpha_s\in [c_A, f(X_s)]$ where we expect $f(\infty)=2 c_A$ and we allow $f(0)=+\infty$. Choosing appropriately the function $f$ we expect to incur no change in the value of the optimisation. 

It turns out that all the heuristics above can be made rigorous and indeed the time-changed problem is amenable to an extremely detailed probabilistic analysis. On the strength of these considerations, in the next section we introduce a family of auxiliary control problems.


\section{Auxiliary problem}\label{sec:aux}
In this section, we consider a family of auxiliary problems which help us to solve the original one. These problems are cast in {\em strong} formulation in the sense that on a given probability space $(\Omega,\cF, \P)$, we consider a {\em fixed} Brownian motion $B=(B_t)_{t\in[0,\infty)}$ and the filtration $\F\coloneqq(\cF_t)_{t\in[0,T]}$ generated by the Brownian increments. We could in principle choose $(B,\P)=(\sigma^{-1}(Y-Y_0),\P_0)$ but this is not necessary. The filtration is right-continuous and augmented with the $\P$-null sets.

We introduce the class of processes
\begin{equation*}
\cA(\F)=\big\{\alpha\,|\,(\alpha_s)_{s\in[0,\infty)}\text{ is $\F$-progr.\ meas.\ and $\alpha_s\ge c_A$ for all $s\ge0$}\big\},
\end{equation*}
with $c_A>0$ as in \eqref{eq:SDEYW}.
Given $\alpha\in\cA(\F)$ we introduce processes $Z$ and $X$, defined as
\begin{equation*}
Z_s^{t;\alpha}=t+\int_0^s\! \alpha_v^2\,\ud v\quad\text{and}\quad X_s^x=x+c_A s+ \sigma B_s,\qquad s\ge 0,
\end{equation*}
with starting point $(t,x)\in[0,T]\times[0,\infty)$. We also define the stopping times 
\begin{equation}\label{eq:sigmat_rhox}
\sigma_{t}^\alpha\coloneqq\inf\big\{s\ge 0: Z_s^{t;\alpha} = T\big\}\quad\text{and}\quad\rho_x\coloneqq\inf\big\{s\ge0: X_s^x= 0\big\},\quad \inf\varnothing=\infty.
\end{equation}
Let $K:\R\to [0,\frac{1}{4c_A}]$ be continuous, non-decreasing and such that $K(x)>0$ for $x>0$ and $K(x)=0$ for $x\le 0$. For any such function $K$, and adopting the convention $1/0=+\infty$, we define the class of admissible controls $\cA^K_x(\F)\subset\cA(\F)$ as
\begin{equation}\label{eq:AK}
\cA^{K}_x(\F)=\Big\{\alpha\in\cA(\F)\,\Big|\, c_A\le \alpha_s\le \tfrac{1}{2}[K(X^x_s)]^{-1}\quad\text{for all $s\ge0$}\Big\}.
\end{equation}
Notice that for all values of $x\in\R$ the interval $[c_A,1/(2K(x))]\supseteq[c_A,2 c_A]$ and therefore the class $\cA^K_x(\F)$ is nonempty.

Given a function $K(x)$ as above, the objective function of the auxiliary problem reads as
\begin{equation}\label{eq:vphiK}
\varphi^K(t,x)\coloneqq\sup_{\alpha\in\cA^K_x(\F)}\E\Big[\int_0^{\sigma_t^\alpha\wedge \rho_x}\!(\alpha_s-c_A)\ud s\Big], \quad (t,x)\in[0,T]\times[0,\infty).
\end{equation}
For now we allow $K$ to be generic, but after Proposition \ref{prop:d_time} we are going to fix one specific $K(x)=K_*(x)$ and denote the associated value function $\varphi^{K_*}$ simply by $\varphi^*$. Eventually we show that $\varphi^{K_*}$ is equal to the value function $\bar \varphi$ from \eqref{eq:orig_value}. Thus, every result stated throughout the rest of the paper for a generic $\varphi^K$ also holds for the Principal's original value function $\bar \varphi$.

Because we are working with the Brownian filtration $\F$, the Dynamic Programming Principle (DPP) holds for the auxiliary problem (see, e.g., \cite{bouchard2012dynamic} or \cite{bouchard2011weak}) and it will be used throughout the paper. That is, we have
\begin{equation}\label{eq:DPP}
\varphi^K(t,x)=\sup_{\alpha\in\cA^K_x(\F)}\E\Big[\int_0^{\tau\wedge \sigma_t^{\alpha}\wedge\rho_x}(\alpha_u-c_A)\ud u+\varphi^K\big(Z^{t;\alpha}_{\tau\wedge \sigma_t^{\alpha}\wedge\rho_x},X_{\tau\wedge \sigma_t^{\alpha}\wedge\rho_x}^x\big)\Big],
\end{equation}
for any $\F$-stopping time $\tau$. The emphasis on the filtration in the notation $\cA^K_x(\F)$ will only be needed later in the proof of Theorem \ref{thm:DPP_time}. For the time being, we use a leaner notation $\cA^K_x=\cA^K_x(\F)$. We notice that a proof of \eqref{eq:DPP} requires continuity of $\varphi^K$, which we obtain in Proposition \ref{cor:contphi}.

Set $\cK'(x)\coloneqq[c_A,1/(2K(x))]$ for notational convenience and let
\begin{equation}\label{eq:hamil}
\cM_K(x,p)\coloneqq\sup_{\alpha\in\cK'(x)}\big(\alpha^2p+\alpha\big),
\end{equation}
be the Hamiltonian for the problem above. The HJB equation that we expect $\varphi^K$ should solve can be stated as 
\begin{equation*}
\tfrac12\sigma^2\partial_{xx}\psi(t,x)+c_A\partial_x \psi(t,x)+\cM_K\big(x,\partial_t\psi(t,x)\big)=c_A,
\end{equation*}
with $\psi(t,0)=0$ for $t\in[0,T)$ and $\psi(T,x)=0$ for $x\in[0,\infty)$.
The analysis in this section prepares the ground to show that $\varphi^K$ is classical solution of the above equation.

In order to prove some bounds on $\varphi^K$ we need to introduce also a deterministic control problem.
Let $\cA_d$ be the class of measurable functions $[0,\infty)\ni t\mapsto a(t)$ bounded from below by $c_A$. Consider the mapping $[0,\infty)\times[0,T]\times\cA_d\ni(s,t,a)\mapsto \cZ(s,t,a)=t+\int_0^s a^2(u)\ud u$. 
Let us denote
\begin{equation}\label{eq:phi(t)}
\phi(t)\coloneqq\sup_{a\in\cA_d}\int_0^{\eta(t,a)}\!\big(a(s)-c_A\big)\ud s,\qquad t\in[0,T],
\end{equation}
with $\eta(t,a)=\inf\{s\ge 0\,:\,\cZ(s,t,a)=T\}$. It is not hard to show that 
$\phi(t)=\frac{T-t}{4c_A}$ by solving the associated HJB:
\begin{equation*}
\sup_{\alpha\in[c_A,\infty)}\big(\alpha^2\dot{\phi}(t)+\alpha-c_A\big)=0,\quad\text{for $t\in[0,T)$},
\end{equation*}
with terminal condition $\phi(T)=0$.

\subsection{Lipschitz bounds and concavity in time}\label{sec:lipschitz}

We begin our analysis by proving boundedness and continuity of the function $\varphi^K$. After that we are able to use the DPP \eqref{eq:DPP} to prove Lipschitz continuity and concavity in time, along with its time-monotonicity. 

\begin{proposition}\label{cor:contphi}
The function $\varphi^K$ is continuous on $[0,T]\times[0,\infty)$ and bounded with
\begin{equation*}
0\le \varphi^K(t,x)\le \phi(t)=\frac{T-t}{4c_A},\quad (t,x)\in[0,T]\times[0,\infty).
\end{equation*}
\end{proposition}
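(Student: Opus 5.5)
The plan is to get the bounds first, since they are pathwise and elementary, and then prove continuity in $t$ and in $x$ separately with explicit moduli.

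\emph{Bounds.} For the lower bound, use the admissible constant control $\alpha\equiv c_A$. The integrand $\alpha_s-c_A$ is then zero, and more generally it is non-negative for every $\alpha\in\cA^K_x$. Hence $\varphi^K\ge0$.

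For the upper bound, fix $\alpha\in\cA^K_x$ and $\omega$. The path $s\mapsto\alpha_s(\omega)$ is a measurable function bounded below by $c_A$, so it belongs to $\cA_d$, and $\sigma^\alpha_t(\omega)=\eta(t,\alpha(\omega))$. Since $\alpha_s-c_A\ge0$, we have pathwise
\[
\int_0^{\sigma_t^\alpha\wedge\rho_x}(\alpha_s-c_A)\,\ud s\le\int_0^{\eta(t,\alpha(\omega))}(\alpha_s-c_A)\,\ud s\le\phi(t).
\]
Taking expectations and the supremum gives $\varphi^K\le\phi(t)=(T-t)/(4c_A)$. To make this self-contained I would verify $\phi(t)=(T-t)/(4c_A)$ directly via the substitution $u=\cZ(s,t,a)$:
\[
\int_0^{\eta}(a-c_A)\,\ud s=\int_t^T\frac{a-c_A}{a^2}\,\ud u\le\frac{T-t}{4c_A},
\]
because $\max_{a\ge c_A}(a-c_A)/a^2=1/(4c_A)$, attained at $a=2c_A$. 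This gives boundedness, and also continuity at $t=T$, uniformly in $x$.

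\emph{Continuity in $t$.} Take $t<t'$ and any $\alpha$. Since $Z^{t;\alpha}$ and $Z^{t';\alpha}$ differ by the constant $t'-t$, we have $\sigma^\alpha_{t'}\le\sigma^\alpha_t$, and over $[\sigma^\alpha_{t'},\sigma^\alpha_t]$ the clock $Z$ increases by exactly $t'-t$. Using the same change of variables on that interval,
\[
0\le\int_{\sigma_{t'}^\alpha\wedge\rho_x}^{\sigma_t^\alpha\wedge\rho_x}(\alpha_s-c_A)\,\ud s\le\frac{t'-t}{4c_A}.
\]
The admissible class $\cA^K_x$ does not depend on $t$, so $|\varphi^K(t,x)-\varphi^K(t',x)|\le|t-t'|/(4c_A)$, uniformly in $x$.

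\emph{Continuity in $x$.} This is the main obstacle, because the admissible class $\cA^K_x$ depends on $x$ through the constraint $\alpha_s\le 1/(2K(X^x_s))$. Take $x<x'$ and set $\delta=x'-x$. Since $X^{x'}=X^x+\delta$ and $K$ is non-decreasing, we have $K(X^{x'})\ge K(X^x)$, so $\cA^K_{x'}\subseteq\cA^K_x$; also $\rho_x\le\rho_{x'}$.

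For the direction $\varphi^K(t,x')-\varphi^K(t,x)$, take $\alpha\in\cA^K_{x'}\subseteq\cA^K_x$. The payoffs differ only on $[\rho_x,\rho_{x'}]$, and that piece is bounded by $\phi$ applied at the random time $Z^{t;\alpha}_{\rho_x}$, hence by $\frac{1}{4c_A}\E[(T-Z^{t;\alpha}_{\rho_x})^+]$. On the other hand, $Z^{t;\alpha}_{\rho_x}\ge t+c_A^2\rho_x$, so this difference is at most
\[
\frac{1}{4c_A}\E\big[(T-t)\wedge c_A^2(\rho_{x'}-\rho_x)\big],
\]
which tends to $0$ as $\delta\to0$. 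Indeed, by the strong Markov property, $\rho_{x'}-\rho_x$ is distributed as the hitting time of $0$ from $\delta$ for a drifted Brownian motion, and that time tends to $0$ in probability.

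For the reverse direction, start from $\alpha\in\cA^K_x$, which need not be admissible at $x'$. I would truncate it to $\alpha'_s=\alpha_s\wedge\frac{1}{2K(X^{x'}_s)}$, which belongs to $\cA^K_{x'}$. The truncation only affects times where $K(X^{x'}_s)>K(X^x_s)$, that is near zero. The loss can be controlled using continuity of $K$, the bound $(a-c_A)/a^2\le1/(4c_A)$ in time-changed form, and the fact that $(\alpha-c_A)$ integrated against the clock is bounded. This is the step requiring most care; a cruder fallback is to use local uniform continuity of $K$ on compacts away from zero together with the smallness of the time spent near $0$ before $\rho_x$.

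Finally, joint continuity follows by combining the uniform Lipschitz bound in $t$ with continuity in $x$ for each fixed $t$.
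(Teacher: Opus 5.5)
Your bounds and time estimate are correct. In fact, using $(\alpha_s-c_A)\,\ud s\le\frac{1}{4c_A}\alpha_s^2\,\ud s$ on $[\sigma^\alpha_{t'},\sigma^\alpha_t]$ gives the Lipschitz bound $|t-t'|/(4c_A)$ uniformly in $x$ directly. This is sharper than the paper's $\frac12$-H\"older estimate at this stage, and it needs no DPP, unlike the paper's proof of Proposition~\ref{prop:phi_bnd}-(i).

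The gaps are in the continuity in $x$. In the direction $\varphi^K(t,x')-\varphi^K(t,x)$, the bound $\frac{1}{4c_A}\E[(T-t)\wedge c_A^2(\rho_{x'}-\rho_x)]$ does not follow from what you cite. Combining $(T-Z^{t;\alpha}_{\rho_x})^+$ with $Z^{t;\alpha}_{\rho_x}\ge t+c_A^2\rho_x$ only gives $(T-t-c_A^2\rho_x)^+$, which does not vanish as $\delta\to0$. Bounding the clock increment on $[\rho_x,\rho_{x'}]$ by $c_A^2(\rho_{x'}-\rho_x)$ would require $\alpha\le c_A$, which is the wrong direction. Already for $\alpha\equiv 2c_A$, the piece equals $c_A(\rho_{x'}-\rho_x)$ on $\{\rho_{x'}\le (T-t)/(4c_A^2)\}$, so the claimed pathwise bound fails. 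The fix is Cauchy--Schwarz, as the paper does at $x=0$. The piece is at most $\sqrt{T-t}\,\big((\rho_{x'}-\rho_x)\wedge\frac{T-t}{c_A^2}\big)^{1/2}$, and by the strong Markov property its expectation is at most $\sqrt{T-t}\,\E\big[(\rho_\delta\wedge\frac{T-t}{c_A^2})^{1/2}\big]\to0$, uniformly in $\alpha$ and $x$.

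The reverse direction is the real gap. First, the truncation $\alpha\wedge\frac{1}{2K(X^{x'})}$ is not confined to times near zero: for strictly increasing $K$, such as $K_*$, it can act at every level of $X$. Second, and more important, you need a loss bound uniform over $\alpha\in\cA^K_x$. For upper semicontinuity at $y$ from the left, you take $x=z\uparrow y=x'$, and the control being truncated is an $\eps$-optimal control for the moving point $z$. Dominated convergence for a fixed $\alpha$ is therefore not enough.

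Your fallback does supply this once written out. Since $\sigma^{\alpha'}_t\ge\sigma^\alpha_t$ and $\rho_{x'}\ge\rho_x$, the loss is at most $\E\int_0^{\sigma^\alpha_t\wedge\rho_x}\min\{\alpha_s,g_\delta(X^x_s)\}\,\ud s$, where $g_\delta(y)=\frac{1}{2K(y)}-\frac{1}{2K(y+\delta)}$.
\begin{itemize}
\item On $\{X^x_s\ge a\}$: $1/K$ is continuous, monotone and bounded on $[a,\infty)$, hence uniformly continuous, so $\sup_{y\ge a}g_\delta(y)\to0$. Combine this with $\sigma^\alpha_t\le (T-t)/c_A^2$.
\item On $\{0<X^x_s<a\}$: use Cauchy--Schwarz together with $\int_0^{(T-t)/c_A^2}\P(0<X^x_s<a)\,\ud s\le Ca$.
\end{itemize}

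With these two repairs your route is a valid alternative to the paper's. The paper obtains lower semicontinuity by moving the truncation into the payoff $\cP_{t,x}(\alpha)$ over the unconstrained class. It obtains upper semicontinuity by a strong-Markov construction: use $c_A$ until $X^x$ hits $x_n$, then run the shifted $\eps$-optimal control for $(t_n,x_n)$. That construction does not use monotonicity of $K$. Your argument exploits monotonicity through the nesting $\cA^K_{x'}\subseteq\cA^K_x$, avoids representing controls as path functionals, and yields moduli of continuity uniform in $x$.
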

\begin{proof}
First we prove the bounds.
Fix $(t,x)\in[0,T]\times[0,\infty)$. Any $\alpha\in\cA^K_x$ satisfies $\alpha_s-c_A\ge0$ for all $s\ge0$ and so $\varphi^K\ge 0$ by the arbitrariness of $(t,x)$. Clearly,
\begin{equation*}
\begin{aligned}
\varphi^K(t,x)&\le \sup_{\alpha\in\cA^K_x}\E\Big[\int_0^{\sigma_t^\alpha}\!(\alpha_s-c_A)\ud s\Big]\\
&\le\sup_{\alpha\in\cA}\E\Big[\int_0^{\sigma_t^\alpha}\!(\alpha_s-c_A)\ud s\Big]\le \frac{1}{4c_A}\sup_{\alpha\in\cA}\E\Big[\int_0^{\sigma_t^\alpha}\!(\alpha_s)^2\ud s\Big]= \phi(t),
\end{aligned}
\end{equation*}
where the first inequality holds because $\rho_x\wedge\sigma^\alpha_t\le \sigma^\alpha_t$ and $\alpha_s-c_A\ge0$ for $s\ge0$ for $\alpha\in\cA^K_x$, and for the second inequality we use $\cA^K_x\subset\cA$. The third inequality holds because $1/(4c_A) \alpha^2-\alpha+c_A\ge 0$ for $\alpha\in\R$. The equality at the end holds by definition of $\sigma_t^\alpha$. 
Thus the upper bound on $\varphi^K$ holds. 

Let us now prove continuity. The starting observation is that 
\begin{equation*}
\varphi^K(t,x)=\sup_{\alpha\in\cA}\E\Big[\int_0^{\sigma_t^\alpha\wedge \rho_x}\!\Big(\min\big\{\alpha_s,\tfrac12\big[K(X^x_s)\big]^{-1}\big\}-c_A\Big)\ud s\Big]\eqqcolon \sup_{\alpha\in\cA}\cP_{t,x}(\alpha).
\end{equation*}
This is easily seen because $\cA_x^K\subset\cA$ but for any $\alpha\in\cA$ the process $\tilde \alpha_s\coloneqq\min\{\alpha_s,\frac12[K(X^x_s)]^{-1}\}$ belongs to $\cA_x^K$ and $\sigma^\alpha_t \le \sigma^{\tilde\alpha}_t$. Because $\tilde\alpha_s-c_A\ge 0$ for all $s\ge0$, it follows that
\begin{equation*}
\cP_{t,x}(\alpha)=\E\Big[\int_0^{\sigma^\alpha_t\wedge\rho_x}\!\big(\tilde\alpha_s-c_A\big)\ud s\Big]
\le\E\Big[\int_0^{\sigma^{\tilde\alpha}_t\wedge\rho_x}\!\big(\tilde\alpha_s-c_A\big)\ud s\Big]\le\varphi^K(t,x).
\end{equation*}
We also notice for the later use of dominated convergence that 
\begin{equation}\label{eq:asb}
\int_0^{\infty}\!\mathds{1}_{\{s<\sigma_t^\alpha\wedge \rho_x\}}\min\big\{\alpha_s,\tfrac12\big[K(X^x_s)\big]^{-1}\big\}\ud s\le \frac{1}{4c_A}\int_0^{\sigma_t^\alpha}\big(\alpha_s\big)^2\ud s\le \phi(t).
\end{equation}

Fix $(t,x)\in[0,T]\times[0,\infty)$ and let $(t_n,x_n)_{n\in\N}\subseteq [0,T]\times[0,\infty)$ be any sequence converging to $(t,x)$. 
It is not hard to see that for every $\alpha\in\cA$ 
\begin{equation*}
\begin{aligned}
\lim_{n\to\infty}\cP_{t_n,x_n}(\alpha)=\cP_{t,x}(\alpha),
\end{aligned}
\end{equation*}
thanks to a.s.\ convergence of $(\sigma_{t_n}^\alpha,\rho_{x_n},K(X^{x_n}_s))$ to $(\sigma_{t}^\alpha,\rho_{x},K(X^{x}_s))$ and the bound in \eqref{eq:asb}. Then $(t,x)\mapsto \cP_{t,x}(\alpha)$ is continuous and $\varphi^K$ is lower semi-continuous as supremum of continuous functions. 

Thanks to the explicit form of the stopping times $\sigma^\alpha_t$ we can prove stronger regularity in time for $\cP_{t,x}(\alpha)$. This will be needed later to show upper semi-continuity of $\varphi^K$. For any $s,t\in[0,T]$, $s<t$ and a fixed $\alpha\in\cA$ we have $Z^{s;\alpha}_u<Z^{t;\alpha}_u$ for all $u\in[0,\infty)$. Then
\begin{equation*}
\begin{aligned}
\sigma^\alpha_s&=\sigma^\alpha_t\!+\!\inf\Big\{u\ge 0: Z^{s;\alpha}_{\sigma^\alpha_t}\!+\!\int_0^u\!\big(\alpha_{\sigma^\alpha_t+r}\big)^2\ud r=T\Big\}=\sigma^\alpha_t\!+\!\inf\Big\{u\ge 0: \int_0^u\!\big(\alpha_{\sigma^\alpha_t+r}\big)^2\ud r=t-s\Big\},
\end{aligned}
\end{equation*}
where we used that $Z^{s;\alpha}_{\sigma^\alpha_t}=Z^{t;\alpha}_{\sigma^\alpha_t}-(t-s)=T-(t-s)$. Because $\alpha_u\ge c_A$ for all $u\in[0,\infty)$ we reach the uniform bound for all $\omega\in\Omega$
\begin{equation*}
0\le \sup_{\alpha\in\cA}\big(\sigma^\alpha_s(\omega)-\sigma^\alpha_t(\omega)\big)\le \frac{t-s}{c_A^2}.
\end{equation*}
Let us now fix $x\in[0,\infty)$, $s<t$, $s,t\in[0,T]$ and $\alpha\in\cA_x^K$. We have 
\begin{equation*}
\begin{aligned}
0&\le \int_0^{\rho_x\wedge\sigma^\alpha_s}\big(\alpha_r-c_A\big)\ud r-\int_0^{\rho_x\wedge\sigma^\alpha_t}\big(\alpha_r-c_A\big)\ud r\\
&=\int_{\rho_x\wedge\sigma^\alpha_t}^{\rho_x\wedge\sigma^\alpha_s}\alpha_r\ud r\le \sqrt{\sigma^\alpha_s-\sigma^\alpha_t}\Big(\int_{\rho_x\wedge\sigma^\alpha_t}^{\rho_x\wedge\sigma^\alpha_s}\big|\alpha_r\big|^2\ud r\Big)^{\frac12}\le \sqrt{T}\frac{\sqrt{t-s}}{c_A}.
\end{aligned}
\end{equation*}
That implies
\begin{equation}\label{eq:Phol}
0\le \cP_{s,x}(\alpha)-\cP_{t,x}(\alpha)\le \sqrt{T}\frac{\sqrt{t-s}}{c_A},
\end{equation}
which will be used later below. Notice that the above bound immediately implies that 
\begin{equation*}
\sup_{x\ge 0}\big|\varphi^K(t,x)-\varphi^K(s,x)\big|\le \sqrt{T}\frac{\sqrt{t-s}}{c_A}.
\end{equation*}

In order to prove upper semi-continuity let $\alpha^n\in\cA$ be $\eps$-optimal for the problem with value $\varphi^K(t_n,x_n)$. Notice that because $\alpha^n$ is $\F$-progressively measurable, then there is a measurable mapping $a_n:[0,\infty)\times C([0,\infty))\to \R$ such that 
$\alpha^n_s(\omega)=a_n\big(s,X^{x_n}_{\cdot\wedge s}(\omega)\big)$ (cf.\ \cite[Lem.\ 1.99 and 2.20]{fabbri2017stochastic}). Letting 
$\eta_n\coloneqq\inf\big\{s\ge 0: X^x_s=x_n\big\}$,
we define
\begin{equation*}
\nu^n_s(\omega)\coloneqq c_A\mathds{1}_{\{s\le \eta_n(\omega)\}}+ a_n\big(s-\eta_n,x_n+ \Delta X^{n}_{\cdot \wedge s}(\omega)\big)\mathds{1}_{\{s>\eta_n(\omega)\}}
\end{equation*}
with $\Delta X^n_u= c_A (u-\eta_n)+\sigma (B_{u}-B_{\eta_n})$ for $u\in[\![\eta_n,\infty)\!)$. By construction $\nu^n\in\cA$ and therefore $\varphi^K(t,x)\ge \cP_{t,x}(\nu^n)$. On the set $\{\sigma^{\nu^n}_t\wedge \rho_x >\eta_n\}$ we have 
\begin{equation*}
\begin{aligned}
\rho_x&=\eta_n+\inf\big\{u\ge 0: x_n+\Delta X^n_{\eta_n+u}=0\}\eqqcolon\eta_n+\vartheta_n,\\
\sigma^{\nu^n}_t&=\eta_n+\inf\Big\{u\ge 0: t+c^2_A\eta_{n}+\int_0^u\big|a_n\big(v,x_n+ \Delta X^{n}_{(\eta_n+\cdot) \wedge(\eta_n+v)}\big)\big|^2 \ud v=T \Big\}= \eta_n+\gamma_n(\eta_n),
\end{aligned}
\end{equation*}
where we define for $r\ge 0$
\begin{equation*}
\gamma_n(r)\coloneqq\inf\Big\{u\ge 0: t+c^2_A r+\int_0^u\big|a_n\big(v,x_n+ \Delta X^{n}_{(\eta_n+\cdot) \wedge(\eta_n+v)}\big)\big|^2 \ud v=T \Big\}.
\end{equation*}
Notice that the mapping $r\mapsto \gamma_n(r)$ is measurable and the tuple $((\nu^n_{\eta_n+v},\Delta X^n_{\eta_n+v})_{v\ge 0}, \vartheta_n,\gamma_n(r))$ is independent of $\cF_{\eta_n}$. Moreover, we have the following equivalence in law: 
\begin{equation*}
\mathsf{Law}\Big((\nu^n_{\eta_n+v},x_n+\Delta X^n_{\eta_n+v})_{v\ge 0},\vartheta_n,\gamma_n(r)\Big|\cF_{\eta_n}\Big)=\mathsf{Law}\Big((\alpha^n_v,X^{x_n}_v)_{v\ge 0},\rho_{x_n},\sigma^{\alpha^n}_{t+c^2_A r}\Big).
\end{equation*}

Equipped with these observations we obtain the following expression:
\begin{equation*}
\begin{aligned}
\cP_{t,x}(\nu^n)&=\E\Big[\mathds{1}_{\{\eta_n<\sigma^{\nu^n}_t\wedge \rho_x\}}\int_{\eta_n}^{\sigma^{\nu^n}_t\wedge \rho_x}\Big(\min\big\{\nu^n_s,\tfrac12/K(X^x_s)\big\}-c_A\Big)\ud s\Big]\\
&=\E\Big[\mathds{1}_{\{\eta_n<\sigma^{\nu^n}_t\wedge \rho_x\}}\int_{0}^{\gamma_n(\eta_n)\wedge \vartheta_n}\Big(\min\big\{\nu^n_{\eta_n+s},\tfrac12/K(x_n+\Delta X^n_{\eta_n+s})\big\}-c_A\Big)\ud s\Big].
\end{aligned}
\end{equation*}
Using tower property with $\cF_{\eta_n}$ and using independence (cf.\ \cite[Lem.\ 4.1]{baldi2017stochastic}) and the above equivalence in law we get
\begin{equation*}
\begin{aligned}
\cP_{t,x}(\nu^n)&=\E\Big[\mathds{1}_{\{\eta_n<\sigma^{\nu^n}_t\wedge \rho_x\}}\E\Big[\int_{0}^{\gamma_n(\eta_n)\wedge \vartheta_n}\Big(\min\big\{\nu^n_{\eta_n+s},\tfrac12/K(x_n+\Delta X^n_{\eta_n+s})\big\}-c_A\Big)\ud s\Big|\cF_{\eta_n}\Big]\Big]\\
&=\E\Big[\mathds{1}_{\{\eta_n<\sigma^{\nu^n}_t\wedge \rho_x\}}\E\Big[\int_{0}^{\sigma^{\alpha^n}_{t+c_A^2 r}\wedge \rho_{x_n}}\Big(\min\big\{\alpha^n_{s},\tfrac12/K(X^{x_n}_s)\big\}-c_A\Big)\ud s\Big]\Big|_{r=\eta_n}\Big]\\
&=\E\Big[\mathds{1}_{\{\eta_n<\sigma^{\nu^n}_t\wedge \rho_x\}}\cP_{t+c_A^2\eta_n,x_n}(\alpha^n)\Big].
\end{aligned}
\end{equation*}
Recalling the uniform bound in \eqref{eq:Phol} we deduce
\begin{equation*}
\begin{aligned}
\varphi^K(t,x)&\ge \E\Big[\mathds{1}_{\{\eta_n<\sigma^{\nu^n}_t\wedge \rho_x\}}\cP_{t_n,x_n}(\alpha^n)\Big]-\frac{\sqrt{T}}{c_A}\Big(\sqrt{|t-t_n|}+c_A\E\big[\mathds{1}_{\{\eta_n<\sigma^{\nu^n}_t\wedge \rho_x\}}(\eta_n)^{\frac12}\big]\Big)\\
&\ge\varphi^K(t_n,x_n) \P\big(\eta_n<\sigma^{\nu^n}_t\wedge \rho_x\big)-\eps -\frac{\sqrt{T}}{c_A}\Big(\sqrt{|t-t_n|}+c_A\E\big[(\eta_n\wedge T)^{\frac12}\big]\Big)
\end{aligned}
\end{equation*}
where for the second inequality we used that $\varphi^K(t_n,x_n)\le \cP_{t_n,x_n}(\alpha^n)+\eps$ by definition of $\alpha^n$. 
It follows that 
\begin{equation}\label{eq:limsup}
\begin{aligned}
\varphi^K(t_n,x_n)\!-\!\varphi^K(t,x)&\le \varphi^K(t_n,x_n) \P\big(\eta_n\ge \sigma^{\nu^n}_t\!\wedge\! \rho_x\big)\!+\!\eps\! +\!\frac{\sqrt{T}}{c_A}\Big(\sqrt{|t-t_n|}\!+\!c_A\E\big[(\eta_n\wedge T)^{\frac12}\big]\Big)\\
&\le \frac{T}{4 c_A} \P\big(\eta_n\ge \sigma^{\nu^n}_t\!\wedge\! \rho_x\big)\!+\!\eps \!+\!\frac{\sqrt{T}}{c_A}\Big(\sqrt{|t-t_n|}\!+\!c_A\E\big[(\eta_n\wedge T)^{\frac12}\big]\Big),
\end{aligned}
\end{equation}
where the final inequality uses $0\le \varphi^K(t_n,x_n)\le \phi(t_n)$. We decompose the probability as
\begin{equation*}
\begin{aligned}
\P\big(\eta_n\ge \sigma^{\nu^n}_t\wedge \rho_x\big)&=\P\big(\eta_n\ge \sigma^{\nu^n}_t,\sigma^{\nu^n}_t<\rho_x\big)+\P\big(\eta_n\ge \rho_x, \rho_x\le \sigma^{\nu^n}_t\big).
\end{aligned}
\end{equation*}
For the first term on the right-hand side above we have that 
\begin{equation*}
\{\eta_n\ge \sigma^{\nu^n}_t\}\subseteq\{\nu^n_s=c_A,\, s\in[0,\sigma^{\nu^n}_t]\}\subseteq\{\sigma^{\nu^n}_t=(T-t)/c^2_A\}.
\end{equation*}
That yields, for $t\in[0,T)$,
\begin{equation*}
\P\big(\eta_n\ge \sigma^{\nu^n}_t,\sigma^{\nu^n}_t<\rho_x\big)\le \P\big(\eta_n\ge (T-t)/c^2_A\big)\xrightarrow{n\to\infty} 0,
\end{equation*}
because $\eta_n\to 0$ in probability. For the other term we have that because $\P(\rho_x>0)=1$, then for any $\delta\in(0,1)$ there is $\lambda_\delta>0$ such that $\P(\rho_x>\lambda_\delta)\ge 1-\delta$. That implies, for $x>0$ 
\begin{equation*}
\begin{aligned}
\P\big(\eta_n\ge \rho_x, \rho_x\le \sigma^{\nu^n}_t\big)&\le \P\big(\eta_n\ge \rho_x\big)\\
&\le\P\big(\eta_n\ge \rho_x,\rho_x>\lambda_\delta\big)+\delta\le\P\big(\eta_n\ge \lambda_\delta\big)+\delta\xrightarrow{n\to\infty}\delta,
\end{aligned}
\end{equation*}
where the limit holds again by convergence in probability $\eta_n\to 0$. 

Plugging the bounds above back into \eqref{eq:limsup} and letting $n\to \infty$ it is not hard to verify that for $(t,x)\in[0,T)\times(0,\infty)$ we get
\begin{equation*}
\limsup_{n\to\infty}\varphi^K(t_n,x_n)\le \varphi^K(t,x)+\eps+\frac{T}{4c_A}\delta.
\end{equation*}
By arbitrariness of $\delta,\eps>0$ we deduce that $\varphi^K$ is upper semi-continuous on $[0,T)\times(0,\infty)$.

So far we have proven that $\varphi^K$ is continuous on $[0,T)\times(0,\infty)$ and it is at least $\frac12$-H\"older continuous in time, uniformly with respect to $x$, on $[0,T]\times[0,\infty)$. In order to prove that $\varphi^K\in C([0,T]\times[0,\infty))$ it only remains to show that $x\mapsto \varphi^K(t,x)$ is continuous at $x=0$ for any $t\in[0,T]$.
For any $(t,x)$, we have
\begin{equation*}
\begin{aligned}
0\le \varphi^{K}(t,x)&\le \sup_{\alpha\in\cA^K_x}\E\Big[\int_0^{\sigma_t^\alpha\wedge \rho_x}\! \alpha_s\ud s\Big]\\
&\le \sup_{\alpha\in\cA^K_x}\E\Big[\Big(\int_0^{\sigma_t^\alpha\wedge \rho_x}\! \alpha_s^2\ud s\Big)^{1/2}\big(\sigma_t^\alpha\wedge \rho_x\big)^{1/2}\Big]\\
&\le \sup_{\alpha\in\cA^K_x}\E\Big[\Big(\int_0^{\sigma_t^\alpha}\! \alpha_s^2\ud s\Big)^{1/2}\big( \rho_x\wedge\frac{T-t}{c_A^2}\big)^{1/2}\Big]=\E\Big[\big(T-t\big)^{1/2}\big( \rho_x\wedge\frac{T-t}{c_A^2}\big)^{1/2}\Big],
\end{aligned}
\end{equation*}
where we used Cauchy--Schwarz in the second inequality and the definition of $\sigma_t^\alpha$ (cf.\ \eqref{eq:sigmat_rhox}) in the equality. For the third inequality we used that $\alpha_s\ge c_A$ implies $\sigma^\alpha_t\le (T-t)/c^2_A$. Now, for any $t_0\in [0,T]$ and any sequence $(t_n,x_n)_{n\in\N}\subset[0,T]\times [0,\infty)$ converging to $(t_0,0)$ we have 
\begin{equation*}
0=\varphi^K(t_0,0)\le \varphi^K(t_n,x_n) \le \big(T-t_n\big)^{1/2}\E\Big[\Big( \rho_{x_n}\wedge\frac{T-t_n}{c_A^2}\Big)^{1/2}\Big]\to 0,\quad\text{as $n\to\infty$},
\end{equation*}
because $\rho_{x_n}\to 0$, $\P$-a.s. Therefore $\varphi^K(t_n,x_n)\to \varphi^K(t_0,0)$ as $n\to\infty$ as needed.
\end{proof}

The continuity of $\varphi^K$ allows us to use the DPP \eqref{eq:DPP} to improve the regularity to local Lipschitz continuity.

\begin{proposition}\label{prop:phi_bnd}
The following properties hold:
\begin{itemize}
\item[(i)] For each $x\in[0,\infty)$, the mapping $t\mapsto\varphi^K(t,x)$ is non-increasing with 
\begin{equation}\label{eq:lip_time}
-\frac{(t_2-t_1)}{4c_A}\le \varphi^K(t_2,x)-\varphi^K(t_1,x)\le 0, \quad\text{for $0\le t_1<t_2\le T$}.
\end{equation}
\item[(ii)] For each $t\in[0,T]$ and any fixed $\bar x>0$, 
\begin{equation*}
\sup_{x_1,x_2\in[\bar x,\infty)}\frac{|\varphi^{K}(t,x_2)-\varphi^{K}(t,x_1)|}{|x_2-x_1|}\le \frac{2/\sigma^2}{K(\bar x)}\big(T+2\sigma\sqrt T\big).
\end{equation*}
\end{itemize}
\end{proposition}

\begin{proof}
The proof is divided into two parts.

{\em Proof of (i)}. Monotonicity of the mapping $t\mapsto \varphi^K(t,x)$ is clear because $t\mapsto \sigma^\alpha_t$ is non-increasing and $\alpha_s\ge c_A$ for all $s\ge 0$, for any $\alpha\in\cA^K_x$. Next we prove the Lipschitz bound. Fix $t_1,t_2\in[0,T]$ with $t_2>t_1$ and $x\in[0,\infty)$. Let $\eps>0$ and take $\alpha\in\cA^K_x$ an $\eps$-optimal control for the function $\varphi^K(t_1,x)$. Since $\alpha$ is admissible for $\varphi^K(t_2,x)$, then
\begin{equation*}
\varphi^K(t_2,x)\ge \E\Big[\int_0^{\sigma^\alpha_{t_2}\wedge\rho_x}(\alpha_s-c_A)\ud s\Big].
\end{equation*}
Using $\eps$-optimality of $\alpha$ along with the DPP \eqref{eq:DPP} and $\sigma_{t_2}^{\alpha}\le \sigma_{t_1}^{\alpha}$, $\P$-a.s., we also get
\begin{equation*}
\varphi^K(t_1,x)\le \E\Big[\int_0^{\sigma^\alpha_{t_2}\wedge\rho_x}(\alpha_s-c_A)\ud s+\mathds{1}_{\{\sigma^\alpha_{t_2}<\rho_x\}}\varphi^K\big(Z_{\sigma^\alpha_{t_2}}^{t_1;\alpha},X^x_{\sigma^\alpha_{t_2}}\big)\Big]+\eps,
\end{equation*}
because on $\{\sigma^\alpha_{t_2}\ge \rho_x\}$ we have $\varphi^K(Z^{t_1;\alpha}_{\rho_x},X^x_{\rho_x})=\varphi^K(Z^{t_1;\alpha}_{\rho_x},0)=0$.
Subtracting the two expressions yields 
\begin{equation*}
\begin{aligned}
0&\ge \varphi^K(t_2,x)-\varphi^K(t_1,x)\\
&\ge-\E\Big[\mathds{1}_{\{\sigma^\alpha_{t_2}<\rho_x\}}\varphi^K\big(Z_{\sigma^\alpha_{t_2}}^{t_1;\alpha},X^x_{\sigma^\alpha_{t_2}}\big)\Big]-\eps=-\E\Big[\mathds{1}_{\{\sigma^\alpha_{t_2}<\rho_x\}}\varphi^K\big(T-(t_2-t_1),X^x_{\sigma^\alpha_{t_2}}\big)\Big]-\eps\\
&\ge -\frac{(t_2-t_1)}{4c_A}\P\big(\sigma^\alpha_{t_2}<\rho_x\big)-\eps\ge -\frac{(t_2-t_1)}{4c_A}-\eps,
\end{aligned}
\end{equation*}
where we used $Z_{\sigma^\alpha_{t_2}}^{t_1;\alpha}=T-(t_2-t_1)$ for the equality and the upper bound from Proposition \ref{cor:contphi} in the penultimate inequality. By the arbitrariness of $\eps$, we conclude the proof of $(i)$. 
\smallskip

{\em Proof of (ii)}. 
Take $0\le x_1<x_2<\infty$. Setting $\tau_1=\inf\{s\ge 0: X^{x_2}_s\le x_1\}$ and taking $\alpha_s=c_A$ in the DPP \eqref{eq:DPP} we obtain
\begin{equation*}
\varphi^K(t,x_2)\ge \E\Big[\mathds{1}_{\{\tau_1<\sigma^\alpha_t\}}\varphi^K(t+c^2_A\tau_1,x_1)\Big]=\E\Big[\mathds{1}_{\big\{\tau_1<\frac{T-t}{c^2_A}\big\}}\varphi^K(t+c^2_A\tau_1,x_1)\Big],
\end{equation*}
where the equality holds because $\sigma^\alpha_t=(T-t)/c^2_A$ for the constant control $\alpha_s=c_A$. Notice that we also used $\varphi^K (Z^{t;\alpha}_{\sigma^\alpha_t},X^{x_2}_{\sigma^\alpha_t})=\varphi^K(T,X^{x_2}_{\sigma^\alpha_t})=0$ 
on the event $\{\tau_1\ge \sigma^\alpha_t\}$. Then, subtracting $\varphi^K(t,x_1)$, we obtain a lower bound
\begin{equation*}
\begin{aligned}
\varphi^K(t,x_2)-\varphi^K(t,x_1)&\ge \E\Big[\mathds{1}_{\big\{\tau_1<\frac{T-t}{c^2_A}\big\}}\big(\varphi^K(t+c^2_A\tau_1,x_1)-\varphi^K(t,x_1)\big)\Big]-\varphi^K(t,x_1)\E\Big[\mathds{1}_{\big\{\tau_1\ge \frac{T-t}{c^2_A}\big\}}\Big]\\
&\ge -\tfrac{c_A}{4}\E\Big[\tau_1\mathds{1}_{\big\{\tau_1<\frac{T-t}{c^2_A}\big\}}\Big]-\varphi^K(t,x_1)\E\Big[\mathds{1}_{\big\{\tau_1\ge \frac{T-t}{c^2_A}\big\}}\Big]\\
&\ge -\tfrac{c_A}{4}\E\Big[\tau_1\mathds{1}_{\big\{\tau_1<\frac{T-t}{c^2_A}\big\}}\Big]-\tfrac{T-t}{4c_A}\E\Big[\mathds{1}_{\big\{\tau_1\ge \frac{T-t}{c^2_A}\big\}}\Big]\\
&= -\tfrac{c_A}{4}\E\big[\tau_1\wedge \big(\tfrac{T-t}{c^2_A}\big)\big]
\end{aligned}
\end{equation*}
where in the second inequality we use the Lipschitz bound in time for $\varphi^K$ from $(ii)$ and in the last inequality we use the upper bound on $\varphi^K$ from $(i)$. 

Next we obtain an upper bound for $\varphi^K(t,x_2)-\varphi^K(t,x_1)$.
Let $\alpha\in\cA^{K}_{x_2}$ be $\eps$-optimal for $\varphi^{K}(t,x_2)$ and recall $\tau_1$ as above. Using again the DPP \eqref{eq:DPP}, we have
\begin{equation*}
\begin{aligned}
\varphi^{K}(t,x_2)\!-\!\varphi^{K}(t,x_1)
&\le \E\Big[\int_0^{\sigma_t^{\alpha}\wedge\tau_1}\!\!\big(\alpha_s\!-\!c_A\big)\ud s\! +\!\varphi^{K}(Z_{\tau_1\wedge\sigma^\alpha_t}^{t;\alpha}, X^{x_2}_{\tau_1\wedge\sigma^\alpha_t})\Big]\!-\!\varphi^{K}(t,x_1)\!+\!\eps\\
&\le \E\Big[\int_0^{\sigma_t^{\alpha}\wedge\tau_1}\!\!\big(\alpha_s-c_A\big)\ud s\Big]\!+\!\eps,
\end{aligned}
\end{equation*}
where for the second inequality we used 
\begin{equation*}
\varphi^{K}(Z_{\tau_1\wedge\sigma^\alpha_t}^{t;\alpha}, X^{x_2}_{\tau_1\wedge\sigma^\alpha_t})=\mathds{1}_{\{\tau_1<\sigma^\alpha_t\}}\varphi^{K}(Z_{\tau_1\wedge\sigma^\alpha_t}^{t;\alpha}, x_1)\le \varphi^K(t,x_1),
\end{equation*}
because $\varphi^K$ is non-negative and decreasing in time. Moreover, $K(X^{x_2}_s)\ge K(x_1)$ for $s\in[\![0,\tau_1]\!]$ and therefore $c_A\le \alpha_s\le 1/(2K(x_1))$. That yields
\begin{equation*}
\begin{aligned}
\varphi^{K}(t,x_2)\!-\!\varphi^{K}(t,x_1)\le \frac{1}{2K(x_1)}\E\big[\tau_1\wedge\big(\tfrac{T-t}{c^2_A}\big)\big]\!+\!\eps,
\end{aligned}
\end{equation*}
where we also used $\sigma_t^\alpha\le (T-t)/c^2_A$ because $\alpha_s\ge c_A$. Further assuming that $x_1\ge \bar x$ we can make the bound dependent on $\bar x$ rather than on $x_1$ by replacing $K(x_1)$ with $K(\bar x)\le K(x_1)$. Moreover, the quantities under expectation are independent of $\eps$ and we can let $\eps\downarrow 0$.

In summary, we have obtained, for $\bar x\le x_1<x_2<\infty$
\begin{equation*}
-\tfrac{c_A}{4}\E\big[\tau_1\wedge \big(\tfrac{T-t}{c^2_A}\big)\big]\le \varphi^{K}(t,x_2)\!-\!\varphi^{K}(t,x_1)\le \frac{1}{2K(\bar x)}\E\big[\tau_1\wedge\big(\tfrac{T-t}{c^2_A}\big)\big].
\end{equation*}
Recalling that $0\le K(x)\le \frac{1}{4c_A}$ for all $x\ge 0$, we have $\frac{1}{2K(\bar{x})}\ge 2c_A\ge c_A/4$ and therefore 
\begin{equation}\label{eq:plug}
|\varphi^{K}(t,x_2)\!-\!\varphi^{K}(t,x_1)|\le \frac{1}{2K(\bar x)}\E\big[\big(\tfrac{T}{c^2_A}\big)\wedge\tau_1\big].
\end{equation}

It remains to find an upper bound for the right-hand side of the inequality above. Set $\Delta x=x_2-x_1>0$ for simplicity and notice that $\tau_1=\inf\{s\ge 0: (c_A/\sigma) s+ B_s\le -\Delta x/\sigma\}$. We are going to use 
\begin{equation*}
\begin{aligned}
\P(\tau_1>t)&=\P\Big(\inf_{0\le s\le t}\big((c_A/\sigma) s+B_s\big)>-\Delta x/\sigma\Big)\\
&=\P\Big(\sup_{0\le s\le t}\big(\widehat B_s-(c_A/\sigma) s\big)<\Delta x/\sigma\Big)=\Phi\Big(\frac{\Delta x+c_A t}{\sigma\sqrt t}\Big)-\e^{-2 c_A\Delta x/\sigma^2}\Phi\Big(-\frac{\Delta x-c_A t}{\sigma\sqrt t}\Big),
\end{aligned} 	
\end{equation*}
where in the second equality we set $\widehat B=-B$ and in the final formula $\Phi(z)$ is the cumulative of a standard normal (see \cite[Ch.\ II, eq.\ (4.4.21)]{peskir2006optimal}). The right-hand side of the above expression vanishes for $\Delta x\to 0$ and noticing that the derivative with respect to $\Delta x$ is bounded, we also obtain the upper bound
\begin{equation*}
\P(\tau_1>t)\le (2/\sigma^2)\Big(c_A+\frac{\sigma}{\sqrt t}\Big)\Delta x.
\end{equation*}
Then, we have
\begin{equation*}
\begin{aligned}
\E\big[\big(\tfrac{T}{c^2_A}\big)\wedge\tau_1\big]&=\int_0^{\frac{T}{c^2_A}}\P(\tau_1>t)\ud t\le (2/\sigma^2)\Delta x\Big(\frac{T}{c_A}+2\sigma\frac{\sqrt T}{c_A}\Big)\le (4/\sigma^2)\big(T+2\sigma\sqrt T\big)\Delta x,
\end{aligned}
\end{equation*}
where in the last inequality we used $c_A\ge 1/2$ (cf.\ its definition after \eqref{eq:SDEYW}). Plugging the final bound into the right-hand side of \eqref{eq:plug} completes the argument.
\end{proof}

We have shown that $\varphi^K$ is locally Lipschitz in $[0,T]\times(0,\infty)$. The next concavity result is crucial for much of the remainder of the paper.
\begin{proposition}\label{prop:phi^A_conc_A}
The mapping $t\mapsto\varphi^K(t,x)$ is concave for each $x\in[0,\infty)$.
\end{proposition}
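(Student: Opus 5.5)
The plan is a direct coupling argument at the level of controls, without the HJB equation. Fix $x\ge0$, $0\le t_1<t_2\le T$ and the midpoint $t=(t_1+t_2)/2$; since $\varphi^K(\cdot,x)$ is continuous (Proposition \ref{cor:contphi}), midpoint concavity suffices. So the goal is
\begin{equation*}
\varphi^K(t,x)\ \ge\ \tfrac12\big(\varphi^K(t_1,x)+\varphi^K(t_2,x)\big).
\end{equation*}
Take $\eps$-optimal controls $\alpha^1\in\cA^K_x$ for $\varphi^K(t_1,x)$ and $\alpha^2\in\cA^K_x$ for $\varphi^K(t_2,x)$. Both are driven by the same Brownian motion and the same uncontrolled process $X^x$, so they share the stopping time $\rho_x$. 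Write $\sigma_i=\sigma^{\alpha^i}_{t_i}$.

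The key observation is that time enters only through the ``budget'' constraint $\int_0^{\sigma}\alpha_s^2\,\ud s=T-t$, and the midpoint budget is the average $\frac12[(T-t_1)+(T-t_2)]$. I would set $\alpha^2_s:=0$ formally for $s\ge\sigma_2$, so that $\alpha^2$ has stopped consuming budget, and define the quadratic mean
\begin{equation*}
\hat\alpha_s\coloneqq\sqrt{\tfrac12\big((\alpha^1_s)^2+(\alpha^2_s)^2\big)}.
\end{equation*}
With this choice $Z^{t;\hat\alpha}_s=\frac12\big(Z^{t_1;\alpha^1}_s+Z^{t_2;\alpha^2}_{s\wedge\sigma_2}\big)$ exactly. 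Because $Z^{t_2;\alpha^2}$ is frozen at $T$ after $\sigma_2$, the average reaches $T$ exactly when $Z^{t_1;\alpha^1}$ does, so $\sigma^{\hat\alpha}_t=\sigma_1\ge\sigma_2$. The upper constraint in \eqref{eq:AK} is preserved, since it is a pointwise bound in terms of $X^x_s$ and the quadratic mean of two numbers below it stays below it. Moreover $\sqrt{(a^2+b^2)/2}\ge (a+b)/2$, which gives pathwise
\begin{equation*}
\int_0^{\sigma_1\wedge\rho_x}(\hat\alpha_s-c_A)\,\ud s\ \ge\ \tfrac12\int_0^{\sigma_1\wedge\rho_x}(\alpha^1_s-c_A)\,\ud s+\tfrac12\int_0^{\sigma_2\wedge\rho_x}(\alpha^2_s-c_A)\,\ud s .
\end{equation*}
On $[\sigma_2,\sigma_1)$ this uses that $\alpha^2$ contributes $0$, and $(\alpha^1_s-c_A)/2\le\alpha^1_s/\sqrt2-c_A/2$, hence $\le\hat\alpha_s-c_A/2\cdot 2/2$, which needs checking. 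Taking expectations and letting $\eps\downarrow0$ would give the claim.

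The main obstacle is the lower constraint $\hat\alpha\ge c_A$. On $[\![\sigma_2,\sigma_1)\!)$ we have $\hat\alpha=\alpha^1/\sqrt2$, which may fall below $c_A$, and the inequality $(\alpha^1-c_A)/2\le \alpha^1/\sqrt2-c_A$ also fails when $\alpha^1$ is close to $c_A$. The first fix I would try is to replace $\hat\alpha$ by $\tilde\alpha=\max\{\hat\alpha,c_A\}$ (again admissible) on that interval, and to accept that this shortens the horizon. To quantify the loss, I would use the DPP \eqref{eq:DPP} at $\sigma_2\wedge\rho_x$. After $\sigma_2$ only $\alpha^1$ matters, and the remaining budgets are $T-Z^{t;\hat\alpha}_{\sigma_2}=\frac12(T-Z^{t_1;\alpha^1}_{\sigma_2})$ for the mixed problem against $T-Z^{t_1;\alpha^1}_{\sigma_2}$ for the $t_1$-problem. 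So I need a bound of the form
\begin{equation*}
\varphi^K(T-r/2,y)\ \ge\ \tfrac12\varphi^K(T-r,y),\qquad r\in[0,T],\ y\ge0,
\end{equation*}
i.e.\ a ``star-shapedness'' of $\varphi^K$ in remaining time. This is weaker than concavity, and I would try to prove it first and independently by a pure time-scaling of controls. Given $\alpha$ with budget $r$, run the same control on the same path and stop once budget $r/2$ is used. The reward rate $\alpha-c_A\ge0$, so the question reduces to showing that the first half of the budget earns at least half the reward. That need not hold pathwise, and I expect it to require choosing which half (first versus second) to keep via the DPP and the time-monotonicity in Proposition \ref{prop:phi_bnd}(i). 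If this route stalls, the fallback is to symmetrize the construction: let $\alpha^1$ and $\alpha^2$ exchange roles after $\sigma_2$, using $\alpha^1$ on the whole budget difference $t_2-t_1$. With this change the frozen-at-zero issue disappears and the mixing is only needed on $[0,\sigma_2]$, where both controls are $\ge c_A$.
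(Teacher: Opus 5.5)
Your proposal stops short of a proof: the decisive step is left open, and none of the routes you sketch for it works.

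A smaller point first. Nothing forces $\sigma_2\le\sigma_1$ for two different controls, so the claim $\sigma^{\hat\alpha}_t=\sigma_1$ is unjustified. The sound version of your idea uses the quadratic mean only on $[\![0,\tau]\!]$, where $\tau\coloneqq\sigma_1\wedge\sigma_2\wedge\rho_x$, and extends it arbitrarily in $\cA^K_x$ afterwards. On that interval both $\alpha^i\ge c_A$, so $\hat\alpha\in\cA^K_x$, $\sigma^{\hat\alpha}_t\ge\tau$, and $\hat\alpha-c_A\ge\frac12(\alpha^1-c_A)+\frac12(\alpha^2-c_A)$. You then apply \eqref{eq:DPP} at $\tau$: as a lower bound on $\varphi^K(t,x)$ along $\hat\alpha$, and as an upper bound on the payoff of each $\alpha^i$. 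On $\{\tau<\rho_x\}$ one of the $Z^{t_i;\alpha^i}_\tau$ equals $T$ and $Z^{t;\hat\alpha}_\tau$ is their average. So this reduction is correct, and it closes as soon as
\begin{equation*}
\varphi^K\big(T-\tfrac r2,y\big)\ge\tfrac12\varphi^K(T-r,y),\qquad r\in[0,T],\ y\ge0.
\end{equation*}

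But this inequality is exactly the midpoint inequality you are trying to prove, in the special case $t_2=T$. It carries the whole difficulty of comparing the mixed control with the tail of the longer control, and your suggested proofs of it fail:
\begin{itemize}
\item Keeping the first half of the budget loses whenever the reward is back-loaded. For example, $\alpha=c_A$ on the first half of the budget earns nothing.
\item The role-exchange fallback runs into the same first-half problem after $\sigma_2$.
\item Proposition \ref{prop:phi_bnd}(i) only yields $\varphi^K(T-\frac r2,y)\ge\varphi^K(T-r,y)-\frac{r}{8c_A}$. This implies what you need only if $\varphi^K(T-r,y)\ge\frac{r}{4c_A}=\phi(T-r)$, which is the extremal case of Proposition \ref{cor:contphi}.
\end{itemize}

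The missing idea is to fill the budget deficit with the constant rate $2c_A$, which is the most budget-efficient rate. Pointwise, $\alpha-c_A\le\alpha^2/(4c_A)$, i.e.\ $(\alpha-2c_A)^2\ge0$, with equality at $\alpha=2c_A$; this is what $\phi(t)=\frac{T-t}{4c_A}$ encodes. Here is how it gives the inequality above.
\begin{itemize}
\item Take $\alpha\in\cA^K_y$ that is $\eps$-optimal for $\varphi^K(s,y)$, and set $\sigma=\sigma^\alpha_s$.
\item The control $\bar\alpha=\frac12\alpha+c_A$ belongs to $\cA^K_y$, because $2c_A\le\frac{1}{2K}$.
\item By convexity of the square, $\sigma^{\bar\alpha}_{(s+T)/2}\ge\bar\sigma\coloneqq\inf\{u\ge0:Z^{s;\alpha}_u+4c_A^2u=T\}$, and $\bar\sigma\le\sigma$.
\item The reward of $\bar\alpha$ is therefore at least $\frac12\int_0^{\bar\sigma\wedge\rho_y}(\alpha_u-c_A)\ud u+\frac{c_A}{2}(\bar\sigma\wedge\rho_y)$.
\item On $\{\bar\sigma<\rho_y\}$ the lost tail satisfies
\begin{equation*}
\int_{\bar\sigma}^{\sigma\wedge\rho_y}(\alpha_u-c_A)\ud u\le\frac{1}{4c_A}\big(T-Z^{s;\alpha}_{\bar\sigma}\big)=c_A\bar\sigma .
\end{equation*}
\end{itemize}
This proves the inequality pathwise.

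This is precisely the mechanism of the paper's proof. There, each $\alpha_i$ is extended by $2c_A$ after $\sigma_i$ and the arithmetic mean is taken. The claim $H\ge0$ then shows that the reward $c_A(\bar\sigma-\sigma_1)$ earned by the extension dominates the tail of $\alpha_2$, via the deterministic bound $\phi$. With this lemma supplied, your quadratic-mean-plus-DPP reduction would become a valid alternative proof; without it, the argument is incomplete.
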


\begin{proof}
The function $t\mapsto\varphi^K(t,x)$ is continuous by Proposition \ref{cor:contphi}. Then it is concave if it is mid-point concave (cf.\ \cite[Thm.\ 1.1.4]{niculescu2006convex}), i.e., if 
for fixed $x\in[0,\infty)$ and any $t_1,t_2\in[0,T]$, 
\begin{equation*}
\varphi^K\big(\tfrac12 t_1+\tfrac12 t_2,x\big)\ge \tfrac{1}{2} \varphi^K(t_1,x)+\tfrac{1}{2} \varphi^K(t_2,x).
\end{equation*}
The focus of the remainder of the proof is on showing the above inequality.

Let $t_1,t_2\in[0,T]$, $x\in[0,\infty)$ and define $\bar{t}=\frac12(t_1+t_2)$. Notice that the class of admissible controls $\cA^K_x$ does not depend on the variable $t$. 
Given any two controls $\alpha_1,\alpha_2\in\cA^K_x$ we let $\sigma_1\coloneqq\sigma_{t_1}^{\alpha_1}$ and $\sigma_2\coloneqq\sigma_{t_2}^{\alpha_2}$ for the ease of exposition.
For $s\in[0,\infty)$, we set
\begin{equation*}
\begin{aligned}
\hat{\alpha}_1(s)&=\alpha_1(s)\mathds{1}_{[0,\sigma_1]}(s)+2c_A\mathds{1}_{(\sigma_1,\infty)}(s),\quad\hat{\alpha}_2(s)=\alpha_2(s)\mathds{1}_{[0,\sigma_2]}(s)+2c_A\mathds{1}_{(\sigma_2,\infty)}(s),
\end{aligned}
\end{equation*}
and $\bar{\alpha}(s)=\frac{1}{2}(\hat{\alpha}_1(s)+\hat{\alpha}_2(s))$. Since the process $(X^x_t)_{t\ge 0}$ is uncontrolled, by definition of $\cA^K_x$ in \eqref{eq:AK} we easily see that $\bar \alpha\in\cA^K_x$.
By definition of $\bar{\alpha}$, we have 
\begin{equation*}
Z^{\bar{t};\bar{\alpha}}_s=\bar{t}+\int_0^s\big(\bar{\alpha}(u)\big)^2\ud u \le \frac{1}{2}(t_1+t_2)+\frac{1}{2}\int_0^s\Big[\big(\hat{\alpha}_1(u)\big)^2+\big(\hat{\alpha}_2(u)\big)^2\Big]\ud u=\frac{1}{2}\big(Z^{t_1;\hat{\alpha}_1}_s+Z^{t_2;\hat{\alpha}_2}_s\big).
\end{equation*}
Then, letting 
\begin{equation}\label{eq:barsigma}
\bar{\sigma}\coloneqq\inf\big\{s\ge 0:\tfrac{1}{2}\big(Z^{t_1;\hat{\alpha}_1}_s+Z^{t_2;\hat{\alpha}_2}_s\big)= T\big\},
\end{equation}
it holds $\sigma_{\bar{t}}^{\bar{\alpha}}\ge\bar{\sigma}$ and $\bar \sigma\in[\![\sigma_1\wedge\sigma_2, \sigma_1\vee\sigma_2]\!]$. Therefore, using the form of $\hat \alpha_1$ and $\hat \alpha_2$ we obtain 
\begin{equation}\label{eq:convex1}
\begin{aligned}
\varphi^K(\bar{t},x)&\!\ge\! \E\Big[\int_0^{\sigma_{\bar{t}}^{\bar{\alpha}}\wedge\rho_x}\!\!(\bar{\alpha}(s)\!-\!c_A)\ud s\Big]\\
&=\E\Big[\mathds{1}_{\{\sigma_1=\sigma_2\}}\int_0^{\sigma_{\bar{t}}^{\bar{\alpha}}\wedge\rho_x}\!\!(\bar{\alpha}(s)\!-\!c_A)\ud s\Big]\\
&\quad+\!\E\Big[\mathds{1}_{\{\sigma_1<\sigma_2\}}\tfrac{1}{2}\Big(\!\int_0^{\sigma_1\wedge\rho_x}\!\!(\alpha_1(s)\!-\!c_A)\ud s\!+\!\int_{\sigma_1\wedge\rho_x}^{\sigma_{\bar{t}}^{\bar{\alpha}}\wedge\rho_x}\!c_A\ud s\!+\!\int_0^{\sigma_{\bar{t}}^{\bar{\alpha}}\wedge\rho_x}\!\!(\hat \alpha_2(s)\!-\!c_A)\ud s\Big)\Big]\\
&\quad+\!\E\Big[\mathds{1}_{\{\sigma_1>\sigma_2\}}\tfrac{1}{2}\Big(\!\int_0^{\sigma_{\bar{t}}^{\bar{\alpha}}\wedge\rho_x}\!\!(\hat \alpha_1(s)\!-\!c_A)\ud s\!+\!\int_0^{\sigma_2\wedge\rho_x}\!\!(\alpha_2(s)\!-\!c_A)\ud s\!+\!\int_{\sigma_2\wedge\rho_x}^{\sigma_{\bar{t}}^{\bar{\alpha}}\wedge\rho_x}\!c_A\ud s\Big)\Big]\\
&\eqqcolon (A)+(B)+(C),
\end{aligned}
\end{equation}
where we used that $\hat \alpha_j(s)=\alpha_j(s)$ for $s\in[\![0,\sigma_j]\!]$ and $\hat \alpha_j(s)=2 c_A$ for $s\in(\!(\sigma_j,\infty)\!)$. Noticing that $\bar{\alpha}(s)\ge c_A$ for all $s\ge0$ and $\sigma_{\bar{t}}^{\bar{\alpha}}\ge \sigma_1\wedge\sigma_2$, for the first term above we have
\begin{equation}\label{eq:sig1=sig2}
\begin{aligned}
(A)&\ge \E\Big[\mathds{1}_{\{\sigma_1=\sigma_2\}}\int_0^{\sigma_1\wedge\sigma_2\wedge\rho_x}\!\!\!(\bar{\alpha}(s)\!-\!c_A)\ud s\Big]\\
&=\tfrac{1}{2}\E\Big[\mathds{1}_{\{\sigma_1=\sigma_2\}}\int_0^{\sigma_1\wedge\rho_x}\!\!\!(\alpha_1(s)\!-\!c_A)\ud s\!+\!\mathds{1}_{\{\sigma_1=\sigma_2\}}\int_0^{\sigma_2\wedge\rho_x}\!\!\!(\alpha_2(s)\!-\!c_A)\ud s\Big].
\end{aligned}
\end{equation}

In order to provide a lower bound on $(B)$, under expectations we separately consider the events $\{\sigma_1<\sigma_2\}\cap\{\sigma^{\bar \alpha}_{\bar t}\le \sigma_2\}$ and $\{\sigma_1<\sigma_2\}\cap\{\sigma^{\bar \alpha}_{\bar t}> \sigma_2\}$. On the event $\{\sigma_1<\sigma_2\}\cap\{\sigma^{\bar \alpha}_{\bar t}\le \sigma_2\}$, we have
\begin{equation*}
\begin{aligned}
&\int_0^{\sigma_1\wedge\rho_x}\!\!(\alpha_1(s)\!-\!c_A)\ud s\!+\!\!\int_{\sigma_1\wedge\rho_x}^{\sigma_{\bar{t}}^{\bar{\alpha}}\wedge\rho_x}\!\!c_A\,\ud s\!+\!\int_0^{\sigma_{\bar{t}}^{\bar{\alpha}}\wedge\rho_x}\!\!(\hat \alpha_2(s)\!-\!c_A)\ud s\\
&=\int_0^{\sigma_1\wedge\rho_x}\!\!(\alpha_1(s)\!-\!c_A)\ud s\!+\!\big(\sigma_{\bar{t}}^{\bar{\alpha}}\!\wedge\!\rho_x\!-\!\sigma_1\!\wedge\!\rho_x \big)c_A\!+\!\!\int_0^{\sigma_2\wedge\rho_x}\!\!(\alpha_2(s)\!-\!c_A)\ud s\!-\!\!\int_{\sigma_{\bar{t}}^{\bar{\alpha}}\wedge\rho_x}^{\sigma_2\wedge\rho_x}\!\!(\alpha_2(s)\!-\!c_A)\ud s
\end{aligned}
\end{equation*}
where again we used $\hat \alpha_2=\alpha_2$ on $[\![0,\sigma_2]\!]$. We claim here and we prove at the end that
\begin{equation}\label{eq:^_claim}
H\coloneqq\big(\sigma_{\bar{t}}^{\bar{\alpha}}\!\wedge\!\rho_x-\sigma_1\!\wedge\!\rho_x\big)c_A-\int_{\sigma_{\bar{t}}^{\bar{\alpha}}\wedge\rho_x}^{\sigma_2\wedge\rho_x}(\alpha_2(s)-c_A)\ud s\ge0,\quad\text{ on }\{\sigma_1<\sigma_2\}\cap\{\sigma^{\bar \alpha}_{\bar t}\le \sigma_2\}.
\end{equation}
Using the latter we obtain
\begin{equation*}
\begin{aligned}
&\int_0^{\sigma_1\wedge\rho_x}\!(\alpha_1(s)-c_A)\ud s+\int_{\sigma_1\wedge\rho_x}^{\sigma_{\bar{t}}^{\bar{\alpha}}\wedge\rho_x}\!c_A\,\ud s+\int_0^{\sigma_{\bar{t}}^{\bar{\alpha}}\wedge\rho_x}(\alpha_2(s)-c_A)\ud s\\
&\ge\int_0^{\sigma_1\wedge\rho_x}\!(\alpha_1(s)-c_A)\ud s+\int_0^{\sigma_2\wedge\rho_x}(\alpha_2(s)-c_A)\ud s, \qquad\text{ on }\{\sigma_1<\sigma_2\}\cap\{\sigma^{\bar \alpha}_{\bar t}\le \sigma_2\}.
\end{aligned}
\end{equation*}
On the event $\{\sigma_1<\sigma_2\}\cap\{\sigma^{\bar \alpha}_{\bar t}> \sigma_2\}$, we also have
\begin{equation*}
\begin{aligned}
&\int_0^{\sigma_1\wedge\rho_x}\!\!(\alpha_1(s)\!-\!c_A)\ud s\!+\!\int_{\sigma_1\wedge\rho_x}^{\sigma_{\bar{t}}^{\bar{\alpha}}\wedge\rho_x}\!\!c_A\,\ud s\!+\!\int_0^{\sigma_{\bar{t}}^{\bar{\alpha}}\wedge\rho_x}\!\!(\hat \alpha_2(s)\!-\!c_A)\ud s\\
&\ge\int_0^{\sigma_1\wedge\rho_x}\!(\alpha_1(s)-c_A)\ud s+\int_0^{\sigma_2\wedge\rho_x}(\alpha_2(s)-c_A)\ud s,
\end{aligned}
\end{equation*}
where we used that $\hat \alpha_2\ge c_A$ to find a lower bound for the third integral.

In conclusion we have obtained
\begin{equation}\label{eq:sig1<sig2}
\begin{aligned}
(B)\ge \tfrac{1}{2}\E\Big[\mathds{1}_{\{\sigma_1<\sigma_2\}}\int_0^{\sigma_1\wedge\rho_x}\!\!\!(\alpha_1(s)\!-\!c_A)\ud s\!+\!\mathds{1}_{\{\sigma_1<\sigma_2\}}\int_0^{\sigma_2\wedge\rho_x}\!\!\!(\alpha_2(s)\!-\!c_A)\ud s\Big].
\end{aligned}
\end{equation}
Repeating the same arguments we obtain a lower bound for $(C)$ in the form 
\begin{equation}\label{eq:sig2<sig1}
\begin{aligned}
(C)\ge \tfrac{1}{2}\E\Big[\mathds{1}_{\{\sigma_1>\sigma_2\}}\int_0^{\sigma_1\wedge\rho_x}\!\!\!(\alpha_1(s)\!-\!c_A)\ud s\!+\!\mathds{1}_{\{\sigma_1>\sigma_2\}}\int_0^{\sigma_2\wedge\rho_x}\!\!\!(\alpha_2(s)\!-\!c_A)\ud s\Big].
\end{aligned}
\end{equation}
Plugging \eqref{eq:sig1=sig2}, \eqref{eq:sig1<sig2} and \eqref{eq:sig2<sig1} into \eqref{eq:convex1} yields
\begin{equation*}
\varphi^K(\bar{t},x)\ge \tfrac{1}{2}\E\Big[\int_0^{\sigma_1\wedge\rho_x}\!(\alpha_1(s)-c_A)\ud s\Big]+\tfrac{1}{2}\E\Big[\int_0^{\sigma_2\wedge\rho_x}\!(\alpha_2(s)-c_A)\ud s\Big].
\end{equation*}
By arbitrariness of $\alpha_1,\alpha_2\in\cA^K_x$, we have $\varphi^K(\bar{t},x)\ge \frac{1}{2}\varphi^K(t_1,x)+\frac{1}{2}\varphi^K(t_2,x)$, as needed. 

In order to conclude the proof, it remains to show \eqref{eq:^_claim}. Recall that we are considering the event $\{\sigma_1<\sigma_2\}\cap\{\sigma^{\bar \alpha}_{\bar t}\le \sigma_2\}$. Then, 
$\rho_x\le \sigma_{\bar{t}}^{\bar{\alpha}}\implies\sigma_2\wedge\rho_x=\sigma_{\bar{t}}^{\bar{\alpha}}\wedge\rho_x=\rho_x$,
and we obtain
\begin{equation*}
\begin{aligned}
H
&=\mathds{1}_{\{\rho_x\le\sigma_{\bar{t}}^{\bar{\alpha}}\}}\Big(\big(\rho_x-\sigma_1\!\wedge\!\rho_x\big)c_A\Big)+\mathds{1}_{\{\rho_x>\sigma_{\bar{t}}^{\bar{\alpha}}\}}\Big(\big(\sigma_{\bar{t}}^{\bar{\alpha}}-\sigma_1\big)c_A-\int_{\sigma_{\bar{t}}^{\bar{\alpha}}}^{\sigma_2\wedge\rho_x}(\alpha_2(s)-c_A)\ud s\Big)\\
&\ge \mathds{1}_{\{\rho_x>\sigma_{\bar{t}}^{\bar{\alpha}}\}}\Big(\big(\sigma_{\bar{t}}^{\bar{\alpha}}-\sigma_1\big)c_A-\int_{\sigma_{\bar{t}}^{\bar{\alpha}}}^{\sigma_2\wedge\rho_x}(\alpha_2(s)-c_A)\ud s\Big).
\end{aligned}
\end{equation*}
We now recall that $\sigma_{\bar{t}}^{\bar{\alpha}}\ge\bar{\sigma}$ (cf.\ \eqref{eq:barsigma}) and, on the event $\{\sigma_1<\sigma_2\}$ that we are considering, $\bar \sigma\in[\![\sigma_1, \sigma_2]\!]$. Then, using also $\alpha_2(s)\ge c_A$ for all $s\ge0$ we arrive at
\begin{equation}\label{eq:^_claim1}
\begin{aligned}
H&\ge\mathds{1}_{\{\rho_x>\sigma_{\bar{t}}^{\bar{\alpha}}\}}\Big(\big(\bar{\sigma}-\sigma_1\big)c_A-\int_{\bar{\sigma}}^{\sigma_2}(\alpha_2(s)-c_A)\ud s\Big).
\end{aligned}
\end{equation}
Next, we rewrite $\bar \sigma $ in a more explicit fashion. Once again, recall that on $\{\sigma_1<\sigma_2\}$ we have $\bar\sigma\ge \sigma_1$. Moreover, $Z^{t_1;\hat \alpha_1}_s=Z^{t_1;\alpha_1}_s$ for $s\in[\![0,\sigma_1]\!]$ and 
\begin{equation*}
Z^{t_1;\hat \alpha_1}_s=Z^{t_1;\hat \alpha_1}_{\sigma_1}+4c_A^2 (s-\sigma_1)=T+4c_A^2 (s-\sigma_1),\quad \text{for $s\in(\!(\sigma_1,\infty)\!)$}.
\end{equation*}
Then $\bar \sigma$ rewrites as
\begin{equation*}
\begin{aligned}
\bar{\sigma}&=\inf\big\{s \ge \sigma_1:\tfrac{1}{2}\big(Z^{t_1;\hat{\alpha}_1}_s+Z^{t_2;\hat{\alpha}_2}_s\big)=T\big\}\\
&=\sigma_1+\inf\Big\{s \ge 0:\tfrac{1}{2}\big(T+4c^2_A s+Z^{t_2;\alpha_2}_{\sigma_1+s}\big)=T\Big\}=\sigma_1+\inf\Big\{s \ge 0:4c^2_A s+Z^{t_2;\alpha_2}_{\sigma_1+s}=T\Big\},
\end{aligned}
\end{equation*}
where in the second equality we also used that $Z^{t_2;\hat{\alpha}_2}_{s}=Z^{t_2;\alpha_2}_{s}$ for $s\in[\![0,\bar\sigma]\!]\subset[\![0,\sigma_2]\!]$ on $\{\sigma_1<\sigma_2\}$. The chain of equalities above shows that
\begin{equation*}
\bar\sigma-\sigma_1=\inf\Big\{s \ge 0:4c^2_A s+Z^{t_2;\alpha_2}_{\sigma_1+s}=T\Big\}\eqqcolon \zeta,\quad\text{on $\{\sigma_1<\sigma_2\}\cap\{\sigma^{\bar \alpha}_{\bar t}\le \sigma_2\}$}.
\end{equation*}
Since $4c^2_A \zeta+Z^{t_2;\alpha_2}_{\sigma_1+\zeta}=T$ by continuity of paths, then
\begin{equation}\label{eq:^_claim2}
\bar{\sigma}-\sigma_1=\frac{1}{4c^2_A}\Big(T-Z^{t_2;\alpha_2}_{\bar{\sigma}}\Big),\quad\text{on $\{\sigma_1<\sigma_2\}\cap\{\sigma^{\bar \alpha}_{\bar t}\le \sigma_2\}$}.
\end{equation}
Plugging \eqref{eq:^_claim2} into \eqref{eq:^_claim1} yields
\begin{equation}\label{eq:H1}
H\ge \mathds{1}_{\{\rho_x>\sigma_{\bar{t}}^{\bar{\alpha}}\}}\Big(\tfrac{1}{4c_A}\big(T-Z^{t_2;\alpha_2}_{\bar{\sigma}}\big)-\int_{\bar{\sigma}}^{\sigma_2}(\alpha_2(s)-c_A)\ud s\Big),\quad\text{on $\{\sigma_1<\sigma_2\}\cap\{\sigma^{\bar \alpha}_{\bar t}\le \sigma_2\}$}.
\end{equation}
Now we find an upper bound on the integral.
Fix $\omega\in\{\sigma_1<\sigma_2\}\cap\{\sigma^{\bar \alpha}_{\bar t}\le \sigma_2\}$. We have
\begin{equation}\label{eq:^_claim3}
\int_{\bar{\sigma}(\omega)}^{\sigma_2(\omega)}(\alpha_2(s,\omega)-c_A)\ud s=\int_0^{\sigma_2(\omega)-\bar{\sigma}(\omega)}(\alpha_2(\bar{\sigma}(\omega)+s,\omega)-c_A)\ud s.
\end{equation}
Let us notice
\begin{equation*}
\sigma_2(\omega)=\bar{\sigma}(\omega)+\inf\Big\{s\ge0: Z_{\bar{\sigma}(\omega)}^{t_2;\alpha_2}(\omega)+\int_0^s\big(\alpha_2(\bar{\sigma}(\omega)+u,\omega)\big)^2\ud u= T\Big\}\eqqcolon \bar{\sigma}(\omega)+\bar{\eta}(\omega),
\end{equation*}
so that $\bar \eta(\omega)$ is the first time the process
\begin{equation*}
s\mapsto Z_{\bar{\sigma}(\omega)}^{t_2;\alpha_2}(\omega)+\int_0^s\big(\alpha_2(\bar{\sigma}(\omega)+u,\omega)\big)^2\ud u
\end{equation*}
reaches $T$. Recall the formulation of the deterministic control problem in \eqref{eq:phi(t)} with $[0,\infty)\times[0,T]\times\cA_d\ni(s,t,a)\mapsto \cZ(s,t,a)=t+\int_0^s a^2(u)\ud u$ and the time $\eta(t,a)$. It is then clear that 
\begin{equation*}
Z_{\bar{\sigma}(\omega)}^{t_2;\alpha_2}(\omega)+\int_0^s\big(\alpha_2(\bar{\sigma}(\omega)+u,\omega)\big)^2\ud u=\cZ\big(s,Z_{\bar{\sigma}(\omega)}^{t_2;\alpha_2}(\omega),\alpha_2(\bar{\sigma}(\omega)+\cdot,\omega)\big)
\end{equation*}
and
\begin{equation*}
\bar \eta(\omega)=\eta\big(Z_{\bar{\sigma}(\omega)}^{t_2;\alpha_2}(\omega),\alpha_2(\bar{\sigma}(\omega)+\cdot,\omega)\big).
\end{equation*}
Since $\omega$ is fixed, it follows from \eqref{eq:phi(t)} and \eqref{eq:^_claim3} that
\begin{equation*}
\begin{aligned}
\int_{\bar{\sigma}(\omega)}^{\sigma_2(\omega)}\!\!\big(\alpha_2(s,\omega)\!-\!c_A\big)\ud s&=\int_0^{\bar{\eta}(\omega)}\big(\alpha_2(\bar{\sigma}(\omega)\!+\!s,\omega)\!-\!c_A\big)\ud s\\
&\le \sup_{a\in\cA_d}\int_0^{\eta(Z_{\bar{\sigma}(\omega)}^{t_2;\alpha_2}(\omega),a)}\!\!(a(s)\!-\!c_A)\ud s=\phi\big(Z_{\bar{\sigma}(\omega)}^{t_2;\alpha_2}(\omega)\big)=\frac{1}{4c_A}\Big(T\!-\!Z_{\bar{\sigma}(\omega)}^{t_2;\alpha_2}(\omega)\Big).
\end{aligned}
\end{equation*}
Since $\omega\in\{\sigma_1<\sigma_2\}\cap\{\sigma^{\bar \alpha}_{\bar t}\le \sigma_2\}$ was arbitrary, plugging the last equation above into \eqref{eq:H1} the claim in \eqref{eq:^_claim} is proved.
\end{proof}

Our next proposition shows that $\partial_t\varphi^K(t,x)$ is strictly smaller than zero for $x>0$. This fact will be used later to choose a specific function $K(x)$ in the definition of the class $\cA^K_x$. Since $\varphi^K(\cdot,x)$ is concave, it admits right- and left-derivatives at all points $t\in(0,T)$ and we denote them by $\partial^+_t\varphi^K(t,x)$ and $\partial^-_t\varphi^K(t,x)$, respectively. 

\begin{proposition}\label{prop:d_time}
For any $(t,x)\in(0,T)\times(0,\infty)$, we have
\begin{equation}\label{eq:dt}
-\frac{1}{4 c_A}\le \partial^+_t\varphi^K(t,x)\le\partial^-_t\varphi^K(t,x) \le -\frac{1}{4c_A}\P\Big(\rho_x>(T-t) c^{-2}_A\Big).
\end{equation}
The upper and lower bounds are independent of the choice of the function $K(\cdot)$. 
\end{proposition}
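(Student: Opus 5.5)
The lower bound and the middle inequality need no new work. By Proposition \ref{prop:phi_bnd}(i), $t\mapsto\varphi^K(t,x)$ is Lipschitz with constant $1/(4c_A)$ and non-increasing. This immediately gives $\partial^+_t\varphi^K\ge -1/(4c_A)$. The inequality $\partial^+_t\varphi^K\le\partial^-_t\varphi^K$ is just concavity (Proposition \ref{prop:phi^A_conc_A}).

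The real content is the upper bound. By concavity it is enough to show
\begin{equation*}
\varphi^K(t,x)-\varphi^K(t-h,x)\le -\frac{h}{4c_A}\P\big(\rho_x>(T-t+h)c_A^{-2}\big)+o(h)\quad\text{as $h\downarrow0$},
\end{equation*}
since $\rho_x$ has a continuous law and the probability then converges to $\P(\rho_x>(T-t)c_A^{-2})$. Equivalently, I want a lower bound $\varphi^K(t-h,x)\ge\varphi^K(t,x)+\frac{h}{4c_A}\P(\cdots)-\eps$.

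I will do this by concatenation. Take $\alpha\in\cA^K_x$ that is $\eps$-optimal for $\varphi^K(t,x)$. Define a control for the problem started at $t-h$: use $\alpha$ on $[0,\sigma^\alpha_t]$, and after $\sigma^\alpha_t$ use the constant $2c_A$. This constant is admissible because $1/(2K)\ge 2c_A$, and it is the maximiser in the deterministic problem \eqref{eq:phi(t)}. At time $\sigma^\alpha_t$ the clock satisfies $Z^{t-h;\alpha}_{\sigma^\alpha_t}=T-h$. The continuation therefore lasts exactly $h/(4c_A^2)$ units and earns $c_A\cdot h/(4c_A^2)=h/(4c_A)$, provided $X$ does not hit zero before then. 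Hence
\begin{equation*}
\varphi^K(t-h,x)\ge \E\Big[\int_0^{\sigma^\alpha_t\wedge\rho_x}(\alpha_s-c_A)\ud s\Big]+\frac{h}{4c_A}\P\big(\rho_x>\sigma^\alpha_t+h/(4c_A^2)\big)\ge\varphi^K(t,x)-\eps+\frac{h}{4c_A}\P\big(\rho_x>(T-t)c_A^{-2}+h/(4c_A^2)\big),
\end{equation*}
using $\sigma^\alpha_t\le (T-t)/c_A^2$ because $\alpha\ge c_A$. The resulting bound does not depend on $\alpha$.

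To finish, let $\eps\downarrow0$, divide by $h$, and let $h\downarrow0$. Continuity of $s\mapsto\P(\rho_x>s)$ for the drifted Brownian motion gives exactly the upper bound on $\partial^-_t\varphi^K$ in \eqref{eq:dt}. None of the constants involve $K$; the only place $K$ enters is through admissibility of the constant $2c_A$, which holds for every admissible $K$. This proves that the bounds are independent of $K$.

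The main obstacle is checking that the concatenated control is progressively measurable and belongs to $\cA^K_x$. Both follow because $\sigma^\alpha_t$ is a stopping time and $X$ is uncontrolled, so the admissibility constraint $\alpha_s\le\tfrac12[K(X^x_s)]^{-1}$ is pointwise in $(s,\omega)$ and is satisfied separately by each piece.
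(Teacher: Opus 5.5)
Your proposal is correct. The lower bound and the middle inequality are handled exactly as in the paper, but for the upper bound you take a more direct route.

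The paper writes $\varphi^K(t_1,x)$ via the dynamic programming principle \eqref{eq:DPP} at the control-dependent time $\sigma^\alpha_{t_2}$. This gives $\varphi^K(t_2,x)-\varphi^K(t_1,x)\le-\inf_{\alpha\in\cA^K_x}\E\big[\mathds{1}_{\{\sigma^\alpha_{t_2}<\rho_x\}}\varphi^K\big(T-(t_2-t_1),X^x_{\sigma^\alpha_{t_2}}\big)\big]$. It then bounds the continuation value $\varphi^K(T-h,y)$ from below by playing the constant $2c_A$. Because that bound depends on the random state $y=X^x_{\sigma^\alpha_{t_2}}$, the paper restricts to $\{\sigma^\alpha_{t_2}<\rho_{x-\eps}\}$, where $X^x_{\sigma^\alpha_{t_2}}>\eps$. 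It then uses monotonicity of $y\mapsto\P(s\le\rho_y)$ and removes $\eps$ through a second limit.

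You instead paste $2c_A$ explicitly onto an $\eps$-optimal control for $\varphi^K(t,x)$ at the stopping time $\sigma^\alpha_t$. Since the continuation runs on the same path $X^x$, the extra payoff is at least $\frac{h}{4c_A}\P\big(\rho_x>\sigma^\alpha_t+h/(4c_A^2)\big)$. The bound $\sigma^\alpha_t\le(T-t)/c_A^2$ then makes this uniform in $\alpha$ immediately.

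Your route buys three things:
\begin{itemize}
\item No appeal to the DPP at all. In particular you avoid its harder ``$\ge$'' direction at a stopping time that depends on the control, which rests on the measurable-selection machinery behind \eqref{eq:DPP} and on the continuity of $\varphi^K$.
\item No $\eps$-shift argument.
\item A single limit $h\downarrow0$, for which right-continuity of $s\mapsto\P(\rho_x>s)$ is enough.
\end{itemize}
The paper's route fits its DPP-based framework and passes through the value-function inequality displayed above, which is formally sharper. However, that inequality is ultimately evaluated with the same constant control $2c_A$, so it gives nothing more for the stated estimate.
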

\begin{proof}
The first inequality holds by \eqref{eq:lip_time} and the second one by concavity. For the final inequality let $0\le t_1<t_2\le T$ and fix $x \in(0,\infty)$. Since $\cA^K_x$ is independent of time, admissible controls for $\varphi^K(t_1,x)$ and $\varphi^K(t_2,x)$ are the same. For $\alpha\in\cA^K_x$ we have $\sigma^\alpha_{t_2}\le \sigma^\alpha_{t_1}$. Then, by the DPP
\begin{equation*}
\varphi^K(t_1,x)=\sup_{\alpha\in\cA^K_x}\E\Big[\int_0^{\sigma_{t_2}^\alpha\wedge \rho_x}\!\!(\alpha_s\!-\!c_A)\ud s\!+\!\mathds{1}_{\{\sigma_{t_2}^\alpha<\rho_x\}}\varphi^K(Z^{t_1;\alpha}_{\sigma_{t_2}^\alpha},X_{\sigma_{t_2}^\alpha}^x)\Big].
\end{equation*}
Using $Z^{t_1;\alpha}_{\sigma^\alpha_{t_2}}=Z^{t_2;\alpha}_{\sigma^\alpha_{t_2}}-(t_2-t_1)=T-(t_2-t_1)$, we get
\begin{equation*}
\begin{aligned}
&\varphi^K(t_2,x)\!-\!\varphi^K(t_1,x)\\
&=\sup_{\alpha\in\cA^K_x}\E\Big[\int_0^{\sigma_{t_2}^\alpha\wedge \rho_x}\!\!(\alpha_s\!-\!c_A)\ud s\Big] \!-\!\sup_{\alpha\in\cA^K_x}\E\Big[\int_0^{\sigma_{t_2}^\alpha\wedge \rho_x}\!\!(\alpha_s\!-\!c_A)\ud s\!+\!\mathds{1}_{\{\sigma_{t_2}^\alpha<\rho_x\}}\varphi^K(Z^{t_1;\alpha}_{\sigma_{t_2}^\alpha},X_{\sigma_{t_2}^\alpha}^x)\Big]\\
&\le\sup_{\alpha\in\cA^K_x}\E\Big[\!-\!\mathds{1}_{\{\sigma_{t_2}^\alpha<\rho_x\}}\varphi^K(T\!-\!(t_2\!-\!t_1),X_{\sigma_{t_2}^\alpha}^x)\Big].
\end{aligned}
\end{equation*}
For $0<\eps<x$ 
we have $\rho_{x-\eps}\le \rho_x$ and therefore 
\begin{equation}\label{eq:lowerbnd0}
\begin{aligned}
\varphi^K(t_2,x)-\varphi^K(t_1,x)&\le -\inf_{\alpha\in\cA^K_x}\E\Big[\mathds{1}_{\{\sigma_{t_2}^\alpha<\rho_{x-\eps}\}}\varphi^K\big(T-(t_2-t_1),X_{\sigma_{t_2}^\alpha}^{x}\big)\Big],
\end{aligned}
\end{equation}
because $\varphi^K\ge 0$. 
Notice that for arbitrary $(t,y)$, picking the deterministic control $\cA^K_y\ni\alpha_t\equiv 2c_A$ yields $\sigma_t^{\alpha}=(T-t)/(4c_A^2)$ and the lower bound:
\begin{equation*}
\begin{aligned}
\varphi^K(t,y)\ge 
c_A \E\Big[\int_{0}^{\frac{T-t}{4c_A^2}}\mathds{1}_{\{s\le \rho_y\}}\,\ud s\Big]= c_A \int_{0}^{\frac{T-t}{4c_A^2}}\P(s\le \rho_y)\,\ud s\eqqcolon c_A \int_{0}^{\frac{T-t}{4c_A^2}}g(s,y)\,\ud s.
\end{aligned}
\end{equation*}
In particular, on the event $\{\sigma_{t_2}^\alpha<\rho_{x-\eps}\}$ we have $X_{\sigma_{t_2}^\alpha}^{x-\eps}>0\iff X_{\sigma_{t_2}^\alpha}^{x}>\eps$ and
\begin{equation}\label{eq:lowerbnd}
\begin{aligned}
\varphi^K\big(T-(t_2-t_1),X_{\sigma_{t_2}^\alpha}^{x}\big)\ge c_A \int_{0}^{\frac{t_2-t_1}{4c_A^2}}g\big(s,X_{\sigma_{t_2}^\alpha}^{x}\big)\,\ud s\ge c_A \int_{0}^{\frac{t_2-t_1}{4c_A^2}}g(s,\eps)\,\ud s,
\end{aligned}
\end{equation}
where the final inequality holds because $y\mapsto g(s,y)$ is non-decreasing.

Therefore, plugging \eqref{eq:lowerbnd} into \eqref{eq:lowerbnd0} and using the form of $g(s,\eps)$ 
yields
\begin{equation*}
\begin{aligned}
\varphi^K(t_2,x)-\varphi^K(t_1,x)
&\le -\Big(c_A\int_0^{\frac{t_2-t_1}{4c^2_A}}\P(s\le \rho_{\eps})\ud s\Big)\inf_{\alpha\in\cA^K_x}\P\Big(\sigma_{t_2}^\alpha<\rho_{x-\eps}\Big)\\
&\le-\Big(c_A\int_0^{\frac{t_2-t_1}{4c^2_A}}\P(s\le \rho_{\eps})\ud s\Big)\P\Big((T-t_2) c^{-2}_A<\rho_{x-\eps}\Big),
\end{aligned}
\end{equation*}
where in the second inequality we used 
$ \sigma_{t_2}^\alpha\le \frac{T-t_2}{c^2_A}$ for all $\alpha\in\cA^K_x$, because $\alpha_t\ge c_A$.
Dividing by $t_2-t_1$ and taking limits as $t_1\uparrow t_2$, we get
\begin{equation*}
\partial^-_t\varphi^K(t_2,x)\le -\frac{1}{4 c_A}\P\Big((T-t_2) c^{-2}_A<\rho_{x-\eps}\Big),
\end{equation*}
because $s\mapsto\P(\rho_{\eps}\ge s)$ is continuous with $\P(\rho_{\eps}> 0)=\P(\rho_{\eps}\ge 0)=1$. Letting $\eps\downarrow0$, we have $\rho_{x-\eps}\uparrow\rho_x$. Then, by dominated convergence and left-continuity of $s\mapsto\mathds{1}_{\{s>c^{-2}_A(T-t_2)\}}$, we get the final inequality in \eqref{eq:dt}.
\end{proof}

\begin{remark}\label{rem:opt_cont}
Let $K_*(x)\coloneqq \frac{1}{4c_A}\P(\rho_x>T c^{-2}_A)$ with $\rho_x$ as in Proposition \ref{prop:d_time}.
By concavity of $\varphi^K(\cdot,x)$, for any fixed $x\in(0,\infty)$, we have $\partial_t^-\varphi^K(t,x)=\partial_t^+\varphi^K(t,x)$ for almost every $t\in[0,T]$, and the specific form of the Hamiltonian in \eqref{eq:hamil} tells us that 
$\alpha^*_s=-1/[2 \partial_t^{-}\varphi^K(Z^{t;\alpha^*}_s,X^{x}_s)]$
is the candidate optimal control.
Therefore, by Proposition \ref{prop:d_time}
\begin{equation*}
2c_A \le \alpha^*_s \le \frac12[K_*(X_s^x)]^{-1}.
\end{equation*}

The bound in Proposition \ref{prop:d_time} holds for any function $K(x)$ defining $\cA^K_x$. Then, it is natural to choose $K(x)=K_*(x)$ and denote the associated $\varphi^{K_*}$ simply by $\varphi^*$. Similarly, we write $\cA^*_x=\cA^{K_*}_x$.
\end{remark}

\subsection{Bounds on second order spatial derivative}\label{sec:phixx}

Having established estimates for the time derivative of $\varphi^*=\varphi^{K_*}$ and for its first-order spatial derivative, we now proceed to obtain bounds on the second-order spatial derivative. Since we work with $K(x)=K_*(x)$, the Lipschitz constant in Proposition \ref{prop:phi_bnd} is precisely $2(T+2\sigma\sqrt{T})/(\sigma^2 K_*(\bar x))$. In the proof of the next proposition we set
$\delta_T\coloneqq 2(T+2\sigma\sqrt{T})/\sigma^2$.
\begin{proposition}\label{prop:twice_diff}
For fixed $t\in[0,T]$, the second-order spatial derivative $\partial_{xx} \varphi^*(t,x)$ exists for almost every $x\in(0,\infty)$. Moreover, for any bounded interval $I \subset(0,\infty)$
\begin{equation}\label{eq:D2}
\|\partial_{xx}\varphi^*(t,\,\cdot\,)\|_{L^{\infty}(I)}\le C_0
\end{equation}
for a constant $C_0=C_0(I)>0$ independent of $t\in[0,T]$.
\end{proposition}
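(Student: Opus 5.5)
We will prove that, for fixed $t$, the map $x\mapsto\varphi^*(t,x)$ is semiconcave and semiconvex on each bounded interval $I=[a,b]\subset(0,\infty)$, with constants independent of $t$. That is, there is $C_0=C_0(I)$ such that for all $x\in I$ and all small $h>0$,
\begin{equation*}
\big|\varphi^*(t,x+h)+\varphi^*(t,x-h)-2\varphi^*(t,x)\big|\le C_0h^2 .
\end{equation*}
Together with the local Lipschitz bound of Proposition \ref{prop:phi_bnd}(ii), this gives that $\partial_x\varphi^*(t,\cdot)$ is Lipschitz on $I$. By Rademacher's theorem $\partial_{xx}\varphi^*(t,x)$ then exists for a.e.\ $x$, and \eqref{eq:D2} holds. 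The key structural fact is that $X^x$ is uncontrolled and shifts additively in $x$: $X^{x\pm h}_s=X^x_s\pm h$. Moreover, on $I$ the admissible control sets are uniformly controlled, since $K_*(X_s)$ stays bounded below while $X$ stays above $a/2$.

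\emph{Localisation.} Fix $x\in I$ and let $\tau\coloneqq\inf\{s\ge0:|X^x_s-x|\ge a/2\}$. For $s\le\tau$ we have $X^{x\pm h}_s\ge a/2-h>a/4$, so none of the three processes $X^{x},X^{x\pm h}$ is absorbed before $\tau$. On $[\![0,\tau]\!]$, any control with values in $[c_A,\tfrac12 K_*(a/4)^{-1}]$ is admissible for all three starting points. The DPP \eqref{eq:DPP} with the stopping time $\tau\wedge\sigma^\alpha_t$ therefore allows us to use \emph{the same} control $\alpha$ for $x-h,x,x+h$ up to $\tau$. The terminal functional on $\{\tau<\sigma_t^\alpha\}$ is $\varphi^*(Z^{t;\alpha}_\tau,X^x_\tau\pm h)$, and $Z$ is the same for all three.

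\emph{Semiconcavity (upper bound on the second difference).} Take $\alpha$ $\eps$-optimal for the two problems at $x\pm h$; we will have to average two controls, as in Proposition \ref{prop:phi^A_conc_A}. A simpler route is a Girsanov shift. Write $X^{x\pm h}_s=X^x_s\pm h$ and transfer the shift $\pm h$ into a drift perturbation of the Brownian motion over a short time window $[0,\delta]$, with $\delta$ depending only on $I$. That is, let $\widetilde B^{\pm}_s=B_s\pm\frac{h}{\sigma\delta}(s\wedge\delta)$, so that $x+c_As+\sigma\widetilde B^\pm_s$ equals $X^x_s\pm h$ for $s\ge\delta$. The density
\begin{equation*}
M^\pm=\exp\Big(\mp\tfrac{h}{\sigma\delta}B_\delta-\tfrac{h^2}{2\sigma^2\delta}\Big)
\end{equation*}
satisfies $\E[M^++M^--2]=0$ and $\E[|M^++M^--2|]\le Ch^2/\delta$. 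Writing $\varphi^*(t,x\pm h)$ as expectations under the original problem at $x$, weighted by $M^\pm$, the second difference becomes $\E[(M^++M^--2)\,\Xi]$, with $\Xi$ the payoff. Since $0\le\Xi\le T/(4c_A)$ by Proposition \ref{cor:contphi}, this is bounded by $C h^2/\delta$. The controls must be transported along the map $\omega\mapsto\omega\pm$shift, which is legitimate in the Brownian filtration using the path-functional representation already used in the proof of Proposition \ref{cor:contphi}.

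\emph{Absorption and obstacle.} The admissibility constraint $\alpha_s\le\frac12K_*(X_s)^{-1}$ also shifts with $h$; this is the main obstacle. Before $\delta$, the shifted path differs from $X^x$ by at most $h$. We handle this by restricting to the event $\{\tau>\delta\}$ and using monotonicity of $K_*$. Controls admissible at $x$ can be truncated as in the proof of Proposition \ref{cor:contphi}, via $\min\{\alpha,\frac12K_*^{-1}\}$, at a cost bounded using the Lipschitz property of $K_*$ on $[a/4,\infty)$, which follows from the explicit Gaussian first-passage law. The resulting error is $O(h)\cdot\P(\text{window})$. We expect to need a second-order version of this estimate. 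The complementary event $\{\tau\le\delta\}$ has probability $\le e^{-c/\delta}$, which is negligible once $\delta$ is fixed. Combining these gives $|\Delta_h^2\varphi^*(t,x)|\le C_0(I)h^2$ uniformly in $t$, with $C_0$ depending on $a,b,T,\sigma,c_A$ only.
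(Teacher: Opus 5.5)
Your proposal has a genuine gap, and it lies in the direction you call semiconcavity. The identity $\Delta_h^2\varphi^*(t,x)=\E[(M^++M^--2)\,\Xi]$ needs a single payoff $\Xi$, i.e.\ a single control, for all three starting points. But $\varphi^*$ is a supremum, and near-optimal controls at $x-h$, $x$, $x+h$ are different. If you take $\alpha$ $\eps$-optimal at $x$ and transport it to $x\pm h$, you only get the lower bound $\Delta_h^2\varphi^*(t,x)\ge\E[(M^++M^--2)\,\Xi_\alpha]-2\eps$, which is semiconvexity. For the upper bound you must start from two different controls $\alpha^\pm$, $\eps$-optimal at $x\pm h$. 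The Girsanov computation then leaves $\E[(M^+-1)\Xi_{\alpha^+}]+\E[(M^--1)\Xi_{\alpha^-}]$. Its first-order parts $\mp\frac{h}{\sigma\delta}\E[B_\delta\,\Xi_{\alpha^\pm}]$ cancel only if $\Xi_{\alpha^+}$ and $\Xi_{\alpha^-}$ are $O(h)$-close, which is a stability property of near-optimal controls that is not available. Averaging controls as in Proposition \ref{prop:phi^A_conc_A} does not help either: there the path of $X$ was common to both problems, whereas here the absorption times $\rho_{x\pm h}$ and the constraints differ.

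Even the semiconvexity direction is not closed. Your admissibility claim is reversed. Since $y\mapsto\frac12K_*(y)^{-1}$ is non-increasing, $\frac12K_*(a/4)^{-1}$ is a bound that every admissible control satisfies on $[\![0,\tau]\!]$, not a sufficient condition for admissibility. So a control admissible at $x$ must be truncated on $[0,\delta]$ to become admissible at $x+h$, at a cost of order $h\delta$. Balanced against the Girsanov error $h^2/\delta$, this gives at best $O(h^{3/2})$, which does not yield $C^{1,1}$. Likewise, $\P(\tau\le\delta)\le\e^{-c/\delta}$ is not $O(h^2)$ for fixed $\delta$. You concede yourself that the needed second-order estimate is missing.

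The missing idea is to let the dynamic programming principle compare neighbouring points, so that the value function itself appears at the new positions rather than the payoff of a particular control. The paper applies \eqref{eq:DPP} up to the exit time $\nu_I$ of the uncontrolled process $X^x$ from $I=(a,b)$.
\begin{itemize}
\item With the single suboptimal control $\alpha\equiv c_A$, the time-Lipschitz bound \eqref{eq:lip_time} and $\E[B_{\nu_I}]=0$ (Lemma \ref{lem:OS}) give $\tilde f(x;t)\ge\E[\tilde f(X^x_{\nu_I};t)]$ for $\tilde f(y;t)=\varphi^*(t,y)-\frac14y$.
\item With the crude bound $\alpha_s\le\frac12K_*(a)^{-1}$ on $[\![0,\nu_I]\!]$ and monotonicity in $t$, it obtains $\hat f(x;t)\le\E[\hat f(X^x_{\nu_I};t)]$ for $\hat f(y;t)=\varphi^*(t,y)+\frac{y}{2c_AK_*(a)}$.
\end{itemize}
Through the scale function $S$ of $X$, these say that $\tilde f$ is $S$-concave and $\hat f$ is $S$-convex on $I$. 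The Lipschitz bound of Proposition \ref{prop:phi_bnd}(ii) then converts this into $-C_2\le\partial_{xx}\varphi^*(t,\cdot)\le C_1$ on $I$, uniformly in $t$.

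Note that the upper bound on $\partial_{xx}\varphi^*$, which your Girsanov argument cannot reach, comes here from one fixed control. This works because the DPP compares $\varphi^*(t,x)$ directly with $\varphi^*$ at the exit points, so neither Girsanov transport nor stability of optimal controls is needed. Running the same argument with $I=(x-h,x+h)$ gives your bound $|\Delta_h^2\varphi^*(t,x)|\le C_0h^2$ directly.
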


\begin{proof}
Let $x\in (0,\infty)$ and take $0<a<x<b<\infty$. The function $\varphi^*$ restricted to the domain $[0,T]\times I$ with $I=(a,b)$ is Lipschitz by Proposition \ref{prop:phi_bnd}--(i+ii), and $\varphi^*(T,x)=0$ for all $x\in I$. Then, we extend $\varphi^*$ outside $[0,T]$ defining
\begin{equation*}
\tilde{\varphi}^*(t,x)=\varphi^*(t,x)\mathds{1}_{[0,T]\times I}(t,x),\quad (t,x)\in [0,\infty)\times I.
\end{equation*}
The function $\tilde{\varphi}^*$ is Lipschitz continuous on $[0,\infty)\times I$ with the same Lipschitz constant as $\varphi^*$ on $[0,T]\times I$. By DPP \eqref{eq:DPP} applied to $\varphi^*$ with $\tau=\nu_I\coloneqq\inf\{s\ge 0: X_s^x\notin I\}$, we have
\begin{equation}\label{eq:tilde_varphi}
\begin{aligned}
\varphi^*(t,x)
&=\sup_{\alpha\in\cA^*_x}\E\Big[\int_0^{\sigma_t^\alpha\wedge\nu_I}(\alpha_s-c_A)\,\ud s+\mathds{1}_{\{\nu_I<\sigma_t^\alpha\}}\varphi^*(Z^{t;\alpha}_{\nu_I},X_{\nu_I}^x)\Big]\\
&=\sup_{\alpha\in\cA^*_x}\E\Big[\int_0^{\sigma_t^\alpha\wedge\nu_I}(\alpha_s-c_A)\,\ud s+\tilde{\varphi}^*(Z^{t;\alpha}_{\nu_I},X^x_{\nu_I})\Big],
\end{aligned}
\end{equation}
where in the first equality we used that $\varphi^*(Z^{t;\alpha}_{\sigma_t^\alpha\wedge\nu_I},X^x_{\sigma_t^\alpha\wedge\nu_I})\mathds{1}_{\{\sigma_t^\alpha<\nu_I\}}=\varphi^*(T,X^x_{\sigma_t^\alpha})\mathds{1}_{\{\sigma_t^\alpha<\nu_I\}}=0$ and in the second equality we used that 
\begin{equation*}
\mathds{1}_{\{\nu_I<\sigma_t^\alpha\}}\varphi^*(Z^{t;\alpha}_{\nu_I},X_{\nu_I}^x)=\mathds{1}_{\{\nu_I<\sigma_t^\alpha\}}\tilde\varphi^*(Z^{t;\alpha}_{\nu_I},X_{\nu_I}^x)=\tilde\varphi^*(Z^{t;\alpha}_{\nu_I},X_{\nu_I}^x),
\end{equation*}
because $\tilde\varphi^*(Z^{t;\alpha}_{\nu_I},X_{\nu_I}^x)\mathds{1}_{\{\nu_I\ge \sigma_t^\alpha\}}=0$ by definition of $\tilde\varphi^*$.

Fix $t\in[0,T]$. Taking the sub-optimal strategy $\alpha\equiv c_A$ and using \eqref{eq:lip_time}, we get from \eqref{eq:tilde_varphi}
\begin{equation*}
\begin{aligned}
\varphi^*(t,x)\ge\E\Big[\tilde{\varphi}^*(t+c^2_A\nu_I,X^x_{\nu_I})\Big]\ge \E\Big[\tilde{\varphi}^*(t,X^x_{\nu_I})-\frac14c_A\nu_I\Big].
\end{aligned}
\end{equation*}
It is well-known that $\E[B_{\nu_I}]=0$, because $\nu_I$ is the exit time of $B$ from a time-dependent interval (for the reader's convenience we provide a short proof of this fact in Appendix \ref{app:stopping}, Lemma \ref{lem:OS}). Noticing that $c_A\nu_I=(x+c_A\nu_I+\sigma B_{\nu_I})-x-\sigma B_{\nu_I}=X^x_{\nu_I}-x-\sigma B_{\nu_I}$, we obtain 
\begin{equation*}
\varphi^*(t,x)\ge \E\Big[\tilde{\varphi}^*(t,X^x_{\nu_I})-\tfrac14X^x_{\nu_I}\Big]+\tfrac14 x.
\end{equation*}
Denoting $\tilde{f}(x;t)\coloneqq \tilde{\varphi}^*(t,x)-\frac14 x$ and using the inequality above with $\tilde{\varphi}^*=\varphi^*$ on $[0,T]\times I$, we have
\begin{equation*}
\tilde{f}(x;t)\ge \E\big[\tilde{f}(X_{\nu_I}^x;t)\big].
\end{equation*}
The right-hand side of the inequality can be rewritten in terms of the scale function $S(x)$ of the process $X$. That is,
\begin{equation*}
\E\big[\tilde{f}(X_{\nu_I}^x;t)\big]=\tilde{f}(a;t)\frac{S(b)-S(x)}{S(b)-S(a)}+\tilde{f}(b;t)\frac{S(x)-S(a)}{S(b)-S(a)}.
\end{equation*}
Then, $\tilde{f}(\,\cdot\,;t)$ is $S$-concave on $[a,b]$, i.e., for all $x\in[a,b]$ 
\begin{equation*}
\tilde{f}(x;t)\ge \tilde{f}(a;t)\frac{S(b)-S(x)}{S(b)-S(a)}+\tilde{f}(b;t)\frac{S(x)-S(a)}{S(b)-S(a)}.
\end{equation*}
As a consequence it admits left- and right-derivative with respect to the scale function at all points. More precisely (cf.\ \cite[Prop.\ 2.6]{dayanik2003optimal}), the right- and left-limits 
\begin{equation*}
D_S^+\tilde{f}(x;t)\coloneqq \lim_{y\to x+}\frac{\tilde{f}(y;t)-\tilde{f}(x;t)}{S(y)-S(x)}\quad\text{and}\quad D_S^-\tilde{f}(x;t)\coloneqq \lim_{y\to x-}\frac{\tilde{f}(x;t)-\tilde{f}(y;t)}{S(x)-S(y)}
\end{equation*}
exist for all $x\in I$; moreover, the mappings $x\mapsto D_S^+\tilde{f}(x;t)$ and $x\mapsto D_S^-\tilde{f}(x;t)$ are non-increasing with the first one right-continuous and the second one left-continuous; finally, since $S$ is continuous, then $D_S^+\tilde{f}(x;t)=D_S^-\tilde{f}(x;t)\eqqcolon D_S\tilde{f}(x;t)$ in $I\setminus C$ with $C\subset I$ at most countable. As a result of these facts and continuous differentiability of $S$ at all points, we have
\begin{equation*}
\begin{aligned}
D^\pm_S\tilde{f}(x;t)&=\lim_{y\to x\pm}\frac{\tilde{f}(x;t)-\tilde{f}(y;t)}{x-y}\frac{x-y}{S(x)-S(y)}=\frac{1}{S'(x)}\lim_{y\to x\pm}\frac{\tilde{f}(x;t)-\tilde{f}(y;t)}{x-y},\quad x\in I.
\end{aligned}
\end{equation*}
Therefore, 
$\partial^{\pm}_x \tilde{f}(x;t)=D^{\pm}_S\tilde{f}(x;t) S'(x)$ for $x\in I$ with $\partial^{\pm}_x \tilde{f}(x;t)=D_S\tilde{f}(x;t) S'(x)\eqqcolon \partial_x \tilde{f}(x;t)$ for $x\in I\setminus C$. Notice that $x\mapsto \partial^{-}_x \tilde{f}(x;t)$ and $x\mapsto \partial^{+}_x \tilde{f}(x;t)$ are left- and right-continuous, respectively.

Since $x\mapsto D^\pm_S\tilde f(x;t)$ is non-increasing on $I$, it is differentiable Lebesgue almost everywhere on $I$ with $\partial_x(D^\pm_S\tilde f)(x;t)\le 0$. As a result $\partial_{x}(\partial_x^\pm\tilde f(x;t))$ exists a.e.\ on $I$ with 
\begin{equation}\label{eq:2nd_deriv}
\begin{aligned}
\partial_{x}\big(\partial^\pm_x \tilde{f}(x;t)\big)&=\partial_x (D^\pm_S\tilde{f})(x;t) S'(x)+D^\pm_S\tilde{f}(x;t)S''(x)\le D^\pm_S\tilde{f}(x;t)S''(x).
\end{aligned}
\end{equation}
We wish to bound the last expression from above by a constant. We recall that, up to an affine transformation, the scale function reads as $S(x)=1-\e^{-2c_Ax/\sigma^2}$ and therefore $S''(x)=-4 c_A^2/\sigma^4\e^{-2c_A x/\sigma^2}$. 
By definition of $\tilde f$ we get $\partial^\pm_x \tilde f(x;t)=\partial^\pm_x\varphi^*(t,x)-\frac14$ and $\varphi^*(t,\cdot)$ is Lipschitz on $I$ with Lipschitz constant bounded by $\delta_T/K_*(a)$ (cf.\ Proposition \ref{prop:phi_bnd}--(ii)). Then, for $x\in I$
\begin{equation*}
\begin{aligned}
|D^\pm_S\tilde{f}(x;t)|&=\frac{1}{S'(x)}\big|\partial^\pm_x \tilde{f}(x;t)\big|\le \frac{\e^{2 c_A x/\sigma^2}}{2 c_A/\sigma^2}\big(\tfrac14+\big|\partial^\pm_x\varphi^*(t,x)\big|\big) \le \frac{\e^{2 c_A b/\sigma^2}}{2 c_A/\sigma^2}\Big(\frac14+\frac{\delta_T}{K_*(a)}\Big).
\end{aligned}
\end{equation*}
Substituting this bound into \eqref{eq:2nd_deriv} and using $|S''(x)|\le 4c^2_A/\sigma^4\e^{-2 c_A a/\sigma^2}$ yields, for a.e.\ $x\in I$
\begin{equation*}
\partial_{x}\big(\partial_x^\pm\tilde f(x;t)\big)\le \frac{2 c_A}{\sigma^2}\e^{2 c_A (b-a)/\sigma^2}\Big(\frac14+\frac{\delta_T}{ K_*(a)}\Big)\eqqcolon C_1.
\end{equation*}

For fixed $t\in[0,T]$, we have shown that $x\mapsto \tilde{f}(x;t)=\tilde \varphi^*(t,x)-\frac14 x$ is locally semi-concave. More precisely,
recalling $\tilde{\varphi}^*=\varphi^*$ on $[0,T]\times I$, the mapping 
\begin{equation}\label{eq:semiconc_A}
[a,b]\ni x\mapsto \varphi^*(t,x)-\frac14x- C_1 x^2
\end{equation}
is concave.

Next, we are going to adopt similar arguments to show that $x\mapsto \varphi^*(t,x)$ is also semi-convex for fixed $t\in[0,T]$. Take $x\in I$. We recall that $\alpha\in \cA^*_x$ and so $c_A\le \alpha_s\le \frac{1}{2K_*(X_s^x)}$ for all $s\in[0,\infty)$. In particular 
$c_A\le \alpha_s\le \frac12[K_*(a)]^{-1}$, for $s\in [\![0,\nu_I]\!]$.
Then, from \eqref{eq:tilde_varphi} we have
\begin{equation*}
\begin{aligned}
\varphi^*(t,x)\le&\sup_{\alpha\in\cA^*_x}\E\Big[\frac{1}{2 K_*(a)}(\sigma_t^\alpha\wedge\nu_I)+\tilde{\varphi}^*(Z^{t;\alpha}_{\nu_I},X^x_{\nu_I})\Big]\le \E\Big[\frac{1}{2 K_*(a)}\nu_I+\tilde{\varphi}^*(t,X^x_{\nu_I})\Big],
\end{aligned}
\end{equation*}
where in the second inequality we used that $t\mapsto\tilde{\varphi}^*(t,x)$ is non-increasing. Rewriting 
\begin{equation*}
\frac{1}{2 K_*(a)}\nu_I=\frac{1}{ 2c_A K_*(a)}X^x_{\nu_I}-\frac{1}{2c_A K_*(a)}(x+\sigma B_{\nu_I})
\end{equation*} 
and using again $\E[B_{\nu_I}]=0$ and the equivalence $\tilde \varphi^*=\varphi^*$ on $[0,T]\times I$, we obtain
\begin{equation*}
\tilde{\varphi}^*(t,x)\le\E\Big[\frac{1}{2 c_A K_*(a)}X^x_{\nu_I}+\tilde{\varphi}^*(t,X^x_{\nu_I})\Big]-\frac{1}{2 c_A K_*(a)}x.
\end{equation*}
Defining the function $\hat{f}(x;t)\coloneqq \tilde{\varphi}^*(t,x)+\frac{1}{2 c_A K_*(a)}x$, the equation above yields
\begin{equation*}
\hat{f}(x;t)\le\E\big[\hat{f}(X^x_{\nu_I};t)\big]=\hat{f}(a;t)\frac{S(b)-S(x)}{S(b)-S(a)}+\hat{f}(b;t)\frac{S(x)-S(a)}{S(b)-S(a)}.
\end{equation*}
Then, for fixed $t\in[0,T]$, the mapping $x\mapsto \hat f(x;t)$ is $S$-convex on $I$. Using again \cite[Prop.\ 2.6]{dayanik2003optimal} we obtain that $D_S^+\hat{f}(x;t)$ and $D_S^-\hat{f}(x;t)$ exist for all $x\in I$, both are non-decreasing in $x$, the first one is right-continuous and the second one is left-continuous. Moreover, by continuity of $S$ we have $D_S^+\hat{f}(x;t)=D_S^-\hat{f}(x;t)\eqqcolon D_S\hat f(x;t)$ for a.e.\ $x\in I$.
As in \eqref{eq:2nd_deriv}, we have
\begin{equation}\label{eq:hatfxx}
\partial_{x}\big(\partial^\pm_x \hat{f}(x;t)\big)=\partial_x (D^\pm_S\hat{f})(x;t)S'(x)+D^\pm_S\hat{f}(x;t)S''(x)\ge D^\pm_S\hat{f}(x;t)S''(x).
\end{equation}
Recalling that $\hat{f}(x;t)\coloneqq \tilde{\varphi}^*(t,x)+\frac{1}{2 c_A K_*(a)}x$ and $|\partial^\pm_x \tilde{\varphi}^*(t,x)|\le \delta_T/K_*(a)$ (Proposition \ref{prop:phi_bnd}--(ii)), we obtain for $x\in I$,
\begin{equation*}
\begin{aligned}
|D^\pm_S\hat{f}(x;t)|=\frac{1}{S'(x)}\big|\partial^\pm_x\hat{f}(x;t)\big|&\le \frac{\e^{2 c_A x/\sigma^2}}{2 c_A/\sigma^2}\Big(\frac{1}{2 c_A K_*(a)}+\big|\partial^\pm_x\tilde{\varphi}^*(t,x)\big|\Big)\le \frac{\e^{2 c_A b/\sigma^2}}{2 c_A K_*(a)/\sigma^2}\big(1+\delta_T\big),
\end{aligned}
\end{equation*}
where we also used $c_A\ge 1/2$ for the final inequality.
Combining the bound above with $|S''(x)|\le 4c_A^2/\sigma^4\e^{-2c_A a/\sigma^2}$ and \eqref{eq:hatfxx} we deduce
\begin{equation*}
\partial_x\big(\partial^\pm_x \hat{f}(x;t)\big)\ge -\frac{2c_A/\sigma^2\e^{ 2c_A (b-a)/\sigma^2}}{K_*(a)}\big(1+\delta_T\big)\eqqcolon -C_2.
\end{equation*}
Then, the mapping
\begin{equation}\label{eq:semiconv}
[a,b]\ni x\mapsto \varphi^*(t,x)+\frac{1}{2c_A K_*(a)}x+C_2 x^2
\end{equation}
is convex. 

Combining \eqref{eq:semiconc_A} with \eqref{eq:semiconv}, for any fixed $t\in[0,T]$ we know that $\partial_{xx}\varphi^*(t,\cdot)$ exists in the sense of distributions and it defines a signed measure on $I$. Moreover, for any test function $\zeta\in C^\infty_c(I)$, $\zeta\ge 0$, we get
\begin{equation*}
\Big|\int_I \zeta(x)\partial_{xx}\varphi^*(t,\ud x)\Big|\le \big(C_1+C_2\big)\int_I\zeta(x)\ud x.
\end{equation*} 
Then, there is a set $N_t\subset I$ of zero Lebesgue measure such that $\partial_{xx}\varphi^*(t,x)$ exists for $x\in I\setminus N_t$ and, moreover,
\begin{equation*}
\big\|\partial_{xx}\varphi^*(t,\,\cdot\,)\big\|_{L^\infty(I)}\le C_1+C_2.
\end{equation*}
The constants $C_1$ and $C_2$ depend on $I$ but are independent of $t$. That concludes the proof.
\end{proof}

From the proposition above, for each fixed $t\in[0,T]$ the weak derivative $\partial_{xx}\varphi^*(t,\,\cdot\,)$ is well-defined and locally bounded on $(0,\infty)$. However, a priori the null set where $\partial_{xx}\varphi^*(t,\,\cdot\,)$ is not defined may depend on $t$. In the next proposition we show that indeed $\partial_{xx}\varphi^*$ is well-defined as a locally bounded measurable function on $[0,T]\times[0,\infty)$, i.e., $\partial_{xx}\varphi^*\in L^\infty_{\ell oc}([0,T]\times[0,\infty))$. We recall the definition of the Sobolev space $W^{1,2;p}(\cO)$ as the space of functions $f\in L^p(\cO)$ whose weak derivatives $\partial_t f$, $\partial_x f$ and $\partial_{xx}f$ exist as functions in $L^p(\cO)$. We use $W^{1,2;p}_{\ell oc}(\cO)$ for the class of functions $f\in W^{1,2;p}(K)$ for any bounded subset $K$ of $\cO$.

The proof of our next proposition closely resembles the one of \cite[Prop.\ 9.3]{brezis2010functional} and so we postpone it to the Appendix.
\begin{proposition}\label{prop:W12p}
We have $\varphi^*\in W^{1,2;\infty}_{\ell oc}([0,T]\times[0,\infty))\cap C([0,T]\times[0,\infty))$.
\end{proposition}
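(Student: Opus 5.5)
Continuity on $[0,T]\times[0,\infty)$ is Proposition \ref{cor:contphi}, so the work is to exhibit weak derivatives $\partial_t\varphi^*$, $\partial_x\varphi^*$, $\partial_{xx}\varphi^*$ in $L^\infty$ on every bounded set of the form $[0,T]\times[\bar x,b]$, $0<\bar x<b$. (At $x=0$ the Lipschitz constant $\delta_T/K_*(\bar x)$ blows up, so I read ``local'' in the interior sense of $x$, exactly as the bounds of Propositions \ref{prop:phi_bnd} and \ref{prop:twice_diff} allow.) Fix such a rectangle $Q=(0,T)\times I$ with $I=(a,b)\Subset(0,\infty)$.

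First, first-order derivatives. By Proposition \ref{prop:phi_bnd}(i)–(ii), $\varphi^*$ is Lipschitz on $\bar Q$, with constant $\max\{1/(4c_A),\delta_T/K_*(a)\}$. By Rademacher's theorem, or more elementarily the characterisation of $W^{1,\infty}$ as Lipschitz functions in \cite[Prop.\ 9.3]{brezis2010functional}, the weak gradient exists and coincides a.e.\ with the pointwise one. In particular $\partial_t\varphi^*$ and $\partial_x\varphi^*$ lie in $L^\infty(Q)$, and $\partial_t\varphi^*\in[-1/(4c_A),0]$ a.e.

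Second, the second spatial derivative, which is the main obstacle. Proposition \ref{prop:twice_diff} gives, for each $t$, that $\partial_x\varphi^*(t,\cdot)$ is Lipschitz on $I$ with constant $C_0=C_1+C_2$, independent of $t$. The issue is joint measurability and identification as a distributional derivative on $Q$. I would argue via difference quotients, in the spirit of Brezis. Define
\[
g_h(t,x)=\frac{\varphi^*(t,x+h)-\varphi^*(t,x)}{h}
\]
on $Q_h=(0,T)\times(a,b-h)$. Each $g_h$ is continuous, and $g_h(t,\cdot)$ is Lipschitz with constant $C_0$ uniformly in $t,h$, by the semiconcavity/semiconvexity bounds \eqref{eq:semiconc_A}–\eqref{eq:semiconv}. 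Then for any $\zeta\in C_c^\infty(Q)$ and $h$ small, the standard estimate
\[
\Big|\int_Q \varphi^*\,\frac{\zeta(t,x+h)-2\zeta(t,x)+\zeta(t,x-h)}{h^2}\,\ud t\,\ud x\Big|\le C_0\|\zeta\|_{L^1(Q)}
\]
holds; it follows by writing the second difference of $\varphi^*$ as an integral of $\partial_x^\pm\varphi^*$ increments and using the $t$-uniform Lipschitz bound slice by slice, then Fubini. Letting $h\to0$ gives
\[
\Big|\int_Q\varphi^*\,\partial_{xx}\zeta\Big|\le C_0\|\zeta\|_{L^1(Q)}.
\]
Hence the linear functional $\zeta\mapsto\int\varphi^*\partial_{xx}\zeta$ extends to a bounded functional on $L^1(Q)$. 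By Riesz duality $(L^1)^*=L^\infty$ there is $w\in L^\infty(Q)$ with $\|w\|_\infty\le C_0$ and $\int\varphi^*\partial_{xx}\zeta=\int w\zeta$; this $w$ is the weak $\partial_{xx}\varphi^*$.

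Finally, I would check consistency: for a.e.\ $t$, $w(t,\cdot)$ equals the slicewise a.e.\ derivative from Proposition \ref{prop:twice_diff}. This follows from Fubini, testing with product functions $\zeta(t,x)=\chi(t)\theta(x)$. Being in $L^\infty(Q)\subset L^p(Q)$ on bounded sets, these derivatives put $\varphi^*\in W^{1,2;\infty}(Q)$ for every such $Q$, which concludes the plan. The delicate point is the uniform-in-$t$ control of the second differences, which is precisely why the independence of $C_0$ from $t$ in Proposition \ref{prop:twice_diff} matters.
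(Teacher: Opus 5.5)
Your proposal is correct and follows essentially the paper's argument: the Lipschitz bounds of Proposition \ref{prop:phi_bnd} give the first-order weak derivatives via \cite[Prop.\ 9.3]{brezis2010functional}, and the $t$-uniform slicewise control from Proposition \ref{prop:twice_diff} combined with Fubini yields $|\int\varphi^*\partial_{xx}\zeta|\le C_0\|\zeta\|_{L^1}$, after which $L^1$--$L^\infty$ duality produces $\partial_{xx}\varphi^*$. The only difference is cosmetic: the paper integrates by parts on each slice against the a.e.\ second derivative, where you use second difference quotients. Your interior-in-$x$ reading of ``local'' also matches what the paper actually proves, since its proof only treats compact $I\subset(0,\infty)$; indeed $\partial_x\varphi^*(t,0+)=+\infty$ by Theorem \ref{thm:GH}, so no bound up to $x=0$ could hold.
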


We conclude the section providing bounds on $\varphi^*$ akin to those arising from the dynamic programming principle but for a time-changed version of \eqref{eq:DPP}. The aim is to link $\varphi^*$ to the original problem \eqref{eq:orig_value} but in a different kind of weak formulation. Later on, we will show that indeed the optimally controlled dynamics exists in a strong sense and therefore all formulations are equivalent. We recall $\bar \cB$ from Definition \ref{def:B} (recall also Remark \ref{rem:barphi}). Now we introduce an analogue class of controls on our fixed probability space $(\Omega,\cF,\P)$, but in a weaker formulation. That is, a control $\beta$ is a collection $(\F',(\beta_t)_{t\ge 0}, (B'_t)_{t\ge 0},(W_t^{\beta})_{t\ge 0})$ such that $\F'\coloneqq(\cF'_t)_{t\ge 0}$ is a filtration on $\cF$, $(B'_t)_{t\ge 0}$ is a $\F'$-Brownian motion, $(\beta_t)_{t\ge 0}$ is a $\F'$-progressively measurable, process such that
$\E[\int_0^T\beta^2_s\ud s]<\infty$,
and the process $W^{\beta}$ follows the dynamics
\begin{equation}\label{eq:Wbetaweak}
W_t^{\beta}=W^\beta_0+\int^t_0 c_A \beta_v^2\,\ud v+\int^t_0\beta_v \,\sigma\ud B'_v,
\end{equation}
with $W^\beta_0$ an $\cF'_0$-measurable initial value.
When $W^\beta_0=x\in[0,\infty)$ we use the notation $W^{x;\beta}_t$ for the associated process.
We denote the class of such controls by $\cB$ and, for future reference, we notice that such class is analogous to the one used in \cite[Ch.\ 2, Sec.\ 4]{krylov2009controlled} (in the proof of Theorem \ref{thm:PDE_q-orig} we will use \cite[Cor.\ 2.4.8]{krylov2009controlled}).
Let us also consider a class $\cB^*_x\subset \cB$ of admissible controls defined as
\begin{equation*}
\cB^*_x\coloneqq \Big\{\beta\in\cB\,\Big|\,2K_*(W_s^{x;\beta})\le \beta_s\le \frac{1}{c_A},\,\text{for all}\, s\ge 0\Big\},
\end{equation*}
with the function $K_*(x)$ as in Remark \ref{rem:opt_cont}. 

In what follows, given a set $I\subset[0,\infty)$ we denote $\mathrm{cl}(I)$ its closure, relative to $[0,\infty)$.
For $x\in[0,\infty)$ we define $\cK_*(x)\coloneqq[2K_*(x),c^{-1}_A]$ and recall that $K_*$ is non-decreasing with $K_*(\infty)=1/(4 c_A)$. We must emphasise that a rigorous use of the DPP for controls from the class $\cB^*_x$ is not immediate, because those controls are both constrained and adapted to a generic filtration. Hence, the rigorous use of pseudo-Markovian property overarching the DPP is not granted (cf.\ \cite{claisse2016pseudo}). 
\begin{theorem}\label{thm:DPP_time}
Fix $(t,x)\in[0,T]\times(0,\infty)$. Let $I\subset (0,\infty)$ be a bounded open interval with $x\in I$ and $\mathrm{cl}(I)\subset (0,\infty)$. Given a process $\beta\in\cB$ we denote $\tau_I^\beta\coloneqq\inf\{s\ge 0: W_s^{x;\beta} \notin I\}$. Then, for all $\lambda\in[0,T-t]$, the function $\varphi^*$ satisfies the two properties below:
\begin{itemize}
\item[(i)] The upper bound holds,
\begin{equation}\label{eq:DPP2a}
\begin{aligned}
\varphi^*(t,x)\le \sup_{\beta\in\cB^*_x}\E\Big[\int_0^{\tau_I^\beta\wedge\lambda}(1-c_A\beta_u)\beta_u\ud u+\varphi^*\Big(t+\tau_I^\beta\wedge\lambda,W_{\tau_I^\beta\wedge\lambda}^{x;\beta}\Big)\Big].
\end{aligned}
\end{equation}
\item[(ii)] For any process $\beta\in\cB$ of the form 
\begin{equation}\label{eq:betaDPP}
\beta_s=\mathds{1}_{[0,\tau_I^{\beta}]}(s)\beta_0+\mathds{1}_{(\tau_I^{\beta},\infty)}(s)\frac{1}{c_A},\quad s\ge 0,
\end{equation}
with deterministic $\tfrac{1}{c_A}\ge \beta_0\ge 2K_*(\sup I)$, we have $\beta\in \cB_x^*$ and
\begin{equation*}
\begin{aligned}
\varphi^*(t,x)&\ge\E\Big[\int_0^{\tau_I^\beta\wedge\lambda}(1-c_A\beta_u)\beta_u\ud u+\varphi^*\Big(t+\tau_I^\beta\!\wedge\!\lambda,W_{\tau_I^\beta\wedge\lambda}^{x;\beta}\Big)\Big].
\end{aligned}
\end{equation*}
\end{itemize}
\end{theorem}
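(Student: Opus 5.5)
The plan is to transfer the dynamic programming principle \eqref{eq:DPP} for the auxiliary problem to the $\beta$-formulation through the time change of the heuristic section, now applied to the fixed probability space. For $\alpha\in\cA^*_x$ I set $A^{-1}(u)\coloneqq\int_0^u\alpha_v^2\ud v$, so that $Z^{t;\alpha}_u=t+A^{-1}(u)$. Since $\alpha\ge c_A$, this map is strictly increasing, continuous and onto $[0,\infty)$; let $A(s)$ denote its inverse. I then define
\begin{equation*}
\beta_s\coloneqq 1/\alpha_{A(s)},\qquad W_s\coloneqq X^x_{A(s)},\qquad \cF'_s\coloneqq\cF_{A(s)},\qquad B'_s\coloneqq\int_0^{A(s)}\alpha_v\ud B_v .
\end{equation*}
Each $A(s)$ is an $\F$-stopping time. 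The process $B'$ is a continuous $\F'$-local martingale with $\langle B'\rangle_s=A^{-1}(A(s))=s$, hence an $\F'$-Brownian motion by L\'evy's characterisation. Using $\ud A(s)=\beta_s^2\ud s$, the change of variables in the Lebesgue and stochastic integrals gives
\begin{equation*}
W_s=x+\int_0^s c_A\beta_v^2\ud v+\int_0^s\beta_v\sigma\ud B'_v ,
\end{equation*}
so the collection $(\F',\beta,B',W)$ belongs to $\cB$ with $W=W^{x;\beta}$, and $\beta$ is bounded by $1/c_A$. The running reward transforms as $\int_0^{A(s)}(\alpha_u-c_A)\ud u=\int_0^s(1/\beta_v-c_A)\beta_v^2\ud v=\int_0^s(1-c_A\beta_v)\beta_v\ud v$. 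Finally, the constraint $c_A\le\alpha\le\frac12[K_*(X^x)]^{-1}$ becomes exactly $2K_*(W^{x;\beta})\le\beta\le 1/c_A$, so $\beta\in\cB^*_x$.

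For part (i), I fix $\eps>0$ and an $\eps$-optimal $\alpha\in\cA^*_x$ for $\varphi^*(t,x)$. I apply \eqref{eq:DPP} with the $\F$-stopping time
\begin{equation*}
\tau\coloneqq\nu_I\wedge\inf\{u\ge0:\,A^{-1}(u)=\lambda\},\qquad \nu_I\coloneqq\inf\{u\ge0:\,X^x_u\notin I\}.
\end{equation*}
Under the time change, $A^{-1}(\tau)=\tau^\beta_I\wedge\lambda$. Because $\mathrm{cl}(I)\subset(0,\infty)$, we have $\tau\le\rho_x$, and $\lambda\le T-t$ gives $\tau\le\sigma^\alpha_t$. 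Moreover $Z^{t;\alpha}_\tau=t+\tau^\beta_I\wedge\lambda$ and $X^x_\tau=W^{x;\beta}_{\tau^\beta_I\wedge\lambda}$. Inserting these identities into \eqref{eq:DPP}, the bracket under the expectation becomes exactly the one in \eqref{eq:DPP2a}, up to the error $\eps$. Taking the supremum over $\cB^*_x$ and letting $\eps\downarrow0$ yields (i). Here it matters that $\cB$ allows a generic filtration: $\F'$ is the time-changed filtration, not a Brownian filtration.

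For part (ii), I first check that $\beta\in\cB^*_x$. Before exit, $W\in I$ and $K_*$ is non-decreasing, so $2K_*(W)\le2K_*(\sup I)\le\beta_0$. After exit, $2K_*\le 1/(2c_A)\le1/c_A$. Next I observe that the law of the pair $\big((W^{x;\beta}_{s\wedge\tau^\beta_I})_{s\ge0},\tau^\beta_I\big)$ is uniquely determined: up to $\tau^\beta_I$ the process is $x+c_A\beta_0^2s+\sigma\beta_0B'_s$. Since the functional inside the expectation depends only on $W$ stopped at $\tau^\beta_I\wedge\lambda$, its expectation is the same on any space. I can therefore realise it on the fixed space with $\alpha_u\coloneqq\beta_0^{-1}\mathds 1_{\{u\le\nu_I\}}+c_A\mathds 1_{\{u>\nu_I\}}$. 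This control lies in $\cA^*_x$, because $\beta_0^{-1}\le\frac12[K_*(\sup I)]^{-1}\le\frac12[K_*(X^x_u)]^{-1}$ on $[\![0,\nu_I]\!]$. The time change of the first paragraph maps this $\alpha$ to a $\beta$ of the form \eqref{eq:betaDPP}, with $W^{x;\beta}_{s}=X^x_{s/\beta_0^2}$ up to exit. The inequality ``$\ge$'' in \eqref{eq:DPP}, with the same $\tau$ as before and this fixed admissible $\alpha$, then gives (ii) after undoing the time change.

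I expect the main obstacle to be the rigorous justification of the time change in (i). One must check that $\beta$ is $\F'$-progressively measurable, that $B'$ is a Brownian motion in $\F'$ and not merely in its own filtration, and that the stochastic integral $\int\beta\,\ud B'$ coincides with $X^x_{A(\cdot)}-x-c_A A(\cdot)$. I would obtain all three from the standard time-change results for continuous local martingales (Revuz--Yor, Ch.~V), which apply because $A(s)$ is a continuous, strictly increasing family of bounded stopping times, with $A(s)\le s/c_A^2$.
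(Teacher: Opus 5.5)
\textbf{Part (i): a gap in the time change.} Part (i) follows the paper's route: you time-change an $\eps$-optimal control in \eqref{eq:DPP}, and your $B'_s=\int_0^{A(s)}\alpha_v\ud B_v$ is the paper's $\widetilde B^\alpha$ written in one step. However, one step fails as stated. The map $u\mapsto\int_0^u\alpha_v^2\ud v$ is not continuous and onto $[0,\infty)$ merely because $\alpha\ge c_A$.
\begin{itemize}
\item The class $\cA^*_x$ imposes no integrability on $\alpha$.
\item Its only upper bound, $\tfrac12[K_*(X^x_u)]^{-1}$, is unbounded as $X^x$ approaches $0$.
\item So for an admissible $\alpha$ the integral can jump to $+\infty$ at a finite time, e.g.\ right after $\rho_x$.
\end{itemize}
In that case $A(s)$ is undefined for large $s$, $B'$ is not a Brownian motion on $[0,\infty)$, and the tuple is not in $\cB$.

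The repair is the one the paper uses: replace $\alpha$ by $c_A$ on $(\!(\tau,\infty)\!)$. This changes neither $\tau$ nor the bracket in \eqref{eq:DPP}. On $[\![0,\tau]\!]$ you have $X^x\in\mathrm{cl}(I)$, hence $c_A\le\alpha\le\tfrac12[K_*(\inf I)]^{-1}$. The modified control is therefore bounded, and all your time-change claims hold.

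Two smaller points. First, choose $\alpha$ $\eps$-optimal for the right-hand side of \eqref{eq:DPP} with $\tau=\tau(\alpha)$, as the paper does, not for the original criterion. Otherwise you need a per-control inequality that \eqref{eq:DPP}, being an identity between suprema, does not literally provide. Second, in (ii) one has $W^{x;\beta}_s=X^x_{\beta_0^2 s}$, not $X^x_{s/\beta_0^2}$.

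\textbf{Part (ii): correct, by a different route.} The paper proceeds as follows:
\begin{itemize}
\item It time-changes a given $\beta\in\cB^*_x$ backwards via Dambis--Dubins--Schwarz, giving an $\alpha$ driven by a new Brownian motion $\widetilde B^\beta$ in the time-changed filtration.
\item It establishes \eqref{eq:timechangeDPP} for every such $\beta$ bounded away from zero.
\item Only at the last step does it use the form \eqref{eq:betaDPP}, to see that $\alpha$ is adapted to the filtration generated by $\widetilde B^\beta$. There \eqref{eq:DPP} applies, implicitly using that $\varphi^*$ does not depend on which Brownian motion defines the strong formulation.
\end{itemize}
Your argument instead runs as follows:
\begin{itemize}
\item For $\beta$ of the form \eqref{eq:betaDPP}, the stopped pair $(W^{x;\beta}_{\cdot\wedge\tau^\beta_I},\tau^\beta_I)$ is an explicit functional of a Brownian motion. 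So the expectation in (ii) is the same for all such $\beta$.
\item You realise one such $\beta$ on the fixed space by forward time-changing the explicit control $\beta_0^{-1}\mathds 1_{\{u\le\nu_I\}}+c_A\mathds 1_{\{u>\nu_I\}}\in\cA^*_x(\F)$.
\item You then invoke the trivial inequality in \eqref{eq:DPP}.
\end{itemize}

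Your version is more economical. It reuses the forward time change, needs no Dambis--Dubins--Schwarz, and its only law-invariance concerns an explicit Brownian functional rather than the value function. The paper's version buys structural information: everything except the final application of the DPP works for arbitrary $\beta\in\cB^*_x$. That isolates the missing pseudo-Markov property in a non-Brownian filtration as the only obstruction to a full DPP over $\cB^*_x$.
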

\begin{proof}
We start by proving (i). From \eqref{eq:DPP}, for any $\eps>0$ we can choose $\hat \alpha=\hat \alpha^\eps\in\cA^*_x(\F)$ such that 
\begin{equation}\label{eq:DPPalpha}
\varphi^*(t,x)\le \E\Big[\int_0^{\nu_I\wedge \zeta^\lambda(\hat \alpha) }(\hat \alpha_u-c_A)\ud u+\varphi^*\big(Z^{t;\hat \alpha}_{\nu_I\wedge \zeta^\lambda(\hat \alpha)},X_{\nu_I\wedge \zeta^\lambda(\hat \alpha)}^x\big)\Big]+\eps,
\end{equation}
where $\zeta^\lambda(\hat \alpha)\coloneqq\inf\{s\ge 0: Z^{t;\hat \alpha}_s=t+\lambda\}$ and $\nu_I=\nu_I(x)\coloneqq\inf\{s\ge 0:X^x_s\notin I\}$. We then introduce the process $\alpha_u=\hat \alpha_u$ for $u\in[\![0,\nu_I\wedge\zeta^\lambda(\hat \alpha)]\!]$ and $\alpha_u= c_A$ for $u\in(\!(\nu_I\wedge\zeta^\lambda(\hat \alpha),\infty)\!)$ and notice that $\nu_I\wedge\zeta^\lambda(\hat \alpha)=\nu_I\wedge\zeta^\lambda(\alpha)$.

We can use the strictly increasing process
\begin{equation*}
S^\alpha(u)\coloneqq Z^{t:\alpha}_u-t=\int_0^u \alpha^2_r\ud r,\quad u\ge 0,
\end{equation*}
to define a time-change. Letting the process $s\mapsto T^\alpha(s)$ be the inverse of $u\mapsto S^\alpha(u)$, we get
\begin{equation*}
T^\alpha(s)=\int_0^s\alpha^{-2}_{T^\alpha(u)}\ud u=\int_0^s\beta^{2}_{u}\ud u,\quad s\ge 0,
\end{equation*}
where we set $\beta_u\coloneqq \alpha^{-1}_{T^\alpha(u)}$. The process $M^\alpha_s\coloneqq B_{T^\alpha(s)}$ is a continuous martingale with respect to the time-changed filtration $\H^\alpha=(\cH^\alpha_s)_{s\ge 0}$ with $\cH^\alpha_s=\cF_{T^\alpha(s)}$, with quadratic variation $\langle M^\alpha\rangle_s=T^\alpha(s)$ (cf.\ \cite[Prop.\ V.1.5]{revuz2013continuous}). We also notice that by construction $\beta_u$ is $\H^\alpha$-progressively measurable (cf.\ \cite[Prop.\ V.1.4]{revuz2013continuous}) with $\beta_u\ge 2 K_*\big(X^x_{T^\alpha(u)}\big)\ge 2 K_*(\inf I)>0$ because of the definition of $\alpha_t$ in terms of $\hat \alpha\in\cA^*_x(\F)$.

Now, let us define a $\H^\alpha$-adapted process $(\widetilde B^\alpha_s)_{s\ge 0}$ as
\begin{equation*}
\widetilde B^\alpha_s\coloneqq\int_0^s\beta^{-1}_u\ud M^\alpha_u=\int_0^s\alpha_{T^\alpha(u)}\ud M^\alpha_u,\quad s\ge 0.
\end{equation*}
It is clear that $\widetilde B^\alpha$ is a continuous martingale with quadratic variation 
\begin{equation*}
\langle\widetilde B^\alpha\rangle_s=\int_0^s\beta^{-2}_u\ud\langle M^\alpha\rangle_u,
=\int_0^s\beta^{-2}_u\ud T^\alpha(u)=s.
\end{equation*}
and therefore it is a $\H^\alpha$-Brownian motion. Moreover, $M^\alpha_s=\int_0^s \beta_u\ud \widetilde B^\alpha_u$. Notice that $Z^{t;\alpha}_{T^\alpha(s)}=t+s$ and let us now define a process 
\begin{equation*}
W^{x;\beta}_s\coloneqq X^{x}_{T^\alpha(s)}=x+c_A T^\alpha(s)+\sigma M^\alpha_s=x+c_A\int_0^s\beta^2_u\ud u+\int_0^s \sigma\beta_u\ud \widetilde B^\alpha_u,\quad s\ge 0.
\end{equation*} 
We emphasise that we are slightly abusing notation, because $\beta$ depends on $\alpha$ and therefore we should use $\beta^\alpha$. However, we drop the superscript for the ease of notation. 

We recall that $\beta_s=\alpha^{-1}_{T^\alpha(s)}$ is $\H^\alpha$-progressively measurable and since $\alpha\in\cA^*_x(\F)$ (cf.\ \eqref{eq:AK}) it satisfies the bounds
\begin{equation*}
2K_*(X_{T^\alpha(s)}^x)=2K_*(W^{x;\beta}_s)\le \beta_s\le \frac{1}{c_A}.
\end{equation*}
Then $\beta=(\H^\alpha,(\beta_t)_{t\ge 0},(\widetilde B^\alpha_t)_{t\ge 0},(W^\beta_t)_{t\ge 0})\in\cB^*_x$. Recalling $\tau^\beta_I=\inf\{s\ge 0: W_s^{x;\beta}\notin I\}$, we have $\nu_I=0\iff x\notin I\iff \tau^\beta_I=0$. If instead $\tau^\beta_I>0$, we have
\begin{equation}\label{eq:change_exit}
\begin{aligned}
\tau^\beta_I>t\quad& \iff\quad W_u^{x;\beta}\in I\text{ for all $u\in[0,t]$}\\
&\iff\quad X_{T^\alpha(u)}^{x} \in I\text{ for all $u\in[0,t]$}\\
& \iff\quad X_{r}^{x}\in I\text{ for all $r \in[0,T^\alpha(t)]$}\iff T^\alpha(t)<\nu_I.
\end{aligned}
\end{equation}
Taking $t=S^\alpha(r)$ the above equivalence reads as $\nu_I>r\iff \tau^\beta_I>S^\alpha(r)$.
By continuity of $T^\alpha(\cdot)$, we have $\lim_{t\uparrow \tau^\beta_I} T^{\alpha}(t)=T^{\alpha}(\tau^\beta_I)\le \nu_I$. In particular, $\tau^\beta_I=\infty\implies \nu_I=\infty$. By the above chain of equivalences we can also show the reverse implication, i.e., $\nu_I=\infty\implies \tau^\beta_I=\infty$. Moreover, on $\{\nu_I<\infty\}=\{\tau^\beta_I<\infty\}$ we have $X^{x}_{T^\alpha(\tau^\beta_I)}=W_{\tau^\beta_I}^{x;\beta}\in \partial I$, which implies $T^{\alpha}(\tau^\beta_I)\ge \nu_I$. Combining the two inequalities we have $\nu_I=T^{\alpha}(\tau^\beta_I)$ and therefore $S^\alpha(\nu_I)=\tau^\beta_I$ because the time change is invertible. Now, notice that
$S^\alpha(\nu_I\wedge \zeta^\lambda(\alpha))=S^\alpha(\nu_I)\wedge S^\alpha(\zeta^\lambda(\alpha))=\tau^\beta_I\wedge \lambda$
and $W^{x;\beta}_{S^\alpha(u)}=X^x_u$.
Therefore, we deduce 
\begin{equation*}
\begin{aligned}
\Big(t+\int_0^{\nu_I\wedge\zeta^\lambda(\alpha)}\alpha^2_s\ud s, X^{x}_{\nu_I\wedge\zeta^\lambda(\alpha)}\Big)=\Big(t+S^\alpha(\nu_I)\wedge \lambda, W^{x;\beta}_{S^\alpha(\nu_I)\wedge\lambda}\Big)=\Big(t+\tau_I^\beta\wedge\lambda,W_{\tau_I^\beta\wedge\lambda}^{x;\beta}\Big).
\end{aligned}
\end{equation*}
Furthermore, we have 
\begin{equation*}
\begin{aligned}
\int_0^{\nu_I\wedge\zeta^\lambda(\alpha)}\big(\alpha_s-c_A\big)\ud s=\int_0^{\nu_I\wedge\zeta^\lambda(\alpha)}\big(\beta^{-1}_{S^\alpha(u)}-c_A\big)\ud u=\int_0^{\tau_I^\beta\wedge\lambda}(1-c_A\beta_s)\beta_s\ud s,
\end{aligned}
\end{equation*}
where in the final equality we changed variable of integration setting $u=T^\alpha(s)$. Recalling \eqref{eq:DPPalpha} and $\alpha_t=\hat\alpha_t$ for $t\in[\![0,\nu_I\wedge\zeta^\lambda(\hat\alpha)]\!]=[\![0,\nu_I\wedge\zeta^\lambda(\alpha)]\!]$ we deduce
\begin{equation*}
\begin{aligned}
\varphi^*(t,x)\le \E\Big[\int_0^{\tau_I^\beta\wedge\lambda}(1-c_A\beta_u)\beta_u\ud u+\varphi^*(t+\tau_I^\beta\wedge\lambda,W_{\tau_I^\beta\wedge\lambda}^{x;\beta})\Big]+\eps.
\end{aligned}
\end{equation*}
Recall that $\beta_t=\beta^\alpha_t$ by construction. Since $\beta=(\H^\alpha,(\beta^\alpha_t)_{t\ge 0},(\widetilde B^\alpha_t)_{t\ge 0},(W^{\beta^\alpha}_t)_{t\ge 0})\in\cB^*_x$, then 
\begin{equation*}
\begin{aligned}
\varphi^*(t,x)\le \sup_{\beta\in\cB^*_x}\E\Big[\int_0^{\tau_I^\beta\wedge\lambda}(1-c_A\beta_u)\beta_u\ud u+\varphi^*(t+\tau_I^\beta\wedge\lambda,W_{\tau_I^\beta\wedge\lambda}^{x;\beta})\Big]+\eps,
\end{aligned}
\end{equation*}
and letting $\eps\to 0$ concludes the proof of (i). 

In the remainder of the proof we show (ii). It is clear by definition of the process $\beta$ in \eqref{eq:betaDPP} that it belongs to $\cB_x^*$, because $2K_*(W^{x;\beta}_t)\le 2K_*(\sup I)\le 1/c_A$ for $t\in[\![0,\tau^\beta_I]\!]$. Indeed, all arguments here are valid for an arbitrary control $\beta\in\cB^*_x$, with the sole exception of the use of DPP in the final paragraph of the proof. Thus, we give our arguments in slight more generality than needed for (ii). 

Let us fix a control $\beta=(\F',(\beta_t)_{t\ge0}, (B'_t)_{t\ge 0},(W^\beta_t)_{t\ge 0})\in\cB^*_x$ with $\beta_t\ge \delta$ for all $t\ge 0$ for some $\delta>0$. The process
$T_\beta(t)\coloneqq\int_0^t\beta_u^2\ud u$ is strictly increasing, absolutely continuous with $T_\beta(0)=0$ and $T_\beta(\infty)=\infty$. 
Its inverse $S_\beta(t)\coloneqq (T_\beta)^{-1}(t)$ satisfies $S_\beta(0)=0$, $S_\beta(\infty)=\infty$ and 
\begin{equation*}
\frac{\ud }{\ud t}S_\beta(t)=\frac{1}{\beta^2_{S_\beta(t)}}.
\end{equation*}
Then, the representation $S_\beta(t)=\int_0^t \beta^{-2}_{S_\beta(s)}\ud s$ holds for $t\ge 0$.

By Dambis--Dubins--Schwarz' theorem \cite[Thm.\ 3.4.6]{karatzas2014brownian}, the process $(\widetilde B^\beta_t)_{t\ge 0}$ defined by
$\widetilde B^\beta_t=\int_0^{S_\beta(t)}\beta_s\ud B'_s$,
is a Brownian motion for the time-changed filtration $\G^\beta=(\cG^\beta_t)_{t\ge 0}$, with $\cG^\beta_t\coloneqq \cF'_{S_\beta(t)}$. Moreover, 
$\int_0^s\beta_{u}\ud B'_u=\widetilde B^\beta_{T_\beta(t)}$.
Using the formula above we easily see that 
\begin{equation*}
W^{x;\beta}_t=x+c_A T_\beta(t)+\sigma\widetilde B^\beta_{T_\beta(t)}= X^{x;\beta}_{T_\beta(t)},
\end{equation*}
where $X^{x;\beta}_t=x+c_A t+\sigma\widetilde B^\beta_t$ for $t\ge 0$. By the time change, $X^{x;\beta}_t=W^{x;\beta}_{S_\beta(t)}$.

Letting $\nu^\beta_I\coloneqq\inf\{s\ge 0: X_s^{x;\beta}\notin I\}$, the exact same arguments as in \eqref{eq:change_exit}
yield $\nu^\beta_I=T_{\beta}(\tau^\beta_I)$ and therefore $S_\beta(\nu^\beta_I)=\tau^\beta_I$ because the time change is invertible. It is also worth noticing that, because $T_\beta$ is the inverse of $S_\beta$ and the latter is continuous, for any $\lambda>0$, 
\begin{equation*}
T_\beta(\lambda)=\inf\big\{t\ge 0:S_\beta(t)=\lambda\big\}=\inf\Big\{t\ge 0:\int_0^t\beta^{-2}_{S_\beta(s)}\ud s=\lambda\Big\}\eqqcolon \hat \sigma^\lambda_\beta.
\end{equation*}
Now $T_\beta(\tau_I^\beta\wedge \lambda)=T_\beta(\tau_I^\beta)\wedge T_\beta(\lambda)=\nu^\beta_I\wedge \hat \sigma^\lambda_\beta$ and, by taking the inverse, also 
\begin{equation*}
\tau^\beta_I\wedge\lambda=S_\beta(\nu^\beta_I\wedge\hat \sigma^\lambda_\beta)=\int_0^{\nu^\beta_I\wedge\hat \sigma^\lambda_\beta}\beta^{-2}_{S_\beta(s)}\ud s=\int_0^{\nu^\beta_I\wedge\hat \sigma^\lambda_\beta}\alpha^2_s\ud s,
\end{equation*} 
where we set $\alpha_t\coloneqq 1/\beta_{S_\beta(t)}$. 

The above formulae allow us to represent the pair $\big(t+\tau_I^\beta\wedge\lambda,W_{\tau_I^\beta\wedge\lambda}^{x;\beta}\big)$ as follows:
\begin{equation*}
\begin{aligned}
(t+\tau_I^\beta\wedge\lambda,W_{\tau_I^\beta\wedge\lambda}^{x;\beta})&=\Big(t+\int_0^{\nu^\beta_I\wedge\hat \sigma^\lambda_\beta}\beta^{-2}_{S_\beta(s)}\ud s, X^{x;\beta}_{\nu^\beta_I\wedge\hat\sigma^\lambda_\beta}\Big)=\Big(t+\int_0^{\nu^\beta_I\wedge\hat \sigma^\lambda_\beta}\alpha^2_s\ud s, X^{x;\beta}_{\nu^\beta_I\wedge\hat\sigma^\lambda_\beta}\Big).
\end{aligned}
\end{equation*}
Since $\alpha_{T_\beta(t)}=\beta^{-1}_t$, we have by a simple change of variable of integration
\begin{equation*}
\begin{aligned}
\int_0^{\tau_I^\beta\wedge\lambda}(1-c_A\beta_u)\beta_u\ud u&=\int_0^{\tau_I^\beta\wedge\lambda}\big(\alpha_{T_\beta(u)}-c_A\big)\ud T_\beta(u)=\int_0^{\nu^\beta_I\wedge\hat\sigma^\lambda_\beta}\big(\alpha_s-c_A\big)\ud s.
\end{aligned}
\end{equation*}
Then, for any $\beta\in \cB^*_x$ with $\beta_t\ge \delta>0$ for all $t\ge 0$ we have
\begin{equation}\label{eq:timechangeDPP}
\begin{aligned}
&\E\Big[\int_0^{\tau_I^\beta\wedge\lambda}(1-c_A\beta_u)\beta_u\ud u+\varphi^*\big(t+\tau_I^\beta\wedge\lambda,W_{\tau_I^\beta\wedge\lambda}^{x;\beta}\big)\Big]\\
&=\E\Big[\int_0^{\nu^\beta_I\wedge\hat\sigma^\lambda_\beta}\big(\alpha_s-c_A\big)\ud s+\varphi^*\Big(t+\int_0^{\nu^\beta_I\wedge\hat \sigma^\lambda_\beta}\alpha^2_s\ud s, X^{x;\beta}_{\nu^\beta_I\wedge\hat\sigma^\lambda_\beta}\Big)\Big].
\end{aligned}
\end{equation}
Finally, notice that $(\alpha_t)_{t\ge 0}$ is progressively measurable for the time-changed filtration $\G^\beta$ (cf.\ \cite[Prop.\ V.1.4]{revuz2013continuous}) and it satisfies the bounds
\begin{equation*}
c_A\le \alpha_t\le \frac{1}{2 K_*(W^{x;\beta}_{S_\beta(t)})}=\frac{1}{2 K_*(X^{x;\beta}_{t})},\quad t\ge 0,
\end{equation*}
because $2K_*(W^{x;\beta}_t)\le \beta_t\le 1/c_A$. 

For our choice of $\beta$ in \eqref{eq:betaDPP}, the form of the process $W^{x;\beta}$ is particularly simple and it is easy to check that $\tau^\beta_I$ is a stopping time for the filtration generated by the increments of the Brownian motion $B'$. Therefore, the process $\beta$ itself is adapted to such filtration and it is not hard to verify that the process $\alpha_t=1/\beta_{S_\beta(t)}$ can be expressed as
\begin{equation*}
\alpha_t=\mathds{1}_{[0,\nu_I^{\beta}]}(t)\frac{1}{\beta_0}+\mathds{1}_{(\nu_I^{\beta},\infty)}(t)c_A,\quad t\ge 0.
\end{equation*}
Because of this construction, both the process $X^{x;\beta}$ and the control $(\alpha_t)_{t\ge 0}$ are adapted to the filtration $\widetilde\F^\beta=(\widetilde\cF^\beta_t)_{t\ge 0}$ generated by the increments of the Brownian motion $\widetilde B^\beta$. 
Therefore, the control $(\alpha_t)_{t\ge 0}$ 
is one particular process in the class $\cA^*_x(\widetilde\F^\beta)$. This allows the use of DPP as in \eqref{eq:DPP}: for an arbitrary $\hat\alpha\in\cA^*_x(\widetilde\F^\beta)$ we set $\zeta^\lambda(\hat \alpha)=\inf\{s\ge 0: Z^{t;\hat \alpha}_s=t+\lambda\}$
and \eqref{eq:timechangeDPP} yields
\begin{equation*}
\begin{aligned}
&\E\Big[\int_0^{\tau_I^\beta\wedge\lambda}(1-c_A\beta_u)\beta_u\ud u+\varphi^*\big(t+\tau_I^\beta\wedge\lambda,W_{\tau_I^\beta\wedge\lambda}^{x;\beta}\big)\Big]\\
&\le \sup_{\hat \alpha\in\cA^*_x(\widetilde\F^\beta)}\E\Big[\int_0^{\nu^\beta_I\wedge\zeta^\lambda(\hat \alpha)}\big(\hat \alpha_s-c_A\big)\ud s+\varphi^*\Big(Z^{t;\hat \alpha}_{\nu^\beta_I\wedge\zeta^\lambda(\hat \alpha)}, X^{x;\beta}_{\nu^\beta_I\wedge\zeta^\lambda(\hat \alpha)}\Big)\Big]=\varphi^*(t,x),
\end{aligned}
\end{equation*}
where the final equality holds thanks to the DPP in \eqref{eq:DPP}. This completes the proof of (ii). 
\end{proof}

\section{The Hamilton-Jacobi-Bellman equation}\label{sec:HJB}

Theorem \ref{thm:DPP_time} of the previous section gives us a form of the Dynamic Programming Principle that we are now going to use in order to derive an HJB equation for $\varphi^*$. Let us introduce some notations. Given a sufficiently regular function $f:[0,T]\times[0,\infty)\to \R$ and a constant $\beta\in\R$, we denote 
\begin{equation*}
H(t,x;\beta,f)=\Big(\frac{\sigma^2}{2}\beta^2\partial_{xx}f\!+\!c_A\beta^2\partial_{x}f\Big)(t,x)+(1-c_A\beta)\beta,
\end{equation*}
for $(t,x)\in[0,T]\times[0,\infty)$. For $x\in[0,\infty)$ recall $\cK_*(x)=[2K_*(x),c^{-1}_A]$ and let us introduce the Hamiltonian of our problem:
\begin{equation}\label{eq:HK}
\cH^*_\cK(t,x;f)\coloneqq\sup_{\beta\in\cK_*(x)}H(t,x;\beta,f).
\end{equation}
Our next theorem turns the DPP-like results from Theorem \ref{thm:DPP_time} into an HJB equation.
\begin{theorem}\label{thm:PDE_q-orig}
The function $\varphi^*$ belongs to $W^{1,2;\infty}_{\ell oc}([0,T]\times[0,\infty))\cap C([0,T]\times[0,\infty))$ and it satisfies 
\begin{equation}\label{eq:PDE_q-orig}
\begin{cases}\displaystyle
\partial_t\varphi^*(t,x)+\cH^*_{\cK}(t,x;\varphi^*)=0& \text{a.e.}\ (t,x)\in[0,T)\times(0,\infty),\\
\varphi^*(t,0)=0& \forall t\in[0,T),\\
\varphi^*(T,x)=0& \forall x\in[0,\infty).
\end{cases}
\end{equation}
\end{theorem}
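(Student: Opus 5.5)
The regularity and boundary conditions are already in hand. Proposition \ref{prop:W12p} gives $\varphi^*\in W^{1,2;\infty}_{\ell oc}\cap C$. The boundary values follow directly from \eqref{eq:vphiK}: $\rho_0=0$ gives $\varphi^*(t,0)=0$, and $\sigma^\alpha_T=0$ gives $\varphi^*(T,x)=0$. So what remains is the a.e.\ equation in the interior. I would prove it as two opposite inequalities, using Theorem \ref{thm:DPP_time}(ii) for ``$\ge$'' and Theorem \ref{thm:DPP_time}(i) for ``$\le$''. The tool for applying a generalised It\^o formula to a $W^{1,2;\infty}_{\ell oc}$ function along the controlled process $W^{x;\beta}$ is Krylov's \cite[Cor.\ 2.4.8]{krylov2009controlled}. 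It applies because the diffusion coefficient $\sigma\beta$ is bounded below by $2\sigma K_*(\inf I)>0$ on a bounded interval $I$ with $\mathrm{cl}(I)\subset(0,\infty)$, and bounded above by $\sigma/c_A$. With this non-degeneracy, $\E\int_0^{\tau_I^\beta\wedge\lambda}|g|(t+u,W_u)\ud u\le N\|g\|_{L^{2}}$ (Krylov's estimate), so It\^o's formula holds for $W^{1,2}$ functions and null sets are never charged.

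\emph{Step 1: the lower inequality $\partial_t\varphi^*+\cH^*_\cK\le 0$ a.e.} Fix a rational $\beta_0\in(0,1/c_A]$ and a rectangle $[t_0,t_0+h]\times I$ with $2K_*(\sup I)\le\beta_0$. Apply Theorem \ref{thm:DPP_time}(ii) with the control \eqref{eq:betaDPP} and use It\^o--Krylov up to $\tau_I^\beta\wedge\lambda$, where the control equals the constant $\beta_0$. This gives
\[
0\ge \E\int_0^{\tau_I^\beta\wedge\lambda}\big(\partial_t\varphi^*+H(\cdot;\beta_0,\varphi^*)\big)(t+u,W_u^{x;\beta})\ud u .
\]
Dividing by $\lambda$ and letting $\lambda\downarrow0$ at Lebesgue points would give $\partial_t\varphi^*+H(t,x;\beta_0,\varphi^*)\le0$, but only for a.e.\ $(t,x)$ along a fixed starting point, which is awkward. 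A cleaner route is to argue by contradiction. Suppose $\partial_t\varphi^*+H(\cdot;\beta_0,\varphi^*)>\delta$ on a set $E$ of positive measure. Pick a density point and approximate $\mathds{1}_E$ by continuous functions. Krylov's estimate makes the contribution of the complement of $E$ small, uniformly in starting points near the density point, and this yields a contradiction. Taking the union over rational $\beta_0$ and using continuity of $\beta\mapsto H$ gives the inequality for $\sup_{\beta\in\cK_*(x)}$. Since $K_*$ is continuous, rationals in $\cK_*(x)$ are locally admissible in the sense $2K_*(\sup I)\le\beta_0$.

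\emph{Step 2: the upper inequality $\partial_t\varphi^*+\cH^*_\cK\ge 0$ a.e.} Suppose instead that $\partial_t\varphi^*+\cH^*_\cK<-\delta$ on a set of positive measure. Take a density point $(t,x)$ and a small $I\ni x$. From Theorem \ref{thm:DPP_time}(i), choose an $\eps$-optimal $\beta\in\cB^*_x$ and apply It\^o--Krylov:
\[
-\eps\lambda\le \E\int_0^{\tau_I^\beta\wedge\lambda}\big(\partial_t\varphi^*+H(\cdot;\beta_u,\varphi^*)\big)\ud u\le \E\int_0^{\tau_I^\beta\wedge\lambda}\big(\partial_t\varphi^*+\cH^*_\cK\big)(t+u,W_u)\ud u ,
\]
using $\beta_u\in\cK_*(W_u)$. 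Here $\eps$ is chosen as $o(\lambda)$, which is possible since $\eps$ is arbitrary. The right-hand side is $\le-\delta\lambda/2$ plus an error on the bad set. Krylov's estimate bounds that error uniformly over the admissible class, because all controls in $\cB^*_x$ share the same ellipticity bounds on $I$. We also need $\P(\tau_I^\beta\le\lambda)=o(1)$ uniformly, which holds because the coefficients are bounded. Together these give a contradiction.

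The main obstacle is exactly this localisation: passing from integrated inequalities along trajectories to pointwise a.e.\ statements for a function that is only $W^{1,2;\infty}_{\ell oc}$, uniformly over a weak class of controls with generic filtrations. I would first try to use \cite[Cor.\ 2.4.8]{krylov2009controlled} together with a mollification $\varphi^*_n=\varphi^*\ast\chi_n$. The point is that $\partial_t\varphi^*_n+\cH^*_\cK(\cdot;\varphi^*_n)\to\partial_t\varphi^*+\cH^*_\cK(\cdot;\varphi^*)$ in $L^p_{\ell oc}$. This holds because $\cH^*_\cK$ is Lipschitz in $(\partial_x f,\partial_{xx}f)$, with constant controlled by $1/c_A^2$, and continuous in $x$. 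Krylov's estimate then transfers the $L^p$ convergence to the expectations uniformly in the control.
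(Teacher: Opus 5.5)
Your route differs from the paper's and can be completed, but as written its decisive quantitative step does not close. The route is a contradiction at a density point, with Krylov's estimate controlling the exceptional set, uniformly over $\cB^*_x$ in Step 2.

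\textbf{Where the step fails.} You state Krylov's estimate as $\E\int_0^{\tau_I^\beta\wedge\lambda}|g|(t+u,W_u)\,\ud u\le N\|g\|_{L^2}$ with a fixed $N$. For $g=\mathds{1}_{E^c}$ this bounds the error by $N|E^c\cap Q|^{1/2}$, where $Q$ is the space-time region visited. The main term, however, is of order $\delta\lambda$.
\begin{itemize}
\item In Step 2 you keep $I$ fixed and let $\lambda\downarrow0$; that is the role of $\P(\tau_I^\beta\le\lambda)=o(1)$. Then $|Q|\sim\lambda|I|$ and the error bound is only $O(\lambda^{1/2})\gg\lambda$.
\item Moreover, density of $E$ at $(t,x)$ gives no information about $E^c$ in a slab of fixed width $|I|$.
\item Step 1 has the same problem.
\end{itemize}

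\textbf{How to repair it.} The two scales must be coupled parabolically.
\begin{itemize}
\item Take $I=(x-r,x+r)$ and $\lambda=sr^2$. Choose $s>0$ so small that, for $r$ small, $\P(\tau_I^\beta\le sr^2)\le\frac12$ uniformly in the control. This follows from Doob's inequality, since the drift and diffusion are bounded by $1/c_A$ and $\sigma/c_A$. Hence $\E[\tau_I^\beta\wedge\lambda]\ge sr^2/2$.
\item Use the Brownian-rescaled Krylov estimate
\[
\E\Big[\int_0^{\tau}|f|(t+u,W_u)\,\ud u\Big]\le N\,r^{1/2}\,\|f\|_{L^2(C_r)},\qquad C_r=(t,t+r^2)\times(x-r,x+r).
\]
Here $\tau$ is the exit time of $(t+u,W_u)$ from $C_r$. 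The constant $N$ depends only on the ellipticity and boundedness constants: for $r\le r_0$ the diffusion coefficient stays in $[2\sigma K_*(x-r_0),\sigma/c_A]$, and rescaling only shrinks the drift.
\item Start the process at $(t,x)$ itself, chosen as a density point of $E$ for the cylinders $C_r$. Then $|E^c\cap C_r|=o(r^3)$, so the error is $o(r^2)$ and is beaten by $\delta s r^2/2$. In Step 2 take $\eps=o(r^2)$.
\end{itemize}
With this correction both steps go through. In Step 1 you could equally use the explicit Gaussian kernel of $x+c_A\beta_0^2u+\sigma\beta_0B_u$. A minor point: the It\^o formula for $W^{1,2}$ functions is \cite[Thm.\ 2.10.1]{krylov2009controlled}, not Cor.\ 2.4.8.

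\textbf{Comparison with the paper.} The paper avoids the scaling issue in Step 1.
\begin{itemize}
\item Because the control is the constant $\beta_0$, $W^{x;\beta_0}_s$ is $x$ plus a path independent of $x$. The paper integrates the DPP inequality over starting points in a fixed rectangle.
\item Fubini and $L^1$-continuity of translations then let $\lambda\downarrow0$ with the rectangle fixed. Lebesgue differentiation over shrinking rectangles gives the a.e.\ inequality, and no Krylov estimate is needed.
\item In Step 2 the paper uses Krylov's estimate only to show that the set $\{\partial_t\varphi^*+\cH^*_{\cK}>0\}$, null by Step 1, is not charged. It deduces $\sup_\beta\E[\int_0^{\tau_R^\beta\wedge R}(\partial_t\varphi^*+\cH^*_{\cK})(t_0+u,W_u)\,\ud u]=0$ with an integrand that is $\le0$ a.e. It then concludes with \cite[Cor.\ 2.4.8]{krylov2009controlled}.
\end{itemize}
Your corrected Step 2 is in effect a hands-on proof of that corollary in this setting. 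It is independent of Step 1, at the cost of the scale-invariant estimate and the uniform exit-time bound.
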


\begin{proof}
The regularity of $\varphi^*$ was shown in Proposition \ref{prop:W12p}, then we only need to show that it solves the HJB. Clearly the function $\partial_t\varphi^*(t,x)+\cH^*_\cK(t,x;\varphi^*)$ is well-defined for a.e.\ $(t,x)$. 
The rest of the proof is divided into two main steps. 

{\em Step 1.} Here we show $\partial_t\varphi^*(t,x)+\cH^*_\cK(t,x;\varphi^*)\le 0$ 
for almost every $(t,x)\in [0,T]\times(0,\infty)$. Fix $(t_0,x_0)\in [0,T)\times(0,\infty)$. There is $R>0$ such that $x_0-2R>0$ and $t_0+2R\le T$. Therefore also $x_0-\theta-R>0$ and $t_0+\theta+R<T$ for every $\theta\in(0,R)$. We fix a constant $\beta_0\in \cK_*(x_0+2R)$ and for any $x\in(x_0-R,x_0+R)$ we consider a control $\beta\in\cB^*_{x}$ with process
$(\beta_s)_{s\in[0,\infty)}$ given by 
\begin{equation*}
\beta_s=\mathds{1}_{[0,\tau_R^{\beta}(x))}(s)\beta_0+\mathds{1}_{[\tau_R^{\beta}(x),\infty)}(s)\frac{1}{c_A},\quad \text{for $s\ge0$}, 
\end{equation*}
where 
$\tau_R^\beta(x)\coloneqq \inf\{s\ge 0 : W_{s}^{x;\beta} \notin (x-R,x+R)\}$.

Fix $\lambda\in(0,R]$. Then, by Theorem \ref{thm:DPP_time}-(ii) we have for $(t,x)\in(t_0,t_0+R)\times(x_0-R,x_0+R)$,
\begin{equation*}
\begin{aligned}
\varphi^*(t,x)&\ge\E\Big[\int_0^{\tau_R^\beta(x)\wedge\lambda}(1-c_A\beta_u)\beta_u\ud u+\varphi^*\big(t+\tau_R^\beta(x)\!\wedge\!\lambda,W_{\tau_R^\beta(x)\wedge\lambda}^{x;\beta}\big)\Big]\\
&=\varphi^*(t,x)+\E\Big[\!\int_0^{\tau_R^\beta(x)\wedge \lambda}\!\big[\big(\partial_t \varphi^* \!+\!\tfrac{1}{2}\sigma^2\beta_u^2\partial_{xx}\varphi^*\!+\!c_A\beta_u^2\partial_{x}\varphi^*\big)(t\!+\!s,W_s^{x;\beta})\!+\!(1\!-\!c_A\beta_s)\beta_s\big]\ud s\Big],
\end{aligned}
\end{equation*}
where the equality holds by an application of It\^o--Krylov's formula (cf.\ \cite[Thm.\ 2.10.1]{krylov2009controlled}) which we recall in Appendix \ref{app:Ito-Krylov} for the reader's convenience. Since $\beta_u=\beta_0$ for $u\in[\![0,\tau^\beta_R(x)]\!]$, then the above inequality yields
\begin{equation}\label{eq:sub_optim}
\begin{aligned}
\E\Big[\int_0^{\tau_R^{\beta}(x)\wedge \lambda}\!\partial_t\varphi^*(t\!+\!s,W_s^{x;\beta_0})\!+\!H(t\!+\!s,W_s^{x;\beta_0};\beta_0,\varphi^*)\ud s\Big]\le 0,
\end{aligned}
\end{equation}
where $W_s^{x;\beta_0}=x+c_A\beta_0^2s+\sigma\beta_0 B_s$.
In particular, for a fixed $\theta\in(0,R),$ \eqref{eq:sub_optim} holds for all $(t,x)\in [t_0,t_0+\theta]\times [x_0-\theta,x_0+\theta]$ so that
\begin{equation}\label{eq:sub_optim1}
\begin{aligned}
0\ge &\,\frac{1}{\lambda}\int_{t_0}^{t_0+\theta}\int_{x_0-\theta}^{x_0+\theta}\E\Big[\int_0^{\tau_R^{\beta}(x)\wedge \lambda}\!\!\Big(\partial_t\varphi^*(t\!+\!s,W_s^{x;\beta_0})\!+\!H(t\!+\!s,W_s^{x;\beta_0};\beta_0,\varphi^*)\Big)\ud s\Big]\ud x\ud t\\
=&\frac{1}{\lambda}\int_{t_0}^{t_0+\theta}\int_{x_0-\theta}^{x_0+\theta}\E\Big[\int_0^{\tau_R^{\beta}(x)\wedge \lambda}\!\!\big(\partial_t\varphi^*(t\!+\!s,W_s^{x;\beta_0})\!+\!H(t\!+\!s,W_s^{x;\beta_0};\beta_0,\varphi^*)\\
&\qquad\qquad\qquad\qquad\qquad\qquad\qquad\qquad\qquad-\!\partial_t\varphi^*(t,x)\!-\!H(t,x;\beta_0,\varphi^*)\big)\ud s\Big]\ud x\ud t\\
&+\int_{t_0}^{t_0+\theta}\int_{x_0-\theta}^{x_0+\theta}\big(\partial_t\varphi^*(t,x)\!+\!H(t,x;\beta_0,\varphi^*)\big)\E\Big[\frac{\tau_R^{\beta}(x)\wedge \lambda}{\lambda}\Big]\ud x\ud t\eqqcolon I_1(\lambda)+I_2(\lambda).
\end{aligned}
\end{equation}

The aim is to let $\lambda\downarrow0$. 
For $\cO_R\coloneqq [t_0,t_0\!+\!2R]\times[x_0\!-\!2R,x_0\!+\!2R]$ we denote $\Lambda^\cO_R(t,x;\beta_0,\varphi^*)\coloneqq \mathds{1}_{\cO_R}(t,x)\big(\partial_t\varphi^*(t,x)+H(t,x;\beta_0,\varphi^*)\big)$. Then we have
\begin{equation*}
\begin{aligned}
&\big|\mathds{1}_{\{s<\tau_R^{\beta}(x)\}}\!\big(\partial_t\varphi^*(t\!+\!s,W_s^{x;\beta_0})\!+\!H(t\!+\!s,W_s^{x;\beta_0};\beta_0,\varphi^*)\!-\!\partial_t\varphi^*(t,x)\!-\!H(t,x;\beta_0,\varphi^*)\big)\big|\\
&=\big|\mathds{1}_{\{s<\tau_R^{\beta}(x)\}}\!\big(\Lambda^\cO_R(t\!+\!s,W_s^{x;\beta_0};\beta_0,\varphi^*)\!-\!\Lambda^\cO_R(t,x;\beta_0,\varphi^*)\big)\big| \\
&\le\big|\Lambda^\cO_R(t\!+\!s,W_s^{x;\beta_0};\beta_0,\varphi^*)-\Lambda^\cO_R(t,x;\beta_0,\varphi^*)\big|.
\end{aligned}
\end{equation*}
Recall the form of the dynamics of $W^{x;\beta_0}$ from \eqref{eq:Wbetaweak} and notice that the mapping 
\begin{equation*}
(t,x,s,\omega)\mapsto \Lambda^\cO_R(t\!+\!s,W_s^{x;\beta_0}(\omega);\beta_0,\varphi^*)-\Lambda^\cO_R(t,x;\beta_0,\varphi^*),
\end{equation*}
is jointly measurable and that $\Lambda^\cO_R\in (L^{\infty}\cap L^1)(\R^2)$. 
We use Fubini's theorem to exchange the order of integration in the expression for $I_1(\lambda)$ and we obtain
\begin{equation*}
| I_1(\lambda)|\le \E\Big[\frac{1}{\lambda}\int_0^{\lambda}\!\big\|\Lambda^\cO_R(\,\cdot+\!s,\cdot+c_A\beta_0^2s+\sigma\beta_0 B'_s;\beta_0,\varphi^*)-\Lambda^\cO_R(\,\cdot\,,\,\cdot\,;\beta_0,\varphi^*)\big\|_{L^1(\R^2)}\ud s\Big].
\end{equation*}
For almost every $\omega\in\Omega$, 
\begin{equation*}
\lim_{s\to0}\big\|\Lambda^\cO_R(\,\cdot+\!s,\cdot+c_A\beta_0^2s+\sigma\beta_0 B'_s(\omega);\beta_0,\varphi^*)-\Lambda^\cO_R(\,\cdot\,;\beta_0,\varphi^*)\big\|_{L^1(\R^2)}=0,
\end{equation*}
by well-known continuity of Brownian paths and of the $L^p$-norms\footnote{For $p\in[1,\infty)$ and $f\in (L^\infty\cap L^p)(\R^2)$, and for any $\eps>0$, there is $f_\eps\in C(\R^2)$ with compact support such that $\|f_\eps-f\|_{L^p(\R^2)}<\eps$. Therefore, for any $u\in\R$, 
\begin{equation*}
\begin{aligned}
\|f(\cdot+\!u,\cdot+\!u)-f\|_{L^p(\R^2)}\le&\,\|f(\cdot+\!u,\cdot+\!u)\!-\!f_\eps(\cdot+\!u,\cdot+\!u)\|_{L^p(\R^2)}\!+\!\|f_\eps(\cdot+\!u,\cdot+\!u)\!-\!f_\eps\|_{L^p(\R^2)}\\
&+\|f_\eps-f\|_{L^p(\R^2)}\\
<&\,2\eps\!+\!\|f_\eps(\cdot+\!u,\cdot+\!u)\!-\!f_\eps\|_{L^p(\R^2)}.
\end{aligned}
\end{equation*}
Thus, $\lim_{u\to0}\|f(\cdot+\!u,\cdot+\!u)-f\|_{L^p(\R^2)}<2\eps$. Letting $\eps\to 0$ we conclude.}. In particular, that implies
\begin{equation*}
\lim_{\lambda\downarrow0}\frac{1}{\lambda}\int_0^{\lambda}\!\big\|\Lambda^\cO_R(\,\cdot+\!s,\cdot+c_A\beta_0^2s+\sigma\beta_0 B'_s;\tilde{\beta},\varphi^*)-\Lambda^\cO_R(\,\cdot\,;\beta_0,\varphi^*)\big\|_{L^1(\R^2)}\ud s=0,
\end{equation*}
and by dominated convergence 
\begin{equation}\label{eq:sub_optim2}
\begin{aligned}
\lim_{\lambda\downarrow0} | I_1(\lambda)|=0.
\end{aligned}
\end{equation}
For the term $I_2(\lambda)$, 
we notice that $\tau_R^{\beta}(x)(\omega)>0$ for all $x\in(x_0-\theta,x_0+\theta)$ and almost every $\omega\in\Omega$ by continuity of paths of $W^{x;\beta_0}_s$.
Then, for every $x\in(x_0-\theta,x_0+\theta)$
\begin{equation*}
\lim_{\lambda\downarrow0}\E\Big[\frac{\tau_R^{\beta}(x)\wedge \lambda}{\lambda}\Big]=1,
\end{equation*}
by Dominated convergence. Using Dominated convergence once again we conclude
\begin{equation}\label{eq:sub_optim3}
\begin{aligned}
\lim_{\lambda\downarrow 0} I_2(\lambda)&=\int_{t_0}^{t_0+\theta}\int_{x_0-\theta}^{x_0+\theta}\big(\partial_t\varphi^*(t,x)+H(t,x;\beta_0,\varphi^*)\big)\lim_{\lambda\downarrow 0}\E\Big[\frac{\tau_R^{\beta}(x)\wedge \lambda}{\lambda}\Big]\ud x\ud t\\
&=\int_{t_0}^{t_0+\theta}\int_{x_0-\theta}^{x_0+\theta}\big(\partial_t\varphi^*(t,x)+H(t,x;\beta_0,\varphi^*)\big)\ud x\ud t.
\end{aligned}
\end{equation}
Plugging \eqref{eq:sub_optim2} and \eqref{eq:sub_optim3} into \eqref{eq:sub_optim1}, we get 
\begin{equation}\label{eq:Hint}
\int_{t_0}^{t_0+\theta}\!\!\int_{x_0-\theta}^{x_0+\theta}\big(\partial_t\varphi^*(t,x)+H(t,x;\beta_0,\varphi^*)\big)\ud x\ud t\le 0.
\end{equation}
By the Lebesgue theorem \cite[Thm.\ 1.32]{evans2015measure}, there is a set $N\subset[0,T]\times[0,\infty)$ of zero Lebesgue measure such that for $(t_0,x_0)\notin N$ we have
\begin{equation*}
\begin{aligned}
&\lim_{\theta\to 0}\frac{1}{2\theta^{2}}\int_{t_0}^{t_0+\theta}\!\!\int_{x_0-\theta}^{x_0+\theta}\partial_{t}\varphi^*(t,x)\ud x\ud t=\partial_t\varphi^*(t_0,x_0),\\
&\lim_{\theta\to 0}\frac{1}{2\theta^{2}}\int_{t_0}^{t_0+\theta}\!\!\int_{x_0-\theta}^{x_0+\theta}\partial_{x}\varphi^*(t,x)\ud x\ud t=\partial_x\varphi^*(t_0,x_0),\\
&\lim_{\theta\to 0}\frac{1}{2\theta^{2}}\int_{t_0}^{t_0+\theta}\!\!\int_{x_0-\theta}^{x_0+\theta}\partial_{xx}\varphi^*(t,x)\ud x\ud t=\partial_{xx}\varphi^*(t_0,x_0).
\end{aligned}
\end{equation*}
Then, \eqref{eq:Hint} yields $\partial_t\varphi^*(t_0,x_0)+H(t_0,x_0;\beta_0,\varphi^*)\le 0$ for a.e.\ $(t_0,x_0)\in[0,T]\times[0,\infty)$. Recall that $\beta_0$ was taken from the set $\cK_*(x_0+2R)$, but $R>0$ can be taken arbitrarily small and therefore, by continuity of $x\mapsto K_*(x)$ and $\beta\mapsto H(t_0,x_0;\beta,\varphi^*)$, we conclude that $\beta_0$ can be chosen in $\cK_*(x_0)$. 
Thus, we have shown
\begin{equation}\label{eq:zeroL}
\partial_t\varphi^*(t,x)+\cH^*_\cK(t,x;\varphi^*)=\partial_t\varphi^*(t,x)+\sup_{\beta\in \cK_*(x)}H(t,x;\beta,\varphi^*)
\le 0,\quad\text{a.e.\ $(t,x)\in[0,T]\times[0,\infty)$}.
\end{equation}

{\em Step 2.} Now we prove the reverse inequality to \eqref{eq:zeroL}. We use again It\^o--Krylov's formula in the right-hand side of \eqref{eq:DPP2a} in Theorem \ref{thm:DPP_time}-(i). For $(t_0,x_0)\in[0,T)\times(0,\infty)$ and $R\in(0,x_0)$, we get 
\begin{equation}\label{eq:HJBineq}
\begin{aligned}
0\le&\sup_{\beta\in\cB^*_{x_0}}\E\Big[\int_0^{\tau_R^\beta(x_0)\wedge R}\big[\big(\partial_t \varphi^* +\frac{\sigma^2}{2}\beta_u^2\partial_{xx}\varphi^*+c_A\beta_u^2\partial_{x}\varphi^*\big)(t_0+s,W_s^{x_0;\beta})+(1-c_A\beta_s)\beta_s\big]\ud s\Big]\\
\le &\sup_{\beta\in\cB^*_{x_0}}\E\Big[\int_0^{\tau_R^\beta(x_0)\wedge R}\big(\partial_t\varphi^*+\cH^*_\cK\big(\,\cdot\,;\varphi^*\big)\big)(t_0+s,W_s^{x_0;\beta})\ud s\Big]\\
\le &\sup_{\beta\in\cB^*_{x_0}}\E\Big[\int_0^{\tau_R^\beta(x_0)\wedge R}\mathds{1}_{\cS}(\cdot)\big(\partial_t\varphi^*+\cH^*_\cK\big(\,\cdot\,;\varphi^*\big)\big)(t_0+s,W_s^{x_0;\beta})\ud s\Big],
\end{aligned}
\end{equation}
where the second inequality is simply by definition of $\cH^*_\cK$ and in the third one we set 
\begin{equation*}
\cS\coloneqq \Big\{(t,x)\in [0,T]\times[0,\infty):\partial_t\varphi^*(t,x)+\cH^*_\cK(t,x;\varphi^*)> 0\Big\}.
\end{equation*} 
For any $\beta\in\cB^*_{x_0}$ it holds $0<2K_*(x_0-R)\le\beta_s\le c^{-1}_A$ for $s\le \tau^\beta_R(x_0)\wedge R$. Then, from \cite[Thm.\ 2.2.4]{krylov2009controlled}, there is a constant $C_\beta>0$ such that
\begin{equation*}
\begin{aligned}
0&\le \E\Big[\int_0^{\tau_R^\beta(x_0)\wedge R}\mathds{1}_{\cS}(\cdot)\big(\partial_t\varphi^*+\cH^*_\cK\big(\cdot;\varphi^*\big)(t_0+s,W_s^{x_0;\beta})\big)\ud s\Big]\\
&\le C_\beta \big\|\mathds{1}_{\cS}(\cdot)\big(\partial_t\varphi^*+\cH^*_\cK(\,\cdot\,;\varphi^*)\big)\big\|_{L^{p}(\cO_R)}=0,
\end{aligned}
\end{equation*}
where the final equality holds because $\cS$ is of zero Lebesgue measure due to \eqref{eq:zeroL}. Combining the above with \eqref{eq:HJBineq} 
we deduce
\begin{equation*}
\sup_{\beta\in\cB^*_{x_0}}\E\Big[\int_0^{\tau_R^\beta(x_0)\wedge R}\big(\partial_t\varphi^*+\cH^*_\cK\big(\,\cdot\,;\varphi^*\big)\big)(t_0+s,W_s^{x_0;\beta})\ud s\Big]=0.
\end{equation*}
Thanks to \eqref{eq:zeroL} and the above equation, we apply \cite[Cor.\ 2.4.8]{krylov2009controlled}, whose statement and main idea of proof are given in Corollary \ref{cor:krylov248} for completeness, so that
\begin{equation*}
\partial_t\varphi^*(t,x)+\cH^*_\cK\big(t,x;\varphi^*\big)= 0,\quad\text{a.e.\ $(t,x)\in (t_0,t_0+R)\times(x_0-R,x_0+R)$}.
\end{equation*}
By arbitrariness of $t_0$, $x_0$ and $R$, we have
\begin{equation*}
\partial_t\varphi^*(t,x)+\cH^*_\cK\big(t,x;\varphi^*\big)= 0, \quad\text{a.e.\ $(t,x)\in[0,T)\times(0,\infty)$}.
\end{equation*}
Thus, $\varphi^*$ is a strong solution of \eqref{eq:PDE_q-orig}.
\end{proof}

In the Hamiltonian in \eqref{eq:HK} the set of controls is constrained to the interval $\cK_*(x)$. In the next proposition we show that indeed the maximiser $\beta^*$ lies always in the interior of $\cK_*(x)$ and therefore we can remove the constraint altogether. Recalling \eqref{eq:HK} we introduce the unconstrained Hamiltonian 
\begin{equation*}
\cH(t,x;f)\coloneqq\sup_{\beta\in\R}H(t,x;\beta,f)
\end{equation*}
and we state our next theorem. 
\begin{theorem}\label{thm:HJB}
The function $\varphi^*\in W^{1,2;\infty}_{\ell oc}([0,T]\times[0,\infty))\cap C([0,T]\times[0,\infty))$ satisfies 
\begin{equation}\label{eq:PDE_original}
\begin{cases}\displaystyle\partial_{t}\varphi^*+\cH(t,x;\varphi^*)=0,& \text{a.e.\ on }[0,T)\times(0,\infty),\\
\varphi^*(t,0)=0,& \forall t\in[0,T),\\
\varphi^*(T,x)=0,& \forall x\in[0,\infty).
\end{cases}
\end{equation}
\end{theorem}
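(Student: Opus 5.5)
Given Theorem \ref{thm:PDE_q-orig}, it suffices to show that for a.e.\ $(t,x)\in[0,T)\times(0,\infty)$ the unconstrained supremum $\cH(t,x;\varphi^*)$ coincides with the constrained one $\cH^*_\cK(t,x;\varphi^*)$. The regularity and boundary conditions carry over unchanged. The plan is to study the quadratic map $\beta\mapsto H(t,x;\beta,\varphi^*)=A(t,x)\beta^2+(1-c_A\beta)\beta$, where
\begin{equation*}
A(t,x)\coloneqq\tfrac{\sigma^2}{2}\partial_{xx}\varphi^*(t,x)+c_A\partial_x\varphi^*(t,x).
\end{equation*}
We rewrite this as $H=(A-c_A)\beta^2+\beta$.

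The first step is to fix a point of the full-measure set where \eqref{eq:PDE_q-orig} holds and all weak derivatives exist. We then argue that $A-c_A<0$ there. Suppose instead $A-c_A\ge0$. Then $H$ is convex (or linear and increasing) in $\beta$, so its supremum over the compact interval $\cK_*(x)=[2K_*(x),1/c_A]$ is attained at an endpoint and is strictly positive, because $H(\beta)\ge\beta>0$ for $\beta>0$. On the other hand, the equation gives $\cH^*_\cK=-\partial_t\varphi^*$. To rule this case out cleanly, note that at $\beta=1/c_A$ we get $H=(A-c_A)/c_A^2+1/c_A\ge 1/c_A$. Hence $-\partial_t\varphi^*\ge 1/c_A>1/(4c_A)$, which contradicts the lower bound $\partial_t\varphi^*\ge-1/(4c_A)$ from Proposition \ref{prop:d_time} (equivalently \eqref{eq:lip_time}). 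So $A<c_A$ a.e.

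The second step treats the concave case $A-c_A<0$. The unconstrained maximiser is $\hat\beta=1/(2(c_A-A))$ with maximum value $\cH=1/(4(c_A-A))$. We must show $\hat\beta\in\cK_*(x)$. Once that holds, the constrained and unconstrained suprema agree, and then $\partial_t\varphi^*=-1/(4(c_A-A))$, i.e.\ $\hat\beta=-2\partial_t\varphi^*$. The main obstacle is the circularity here: the identity $\hat\beta=-2\partial_t\varphi^*$ is only available once we know the constraint is inactive. I will resolve it by a case analysis on where the constrained maximiser $\beta^c$ of the concave parabola lies.

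If $\beta^c$ is interior, then $\beta^c=\hat\beta$ and we are done. Suppose $\beta^c=1/c_A$ with $\hat\beta\ge1/c_A$, i.e.\ $c_A-A\le c_A/2$. Then
\begin{equation*}
-\partial_t\varphi^*=H(1/c_A)=\frac{A-c_A}{c_A^2}+\frac1{c_A}\ge\frac{1}{2c_A},
\end{equation*}
which again contradicts $-\partial_t\varphi^*\le 1/(4c_A)$. Suppose instead $\beta^c=2K_*(x)$ with $\hat\beta\le2K_*(x)$. Concavity and the position of the vertex give $H(2K_*)\le \cH= 1/(4(c_A-A))=\hat\beta/2\le K_*(x)$. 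Hence $-\partial_t\varphi^*\le K_*(x)$.

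Proposition \ref{prop:d_time}, however, gives $-\partial_t\varphi^*\ge \frac1{4c_A}\P(\rho_x>(T-t)c_A^{-2})$. For $t>0$ this is strictly larger than $K_*(x)=\frac1{4c_A}\P(\rho_x>Tc_A^{-2})$, because $\rho_x$ has a positive density on $(0,\infty)$. (At $t=0$ one can simply discard a null set.) The boundary case needs care: equality $\hat\beta=2K_*$ is harmless, and the strict inequality above excludes $\hat\beta<2K_*$. So $\hat\beta$ lies in $\cK_*(x)$, and the proof concludes by applying Theorem \ref{thm:PDE_q-orig}.
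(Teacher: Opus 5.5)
Your argument is correct, but it handles the location of the maximiser differently from the paper.

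\textbf{The paper's route.} The paper divides the inequality $\partial_t\varphi^*+H(t,x;\beta,\varphi^*)\le 0$ by $\beta^2>0$ and substitutes $\alpha=1/\beta$. The Hamiltonian becomes $\sup_{\alpha\in[c_A,1/(2K_*(x))]}\big(\alpha^2\partial_t\varphi^*+\alpha\big)$, whose unconstrained maximiser $-1/(2\partial_t\varphi^*)$ depends only on $\partial_t\varphi^*$. The two-sided bound $K_*(x)\le-\partial_t\varphi^*\le 1/(4c_A)$ from Proposition \ref{prop:d_time} then places it in $[2c_A,1/(2K_*(x))]$ directly, with no case analysis. This yields \eqref{eq:PDE_supremum}, i.e.\ $A-c_A=1/(4\partial_t\varphi^*)$. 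Only afterwards is the $\beta$-vertex identified as $-2\partial_t\varphi^*\in[2K_*(x),1/(2c_A)]\subset\cK_*(x)$.

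\textbf{Your route.} You stay in the $\beta$-variable, where the vertex $\hat\beta=1/(2(c_A-A))$ involves $A$, which is not a priori controlled. That is exactly the circularity you identify. You break it by a projection case split, getting a contradiction from the same two-sided bound in each boundary case.

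\textbf{Comparison.} The inputs are identical, and both routes end at the same identity $\hat\beta=-2\partial_t\varphi^*$, hence also at Corollary \ref{cor:PDE_aux}. The reparametrisation buys a case-free argument. Yours is more hands-on but equally rigorous.

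\textbf{A simplification.} In your last case you do not need the strict inequality $-\partial_t\varphi^*>K_*(x)$, and hence not the positivity of the density of $\rho_x$. The non-strict bound $-\partial_t\varphi^*\ge K_*(x)$, which is all the paper uses, combined with $-\partial_t\varphi^*\le\hat\beta/2\le K_*(x)$, already forces $\hat\beta=2K_*(x)\in\cK_*(x)$.
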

\begin{proof}
Fix $(t,x)\in[0,T)\times(0,\infty)$ such that \eqref{eq:PDE_q-orig} holds at that point. For all $\beta\in \cK_*(x)$ we have
\begin{equation*}
\partial_{t}\varphi^*(t,x)+\frac{\sigma^2}{2}\beta^2\partial_{xx}\varphi^*(t,x)+c_A\beta^2\partial_{x}\varphi^*(t,x)+(1-c_A\beta)\beta\le 0.
\end{equation*}
Since $\beta\in\cK_*(x)$ implies $\beta>0$, multiplying by $\frac{1}{\beta^2}$, we obtain
\begin{equation*}
\frac{1}{\beta^2}\partial_{t}\varphi^*(t,x)+\frac{\sigma^2}{2}\partial_{xx}\varphi^*(t,x)+c_A\partial_{x}\varphi^*(t,x)+\frac{1}{\beta}-c_A\le 0.
\end{equation*}
Setting $\alpha=1/\beta$ we have
\begin{equation*}
\frac{\sigma^2}{2}\partial_{xx}\varphi^*(t,x)+c_A\partial_{x}\varphi^*(t,x)+\alpha^2\partial_{t}\varphi^*(t,x)+\alpha-c_A\le 0.
\end{equation*}
Since $\beta\in\cK_*(x)$ was arbitrary, we have $\alpha\in[c_A,\frac{1}{2K_*(x)}]\eqqcolon \cK'_*(x)$ (recall the definition of $\cK'(x)$ before \eqref{eq:hamil}) and we can take the supremum over $\alpha$, yielding
\begin{equation*}
\frac{\sigma^2}{2}\partial_{xx}\varphi^*(t,x)+c_A\partial_{x}\varphi^*(t,x)+\sup_{\alpha\in\cK'_*(x)}\big[\alpha^2\partial_{t}\varphi^*(t,x)+\alpha\big]-c_A\le 0.
\end{equation*}

In order to show that the above inequality is actually an equality, we notice that $\beta\mapsto H(t,x;\beta,\varphi^*)$ is continuous and $\cK_*(x)$ is compact. Thus there is a maximiser $\beta^*\in\cK_*(x)$. 
That is, 
\begin{equation*}
\partial_{t}\varphi^*(t,x)+\frac{\sigma^2}{2}(\beta^*)^2\partial_{xx}\varphi^*(t,x)+c_A(\beta^*)^2\partial_{x}\varphi^*(t,x)+(1-c_A\beta^*)\beta^*= 0,
\end{equation*}
for some $\beta^*(t,x)\in\cK_*(x)$
Now, repeating the same arguments as above with $\alpha^*=1/\beta^*$ we obtain 
\begin{equation*}
\frac{\sigma^2}{2}\partial_{xx}\varphi^*(t,x)+c_A\partial_{x}\varphi^*(t,x)+(\alpha^*)^2\partial_{t}\varphi^*(t,x)+\alpha^*-c_A= 0.
\end{equation*}
In conclusion, we have
\begin{equation}\label{eq:strong_sol_A}
\frac{\sigma^2}{2}\partial_{xx}\varphi^*(t,x)+c_A\partial_{x}\varphi^*(t,x)+\sup_{\alpha\in\cK'_*(x)}\big[\alpha^2\partial_{t}\varphi^*(t,x)+\alpha\big]-c_A= 0.
\end{equation}

The advantage of working with the above reformulation of the HJB is that now we have better control over the maximiser $\alpha^*$. Indeed, by first order conditions and recalling that $\partial_t\varphi^*\le 0$ we have the unique maximiser
\begin{equation*}
\alpha^*(t,x)=\min\Big\{\max\Big\{c_A,-\frac{1}{2\partial_t\varphi^*(t,x)}\Big\},\frac{1}{2K_*(x)}\Big\}=-\frac{1}{2\partial_t\varphi^*(t,x)},
\end{equation*} 
where the second equality holds because $c_A\le -(2\partial_t\varphi^*(t,x))^{-1}\le 1/(2K_*(x))$
by Proposition \ref{prop:d_time} and Remark \ref{rem:opt_cont}. Therefore, in \eqref{eq:strong_sol_A} there is no difference between maximising over $\cK'_*(x)$ or over $\R$. Moreover, plugging $\alpha^*$ in the equation and rearranging terms we deduce 
\begin{equation}\label{eq:PDE_supremum}
\frac{\sigma^2}{2}\partial_{xx}\varphi^*(t,x)+c_A\partial_{x}\varphi^*(t,x)-c_A=\frac{1}{4\partial_{t}\varphi^*(t,x)}.
\end{equation}
Since $\alpha^*$ is the unique maximiser in \eqref{eq:strong_sol_A}, then $\beta^*=1/\alpha^*$ used in the above construction is also the unique maximiser of \eqref{eq:PDE_q-orig} and moreover using \eqref{eq:PDE_supremum} we obtain
\begin{equation}\label{eq:beta_opt}
\beta^*(t,x)=-2\partial_t\varphi^*(t,x)=-\frac{1}{2[\frac{\sigma^2}{2}\partial_{xx}\varphi^*(t,x)+c_A\partial_{x}\varphi^*(t,x)-c_A]},
\end{equation} 
with $2K_*(x)\le \beta^*(t,x)\le 1/c_A$. Then, the maximiser in the Hamiltonian \eqref{eq:HK} lies inside the set $\cK_*(x)$ and there is no difference between maximising over $\cK_*(x)$ or over $\R$. This shows that the first equation in \eqref{eq:PDE_original} holds at $(t,x)$ and, by arbitrariness of $(t,x)$, it holds almost everywhere. The boundary conditions hold directly by continuity of $\varphi^*$.
\end{proof}
From the argument of proof of the above theorem we obtain a useful corollary. Let us recall the Hamiltonian $\cM_K$ from \eqref{eq:hamil} and let us introduce the unconstrained analogue 
\begin{equation*}
\cM(p)\coloneqq \sup_{\alpha\in\R}\big(\alpha^2p+\alpha\big).
\end{equation*}
Combining \eqref{eq:strong_sol_A} and \eqref{eq:PDE_supremum} we deduce the next result.

\begin{corollary}\label{cor:PDE_aux}
The function $\varphi^*\in W^{1,2;\infty}_{\ell oc}([0,T]\times[0,\infty))\cap C([0,T]\times[0,\infty))$ satisfies the two equivalent PDEs:
\begin{equation}\label{eq:PDE_aux}
\begin{aligned}
\tfrac{1}{2}\sigma^2\partial_{xx}\varphi^*+c_A\partial_{x}\varphi^*+\cM(\partial_t\varphi^*)&=c_A,\quad \text{a.e.\ on }[0,T)\times(0,\infty),\\
-\frac{1}{4\partial_{t}\varphi^*}+\tfrac{1}{2}\sigma^2\partial_{xx}\varphi^*+c_A\partial_{x}\varphi^*&=c_A,\quad \text{a.e.\ on }[0,T)\times(0,\infty),
\end{aligned}
\end{equation}
with boundary conditions $\varphi^*(t,0)=0$ for $t\in[0,T)$ and $\varphi^*(T,x)=0$ for $x\in[0,\infty)$.
\end{corollary}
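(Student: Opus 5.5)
The corollary follows by reading off the two identities already obtained inside the proof of Theorem \ref{thm:HJB}. Regularity is inherited from Proposition \ref{prop:W12p}, or equivalently from Theorem \ref{thm:PDE_q-orig}. The boundary conditions follow directly from the continuity of $\varphi^*$ on $[0,T]\times[0,\infty)$, together with $\varphi^*(t,0)=0$ and $\varphi^*(T,x)=0$, which come from the definition \eqref{eq:vphiK}: $\rho_0=0$ and $\sigma^\alpha_T=0$. So only the two a.e. equations remain to be checked.

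Fix a point $(t,x)\in[0,T)\times(0,\infty)$ at which \eqref{eq:PDE_q-orig} holds and all the weak derivatives exist. Such points have full measure.

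\textbf{Step 1.} At such a point the proof of Theorem \ref{thm:HJB} established \eqref{eq:strong_sol_A}, which is the constrained equation with supremum over $\alpha\in\cK'_*(x)=[c_A,1/(2K_*(x))]$.

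\textbf{Step 2.} I compare this constrained supremum with $\cM(\partial_t\varphi^*(t,x))$.
\begin{itemize}
\item By Proposition \ref{prop:d_time}, $p\coloneqq\partial_t\varphi^*(t,x)\le -K_*(x)<0$, using the choice of $K_*$ in Remark \ref{rem:opt_cont}. Note that $\P(\rho_x>(T-t)c_A^{-2})\ge\P(\rho_x>Tc_A^{-2})$.
\item Since $p<0$, the map $\alpha\mapsto\alpha^2p+\alpha$ is a strictly concave parabola. Its global maximiser over $\R$ is $-1/(2p)$, and its global maximum is $\cM(p)=-1/(4p)$.
\item The bounds $-1/(4c_A)\le p\le -K_*(x)$ give $c_A\le -1/(2p)\le 1/(2K_*(x))$. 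Hence the global maximiser lies in $\cK'_*(x)$.
\end{itemize}
It follows that $\sup_{\alpha\in\cK'_*(x)}(\alpha^2p+\alpha)=\cM(p)$. Substituting this into \eqref{eq:strong_sol_A} gives the first equation in \eqref{eq:PDE_aux}.

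\textbf{Step 3.} Inserting $\cM(p)=-1/(4p)$ into the first equation gives the second one. This is precisely \eqref{eq:PDE_supremum}. Conversely, the second equation implies the first, since $\cM(p)=-1/(4p)$ whenever $p<0$. This shows the two PDEs are equivalent.

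The one delicate point is ensuring that the strict bound $p<0$, and the bound $-1/(2p)\le 1/(2K_*(x))$, hold at every point of the full-measure set, and not only for one-sided derivatives. This holds because wherever $\partial_t\varphi^*$ exists it coincides with $\partial_t^\pm\varphi^*$, so the bounds \eqref{eq:dt} apply directly.
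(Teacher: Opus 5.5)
Your proposal is correct and follows essentially the same route as the paper. The paper obtains the corollary by combining \eqref{eq:strong_sol_A} with \eqref{eq:PDE_supremum} from the proof of Theorem~\ref{thm:HJB}, where the unconstrained maximiser $-1/(2\partial_t\varphi^*)$ is placed inside $\cK'_*(x)$ using Proposition~\ref{prop:d_time} and Remark~\ref{rem:opt_cont}; your explicit check of $c_A\le -1/(2p)\le 1/(2K_*(x))$ and of the boundary values spells out what the paper leaves implicit.
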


In the next theorem we lift the regularity of $\varphi^*$ to $C^{\infty}((0,T)\times(0,\infty))$. For that we need a bound provided by\footnote{Similar result on the regularity of partial differential equation in divergence form were proven originally by De Giorgi (elliptic case, \cite{degiorgi1956diff} and \cite{degiorgi1957diff}) and Nash (parabolic case, \cite{nash1958continuity}).} \cite[Lem.\ 4.1]{krylov1981properties} which we state in Lemma \ref{lem:krylov} for completeness. For the use of the lemma we introduce the set 
\begin{equation*}
C_{r}(s,y)\coloneqq\{(t,x)\in\R^2:s< t< s+r^2, |x-y|<r\},
\end{equation*} 
for $(s,y)\in \R^2$ and $r>0$. 
Notice that while \cite{krylov1981properties} considers forward parabolic PDEs, here we deal with a backward parabolic PDE and so our $C_{r}(s,y)$ is adjusted to reflect such difference. We also introduce suitable H\"older spaces with respect to the parabolic distance (cf.\ \cite[Sec.\ 2, Ch.\ 3]{friedman2008partial}) that are needed in the proof of the theorem. For $\delta\in(0,1)$ and $\cD\subset \R^2$, we denote $C^\delta(\cD)$ the set of $\delta$-H\"older continuous functions $f:\cD\to\R$. For $j,k\in\N$, we denote $C^{j,k;\delta}(\cD)$ the set of $\delta$-H\"older continuous functions $f:\cD\to\R$ such that partial derivatives with respect to time, up to the $j$-th order, and partial derivatives with respect to space, up to the $k$-th order, are $\delta$-H\"older continuous on $\cD$. Finally, we use $C^{j,k;\delta}_{\ell oc}(\cD)$ for the class of functions $f\in C^{j,k;\delta}(D)$ for any compact subset $D$ of $\cD$. These spaces equipped with H\"older norms are Banach spaces (see details in \cite[Sec.\ 2, Ch.\ 3]{friedman2008partial}).

\begin{theorem}\label{thm:smooth}
We have $\varphi^*\in C^{\infty}((0,T)\times(0,\infty))$.
\end{theorem}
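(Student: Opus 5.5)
The plan is to differentiate the equation in time, using finite differences first, and to bootstrap regularity for $v\coloneqq\partial_t\varphi^*$. Throughout I will use the second form of Corollary \ref{cor:PDE_aux}, which holds a.e.:
\begin{equation*}
\tfrac{1}{2}\sigma^2\partial_{xx}\varphi^*+c_A\partial_x\varphi^*=c_A+\frac{1}{4\partial_t\varphi^*}.
\end{equation*}
The key structural fact comes from Proposition \ref{prop:d_time} and Remark \ref{rem:opt_cont}. On any compact $D\subset(0,T)\times(0,\infty)$ we have $-\tfrac{1}{4c_A}\le \partial_t\varphi^*\le -K_*(\inf_D x)<0$. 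So $\partial_t\varphi^*$ is bounded and bounded away from zero, and this is what will make the differentiated equation uniformly parabolic, even though the HJB itself is degenerate.

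\emph{Step 1 (difference quotients and Hölder continuity of $\partial_t\varphi^*$).} For small $h>0$ set $w_h(t,x)\coloneqq(\varphi^*(t+h,x)-\varphi^*(t,x))/h$. Subtracting the equation at $t+h$ and at $t$ gives, a.e. on a neighbourhood of $D$,
\begin{equation*}
a_h\,\partial_t w_h+\tfrac12\sigma^2\partial_{xx}w_h+c_A\partial_x w_h=0,\qquad a_h(t,x)\coloneqq\frac{1}{4\partial_t\varphi^*(t+h,x)\partial_t\varphi^*(t,x)}.
\end{equation*}
Here $a_h$ is measurable and $\lambda\le a_h\le\Lambda$ with constants depending only on $D$. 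Moreover $w_h\in W^{1,2;\infty}_{\ell oc}$ by Proposition \ref{prop:W12p}, and $|w_h|\le 1/(4c_A)$ by \eqref{eq:lip_time}. Dividing by $a_h$ turns this into a backward linear parabolic equation in non-divergence form with bounded measurable coefficients, uniformly elliptic and uniform in $h$. The Krylov--Safonov estimate (Lemma \ref{lem:krylov}, i.e. \cite[Lem.\ 4.1]{krylov1981properties}, applied on the backward cylinders $C_r(s,y)$) then gives $\|w_h\|_{C^\delta(D)}\le C$ with $C,\delta$ independent of $h$. Since $w_h\to\partial_t\varphi^*$ a.e., Arzelà--Ascoli yields $\partial_t\varphi^*\in C^\delta_{\ell oc}((0,T)\times(0,\infty))$. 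I expect this step to be the main obstacle. One must check that strong ($W^{1,2;\infty}_{\ell oc}$) solutions with coefficients merely measurable in $t$ are admissible in Krylov's lemma, and that the non-degeneracy bounds hold uniformly on cylinders reaching slightly beyond $D$. I would handle the latter by working only with $t+h<T$ on compacts strictly inside $(0,T)$.

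\emph{Step 2 ($W^{1,2;p}$ regularity and a linear equation for $v$).} Once $v=\partial_t\varphi^*$ is Hölder, $a_h\to a\coloneqq 1/(4v^2)$ locally uniformly, so the coefficients are equicontinuous. Interior Calderón--Zygmund $W^{1,2;p}$ estimates for non-divergence equations with continuous leading coefficients then bound $w_h$ in $W^{1,2;p}_{\ell oc}$ uniformly in $h$. Passing to the limit weakly shows $v\in W^{1,2;p}_{\ell oc}$ and that $v$ is a strong solution of
\begin{equation*}
\frac{1}{4v^2}\partial_t v+\tfrac12\sigma^2\partial_{xx}v+c_A\partial_x v=0.
\end{equation*}
This is linear in $v$ with $C^\delta$ coefficient $4v^2$ in front of the second-order term once we divide through. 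The interior Schauder theory (\cite[Ch.\ 3]{friedman2008partial}) then gives $v\in C^{1,2;\delta}_{\ell oc}$.

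\emph{Step 3 (bootstrap).} If $v\in C^{k,2k;\delta}_{\ell oc}$, then the coefficient $4v^2$ is in the same class. Differentiating the equation for $v$, or again using difference quotients in $t$ and $x$ together with Schauder estimates, raises the regularity of $v$ to $C^{k+1,2k+2;\delta}_{\ell oc}$, so by induction $v\in C^\infty$. Finally, for each $t$ the function $x\mapsto\varphi^*(t,x)$ solves the ODE $\tfrac12\sigma^2\partial_{xx}\varphi^*+c_A\partial_x\varphi^*=c_A+1/(4v)$ with smooth right-hand side. Hence $\partial_x\varphi^*$ and $\partial_{xx}\varphi^*$ are expressed through integrals of smooth functions, and all mixed derivatives exist and are continuous. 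Combined with $\partial_t\varphi^*=v\in C^\infty$, this yields $\varphi^*\in C^\infty((0,T)\times(0,\infty))$.
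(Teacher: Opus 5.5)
Your proposal is correct and follows the paper's proof essentially step by step. The shared steps are: time difference quotients of $\varphi^*$; the Krylov--Safonov estimate, uniform in $h$ thanks to $-\tfrac{1}{4c_A}\le\partial_t\varphi^*\le -K_*(x)$, giving local H\"older continuity of $\partial_t\varphi^*$; Schauder estimates plus a bootstrap to make $\partial_t\varphi^*$ smooth; and finally a transfer of smoothness to $\varphi^*$.

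The deviations are minor. You pass through $W^{1,2;p}$ estimates before applying Schauder, whereas the paper applies Schauder to the difference quotients directly. You also finish via the ODE in $x$ rather than the paper's linear equation $-\partial_t\psi+a\partial_{xx}\psi+b\partial_x\psi=b$ with smooth coefficients $a=2\sigma^2(\partial_t\varphi^*)^2$ and $b=4c_A(\partial_t\varphi^*)^2$. In your version, add one line showing that the $t$-dependent integration constants, such as $\partial_x\varphi^*(t,x_0)$, are smooth in $t$. For example, they can be solved for from $\varphi^*(t,x_0)$ and $\varphi^*(t,x_1)$, and the $t$-derivative of these is the smooth function $\partial_t\varphi^*$.
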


\begin{proof} 
The proof is divided into three steps.

{\em Step 1.} First we show that $\partial_t\varphi^*\in C((0,T)\times(0,\infty))$. We evaluate \eqref{eq:PDE_aux} at $(t,x)$ and $(t+h,x)$, for small $h>0$, $h\in\bQ$. That yields for a.e.\ $(t,x)$,
\begin{equation*}
\begin{aligned}
&-\frac{1}{4\partial_{t}\varphi^*(t+h,x)}+\frac{\sigma^2}{2}\partial_{xx}\varphi^*(t+h,x)+c_A\partial_{x}\varphi^*(t+h,x)-c_A\\
&=-\frac{1}{4\partial_{t}\varphi^*(t,x)}+\frac{\sigma^2}{2}\partial_{xx}\varphi^*(t,x)+c_A\partial_{x}\varphi^*(t,x)-c_A,
\end{aligned}
\end{equation*}
where the null set can be taken independent of $h\in\bQ$ (for each $h\in\bQ$ we have a null set $\cN_h$ but then we consider the equation on $([0,T)\times(0,\infty))\setminus\cN$ with $\cN=\cup_{h\in\bQ}\cN_h$).
Rearranging terms, we get
\begin{equation*}
\begin{aligned}
&\frac{1}{4}\Big(\frac{\partial_{t}\varphi^*(t+h,x)-\partial_{t}\varphi^*(t,x)}{\partial_{t}\varphi^*(t+h,x)\partial_{t}\varphi^*(t,x)}\Big)+\frac{\sigma^2}{2}\Big(\partial_{xx}\varphi^*(t+h,x)-\partial_{xx}\varphi^*(t,x)\Big)\\
&+c_A\Big(\partial_{x}\varphi^*(t+h,x)-\partial_{x}\varphi^*(t,x)\Big)=0.
\end{aligned}
\end{equation*}
Dividing by $h$ and setting $v^h(t,x)\coloneqq(\varphi^*(t+h,x)-\varphi^*(t,x))h^{-1}$ yields
for a.e.\ $(t,x)\in[0,T)\times(0,\infty)$,
\begin{equation}\label{eq:PDE_vt}
\partial_{t}v^h(t,x)+a^h(t,x)\partial_{xx}v^h(t,x)+b^h(t,x)\partial_{x}v^h(t,x)=0,
\end{equation}
where
\begin{equation}\label{eq:linear_coeff_h}
\begin{aligned}
a^h(t,x)\coloneqq 2\sigma^2\big(\partial_{t}\varphi^*(t+h,x)\partial_{t}\varphi^*(t,x)\big),\quad b^h(t,x)\coloneqq 4c_A\big(\partial_{t}\varphi^*(t+h,x)\partial_{t}\varphi^*(t,x)\big).
\end{aligned}
\end{equation}

By Proposition \ref{prop:d_time} we have for a.e.\ $(t,x)\in [0,T]\times[0,\infty)$,
\begin{equation}\label{eq:coeff_PDE_vt}
\begin{aligned}
0\le a^h(t,x)\le \frac{\sigma^2}{8c_A^2}\quad\text{and}\quad 0\le b^h(t,x)\le \frac{1}{4c_A}.
\end{aligned}
\end{equation}
We fix $(t_0,x_0)\in(0,T)\times(0,\infty)$ and $r>0$ such that $x_0-2r>0$ and $t_0+h+4r^2<T$. 
Since the function $v^h\in W^{1,2;p}_{\ell oc}(C_{2r}(t_0,x_0))$ solves \eqref{eq:PDE_vt} and $a^h$ is strictly separate from zero in $C_{2r}(t_0,x_0)$, then by Lemma \ref{lem:krylov}, there are $N,\delta>0$, depending on the bounds in \eqref{eq:coeff_PDE_vt}, such that
\begin{equation}\label{eq:equic_vh}
\begin{aligned}
|v^h(t,x)-v^h(s,y)|&\le N r^{-\delta}\big(|x-y|+|t-s|^{\frac{1}{2}}\big)^\delta \sup_{(t,x)\in C_{2r}(t_0,x_0)}|v^h(t,x)|\\
&\le \frac{N}{4c_A} r^{-\delta}\big(|x-y|+|t-s|^{\frac{1}{2}}\big)^\delta, 
\end{aligned}
\end{equation}
for all $(t,x),(s,y)\in C_{r}(t_0,x_0)$, where for the final inequality we used
\begin{equation}\label{eq:equib_vh}
\sup_{(t,x)\in C_{2r}(t_0,x_0)}|v^h(t,x)|\le \tfrac{1}{4c_A}.
\end{equation}

The family $(v^h)_{h\in(0,h_0)}$ is bounded in the $C^\delta(C_{r}(t_0,x_0))$-norm. Thus, there are a function $f\in C^\delta(C_{r}(t_0,x_0))$ and a sequence $(v^{h_j})_{j\in\N}$, with $h_j\to 0$ as $j\to\infty$, such that for any $\delta'\in(0,\delta)$ we have $v^{h_j}\to f$ in $C^{\delta'}(C_{r}(t_0,x_0))$.
Noticing that $v^h \to \partial_t \varphi^*$ as $h\downarrow 0$ for almost every point in $C_{r}(t_0,x_0)$, we have that $\partial_t \varphi^*=f$ a.e.\ in $C_{r}(t_0,x_0)$. That is, the function $\partial_t \varphi^*$ admits a $\delta$-H\"older continuous modification on $C_{r}(t_0,x_0)$. The construction can be extended to $[0,T)\times(0,\infty)$ as follows. Let us consider a countable covering $\{C_{r_n}(t_n,x_n),\,n\in\N\}$ of $(0,T)\times(0,\infty)$, where $r_n>0$ is such that $t_n+4r_n^2<T$ and $x_n-2r_n>0$. For each $n\in\N$, take $h_n>0$, $h_n\in\bQ$ such that $t_n+h_n+4r_n^2<T$. Then, the family $(v^h)_{h\in(0,h_n)}$ satisfies \eqref{eq:equic_vh} and \eqref{eq:equib_vh} on $C_{r_n}(t_n,x_n)$ with suitable constants $N_n,\delta_n>0$. Passing to the limit along a sequence $(h^n_k)_{k\in\N}$, with $h^n_k\to 0$ as $k\to\infty$, the sequence $(v^{h^n_k})_{k\in\N}$ converges to a function $f_n\in C^{\delta_n}(C_{r_n}(t_n,x_n))$ in the norm of $C^{\delta'_n}(C_{r_n}(t_n,x_n))$, $\delta'_n\in(0,\delta_n)$. As above it must be $f_n=\partial_t\varphi^*$ a.e.\ and so we conclude that $\partial_t\varphi^*\in C^{\delta_n}(C_{r_n}(t_n,x_n))$ on every set $C_{r_n}(t_n,x_n)$. Thus, $\partial_t\varphi^*\in C((0,T)\times(0,\infty))$ because $\cup_{n\in\N}C_{r_n}(t_n,x_n)=(0,T)\times(0,\infty)$ and the covering was arbitrary. Moreover, it is clear from the construction that on every compact subset $U\subset (0,T)\times(0,\infty)$ the constants $N,\delta>0$ only depend on the compact $U$. Hence, $\partial_t\varphi^*\in C^\delta_{\ell oc}((0,T)\times(0,\infty))$. 
\smallskip

{\em Step 2.} Here we show that $\partial_t\varphi^*\in C^{\infty}([0,T)\times(0,\infty))$. 
On any compact rectangle $U\subset(0,T)\times(0,\infty)$, 
for some $h_U>0$ and $\delta=\delta(U)>0$ 
the coefficients $a^h$, $b^h$ in \eqref{eq:linear_coeff_h} belong to $C^\delta(U)$
uniformly for $h\in(0,h_U)$. By construction, the function $v^h$ solves the Cauchy--Dirichlet problem
\begin{equation*}
\begin{cases}\partial_{t}\psi(t,x)+a^h(t,x)\partial_{xx}\psi(t,x)+b^h(t,x)\partial_{x}\psi(t,x)=0,& \text{for }(t,x)\in \mathrm{int}(U),\\
\psi(t,x)=v^h(t,x),& \text{for }(t,x)\in \partial_P U,
\end{cases}
\end{equation*}
where we denote by $\mathrm{int}(U)$ and $\partial_P U$ the interior and the parabolic boundary of $U$, respectively. Since \cite[Thm.\ 9, Ch.\ 3, Sec.\ 4]{friedman2008partial} guarantees existence and uniqueness of a classical solution of the above PDE, then $v^h\in C^{1,2;\delta}_{\ell oc}(\mathrm{int}(U))$. Further, \cite[Thm.\ 5, Ch.\ 3, Sec.\ 4]{friedman2008partial} guarantees that 
\begin{equation}\label{eq:vhholder}
\big\|v^h\big\|_{C^{1,2;\delta}(V)}\le c_V
\end{equation}
where $V\subset \mathrm{int}(U)$ is another compact and the constant $c_V>0$ depends on the H\"older norms of $a^h$ and $b^h$ on $V$, which we know are uniformly bounded for $h\in(0,h_U)$. The bound \eqref{eq:vhholder} implies that $(v^h)_{h\in(0,h_U)}$ is compact in $C^{1,2;\delta'}(V)$, for any $\delta'\in(0,\delta)$. Then $v^{h_n}\to \partial_t\varphi^*$ in $C^{1,2;\delta'}(V)$ along a sequence $h_n\to 0$. Moreover, $\partial_t\varphi^*\in C^{1,2;\delta}(V)$. Since $U$ and $V$ are arbitrary, we conclude that $\partial_t\varphi^*\in C^{1,2}((0,T)\times(0,\infty))$ and it solves 
\begin{equation*}
\partial_{t}\psi(t,x)+a(t,x)\partial_{xx}\psi(t,x)+b(t,x)\partial_{x}\psi(t,x)=0,\quad (t,x)\in(0,T)\times(0,\infty),
\end{equation*}
where 
\begin{equation}\label{eq:linear_coeff}
a(t,x)\coloneqq 2\sigma^2(\partial_{t}\varphi^*(t,x))^2\quad\text{and}\quad b(t,x)\coloneqq 4c_A(\partial_{t}\varphi^*(t,x))^2.
\end{equation}

In particular, for any compact rectangle $U$, the function $\partial_t\varphi^*$ and coefficients $a$ and $b$ belong to $C^{1,2;\delta}(U)$ for some $\delta=\delta(U)>0$ and $\partial_t\varphi^*$ solves the boundary value problem 
\begin{equation}\label{eq:PDE_aux_lin2}
\begin{cases}
\partial_{t}\psi(t,x)+a(t,x)\partial_{xx}\psi(t,x)+b(t,x)\partial_{x}\psi(t,x)=0,& \text{for }(t,x)\in \mathrm{int}(U),\\
\psi(t,x)=\partial_t\varphi^*(t,x),& \text{for }(t,x)\in \partial_P U.
\end{cases}
\end{equation}
Then, by interior regularity results for parabolic PDEs\footnote{Notice that actually all derivatives admit $\delta$-H\"older continuous extension to $U$ by Weierstrass theorem.}, for any compact $V\subset U$ we have $\partial_t\varphi^*\in C^{2,4;\delta}(V)$ with $\partial_{ttx}\varphi^*$ and $\partial_{ttxx}\varphi^*$ also in $C^\delta(V)$ (cf.\ \cite[Thm.\ 11, Ch.\ 3, Sec.\ 5]{friedman2008partial}). In particular, that lifts the regularity of the coefficients $a$ and $b$ in \eqref{eq:PDE_aux_lin2} to $C^{2,4;\delta}(V)$ with $\partial_{tx}a$, $\partial_{tx}b$, $\partial_{txx}a$ and $\partial_{txx}b$ in $C^\delta(V)$. This additional regularity allows another application of \cite[Thm.\ 11, Ch.\ 3, Sec.\ 5]{friedman2008partial}. Continuing by induction, by arbitrariness of $V$, we deduce $\partial_t\varphi^*\in C^\infty(\mathrm{int}(U))$ and since $U$ was arbitrary we conclude $\partial_t\varphi^*\in C^\infty((0,T)\times(0,\infty))$. Next we transfer that smoothness to $\varphi^*$ itself.
\smallskip

{\em Step 3.} First we show that $\varphi^*$ is classical solution of the second equation in \eqref{eq:PDE_aux} (hence also of the first one). We will subsequently deduce its $C^\infty$-regularity.

The second equation in \eqref{eq:PDE_aux} reads also as
\begin{equation}\label{eq:PDE_time}
\frac{\sigma^2}{2}\partial_{xx}\varphi^*=\frac{1}{4\partial_{t}\varphi^*}-c_A\partial_{x}\varphi^*+c_A,\quad \text{a.e.\ on }[0,T)\times(0,\infty).
\end{equation}
Since $\varphi^*\in W^{1,2;\infty}_{\ell oc}([0,T)\times(0,\infty))$ thanks to Proposition \ref{prop:W12p}, then $\partial_x \varphi^*$ is $\gamma$-H\"older continuous for any $\gamma\in(0,1)$ by the embedding $W^{1,2;p}_{\ell oc}([0,T)\times(0,\infty))\hookrightarrow C^{0,1;\gamma}_{\ell oc}([0,T)\times(0,\infty))$ with $\gamma=1-\frac{3}{p}$ and $p>3$ (see, e.g., \cite[eq.\ (E.9)]{fleming2012deterministic} or \cite[exercise 10.1.14]{krylov2008lectures}). Moreover, we proved in the previous step that $\partial_t\varphi^*\in C^\infty((0,T)\times(0,\infty))$ and we know that $\partial_{t}\varphi^*<0$ on $[0,T)\times(0,\infty)$, by Proposition \ref{prop:d_time}. Then, on any compact the terms on the right-hand side of \eqref{eq:PDE_time} are H\"older-continuous and they are continuous on $(0,T)\times(0,\infty)$. Therefore $\partial_{xx}\varphi^*$ is $\delta$-H\"older-continuous on any compact (with $\delta\in(0,1)$ depending on the compact) and it is continuous on $(0,T)\times(0,\infty)$. Hence, $\varphi^*\in C^{1,2}((0,T)\times(0,\infty))\cap C([0,T]\times[0,\infty))$ is a classical solution of \eqref{eq:PDE_aux}. 

For any $S\in[0,T)$ and $0<x_1<x_2<\infty$, the function $\psi=\varphi^*$ solves the (forward) Cauchy--Dirichlet problem obtained multiplying the second equation in \eqref{eq:PDE_aux} by $4(\partial_{t}\varphi^*)^2$. That is, 
\begin{equation}\label{eq:PDE_aux_lin}
\begin{cases}-\partial_{t}\psi(t,x)+a(t,x)\partial_{xx}\psi(t,x)+b(t,x)\partial_{x}\psi(t,x)=b(t,x),& \text{for }(t,x)\in(0,S)\times(x_1,x_2),\\
\psi(t,x_2)=\varphi^*(t,x_2),& \forall t\in(0,S],\\
\psi(t,x_1)=\varphi^*(t,x_1),& \forall t\in(0,S],\\
\psi(0,x)=\varphi^*(0,x),& \forall x\in[x_1,x_2],
\end{cases}
\end{equation}
where $a$ and $b$ are defined in \eqref{eq:linear_coeff}. From step 2 the coefficients $a$ and $b$ are in $C^\infty$ and \cite[Thm.\ 9, Ch.\ 3, Sec.\ 4]{friedman2008partial} says that \eqref{eq:PDE_aux_lin} admits a unique classical solution. Thus $\varphi^*\in C^{1,2;\delta}((0,S)\times(x_1,x_2))$
for some $\delta>0$ depending on the compact $[0,S]\times[x_1,x_2]$. Since the $a$ and $b$ are infinitely smooth, once again we can invoke interior regularity results for parabolic PDEs (cf.\ \cite[Thm.\ 11, Ch.\ 3, Sec.\ 5]{friedman2008partial}) to lift the regularity of the solution of \eqref{eq:PDE_aux_lin} in $(0,S)\times (x_1,x_2)$. Hence, $\varphi^*\in C^\infty((0,S)\times (x_1,x_2))$. Since $(0,S)\times (x_1,x_2)\subset(0,T)\times(0,\infty)$ was arbitrary, this concludes the proof of the theorem.
\end{proof}

\section{Proof of Theorem \ref{thm:main}}\label{sec:proofthm}

The final theorem in the previous section implies that $\varphi^*$ is classical solution of the HJB equation from Theorem \ref{thm:HJB}. Combining that with an application of It\^o's calculus we now show that $\varphi^*$ coincides with the value function $\bar \varphi$ of the Principal's problem in \eqref{eq:orig_value}. That also shows uniqueness of the solution to the HJB equation. Moreover, we obtain the optimal control for the Principal in Markovian form. All these facts and additional regularity in the space variable for $\varphi^*=\bar \varphi$ form the statement of our main theorem (Theorem \ref{thm:main}), the proof of which is the main content of this section.

\begin{proof}[{\bf Proof of Theorem \ref{thm:main}}]
The main task is to show the identity $\varphi^*=\bar\varphi$. Once that is accomplished, we immediately deduce that 
$\bar\varphi\in C([0,T]\times[0,\infty))\cap C^\infty((0,T)\times(0,\infty))$
is the unique bounded solution of the HJB equation and it is concave and decreasing in time, with $\partial_t \bar\varphi$ bounded (Propositions \ref{prop:phi_bnd}--(i) and \ref{prop:phi^A_conc_A}). Monotonicity and concavity in $x$ will be obtained later on. Notice that uniqueness holds because the same verification theorem performed for $\varphi^*$ in the next paragraphs would be applicable to any bounded classical solution of the HJB equation, thus yielding the equivalence with $\bar \varphi$.

Fix $\beta\in \bar \cB$ from Definition \ref{def:B} and recall the probability measure $\P_\beta$ from \eqref{eq:Pbeta}, under which the process
$B^\beta_t=\sigma^{-1}(Y_t-Y_0-\int_0^t\beta_s\ud s)$ is a Brownian motion. The contract's dynamics associated to $\beta$ reads as (cf.\ \eqref{eq:SDEYW})
\begin{equation*}
\bar W_s^{x;\beta}=x+\int_0^s c_A\beta_u^2\,\ud u+\int_0^s\sigma\beta_u \,\ud B^\beta_u,\quad\text{with $x> 0$}.
\end{equation*}
For all $n,k\in\N$, we set 
\begin{equation}\label{eq:taunsigmak}
\tau_\beta^n\coloneqq\inf\{s\ge 0: \bar W_s^{x;\beta}< n^{-1} \}\quad\text{and}\quad \sigma^k_\beta\coloneqq\inf\{s\ge 0: \bar W_s^{x;\beta}> k \}.
\end{equation}
It is clear that $\varphi^*(T,x)=0=\bar\varphi(T,x)$ and $\varphi^*(t,0)=0=\bar\varphi(t,0)$ for all $x\in[0,\infty)$ and all $t\in[0,T]$, respectively. We now show $\varphi^*(t,x)=\bar\varphi(t,x)$ for arbitrary $(t,x)\in(0,T)\times(0,\infty)$. 
Recalling that $\varphi^*\in C^{\infty}((0,T)\times (0,\infty))$ by Theorem \ref{thm:smooth}, we can apply Dynkin's formula to $\varphi^*\big(t+u,\bar W_u^{x;\beta}\big)$ on the random interval $u\in[\![0,s\wedge \tau_\beta^n\wedge\sigma_\beta^k]\!]$ with $s\in[0,T-t)$. That yields
\begin{equation}\label{eq:verif_0}
\begin{aligned}
\varphi^*(t,x)=\E_\beta\Big[&\int_0^{s\wedge \tau_\beta^n\wedge\sigma_\beta^k}\Big(-\partial_t\varphi^*-\frac{\sigma^2}{2}\beta^2_u\partial_{xx}\varphi^*-c_A\beta^2_u\partial_x\varphi^*\Big)\big(t+u,\bar W^{x;\beta}_u\big)\ud u\\
&+ \varphi^*\Big(t+s\wedge\tau_\beta^n\wedge\sigma_\beta^k,\bar W^{x;\beta}_{s\wedge \tau_\beta^n\wedge\sigma_\beta^k}\Big)\Big].
\end{aligned}
\end{equation}
Adding and subtracting $(1-c_A\beta_u)\beta_u$ in the first term under expectation we get
\begin{equation*}
\begin{aligned}
&\E_\beta\Big[\int_0^{s\wedge \tau_\beta^n\wedge\sigma_\beta^k}\Big(-\partial_t\varphi^*-\frac{\sigma^2}{2}\beta^2_u\partial_{xx}\varphi^*-c_A\beta^2_u\partial_x\varphi^*\Big)\big(t+u,\bar W^{x;\beta}_u\big)\ud u\Big]\\
&\ge \E_\beta\Big[\int_0^{s\wedge \tau_\beta^n\wedge\sigma_\beta^k}\Big(-\partial_t\varphi^*-\cH(\,\cdot\,;\varphi^*)\Big)\big(t+u,\bar W^{x;\beta}_u\big)\ud u+\int_0^{s\wedge \tau_\beta^n\wedge\sigma_\beta^k}(1-c_A\beta_u)\beta_u\ud u\Big]\\
&=\E_\beta\Big[\int_0^{s\wedge \tau_\beta^n\wedge\sigma_\beta^k}(1-c_A\beta_u)\beta_u\ud u\Big],
\end{aligned}
\end{equation*}
where the inequality holds by definition of the Hamiltonian \eqref{eq:hamil} and the equality holds by Theorem \ref{thm:HJB}. Plugging the above inequality into \eqref{eq:verif_0}, letting $s\uparrow T-t$ and using continuity of $\varphi^*$ and dominated convergence we obtain 
\begin{equation}\label{eq:verif_2}
\begin{aligned}
\varphi^*(t,x)&\ge \E_\beta\Big[\int_0^{(T-t)\wedge \tau_\beta^n\wedge\sigma_\beta^k}(1-c_A\beta_s)\beta_s\ud s+\varphi^*\Big(T\wedge(t\!+\! \tau_\beta^n\wedge\sigma_\beta^k),\bar W^{x;\beta}_{(T-t)\wedge \tau_\beta^n\wedge\sigma_\beta^k}\Big)\Big]\\
&\ge \E_\beta\Big[\int_0^{(T-t)\wedge \tau_\beta^n\wedge\sigma_\beta^k}(1-c_A\beta_s)\beta_s\ud s\Big],
\end{aligned}
\end{equation}
where in the final inequality we used $\varphi^*\ge 0$. Since $\beta\in\bar{\cB}$, we have 
\begin{equation*}
\begin{aligned}
\E_\beta\Big[\sup_{0\le s\le T-t}\big|\bar W_s^{x;\beta}\big|\Big]
&\le |x|+c_A\E_\beta\Big[\int_0^{T-t}\beta_u^2\ud u\Big]+\sigma\E_\beta\Big[\sup_{0\le s\le T-t}\Big|\int_0^s\beta_u\ud B^\beta_u\Big|^2\Big]^{1/2}\\
&\le |x|+c_A\E_\beta\Big[\int_0^{T-t}\beta_u^2\ud u\Big]+4\sigma\E_\beta\Big[\int_0^{T-t}\beta_u^2\ud u\Big]^{1/2}<\infty
\end{aligned}
\end{equation*}
where we used Jensen's inequality and then Doob's maximal inequality to bound the stochastic integral. This bounds implies that $\sigma_\beta^k\uparrow \infty$ as $k\to\infty$. By continuity of paths of $\bar W^{x;\beta}$ we also have $\tau_\beta^n\uparrow \bar\tau_\beta$ as $n\to\infty$. Therefore, by dominated convergence theorem, taking limits in \eqref{eq:verif_2} we obtain
\begin{equation*}
\varphi^*(t,x)\ge\E_\beta\Big[\int_0^{(T-t)\wedge\bar\tau_\beta}(1-c_A\beta_s)\beta_s\ud s\Big],
\end{equation*}
and by the arbitrariness of $\beta\in\bar \cB$ we deduce $\varphi^*(t,x)\ge \bar \varphi(t,x)$.
Next we show $\varphi^*(t,x)\le \bar{\varphi}(t,x)$.

Let $\beta^*(t,x)\coloneqq -2\partial_t\varphi^*(t,x)$. Since $\beta^*(t,x)$ is locally Lipschitz on $(0,T)\times(0,\infty)$ and bounded (cf.\ Proposition \ref{prop:d_time} and Theorem \ref{thm:smooth}), under $\P_0$ the SDE
\begin{equation*}
\bar W_s^*=x+4(c_A-1)\int_0^s\big(\partial_{t}\varphi^*(t\!+\!u,\bar W_u^{*})\big)^2 \,\ud u-2\int_0^s\partial_{t}\varphi^*(t\!+\!u,\bar W_u^{*})\ud Y_u,
\end{equation*}
admits a unique strong solution (i.e., adapted to the filtration generated by $Y$) on the random time-interval $s\in[\![0,\bar \tau_*\wedge(T-t)]\!]$ where $\bar \tau_*\coloneqq\inf\{s\ge 0: \bar W^*_s\le 0\}$ with $\inf\varnothing=+\infty$.

The process $\beta^*_s$ defined as 
\begin{equation}\label{eq:optimality}
\beta^*_s\coloneqq -2\partial_{t}\varphi^*(t+s,\bar W_s^*)\mathds{1}_{\{s<\bar \tau_*\}}
\end{equation}
belongs to the class $\bar\cB$ because it is clearly $\bar\F$-progressively measurable, bounded and it vanishes for $s\in[\![\bar\tau_*,\infty)\!)$. 
In particular, we can define a probability measure $\P_*=\P_{\beta^*}$ according to \eqref{eq:Pbeta} and a Brownian motion $B^*=B^{\beta^*}$ by Girsanov theorem. Under $\P_*$ we have, for $s\in[\![0,\bar\tau_*]\!]$ 
\begin{equation*}
\begin{aligned}
\bar W_s^*&=x+4 c_A\int_0^s\big(\partial_{t}\varphi^*(t\!+\!u,\bar W_u^{*})\big)^2 \,\ud u-2\int_0^s\partial_{t}\varphi^*(t\!+\!u,\bar W_u^{*}) \sigma\ud B^*_u\\
&=x+c_A\int_0^s(\beta^*_u)^2 \,\ud u+\int_0^s\beta^*_u\sigma\ud B^*_u,
\end{aligned}
\end{equation*}
and we extend the solution as $\bar W_s^*=0$ for $s\in(\!(\bar\tau_*,\infty)\!)$. Defining the stopping times $\tau^n_*=\tau^n_{\beta^*}$ and $\sigma^k_*=\sigma^k_{\beta^*}$ as in \eqref{eq:taunsigmak}, and denoting $\E_*[\,\cdot\,]=\E_{\beta^*}[\,\cdot\,]$ we obtain again \eqref{eq:verif_0} but with $(\tau^n_\beta,\sigma^k_\beta,\bar W^\beta, \beta,\E_\beta)$ therein replaced by $(\tau^n_*,\sigma^k_*,\bar W^*, \beta^*,\E_*)$. Adding and subtracting $(1-c_A\beta^*_u)\beta^*_u$ inside the integral and recalling that
$\beta^*_s$ is maximiser of the Hamiltonian in \eqref{eq:PDE_original} (see also \eqref{eq:beta_opt}), we get
\begin{equation*}
\begin{aligned}
\varphi^*(t,x)&=\E_*\Big[-\!\!\int_0^{s\wedge \tau_*^n\wedge\sigma_*^k}\!\!\!\big[\partial_t\varphi^*(\cdot)\!+\!\cH(\cdot;\varphi^*)\big]\big(t\!+\!u,\bar W^{*}_u\big)\ud u\\
&\qquad+\!\int_0^{s\wedge \tau_*^n\wedge\sigma_*^k}\!(1\!-\!c_A\beta_u^*)\beta_u^*\ud s\!+\! \varphi^*\Big(t\!+\! s\wedge\tau_*^n\wedge\sigma_*^k,\bar W^{*}_{s\wedge \tau_*^n\wedge\sigma_*^k}\Big)\Big]\\
&=\E_*\Big[\!\int_0^{s\wedge \tau_*^n\wedge\sigma_*^k}\!(1\!-\!c_A\beta_s^*)\beta_s^*\ud s\!+\! \varphi^*\Big(t\!+\! s\wedge\tau_*^n\wedge\sigma_*^k,\bar W^{*}_{s\wedge \tau_*^n\wedge\sigma_*^k}\Big)\Big],
\end{aligned}
\end{equation*}
where we used that $\varphi^*$ is classical solution of \eqref{eq:PDE_original} for the second equality.

Letting $k,n\to \infty$, we can use dominated convergence because $\varphi^*$ is bounded (cf.\ Proposition \ref{cor:contphi}) and $\beta^*_u$ is also bounded, along with the limits $\sigma^k_*\uparrow +\infty$ and $\tau^n_*\uparrow \bar \tau_{*}$. That yields
\begin{equation*}
\begin{aligned}
\quad\varphi^*(t,x)&=\E_*\Big[\!\int_0^{s\wedge \bar \tau_{*}}\!(1\!-\!c_A\beta_u^*)\beta_u^*\ud u\!+\! \varphi^*\big(t\!+\!s\wedge \bar \tau_{*},\bar W^{*}_{s\wedge \bar \tau_{*}}\big)\Big]\\
&=\E_*\Big[\!\int_0^{s\wedge \bar \tau_{*}}\!(1\!-\!c_A\beta_u^*)\beta_u^*\ud u\!+\! \varphi^*\big(t\!+\!s,\bar W^{*}_s\big)\mathds{1}_{\{s\le \bar\tau_{*}\}}\Big],
\end{aligned}
\end{equation*}
where the second equality holds because $\varphi^*(t+\bar\tau_{*},\bar W^*_{\bar\tau_{*}})=\varphi^*(t+\bar\tau_{*},0)=0$ on $\{\bar\tau_{*} < s\}$. Letting $s\uparrow T-t$ we have
\begin{equation*}
\varphi^*\big(t\!+\!s,\bar W^{*}_s\big)\mathds{1}_{\{s\le \bar\tau_{*}\}} \longrightarrow \varphi^*\big(T,\bar W^{*}_{T-t}\big)\mathds{1}_{\{T-t\le \bar\tau_{*}\}}=0,
\end{equation*}
and by dominated convergence we deduce
\begin{equation*}
\varphi^*(t,x)=\E_*\Big[\!\int_0^{s\wedge \bar \tau_{*}}\!(1\!-\!c_A\beta_u^*)\beta_u^*\ud u\Big].
\end{equation*}
Thus, $\varphi^*(t,x)\le \bar\varphi(t,x)$ as needed.

We have obtained that $\varphi^*=\bar \varphi$ and optimality of the control map $\beta^*(t,x)=-2\partial_t\bar \varphi(t,x)$ until the first time the controlled dynamics hits zero and $\beta^*(t,x)=0$ after that (cf.\ \eqref{eq:optimality}). It remains to show that $\bar \varphi$ is non-decreasing and concave in $x$. Given $\beta\in\bar\cB$, it is clear that the stopping time $\bar\tau_\beta=\bar\tau_\beta(x)$ depends on the initial value of the process $\bar W^\beta_0=x$. Moreover, for $x_2>x_1$ we have $\bar\tau_\beta(x_2)>\bar\tau_\beta(x_1)$. Let $\beta^*=\beta^{*;x_1}$ be optimal for $\bar\varphi(t,x_1)$. Then, $\beta_s=\beta^*_s\mathds{1}_{\{s\le \bar\tau_{\beta^*}(x_1)\}}+\frac{1}{2c_A}\mathds{1}_{\{s> \bar\tau_{\beta^*}(x_1)\}}$ defines a progressively measurable process which is an admissible control for $\bar\varphi(t,x_2)$, thanks to Remark \ref{rem:barphi}. By continuity of paths of $\bar W^{x_1;\beta^*}$ and $\bar W^{x_2;\beta}$ it follows $\bar\tau_{\beta^*}(x_1)<\bar\tau_{\beta}(x_2)$. Moreover, it is clear that 
$\P_\beta=\P_{\beta^*}$ on $\bar\cF_{\bar \tau_{\beta^*}}$ by the explicit form of the Radon--Nikodym derivative \eqref{eq:Pbeta}. 
Then,
\begin{equation*}
\begin{aligned}
\bar \varphi(t,x_2)&\ge \E_{\beta}\Big[\int_0^{\bar \tau_\beta(x_2)\wedge(T-t)}(1-c_A\beta_s)\beta_s\ud s\Big]\\
&=\E_{\beta}\Big[\int_0^{\bar \tau_{\beta^*}(x_1)\wedge(T-t)}(1-c_A\beta_s)\beta_s\ud s+\int_{\bar \tau_{\beta^*}(x_1)\wedge(T-t)}^{\bar \tau_{\beta}(x_2)\wedge(T-t)}\frac{1}{4c_A}\ud s\Big]\\
&\ge \E_{\beta}\Big[\int_0^{\bar \tau_{\beta^*}(x_1)\wedge(T-t)}(1-c_A\beta_s)\beta_s\ud s\Big]\\
&=\E_{\beta^*}\Big[\int_0^{\bar \tau_{\beta^*}(x_1)\wedge(T-t)}(1-c_A\beta^*_s)\beta^*_s\ud s\Big]=\bar\varphi(t,x_1),
\end{aligned}
\end{equation*}
where the penultimate equality holds because $\P_\beta=\P_{\beta^*}$ on $\bar\cF_{\bar \tau_{\beta^*}}$. Hence, $x\mapsto \bar \varphi(t,x)$ is non-decreasing. Since $\bar \varphi$ solves the second equation in \eqref{eq:PDE_aux} we deduce
\begin{equation}\label{eq:concphi}
\tfrac{\sigma^2}{2}\partial_{xx}\bar \varphi=c_A+\frac{1}{4\partial_{t}\bar \varphi}-c_A\partial_{x}\bar \varphi\le 0,
\end{equation}
where the inequality holds because $\partial_x\bar\varphi\ge 0$ and $-1/(4\partial_t\bar \varphi)\ge c_A$ (cf.\ \eqref{eq:dt}).
\end{proof}


\section{Properties of the optimal contract}\label{sec:econ}

In this section we analyse structural properties of the optimal contract, or in other words of the optimal control map $\beta^*(t,x)$.
Besides their economic implications, results contained here reveal finer regularity properties of the value function $\bar \varphi$ of the problem. 

First we provide sensitivity of the optimal contract with respect to $(t,x)$.
\begin{proposition}\label{prop:beta}
The following properties hold: 
\begin{itemize}
\item[(i)] $t\mapsto \beta^*(t,x)$ is non-decreasing with, for any $\delta>0$,
\begin{equation}
\lim_{t\to T}\sup_{\delta\le x\le 1/\delta}\big|\beta^*(t,x)-\tfrac{1}{2c_A}\big|=0;
\end{equation}

\item[(ii)] $x\mapsto \beta^*(t,x)$ is non-decreasing and concave and, for any $0<\delta<T$, it holds
\begin{equation*}
\begin{aligned}
&\lim_{x\downarrow 0}\sup_{0\le t\le T-\delta}\big|\beta^*(t,x)\big|=0,\quad \lim_{x\uparrow\infty}\sup_{0\le t\le T-\delta}\big|\tfrac{1}{2c_A}-\beta^*(t,x)\big|=0,\\
&\lim_{x\downarrow 0}\inf_{0\le s\le T-\delta}\partial_{x}\beta^*(s,x)\ge \frac{1}{\sigma^2}\quad\text{and}\quad\limsup_{x\to 0}\sup_{0\le s\le T-\delta}\partial_{xx}\beta^*(s,x)\le -\frac{2c_A}{\sigma^4}.
\end{aligned}
\end{equation*}
\end{itemize}
\end{proposition}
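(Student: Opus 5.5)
Throughout I write $\beta^*=-2\partial_t\bar\varphi$ and use the linear PDE from Step 2 of the proof of Theorem \ref{thm:smooth}. Set $w\coloneqq\partial_t\bar\varphi=-\beta^*/2$, which is $C^\infty$ on $(0,T)\times(0,\infty)$ and satisfies
\begin{equation*}
\partial_t w+2\sigma^2w^2\partial_{xx}w+4c_Aw^2\partial_x w=0 .
\end{equation*}
This gives the quasilinear backward equation
\begin{equation*}
\partial_t\beta^*+\tfrac{\sigma^2}{2}(\beta^*)^2\partial_{xx}\beta^*+c_A(\beta^*)^2\partial_x\beta^*=0 .
\end{equation*}

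\emph{Monotonicity.} Monotonicity in $t$ is immediate from Proposition \ref{prop:phi^A_conc_A}: concavity of $t\mapsto\bar\varphi(t,x)$ makes $\partial_t\bar\varphi$ non-increasing, so $\beta^*$ is non-decreasing in $t$.

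For monotonicity in $x$ I would use the original control problem. I would show that $x\mapsto\bar\varphi(t,x)-\bar\varphi(t',x)$ is non-decreasing for $t<t'$. The idea is that a larger initial wealth only delays absorption, and the "extra horizon" $t'-t$ is worth more when absorption is less likely. Dividing by $t'-t$ would then give $\partial_x\partial_t\bar\varphi\le0$. This seems hard to do directly.

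A cleaner route is to differentiate in $x$. Set $u\coloneqq\partial_x\beta^*$. Differentiating the $\beta^*$-equation, $u$ solves a linear backward parabolic equation with smooth coefficients and a zero-order term $\sigma^2\beta^*\partial_{xx}\beta^*+2c_A\beta^*u$. Substituting $\partial_{xx}\beta^*$ from the equation turns this zero-order term into $-\tfrac{2}{\beta^*}\partial_t\beta^*$, which is harmless. I then need boundary data. At $t=T$: by Proposition \ref{prop:d_time}, $\beta^*(T^-,x)\to1/(2c_A)$ uniformly on compacts, since $\P(\rho_x>(T-t)c_A^{-2})\to1$. At $x=0$: $\beta^*\to0$. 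A maximum principle on $[0,T-\delta]\times[\epsilon,1/\epsilon]$ then gives $u\ge0$. This requires controlling the sign of $u$ on the lateral boundary, and that is where I expect real trouble.

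So the first thing I would actually try is a Bernstein-type or probabilistic representation instead. Write $\beta^*(t,x)=\E[\beta^*(t+\lambda,\widehat W_\lambda)]$ along the optimally controlled process $d\widehat W=c_A(\beta^*)^2ds+\sigma\beta^*dB$, stopped at $0$ where $\beta^*=0$. Comparison of this SDE in its initial condition gives that $x\mapsto\widehat W^x$ is non-decreasing. The terminal value $1/(2c_A)$ is attained on $\{\bar\tau>T-t\}$, which grows with $x$. Hence
\begin{equation*}
\beta^*(t,x)=\tfrac{1}{2c_A}\,\P(\bar\tau_*^{t,x}>T-t),
\end{equation*}
after justifying the boundary behaviour, and this is non-decreasing in $x$.

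\emph{Concavity in $x$.} This I would read off the PDE. We have
\begin{equation*}
\partial_{xx}\beta^*=-\frac{2}{\sigma^2}\Big(\frac{\partial_t\beta^*}{(\beta^*)^2}+c_A\partial_x\beta^*\Big)\le0,
\end{equation*}
because $\partial_t\beta^*\ge0$ and $\partial_x\beta^*\ge0$.

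\emph{Limits.} The uniform limit at $x\downarrow0$ follows from $\beta^*\le\frac{1}{2c_A}\P(\bar\tau_*<\infty)$ being small near $0$. A cruder alternative uses $\bar\varphi(t,x)\le\sqrt{T}\,\E[(\rho_x\wedge T/c_A^2)^{1/2}]$ from the proof of Proposition \ref{cor:contphi}, combined with concavity in $t$, which bounds $-\partial_t\bar\varphi(t,x)$ by $\bar\varphi(t,x)/(T-t)$; this is uniform for $t\le T-\delta$.

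The limit as $x\uparrow\infty$ follows from Proposition \ref{prop:d_time}, since $\P(\rho_x>T c_A^{-2})\to1$.

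The limit at $t\to T$ follows from the same lower bound, uniformly for $x\ge\delta$. The upper bound holds by \eqref{eq:dt}.

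\emph{Derivative bounds at $0$.} Evaluate the $\beta^*$-equation near $x=0$. Since $\beta^*\to0$, $\partial_t\beta^*\ge0$ and $\beta^*$ is concave and increasing, the relation
\begin{equation*}
\tfrac{\sigma^2}{2}\partial_{xx}\beta^*=-c_A\partial_x\beta^*-\partial_t\beta^*/(\beta^*)^2
\end{equation*}
forces $\partial_t\beta^*=O((\beta^*)^2)$. I would obtain the constants $1/\sigma^2$ and $-2c_A/\sigma^4$ by a boundary expansion $\beta^*\approx kx$. This is the main obstacle. The plan is to use the representation $\beta^*(t,x)=\frac{1}{2c_A}\P(\bar\tau_*>T-t)$: near $0$ the process behaves like $d\widehat W\approx\sigma k\widehat W\,dB$ with drift $c_Ak^2\widehat W^2$, and matching the hitting-probability asymptotics of this process would identify $k\ge1/\sigma^2$. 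I expect this step to need delicate barrier arguments.
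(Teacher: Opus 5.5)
\textbf{Gap: the derivative bounds at $x=0$.} The last two claims of (ii), $\partial_x\beta^*\ge 1/\sigma^2$ and $\partial_{xx}\beta^*\le -2c_A/\sigma^4$ near $x=0$, are not proved. A boundary expansion $\beta^*\approx kx$ with finite $k$ is not available: finiteness of $\partial_x\beta^*(t,0+)$ is never established, and the claim only asserts a lower bound. The step ``$\partial_t\beta^*=O((\beta^*)^2)$'' would need control of $\partial_x\beta^*$ and $\partial_{xx}\beta^*$ at $0$ that you do not have. Hitting-probability asymptotics for this degenerate SDE are a harder problem than the one at hand.

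The missing idea is one step from your own concavity computation. Since $\partial_t\beta^*\ge 0$, the $\beta^*$-equation gives
\begin{equation*}
\partial_x\Big(\tfrac{\sigma^2}{2}\partial_x\beta^*+c_A\beta^*\Big)=-\frac{\partial_t\beta^*}{(\beta^*)^2}\le 0 ,
\end{equation*}
so $G\coloneqq\tfrac{\sigma^2}{2}\partial_x\beta^*+c_A\beta^*$ is non-increasing in $x$. Hence $G(t,x)\ge\eps^{-1}\int_M^{M+\eps}G(t,y)\,\ud y$ for every $M>x$. Letting $M\to\infty$ and using only $\beta^*(t,M)\to 1/(2c_A)$, which follows from \eqref{eq:dt}, yields $G\ge\tfrac12$, that is, $\partial_x\beta^*\ge\sigma^{-2}(1-2c_A\beta^*)$. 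This one inequality gives:
\begin{itemize}
\item monotonicity in $x$ directly, because $\beta^*\le 1/(2c_A)$;
\item the bound $1/\sigma^2$ at $0$, once $\sup_{t\le T-\delta}\beta^*(t,x)\to 0$;
\item the bound $-2c_A/\sigma^4$, via $\partial_{xx}\beta^*\le-\tfrac{2c_A}{\sigma^2}\partial_x\beta^*$.
\end{itemize}
This is exactly the paper's argument, written there for $\partial_t\bar\varphi$ through the box integration \eqref{eq:be00}--\eqref{eq:be4}.

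\textbf{The remaining steps.} Part (i), concavity, and the limits as $t\to T$ and $x\uparrow\infty$ are correct. Your second argument for $x\downarrow0$ is also correct and simpler than the paper's test-function and Dini argument. Concavity in $t$ with $\bar\varphi(T,x)=0$ gives $\beta^*(t,x)\le 2\bar\varphi(t,x)/(T-t)$, and you combine this with $\bar\varphi(t,x)\le\sqrt T\,\E[(\rho_x\wedge Tc_A^{-2})^{1/2}]$ from the proof of Proposition \ref{cor:contphi}. Drop your first argument there: it uses a representation of $\beta^*$ not yet available, and absorption becomes likely, not unlikely, as $x\downarrow 0$.

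Your route to monotonicity in $x$ is not self-contained, for three reasons.
\begin{itemize}
\item The representation of $\beta^*$ through $\bar\tau_*$ is Propositions \ref{prop:betaP}--\ref{prop:betaexact}. Their proofs use part (i) and the limit at $x\downarrow0$ from this very proposition.
\item The identity actually obtained there is $2c_A\beta^*=\P_*(\bar\tau_*^{t,x}\ge T-t)$, and only after closing a possible gap in \eqref{eq:betabounds}. The strict-inequality version you write is not established.
\item Comparing initial points requires realising the optimal paths as strong solutions driven by a single Brownian motion, since $\P_*$ depends on $(t,x)$.
\end{itemize}
This can be ordered non-circularly, but it is a long detour compared with the inequality above.
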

\begin{proof}
For notational simplicity, throughout the proof we use $\varphi$ instead of $\bar \varphi$. The claim in (i) is immediate by concavity of $t\mapsto \varphi(t,x)$ and thanks to the bound in \eqref{eq:dt}. Indeed, $\P(\rho_x>T-t)\to 1$ as $t\to T$, uniformly for $x\in[\delta,1/\delta]$. We continue with the remaining claims.

The second equation in \eqref{eq:PDE_aux} can be written as
\begin{equation*}
\partial_t\varphi\big(\tfrac12\sigma^2\partial_{xx}\varphi+c_A\partial_x\varphi-c_A\big)=\tfrac14,\quad\text{on}\ [0,T)\times(0,\infty).
\end{equation*}
Thanks to smoothness of $\varphi$ (cf.\ Theorem \ref{thm:smooth}) we can differentiate the above equation with respect to time and obtain
\begin{equation*}
\begin{aligned}
0&=\partial_{tt}\varphi\big(\tfrac12\sigma^2\partial_{xx}\varphi+c_A\partial_x\varphi-c_A\big)+\partial_t\varphi\big(\tfrac12\sigma^2\partial_{txx}\varphi+c_A\partial_{tx}\varphi\big)\\
&=\frac{\partial_{tt}\varphi}{4\partial_t\varphi}+\partial_t\varphi\big(\tfrac12\sigma^2\partial_{txx}\varphi+c_A\partial_{tx}\varphi\big)\ge \partial_t\varphi\big(\tfrac12\sigma^2\partial_{txx}\varphi+c_A\partial_{tx}\varphi\big),
\end{aligned}
\end{equation*}
where the inequality holds because $t\mapsto \varphi(t,x)$ is strictly decreasing and concave. Using again that $\partial_t\varphi<0$ we deduce
\begin{equation}\label{eq:be000}
\tfrac12\sigma^2\partial_{txx}\varphi+c_A\partial_{tx}\varphi\ge0,\quad\text{on}\ [0,T)\times(0,\infty).
\end{equation}

Let us now fix $t<T$, $x>0$ and constants $\delta,\eps,M>0$ with $\delta<T-t$ and $M>x$. Integrating \eqref{eq:be000} with respect to $(s,y,z)\in(0,\delta)\times(x,M)\times(0,\eps)$ yields
\begin{equation}\label{eq:be00}
\begin{aligned}
0&\le  \frac{1}{\delta\eps}\int_x^M\!\int_0^\eps\!\int_0^\delta\Big(\tfrac12\sigma^2\partial_{txx}\varphi(t+s,y+z)+c_A\partial_{tx}\varphi(t+s,y+z)\Big)\ud s\,\ud z\,\ud y\\
&=\frac{1}{\delta\eps}\int_x^M\!\int_0^\eps\tfrac12\sigma^2\big[\partial_{xx}\varphi(t+\delta,y+z)-\partial_{xx}\varphi(t,y+z)\big]\ud z\,\ud y\\
&\quad+\frac{1}{\delta\eps}\int_x^M\!\int_0^\eps c_A\big[\partial_{x}\varphi(t+\delta,y+z)-\partial_{x}\varphi(t,y+z)\big]\ud z\,\ud y.
\end{aligned}
\end{equation}
The first integral on the right-hand side above gives
\begin{equation}\label{eq:be0}
\begin{aligned}
&\int_x^M\!\int_0^\eps\big[\partial_{xx}\varphi(t+\delta,y+z)-\partial_{xx}\varphi(t,y+z)\big]\ud z\,\ud y\\
&=\varphi(t+\delta,M+\eps)-\varphi(t,M+\eps)+\varphi(t,M)-\varphi(t+\delta,M)\\
&\quad+\varphi(t+\delta,x)-\varphi(t,x)+\varphi(t,x+\eps)-\varphi(t+\delta,x+\eps).
\end{aligned}
\end{equation}
The second one instead gives
\begin{equation}\label{eq:be1}
\begin{aligned}
&\int_x^M\!\int_0^\eps \big[\partial_{x}\varphi(t+\delta,y+z)-\partial_{x}\varphi(t,y+z)\big]\ud z\,\ud y\\
&=\int_{x+\eps}^{M+\eps}\big(\varphi(t+\delta,y)-\varphi(t,y)\big)\ud y-\int_x^M\big(\varphi(t+\delta,y)-\varphi(t,y)\big)\ud y\\
&=\int_{M}^{M+\eps}\big(\varphi(t+\delta,y)-\varphi(t,y)\big)\ud y-\int_x^{x+\eps}\big(\varphi(t+\delta,y)-\varphi(t,y)\big)\ud y.
\end{aligned}
\end{equation}
We know from Corollary \ref{cor:PDE_aux} that $\varphi(t,x)$ can be represented as the value of the unconstrained stochastic control problem
\begin{equation*}
\varphi(t,x)=\sup_{\alpha}\E\Big[\int_0^{\sigma^\alpha_t\wedge\rho_x}\big(\alpha_s-c_A\big)\ud s\Big],
\end{equation*}
where the supremum is over the class of all $\F$-progressively measurable processes (cf.\ \eqref{eq:vphiK}). Then, using that $\lim_{x\to\infty}\rho_x=+\infty$ it is not hard to verify that 
\begin{equation*}
\lim_{x\to\infty}\varphi(t,x)=\phi(t)=\frac{T-t}{4c_A},
\end{equation*}
where $\phi(t)$ was introduced in \eqref{eq:phi(t)}. Moreover, the limit is increasing and since $t\mapsto \varphi(t,x)$ is continuous, by Dini's theorem we also deduce
\begin{equation}\label{eq:be2}
\lim_{x\to\infty}\sup_{0\le t\le T}\big|\varphi(t,x)-\phi(t)\big|=0.
\end{equation}

In light of the above, for any $\lambda>0$ we can pick large enough $M_\lambda>0$ so that 
\begin{equation*}
\sup_{x\ge M_\lambda}\sup_{0\le t\le T}\big|\varphi(t,x)-\phi(t)\big|\le \lambda.
\end{equation*}
Using this fact in \eqref{eq:be0} and replacing $\varphi(\cdot,M)$ and $\varphi(\cdot,M+\eps)$ with $\phi(\cdot)$ we get 
\begin{equation}\label{eq:be01}
\begin{aligned}
&\int_x^M\!\int_0^\eps\big[\partial_{xx}\varphi(t+\delta,y+z)-\partial_{xx}\varphi(t,y+z)\big]\ud z\,\ud y\\
&\le 4\lambda+\varphi(t+\delta,x)-\varphi(t,x)+\varphi(t,x+\eps)-\varphi(t+\delta,x+\eps).
\end{aligned}
\end{equation}
Similarly, for \eqref{eq:be1} we obtain
\begin{equation}\label{eq:be11}
\begin{aligned}
&\int_x^M\!\int_0^\eps \big[\partial_{x}\varphi(t+\delta,y+z)-\partial_{x}\varphi(t,y+z)\big]\ud z\,\ud y\\
&=\int_{M}^{M+\eps}\big(\phi(t+\delta)-\phi(t)\big)\ud y+2\lambda\eps-\int_x^{x+\eps}\big(\varphi(t+\delta,y)-\varphi(t,y)\big)\ud y\\
&=\big(\phi(t+\delta)-\phi(t)\big)\eps+2\lambda\eps-\int_x^{x+\eps}\big(\varphi(t+\delta,y)-\varphi(t,y)\big)\ud y.
\end{aligned}
\end{equation}
Recombining \eqref{eq:be01} and \eqref{eq:be11} with \eqref{eq:be00} we obtain the inequality
\begin{equation*}
\begin{aligned}
0&\le \frac{\sigma^2}{\delta\eps}\Big(2\lambda+\tfrac12\big[\varphi(t+\delta,x)-\varphi(t,x)\big]+\tfrac12\big[\varphi(t,x+\eps)-\varphi(t+\delta,x+\eps)\big]\Big)\\
&\quad+\frac{c_A}{\delta\eps}\Big(\big(\phi(t+\delta)-\phi(t)\big)\eps+2\lambda\eps-\int_x^{x+\eps}\big(\varphi(t+\delta,y)-\varphi(t,y)\big)\ud y\Big).
\end{aligned}
\end{equation*}
Since $\lambda$ can be chosen freely, we take $\lambda=p\delta\eps$ for some $p>0$. Then,
\begin{equation*}
\begin{aligned}
0&\le 2 p+\frac{1}{2\eps}\frac{\varphi(t+\delta,x)-\varphi(t,x)}{\delta}+\frac{1}{2\eps}\frac{\varphi(t,x+\eps)-\varphi(t+\delta,x+\eps)}{\delta}\\
&\quad+\frac{c_A}{\sigma^2}\Big(\frac{\phi(t+\delta)-\phi(t)}{\delta}+2p \eps
-\frac{1}{\eps}\int_x^{x+\eps}\frac{\varphi(t+\delta,y)-\varphi(t,y)}{\delta}\ud y\Big).
\end{aligned}
\end{equation*}
Letting $\delta\to 0$ first and then $\eps\to 0$ second we obtain
\begin{equation}\label{eq:be4}
0\le 2 p+2p \frac{c_A}{\sigma^2}-\tfrac12\partial_{tx}\varphi(t,x)-\frac{c_A}{\sigma^2}\big(\partial_t\varphi(t,x)-\phi'(t)\big)\le 2 p+2p \frac{c_A}{\sigma^2}-\tfrac12\partial_{tx}\varphi(t,x),
\end{equation}
where in the second inequality we used $\partial_t\varphi(t,x)\ge -1/(4c_A)=\phi'(t)$. Then, letting also $p\to 0$ we obtain $\partial_{tx}\varphi(t,x)\le 0$. This corresponds to $x\mapsto \beta^*(t,x)$ increasing and plugging back into \eqref{eq:be000} it also implies 
\begin{equation}\label{eq:be3}
\partial_{txx}\varphi(t,x)\ge -\frac{2c_A}{\sigma^2}\partial_{tx}\varphi(t,x)\ge 0,\quad \text{for $(t,x)\in[0,T)\times(0,\infty)$}.
\end{equation}
The latter corresponds to concavity of $x\mapsto \beta^*(t,x)$.

Another consequence of $\partial_{tx}\varphi(t,x)\le 0$ for $(t,x)\in[0,T)\times(0,\infty)$ is that the limits $\partial_t\varphi(t,0+)\coloneqq\lim_{x\downarrow 0}\partial_t\varphi(t,x)$ and $\partial_t\varphi(t,\infty)\coloneqq\lim_{x\uparrow \infty}\partial_t\varphi(t,x)$ are well-defined. Now we calculate such limits. For a test function $\psi\in C^1_c((0,T))$, $\psi\ge 0$ we have
\begin{equation*}
\begin{aligned}
\int_0^T\partial_t\varphi(t,0+)\psi(t)\ud t&=\lim_{x\downarrow 0}\int_0^T\partial_t\varphi(t,x)\psi(t)\ud t\\
&=-\lim_{x\downarrow 0}\int_0^T\varphi(t,x)\psi'(t)\ud t=-\int_0^T\varphi(t,0)\psi'(t)\ud t=0,
\end{aligned}
\end{equation*}
where the first equality is justified by dominated convergence, because $\partial_t\varphi$ is bounded, the second one is integration by parts and the third one is by dominated convergence again. The final equality holds because $\varphi(t,0)=0$. Then it must be $\partial_t\varphi(t,0+)=0$ for a.e.\ $t\in[0,T]$. Since $t\mapsto\partial_t\varphi(t,0+)$ is increasing limit of continuous, non-increasing functions, then it is lower semi-continuous and non-increasing. Hence, $t\mapsto\partial_t\varphi(t,0+)$ is right-continuous and it must be $\partial_t\varphi(t,0+)=0$ for all $t\in[0,T)$. Now, continuity of the limit $\partial_t\varphi(t,0+)$ allows us to use Dini's theorem to also conclude that, for any $0<\delta<T$, 
\begin{equation}\label{eq:be5}
\lim_{x\downarrow 0}\sup_{0\le t\le T-\delta}\big|\partial_t\varphi(t,x)\big|=0.
\end{equation}
By analogous arguments with test functions and using \eqref{eq:be2} we also obtain
\begin{equation*}
\lim_{x\uparrow \infty}\sup_{0\le t\le T-\delta}\big|\partial_t\varphi(t,x)-\phi'(t)\big|=0.
\end{equation*}
Thus, the first two limits in (ii) are proven.

The first inequality in \eqref{eq:be4} (with $p \to 0$) and \eqref{eq:be5} give
\begin{equation*}
\lim_{x\downarrow 0}\sup_{0\le t\le T-\delta}\partial_{tx}\varphi(t,x)\le -\frac{1}{2\sigma^2},
\end{equation*}
where the limit exists thanks to \eqref{eq:be3}, which implies $x\mapsto \partial_{tx}\varphi(t,x)$ non-decreasing. The above equation corresponds to the third limit in (ii). Finally, using the above limit into \eqref{eq:be3} we also obtain 
\begin{equation*}
\liminf_{x\to 0}\partial_{txx}\varphi(t,x)\ge \frac{c_A}{\sigma^4},
\end{equation*}
which corresponds to the fourth limit in (ii).
\end{proof}

There is a characterisation of the optimal effort $\beta^*$ as solution to a Cauchy--Dirichlet problem. This is shown in the next proposition. In a subsequent proposition we obtain a refinement that yields a probabilistic interpretation of the optimal effort.
\begin{proposition}\label{prop:betaP}
Recall that $\beta^*(t,x)=-2\partial_t\bar \varphi(t,x)$ so that $\beta^*\in C^\infty([0,T)\times(0,\infty))$ and $0\le \beta^*(t,x)\le 1/(2c_A)$.
For every $(t,x)\in[0,T)\times[0,\infty)$ we have 
\begin{equation}\label{eq:betabounds}
\frac{1}{2c_A}\P_*\big(\bar\tau_*^{t,x} > T -t\big)\le \beta^*(t,x)\le \frac{1}{2c_A}\P_*\big(\bar\tau_*^{t,x}\ge T -t\big).
\end{equation}

Moreover, $\beta^*\in C([0,T]\!\times\![0,\infty)\!\setminus\!\{(T,0)\})$ (i.e., there is a discontinuity in $(T,0)$) and it solves for $(t,x)\in[0,T)\times(0,\infty)$
\begin{equation}\label{eq:PDEbeta}
\partial_t\beta^*(t,x)+c_A\big[\beta^*(t,x)\big]^2\partial_x\beta^*(t,x)+\tfrac12\sigma^2\big[\beta^*(t,x)\big]^2\partial_{xx}\beta^*(t,x)=0,
\end{equation}
with boundary conditions $\beta^*(t,0)=0$ for $t\in[0,T)$, and $\beta^*(T,x)=1/(2c_A)$ for $x\in(0,\infty)$.
\end{proposition}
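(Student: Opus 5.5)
The plan starts from the linear equation already obtained in Step 2 of the proof of Theorem \ref{thm:smooth}. There we showed that $\partial_t\bar\varphi$ is a classical solution of $\partial_t\psi+a\partial_{xx}\psi+b\partial_x\psi=0$ on $(0,T)\times(0,\infty)$, with $a=2\sigma^2(\partial_t\bar\varphi)^2$ and $b=4c_A(\partial_t\bar\varphi)^2$. Multiplying by $-2$ and using $(\partial_t\bar\varphi)^2=(\beta^*)^2/4$ gives $a=\tfrac12\sigma^2(\beta^*)^2$ and $b=c_A(\beta^*)^2$, which is exactly \eqref{eq:PDEbeta}. Smoothness and the bound $0\le\beta^*\le 1/(2c_A)$ follow from Theorem \ref{thm:smooth} and Proposition \ref{prop:d_time}.

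For the boundary values:
\begin{itemize}
\item The condition $\beta^*(t,0)=0$ is the limit \eqref{eq:be5}, extended continuously to $x=0$.
\item The value $\beta^*(T,x)=1/(2c_A)$ for $x>0$ is Proposition \ref{prop:beta}-(i).
\item Continuity on $[0,T]\times[0,\infty)\setminus\{(T,0)\}$ follows from these uniform limits. At $(T,x)$ with $x>0$ we use the uniform convergence on $[\delta,1/\delta]$. At $(t,0)$ with $t<T$ we use the uniform limit over $[0,T-\delta]$.
\item The discontinuity at $(T,0)$ is immediate, since the two boundary values $0$ and $1/(2c_A)$ disagree there.
\end{itemize}

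For the bounds \eqref{eq:betabounds} I would use a Feynman--Kac/martingale argument along the optimally controlled process of Theorem \ref{thm:main}. Under $\P_*$, $\bar W^*$ has generator $c_A(\beta^*)^2\partial_x+\tfrac12\sigma^2(\beta^*)^2\partial_{xx}$ evaluated at $(t+u,\bar W^*_u)$, which is precisely the spatial operator in \eqref{eq:PDEbeta}. Hence It\^o's formula shows that $u\mapsto\beta^*(t+u,\bar W^*_u)$ is a local martingale on $[\![0,\bar\tau_*\wedge(T-t))\!)$. Because $\beta^*$ is bounded, it is a bounded martingale after localising with $\tau^n_*,\sigma^k_*$ and then letting $s\uparrow T-t$. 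This gives
\begin{equation*}
\beta^*(t,x)=\lim_{s\uparrow T-t}\E_*\big[\beta^*(t+s\wedge\bar\tau_*,\bar W^*_{s\wedge\bar\tau_*})\big].
\end{equation*}
On $\{\bar\tau_*<T-t\}$ the integrand tends to $\beta^*(t+\bar\tau_*,0)=0$. On $\{\bar\tau_*>T-t\}$ we have $\bar W^*_{T-t}>0$, so by continuity at $(T,y)$, $y>0$, the integrand tends to $1/(2c_A)$. On $\{\bar\tau_*=T-t\}$ the limit is unknown, but it lies in $[0,1/(2c_A)]$. Bounded convergence on the first two events, combined with these trivial bounds on the third, yields both inequalities in \eqref{eq:betabounds}.

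The step I expect to be delicate is passing to the limit near the singular corner $(T,0)$. The joint convergence on $\{\bar\tau_*>T-t\}$ requires continuity of $\beta^*$ at $(T,\bar W^*_{T-t})$ with $\bar W^*_{T-t}>0$, which holds pathwise. The event $\{\bar\tau_*=T-t\}$ is exactly where that continuity fails, which is why only the two-sided bound can be claimed. Two further checks are needed. First, $\bar W^*$ must be well defined up to $T-t$, using that $\beta^*$ is locally Lipschitz away from $x=0$, as in the proof of Theorem \ref{thm:main}. Second, at $x=0$ the bounds are trivial since $\bar\tau_*=0$, and $\beta^*(t,0)=0$ there.
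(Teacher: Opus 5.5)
Your proof is correct. For the PDE, the boundary values and the upper bound in \eqref{eq:betabounds} it is essentially the paper's argument. The paper differentiates the second equation in \eqref{eq:PDE_aux} in $t$ and multiplies by $-2(\beta^*)^2$, which is the same computation as rescaling the linear equation of Step 2 of Theorem \ref{thm:smooth}. Its upper bound is the same It\^o argument, stopped at $\tau_{1/n}\wedge\tau_n$ and closed with the uniform limit at $x=0$ from Proposition \ref{prop:beta}.

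The genuine difference is the lower bound. The paper does not obtain it from the martingale property of $\beta^*(t+\cdot,\bar W^*_\cdot)$. Instead it takes the optimal control from $(t,x)$, extended by the value $1/(2c_A)$ after time $T-t$, and uses it as a suboptimal admissible control for $\bar\varphi(t-\eps,x)$. The only extra reward is $\tfrac{1}{4c_A}$ per unit of time on $\{\bar\tau_*^{t,x}>T-t\}$, so dividing by $\eps$ gives $\partial_t\bar\varphi(t,x)\le-\tfrac{1}{4c_A}\P_*(\bar\tau_*^{t,x}>T-t)$ directly. That route uses only the variational definition of $\bar\varphi$, and needs neither \eqref{eq:PDEbeta} nor any information on $\beta^*$ near $t=T$.

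You instead extract both inequalities from the single identity $\beta^*(t,x)=\E_*\big[\mathds{1}_{\{\bar\tau_*>s\}}\beta^*(t+s,\bar W^*_s)\big]$ for $s<T-t$. This comes after removing the localisation via continuity of $\beta^*$ at $(t',0)$, $t'<T$. You then let $s\uparrow T-t$, using bounded convergence off $\{\bar\tau_*=T-t\}$ together with the continuity at $(T,y)$, $y>0$, supplied by Proposition \ref{prop:beta}-(i).

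Your route is more unified. It also shows transparently that the only indeterminacy comes from paths with $\bar\tau_*=T-t$, i.e.\ those running into the corner $(T,0)$ where $\beta^*$ is discontinuous; this is exactly what Proposition \ref{prop:betaexact} later resolves. The price is that your lower bound rests on the terminal continuity of $\beta^*$, which the paper's comparison argument avoids.
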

\begin{proof}
We first show a lower bound on $\beta^*$ from the probabilistic representation of $\bar\varphi$. Take $(t,x)\in[0,T)\times(0,\infty)$ and $\eps>0$. For the purpose of this proof, with no loss of generality we extend the function $\beta^*(t,x)$ to be a constant for $s > T$, say $\beta^*(s,x)=1/2c_A$ for $(s,x)\in(T,\infty)\times[0,\infty)$. Recalling the notations from \eqref{eq:W*tx}, let $\beta^*_s\coloneqq\beta^*(t+s,\bar W^{*;t,x}_{s})\mathds{1}_{\{s<\bar\tau_*^{t,x}\}}$, $s\in[0,\infty)$, be the optimal control for $\bar\varphi(t,x)$. Having extended the function $\beta^*(t,x)$ allows us to consider $(\beta^*_s,\bar W^{*;t,x}_s)$ as a process for $s\in[0,\infty)$ instead of $s\in[0,T-t]$. 
Because $\bar W^{*;t,x}$ is adapted to the filtration $\bar \F$ generated by the output process $(Y_s)_{s\ge 0}$, the process $(\beta^*_s)_{s\in[0,\infty)}$ is also an admissible control for the problem with value $\bar \varphi(t-\eps,x)$. It follows that 
\begin{equation}\label{eq:varphi-eps}
\begin{aligned}
&\bar\varphi(t,x)-\bar\varphi(t-\eps,x)\\
&\le \E_*\Big[\int_{0}^{\bar \tau^{t,x}_{*}\wedge(T-t)}\beta^*_s\big(1-c_A\beta^*_s\big)\ud s-\int_{0}^{\bar \tau^{t,x}_*\wedge(T-t+\eps)}\beta^*_s\big(1-c_A\beta^*_s\big)\ud s\Big]\\
&= -\E_*\Big[\mathds{1}_{\{T-t < \bar\tau^{t,x}_*\}}\int_{T-t}^{\bar \tau_*^{t,x}\wedge(T-t+\eps)}\beta^*_s\big(1-c_A\beta^*_s\big)\ud s\Big]\\
&= -\frac{1}{4c_A}\E_*\Big[\mathds{1}_{\{T-t < \bar\tau^{t,x}_*\}}\big(\bar \tau_*^{t,x}\wedge(T-t+\eps)-(T-t)\big)\Big],
\end{aligned}
\end{equation}
where $\E_*[\,\cdot\,]$ is the expectation under $\P_*$. It is clear that on the event $\{T-t < \bar\tau^{t,x}_*\}$ we have $\eps^{-1}|\bar \tau_*^{t,x}\wedge(T-t+\eps)-(T-t)|\le 1$ and 
\begin{equation*}
\lim_{\eps\to 0}\eps^{-1}\big(\bar \tau_*^{t,x}\wedge(T-t+\eps)-(T-t)\big)=1.
\end{equation*}
Therefore, dividing by $\eps>0$ in \eqref{eq:varphi-eps} and letting $\eps\to 0$ yields by dominated convergence theorem
\begin{equation*}
\partial_t\bar\varphi(t,x)\le -\frac{1}{4c_A}\P_*\big(\bar\tau^{t,x}_*>T-t\big)\implies \beta^*(t,x)\ge \frac{1}{2 c_A}\P_*\big(\bar\tau^{t,x}_*>T-t\big),
\end{equation*}
proving the lower bound in \eqref{eq:betabounds}.

For the upper bound we proceed in a slightly different way. First of all, we notice that because $\partial_t\bar\varphi$ is smooth in $[0,T)\times(0,\infty)$ we can differentiate with respect to $t$ the second PDE in \eqref{eq:PDE_aux}. That yields 
\begin{equation*}
\frac{\partial_{t}(\partial_{t}\bar\varphi)(t,x)}{4\big(\partial_t\bar\varphi(t,x)\big)^2}+\tfrac12\sigma^2\partial_{xx}\big(\partial_t\bar\varphi\big)(t,x)+c_A\partial_x\big(\partial_t\bar\varphi\big)(t,x)=0,\quad (t,x)\in[0,T)\times(0,\infty).
\end{equation*}
Multiplying the above expression by $-2[\beta^*(t,x)]^2=-8[\partial_t\bar\varphi(t,x)]^2$ we obtain \eqref{eq:PDEbeta}. The boundary conditions can be immediately deduced from Proposition \ref{prop:beta}. For any $s\in(0,T-t)$ and $x\in(n^{-1},n)$, an application of It\^o's formula for the dynamics of $\bar W^*$ given in \eqref{eq:W*tx} yields
\begin{equation*}
\beta^*(t+s\wedge \tau_n\wedge\tau_{1/n},\bar W^*_{s\wedge\tau_n\wedge\tau_{1/n}})=\beta^*(t,x)+\int_0^{s\wedge\tau_n\wedge\tau_{1/n}}\sigma\beta^*\big(t+u,\bar W^*_u\big)\partial_x \beta^*\big(t+u,\bar W^*_u\big)\ud B^*_u,
\end{equation*} 
where $\tau_z=\inf\{s\ge 0:\bar W^*_s =z\}$ is needed to guarantee that the stochastic integral is a martingale (we do not control $\partial_x\beta^*(t,x)$ at $x=0$ and $x=+\infty$). Taking expectations yields
\begin{equation*}
\begin{aligned}
\beta^*(t,x)&=\E_*\Big[\beta^*(t+s\wedge\tau_n\wedge\tau_{1/n},\bar W^*_{s\wedge\tau_n\wedge\tau_{1/n}})\Big]\\
&=\E_*\Big[\mathds{1}_{\{\tau_{1/n}<s\wedge\tau_n\}}\beta^*(t+\tau_{1/n},1/n)+\mathds{1}_{\{\tau_{1/n}\ge s\wedge\tau_n\}}\beta^*(t+s\wedge\tau_n,\bar W^*_{s\wedge\tau_n})\Big]\\
&\le \sup_{0\le u\le s}\big|\beta^*(t+u,1/n)\big|+\frac{1}{2c_A}\P_*\big(\tau_{1/n}\ge s\wedge\tau_n\big)\\
&\le \sup_{0\le u\le s}\big|\beta^*(t+u,1/n)\big|+\frac{1}{2c_A}\P_*\big(\bar \tau^{t,x}_*\ge s\wedge\tau_n\big),
\end{aligned}
\end{equation*} 
where for the first inequality we used $\beta^*(t,x)\le 1/(2 c_A)$ and for the second one we used $\tau_{1/n}\le \bar\tau^{t,x}_*$. Letting $n\to\infty$, we have $\tau_n\uparrow +\infty$ because the SDE in \eqref{eq:W*tx} has bounded coefficients. Then, using the uniform convergence in Proposition \ref{prop:beta}, the first term in the last expression above vanishes. Thus, we obtain for $s\in(0,T-t)$
\begin{equation*}
\beta^*(t,x)\le \frac{1}{2 c_A}\P_*\big(\bar \tau^{t,x}_*\ge s\big).
\end{equation*}
Finally, letting $s\to T-t$ we obtain the upper bound in \eqref{eq:betabounds}.
\end{proof}

A priori there may be a gap between the two expressions sandwiching the optimal control in \eqref{eq:betabounds}. Because the SDE for $W^*$ is degenerate at zero and we do not have classical bounds for the associated transition density, it is not immediate to rule out that gap. We refine the previous result with a precise probabilistic representation of the optimal control. 
\begin{proposition}\label{prop:betaexact}
We have $\beta^*(t,x)=1/(2c_A)\P_*(\bar\tau^{t,x}_*\ge T-t)$ for $(t,x)\in[0,T]\times[0,\infty)\setminus\{(T,0)\}$.
\end{proposition}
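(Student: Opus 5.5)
By \eqref{eq:betabounds}, the statement is equivalent to showing that the law of $\bar\tau_*^{t,x}$ has no atom at the fixed time $T-t$:
\begin{equation*}
\P_*\big(\bar\tau^{t,x}_*=T-t\big)=0,\qquad (t,x)\in[0,T)\times(0,\infty).
\end{equation*}

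\emph{Boundary cases.} These are immediate. For $x=0$ and $t<T$ we have $\bar\tau_*=0<T-t$, so both sides of the claimed formula vanish, since $\beta^*(t,0)=0$. For $t=T$ and $x>0$ we have $\P_*(\bar\tau_*\ge 0)=1$ and $\beta^*(T,x)=1/(2c_A)$, by Proposition \ref{prop:betaP}. So only the interior case needs an argument.

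\emph{Step 1: a harmonic-function reformulation.} Set $w(t,x)\coloneqq\P_*(\bar\tau^{t,x}_*=T-t)$. By \eqref{eq:betabounds} we have $0\le w\le 2c_A\,\P_*(\bar\tau^{t,x}_*\ge T-t)-2c_A\beta^*(t,x)+\ldots$, and in particular $w$ is bounded. Uniqueness of the strong solution of \eqref{eq:W*tx} and the Markov property give
\begin{equation*}
w(t,x)=\E_*\big[\mathds{1}_{\{u<\bar\tau_*\}}\,w(t+u,\bar W^*_u)\big],\qquad u\in[0,T-t).
\end{equation*}
Moreover $w(T,\cdot)=0$ on $(0,\infty)$ and $w(\cdot,0)=0$ on $[0,T)$. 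So $w$ is a bounded space--time harmonic function for the killed process which vanishes on the parabolic boundary except at the corner $(T,0)$. The only way $w$ can be nonzero is through mass concentrating at the corner, which is exactly the atom itself. So Step 1 only localises the difficulty: we must show
\begin{equation*}
\P_*\big(\bar W^*_{s}\in(0,\eps),\ \bar\tau_*>s\big)\to0\quad\text{as } s\uparrow T-t,\ \eps\downarrow0
\end{equation*}
jointly, i.e.\ that the process cannot creep into the corner with positive probability.

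\emph{Step 2: time-change coupling.} I would use the time change from the proof of Theorem \ref{thm:DPP_time}. Before $\bar\tau_*$ we write $\bar W^*_s=X^x_{A(s)}$, where $X^x_r=x+c_Ar+\sigma\widetilde B_r$ is an arithmetic Brownian motion with drift and the clock solves
\begin{equation*}
A'(s)=\big[\beta^*(t+s,X^x_{A(s)})\big]^2 .
\end{equation*}
Then $\{\bar\tau_*=T-t\}=\{\rho_x=A(T-t)\}$, and $A$ is strictly increasing up to $\rho_x$. Equivalently, in the auxiliary time, $\{\rho_x=\sigma\}$, where $\sigma$ is the time at which $Z_r=t+\int_0^r\alpha_v^2\,\ud v$ reaches $T$, with the feedback control $\alpha_v=1/\beta^*(\cdot,X_v)$. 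The hitting time $\rho_x$ of $X$ has a smooth density. The goal is then to show that the adapted clock $\sigma$ cannot coincide with $\rho_x$ with positive probability. The mechanism should be as follows. As $X_v\downarrow0$, Proposition \ref{prop:beta}-(ii) gives $\beta^*\to0$ uniformly on $[0,T-\delta]$, and concavity of $\beta^*$ in $x$ combined with $\partial_x\beta^*\ge 1/\sigma^2$ near $0$ gives a linear comparison. Hence $\alpha_v$ blows up and $Z$ accelerates near $\rho_x$. On $\{\rho_x=\sigma\}$ the integral $\int^{\rho_x}\alpha_v^2\,\ud v$ must therefore reach the exact deterministic level $T-t$ at the random time $\rho_x$. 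I would try to rule this out by conditioning on $\cF_{\rho_x-h}$ and using that, given the past, the remaining increment of $Z$ has a continuous (atomless) distribution. That continuity should follow from the Brownian excursion of $X$ over $[\rho_x-h,\rho_x]$ entering $\alpha^2$ nonlinearly. A cleaner alternative would be a scaling or perturbation in $x$: the map $x\mapsto \bar\tau^{t,x}_*$ is pathwise increasing under the coupling, atoms of a monotone family can occur only for countably many $x$, and continuity of $\beta^*$ in $x$ together with the sandwich \eqref{eq:betabounds} would then extend $w=0$ from a dense set to all $x$.

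\emph{Main obstacle.} The main obstacle is precisely this no-atom claim near the corner $(T,0)$. There the diffusion coefficient $\sigma\beta^*$ degenerates and no transition-density bounds are available, as remarked before the statement. I would first attempt the monotone-in-$x$ countability argument, since it uses only pathwise comparison, which holds for the scalar ODE for $A$ because $\beta^*$ is locally Lipschitz, and the continuity of $\beta^*$ on $[0,T]\times[0,\infty)\setminus\{(T,0)\}$ from Proposition \ref{prop:betaP}. The delicate point is to check that the upper and lower probabilities in \eqref{eq:betabounds} are respectively upper and lower semicontinuous in $x$, so that equality on a dense set propagates to every $x$.
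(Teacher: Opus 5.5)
Your reduction to ``no atom'' is not an equivalence. By \eqref{eq:betabounds}, $\P_*(\bar\tau^{t,x}_*=T-t)=0$ is sufficient but not necessary, because the statement only asks that the upper bound in \eqref{eq:betabounds} be attained. The paper never addresses the atom, and it perturbs $t$ rather than $x$. In the time-changed picture one process $X^x$ serves all initial times, the clocks $Z^{*;t,x}$ are ordered in $t$ and satisfy \eqref{eq:unifcont}, so $\sigma^*_{t_n,x}\uparrow\sigma^*_{t,x}$ for $t_n\downarrow t$. Continuity of $\beta^*$ in $t$ and the lower bound at $t_n$ then give $2c_A\beta^*(t,x)\ge\lim_n\P_*(\sigma^*_{t_n,x}<\rho^*_x)$, which the paper identifies with $\P_*(\sigma^*_{t,x}\le\rho^*_x)$. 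Your own steps do not supply a proof. Step 1 is empty, as you say. Conditioning on $\cF_{\rho_x-h}$ is unavailable, since $\rho_x-h$ is not a stopping time. The countability route needs the events $\{\bar\tau^{t,x}_*=T-t\}$, $x>0$, to be pairwise disjoint, which monotonicity in $x$ does not give, and the semicontinuity needed to pass from a dense set of $x$ to all $x$ is left open.

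More fundamentally, the no-atom claim is false under the paper's other results, so no version of your plan can close. Set $f(s,x)\coloneqq\int_1^x\ud y/\beta^*(s,y)$ and $F_u\coloneqq f(t+u,\bar W^*_u)$. By \eqref{eq:PDEbeta}, $\partial_t f(s,x)=\int_1^x(c_A\partial_x\beta^*+\tfrac12\sigma^2\partial_{xx}\beta^*)(s,y)\,\ud y$. It\^o's formula for \eqref{eq:W*tx} then gives, before $\bar\tau_*$,
\[
\ud F_u=\sigma\,\ud B^*_u+\Big(2c_A\beta^*(t+u,\bar W^*_u)-c_A\beta^*(t+u,1)-\tfrac12\sigma^2\partial_x\beta^*(t+u,1)\Big)\ud u,
\]
since the terms in $\partial_x\beta^*(t+u,\bar W^*_u)$ cancel. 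This drift is bounded on $[0,T-t-\delta]$ for every $\delta>0$.

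Next, $\int_0^1\ud y/\beta^*(s,y)=\infty$ for every $s<T$. Suppose it were finite for some $s$. Tonelli would give $\int_0^1\partial_t\beta^*(u,y)/\beta^*(u,y)^2\,\ud y<\infty$ for a.e.\ $u\in(s,T)$. Integrating \eqref{eq:PDEbeta} divided by $(\beta^*)^2$ over $(\delta,1)$ would then keep $\partial_x\beta^*(u,\delta)$ bounded as $\delta\downarrow0$. Hence $\beta^*(u,y)\le Cy$ and $\int_0^1\ud y/\beta^*(u,y)=\infty$, contradicting $\beta^*(u,\cdot)\ge\beta^*(s,\cdot)$.

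As $\beta^*$ is non-decreasing in time, $f(s,y)\to-\infty$ as $y\downarrow0$ uniformly for $s\le T-\delta$. So $\bar W^*$ cannot reach $0$ on $[0,T-t-\delta]$, and $\P_*(\bar\tau^{t,x}_*<T-t)=0$. With the lower bound in \eqref{eq:betabounds} and Proposition \ref{cor:strict},
\[
\P_*(\bar\tau^{t,x}_*=T-t)\ge1-2c_A\beta^*(t,x)>0\quad\text{for all }(t,x)\in[0,T)\times(0,\infty).
\]
Your own heuristic points this way. The blow-up of $\alpha_v$ near $\rho_x$ makes $\int^{\rho_x}\alpha_v^2\,\ud v$ infinite unless $Z$ has already reached $T$, and this drags $\sigma$ onto $\rho_x$. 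In your monotone coupling the events $\{\bar\tau^{t,x}_*=T-t\}$ are nested in $x$, not disjoint.

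The same computation gives $\P_*(\bar\tau^{t,x}_*\ge T-t)=1$. The identity in the statement would then force $\beta^*\equiv1/(2c_A)$, contradicting Proposition \ref{prop:beta}-(ii). The conclusion $\P_*(\bar\tau^{t,x}_*<T-t)=0$ also contradicts Theorem \ref{thm:stop}.

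In the paper's proof, the step that fails is the asserted strict ordering $\sigma^*_{t_n,x}<\sigma^*_{t,x}$. Pathwise uniqueness only separates the clocks strictly before $\rho^*_x$. On the event where both clocks reach $T$ exactly at $\rho^*_x$, where the random ODE for $Z^*$ is singular, the hitting times coincide. There $\lim_n\mathds{1}_{\{\sigma^*_{t_n,x}<\rho^*_x\}}=\mathds{1}_{\{\sigma^*_{t,x}\le\rho^*_x\}}$ fails.
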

\begin{proof}
With no loss of generality, we extend $\beta^*$ by taking $\beta^*(t,x)=1/(2c_A)$ on $(T,\infty)\times[0,\infty)$.
Recalling the notations from the proof of Theorem \ref{thm:DPP_time} we notice that 
$\bar \tau_*^{t,x}=\inf\{s\ge 0:X^x_{T^*_{t,x}(s)}=0\}$,
where $X^x_s=x+c_A s+\sigma B^*_s$ with $B^*$ a Brownian motion under $\P_*$ and
\begin{equation*}
T^*_{t,x}(s)=\int_0^s\big(\beta^{*;t,x}_u\big)^2\ud u,
\end{equation*} 
for $\beta^{*;t,x}_u=\beta^*(t+u,\bar W^{*;t,x}_u)\mathds{1}_{\{u<\bar \tau_*\}}$.
Then, as noted in Theorem \ref{thm:DPP_time}, $\bar \tau^{t,x}_*=\inf\{s\ge 0: T^*_{t,x}(s)=\rho^*_x\}$ with $\rho^*_x=\inf\{s\ge 0: X^x_s\le 0\}$. Because $s\mapsto T^*_{t,x}(s)$ is continuous and strictly increasing on $[\![0,\bar \tau_*^{t,x})\!)$, we deduce
\begin{equation}\label{eq:equiv*}
\bar \tau^{t,x}_*>T-t\iff T^*_{t,x}(T-t)<\rho^*_x\quad\text{and}\quad\bar \tau^{t,x}_*\ge T-t\iff T^*_{t,x}(T-t)\le \rho^*_x.
\end{equation}
The inverse of the process $s\mapsto T^*_{t,x}(s)$, defined as $S^*_{t,x}(s)\coloneqq\inf\{u\ge 0:T^*_{t,x}(u)=s\}$, satisfies the equation
\begin{equation*}
S^*_{t,x}(s)=\int_0^s\big(\beta^*(Z^{*;t,x}_u,X^{x}_u)\big)^{-2}\ud u,\quad\text{for $s\in[\![0,\rho^*_x)\!)$},
\end{equation*}
where $Z^{*;t,x}_s=t+S^*_{t,x}(s)$. Because it is monotone increasing, $S^*_{t,x}$ can be extended to $[\![0,\rho^*_x]\!]$, possibly with $S^*_{t,x}(\rho^*_x)=+\infty$. Denote 
\begin{equation*}
T^*_{t,x}(T-t)=\inf\{s\ge 0:S^*_{t,x}(s)=T-t\}=\inf\{s\ge 0:Z^{*;t,x}_s=T\}\eqqcolon\sigma^*_{t,x},
\end{equation*}
with the usual convention $\inf\varnothing=+\infty$. Then, from \eqref{eq:equiv*} we also deduce
\begin{equation}\label{eq:equiv*2}
\bar \tau^{t,x}_*>T-t\iff \sigma^*_{t,x}<\rho^*_x\quad\text{and}\quad \bar \tau^{t,x}_*\ge T-t\iff \sigma^*_{t,x}\le \rho^*_x.
\end{equation}

Because $\beta^*>0$ and it is smooth on $[0,T]\times(0,\infty)$, for $s\in[\![0,\rho^*_x\wedge \sigma^*_{t,x})\!)$ we have a pathwise unique solution of the random ODE for $Z^*$. That is, there is a unique solution of 
\begin{equation*}
Z^{*;t,x}_s=t+\int_0^s\big(\alpha^*(Z^{*;t,x}_u,X^x_u)\big)^2\ud u,\quad s\in[\![0,\rho^*_x\wedge \sigma^*_{t,x})\!),
\end{equation*}
where for the ease of notation we set $\alpha^*(z,x)=1/\beta^*(z,x)$. Because $Z^{*;t,x}$ is an increasing process, we can extend it to $[\![0,\rho^*_x\wedge \sigma^*_{t,x}]\!]$ by simply setting $Z^{*;t,x}_{\rho^*_x\wedge \sigma^*_{t,x}}=\lim_{s\uparrow \rho^*_x\wedge \sigma^*_{t,x}}Z^{*;t,x}_s$. Moreover, we extend $Z^{*;t,x}$ to $[0,\infty)$ by simply taking 
\begin{equation*}
Z^{*;t,x}_s=Z^{*;t,x}_{\rho^*_x\wedge \sigma^*_{t,x}},\quad \text{for $s\in[\![\rho^*_x\wedge \sigma^*_{t,x},\infty)\!)$}.
\end{equation*}
From now on we slightly abuse our notations and consider $Z^{*;t,x}_s=Z^{*;t,x}_{s\wedge\rho^*_x\wedge \sigma^*_{t,x}}$ for $s\in[0,\infty)$, without further mentioning.

Taking $t_1<t_2$ and $x>0$, by pathwise uniqueness we get 
\begin{equation}\label{eq:pathunique}
Z^{*;t_2,x}_u > Z^{*;t_1,x}_u,\quad u\in[\![0,\rho^*_x\wedge\sigma^*_{t_1,x})\!).
\end{equation}
For $s\in[\![0,\rho^*_x)\!)$, using $\sigma^*_{t_2,x}<\sigma^*_{t_1,x}$ we also have
\begin{equation*}
\begin{aligned}
0\le Z^{*;t_2,x}_s-Z^{*;t_1,x}_s&=t_2-t_1+\int_0^{s\wedge \sigma^*_{t_2,x}}\Big[\big(\alpha^*(Z^{*;t_2,x}_u,X^x_u)\big)^2-\big(\alpha^*(Z^{*;t_1,x}_u,X^x_u)\big)^2\Big]\ud u\\
&\quad-\int_{s\wedge \sigma^*_{t_2,x}}^{s\wedge \sigma^*_{t_1,x}}\big(\alpha^*(Z^{*;t_1,x}_u,X^x_u)\big)^2\ud u\le t_2-t_1,
\end{aligned}
\end{equation*}
where the final inequality holds because \eqref{eq:pathunique} and the fact that $z\mapsto \beta^*(z,x)$ is positive and increasing imply
\begin{equation*}
\big(\alpha^*(Z^{*;t_2,x}_u,X^x_u)\big)^2\le\big(\alpha^*(Z^{*;t_1,x}_u,X^x_u)\big)^2,\quad u\in[\![0,\rho^*_x\wedge\sigma^*_{t_2,x})\!).
\end{equation*}
Thus we have uniform continuity of the flow $t\mapsto Z^{*;t,x}_s$, i.e.,
\begin{equation}\label{eq:unifcont}
\sup_{0\le s\le \rho^*_x}\big|Z^{*;t_2,x}_s-Z^{*;t_1,x}_s\big|\le t_2-t_1.
\end{equation}

Let us now consider a sequence $t_n\downarrow t$ with $t_n<T$. Then $\sigma^*_{t_n,x} < \sigma^*_{t_{n+1},x} < \sigma^*_{t,x}$ where all inequalities are strict because $t\mapsto Z^{*;t,x}_s$ is strictly increasing by path uniqueness. The limit $\sigma_\infty\coloneqq\lim_{n\to\infty}\sigma^*_{t_n,x}\le \sigma^*_{t,x}$ exists and it is finite; in particular, $\sigma^*_{t,x}\le 4c^2_A(T-t)$ because $Z^{*;t_n,x}_s\ge t_n+(2 c_A)^2 s$, using $\alpha^*(z,x)\ge 2c_A$ thanks to the upper bound on $\beta^*(z,x)$. It is now easy to show that indeed $\sigma_\infty=\sigma^*_{t,x}$. Suppose that $\sigma^*_{t,x}>\delta$. Because $s\mapsto Z^{*;t,x}_s$ is increasing, then $T-Z^{*;t,x}_{\delta} = c>0$ for some $c=c(\omega)$, and 
\begin{equation*}
Z^{*;t_n,x}_ {\delta}=Z^{*;t_n,x}_{\delta}-Z^{*;t,x}_{\delta}+Z^{*;t,x}_{\delta}\le T-c+(t_n-t),
\end{equation*} 
where in the final inequality we used \eqref{eq:unifcont}. For sufficiently large $n$, in the right-hand side above we get $c-(t_n-t)>0$. That implies 
$\sigma_{\infty}>\delta$. Arbitrariness of $\delta$ yields $\sigma_\infty\ge \sigma^*_{t,x}$.

From \eqref{eq:equiv*2} and \eqref{eq:betabounds} we know that
\begin{equation*}
\P_*(\sigma^*_{t_n,x}< \rho^*_x)\le 2c_A\beta^*(t_n,x)\le \P_*(\sigma^*_{t_n,x}\le \rho^*_x).
\end{equation*}
By continuity of $\beta^*$ we also deduce
\begin{equation*}
\begin{aligned}
\P_*\big(\sigma^*_{t,x}\le \rho^*_x\big)\ge 2c_A\beta^*(t,x)&=2c_A\lim_{n\to\infty}\beta^*(t_n,x)\ge \lim_{n\to\infty}\P_*(\sigma^*_{t_n,x}< \rho^*_x)\\
&=\E_*\Big[\lim_{n\to\infty}\mathds{1}_{\{\sigma^*_{t_n,x}< \rho^*_x\}}\Big]=\E_*\Big[\mathds{1}_{\{\sigma^*_{t,x}\le \rho^*_x\}}\Big],
\end{aligned}
\end{equation*}
where the final equality holds by strict monotonicity of the sequence $(\sigma_{t_n,x}^*)_{n\in\N}$. Thus we have 
\begin{equation*}
\beta^*(t,x)=\frac{1}{2c_A}\P_*\big(\sigma^*_{t,x}\le \rho^*_x\big)=\frac{1}{2c_A}\P_*\big(\bar \tau_*^{t,x}\ge T-t\big),
\end{equation*}
as claimed.
\end{proof}

We can strengthen some of the inequalities derived above thanks to the next proposition, which is also instrumental in proving Theorems \ref{thm:stop} and \ref{thm:GH}.
\begin{proposition}\label{cor:strict}
Recall $\phi(t)=(T-t)/(4c_A)$. For $(t,x)\in[0,T)\times(0,\infty)$ we have 
\begin{equation*}
\bar\varphi(t,x)<\phi(t),\quad \partial_t\bar \varphi(t,x)>\phi'(t),\quad \partial_{xx}\bar\varphi(t,x)<0,\quad \partial_{tx}\bar\varphi(t,x)<0.
\end{equation*}
\end{proposition}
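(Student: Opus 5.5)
The plan is to obtain all four strict inequalities from the strong maximum principle for linear parabolic equations. The non-strict versions are already known: $\bar\varphi\le\phi$ (Proposition \ref{cor:contphi}), $\partial_t\bar\varphi\ge -1/(4c_A)=\phi'$ (Proposition \ref{prop:d_time}), $\partial_{xx}\bar\varphi\le0$ from \eqref{eq:concphi}, and $\partial_{tx}\bar\varphi\le 0$ (proof of Proposition \ref{prop:beta}). In each case I would show that the relevant non-negative (or non-positive) quantity solves a linear backward parabolic equation with locally smooth coefficients that are locally uniformly parabolic in $(0,T)\times(0,\infty)$. An interior zero would then propagate to a whole region $[0,t]\times(0,\infty)$, and I would contradict this with the known boundary behaviour at $x=0+$.

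\textbf{First inequality.} Set $u(t,x)\coloneqq\phi(t)-\bar\varphi(t,x)\ge 0$. Since $\phi$ is a (classical) solution of the second equation in \eqref{eq:PDE_aux}, because $-1/(4\phi')=c_A$ and $\phi_x=\phi_{xx}=0$, I subtract the two equations. Writing $\frac{1}{4\partial_t\bar\varphi}-\frac{1}{4\phi'}=\frac{\phi'-\partial_t\bar\varphi}{4\phi'\partial_t\bar\varphi}$ as in Step 1 of Theorem \ref{thm:smooth}, I get
\begin{equation*}
\partial_t u+\tilde a\,\partial_{xx}u+\tilde b\,\partial_x u=0,\qquad \tilde a=2\sigma^2\phi'\partial_t\bar\varphi,\quad \tilde b=4c_A\phi'\partial_t\bar\varphi .
\end{equation*}
Here $\tilde a$ is smooth and strictly positive on compacts of $(0,T)\times(0,\infty)$, because $\partial_t\bar\varphi<0$ there by Proposition \ref{prop:d_time}. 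If $u(t_0,x_0)=0$ at an interior point, the strong maximum principle (backward in time, minimum $0$) gives $u\equiv0$ on $(0,t_0]\times(0,\infty)$. This contradicts $u(t,0+)=\phi(t)>0$, which holds by continuity and $\bar\varphi(t,0)=0$.

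\textbf{Second inequality.} Set $w\coloneqq\partial_t\bar\varphi-\phi'\ge0$. The function $\partial_t\bar\varphi$ solves $\partial_t\psi+a\partial_{xx}\psi+b\partial_x\psi=0$ with $a,b$ as in \eqref{eq:linear_coeff}, and any constant solves this equation too, so $w$ solves the same equation. In the proof of Proposition \ref{prop:beta} we showed $\partial_t\bar\varphi(t,0+)=0$, so $w(t,0+)=1/(4c_A)>0$. Hence an interior zero of $w$ would again propagate to $(0,t_0]\times(0,\infty)$, which is impossible. The point $t=0$ is handled either by continuity or by using Remark \ref{rem:timehomo} to enlarge the horizon.

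\textbf{Third inequality.} This then follows directly from \eqref{eq:concphi}: $\tfrac{\sigma^2}{2}\partial_{xx}\bar\varphi=c_A+\frac{1}{4\partial_t\bar\varphi}-c_A\partial_x\bar\varphi$. The strict inequality $\partial_t\bar\varphi>-1/(4c_A)$ makes $c_A+1/(4\partial_t\bar\varphi)<0$, and $\partial_x\bar\varphi\ge0$.

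\textbf{Fourth inequality.} Let $v\coloneqq\partial_{tx}\bar\varphi\le0$. Differentiating the linear equation for $\partial_t\bar\varphi$ in $x$, which is legitimate since everything is $C^\infty$ by Theorem \ref{thm:smooth}, gives
\begin{equation*}
\partial_t v+a\,\partial_{xx}v+(b+\partial_x a)\,\partial_x v+(\partial_x b)\, v=0 .
\end{equation*}
The zero-order coefficient has no sign, but the strong maximum principle still applies to a non-positive solution attaining the value $0$ at an interior point. Indeed, the zero-order term can be removed locally by the substitution $v=\e^{\lambda t}\tilde v$, or one can invoke the version of the principle valid for solutions with a zero maximum. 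So $v(t_0,x_0)=0$ would force $v\equiv0$ on $(0,t_0]\times(0,\infty)$. This contradicts $\lim_{x\downarrow0}\sup_t\partial_{tx}\bar\varphi(t,x)\le-1/(2\sigma^2)$ from the proof of Proposition \ref{prop:beta}.

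\textbf{Main obstacle.} I expect the delicate part to be the precise form of the strong maximum principle for these backward equations on unbounded, non-closed domains with only locally uniformly parabolic coefficients. In particular I must check that the zero set propagates to all earlier times on the whole connected strip, not just a neighbourhood. I would handle this by applying the classical result (e.g.\ Friedman, Ch.~2) on an exhausting sequence of rectangles $[\epsilon,t_0]\times[1/n,n]$, where the coefficients are uniformly elliptic and smooth. The contradiction comes from letting $x\downarrow0$, using the boundary limits already established in Proposition \ref{prop:beta}.
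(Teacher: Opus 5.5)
Your overall strategy works, but you have the strong maximum principle oriented the wrong way in time. For the backward operator $\partial_t+a\,\partial_{xx}+b\,\partial_x$, which becomes a forward one under $s=T-t$, an interior zero of a non-negative solution at $(t_0,x_0)$ propagates to \emph{later} times, i.e.\ to $[t_0,T)\times(0,\infty)$. It does not propagate to $(0,t_0]\times(0,\infty)$. Probabilistically, $u(t_0,x_0)$ is an average of values of $u$ at times after $t_0$. For instance, the solution with zero terminal data and Dirichlet data $u(t,0)=(t_0-t)^+$ vanishes on $[t_0,T)\times(0,\infty)$ but is strictly positive for $t<t_0$.

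Consequently your exhaustion by $[\epsilon,t_0]\times[1/n,n]$ yields nothing, since $(t_0,x_0)$ then sits on the terminal face, which belongs to the parabolic boundary. Use $[t_0,T-\epsilon]\times[1/n,n]$ instead. Every contradiction you extract as $x\downarrow0$ holds at every $t\in[0,T)$: namely $u(t,0+)=\phi(t)>0$, $w(t,0+)=1/(4c_A)$, and $\limsup_{x\downarrow0}\partial_{tx}\bar\varphi(t,x)\le-1/(2\sigma^2)$ uniformly for $t\in[0,T-\delta]$. So all three arguments survive the correction.

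Two smaller points. Continuity alone cannot deliver a strict inequality at $t=0$, so rely on Remark~\ref{rem:timehomo}. In the equation for $v=\partial_{tx}\bar\varphi$, the zero-order coefficient $\partial_x b=8c_A\,\partial_t\bar\varphi\,\partial_{tx}\bar\varphi$ does have a sign, namely $\ge0$, which is the unfavourable one. So your zero-maximum version of the principle is genuinely needed there.

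With that correction your proof is valid and differs genuinely from the paper's, which never uses a strong maximum principle but proceeds as a chain:
\begin{itemize}
\item The first inequality is proved probabilistically with the optimal control of Theorem~\ref{thm:main}. Equality $\bar\varphi=\phi$ would force $\beta^*\equiv1/(2c_A)$ and no absorption before $T-t$ almost surely. This fails because the resulting drifted Brownian motion reaches $0$ before $T-t$ with positive probability.
\item The second uses the monotonicity $\partial_{tt}\bar\varphi\le0$, $\partial_{tx}\bar\varphi\le0$. An equality $\partial_t\bar\varphi(t_0,x_0)=\phi'(t_0)$ propagates to the quadrant $[t_0,T]\times[x_0,\infty)$, and integrating in time gives $\bar\varphi=\phi$ there, contradicting the first.
\item The fourth is read off from \eqref{eq:be4}, which gives $\partial_{tx}\bar\varphi\le-\frac{2c_A}{\sigma^2}(\partial_t\bar\varphi-\phi')$. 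This is strictly negative by the second.
\end{itemize}

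Your route is uniform and purely analytic. Each inequality comes from the linear equation satisfied by $\phi-\bar\varphi$, $\partial_t\bar\varphi-\phi'$ or $\partial_{tx}\bar\varphi$, plus the boundary behaviour at $x=0+$. As a result, the first two inequalities become independent of each other and of the optimal control. The price is a correctly oriented strong maximum principle on an unbounded, only locally uniformly parabolic domain. For the fourth inequality you also need a differentiated equation with a zero-order term, where the paper needs one line from \eqref{eq:be4} once the second inequality is available.
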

\begin{proof}
For the first inequality notice that $\beta(1-c_A\beta)\le 1/(4c_A)$ implies
\begin{equation}\label{eq:attain}
\bar\varphi(t,x)=\E_*\Big[\int_0^{\bar \tau_*^{t,x}\wedge(T-t)}\beta^*_s\big(1-c_A\beta^*_s\big)\ud s\Big]\le \frac{T-t}{4 c_A}.
\end{equation}
Because of strict concavity of $\beta\mapsto \beta(1-c_A\beta)\le 1/(4c_A)$, the upper bound in the equation above is attained only if $\beta^*_s=1/(2c_A)$ for a.e.\ $s\in[0,T-t]$ and at the same time $\bar \tau^{t,x}_*\ge T-t$. More formally, let
\begin{equation*}
\Omega_0\coloneqq\big\{\omega\in\Omega: \beta^*_s(\omega)=\frac{1}{2c_A}\ \text{for a.e.}\ s\in[0,T-t]\ \text{and}\ \bar\tau^{t,x}_*(\omega)\ge T-t \big\}.
\end{equation*}
Then
\begin{equation*}
\begin{aligned}
&\E_*\Big[\int_0^{\bar \tau_*^{t,x}\wedge(T-t)}\beta^*_s\big(1-c_A\beta^*_s\big)\ud s\Big]\\
&=\frac{T-t}{4c_A}\P_*(\Omega_0)+\E_*\Big[\mathds{1}_{\Omega^c_0}\int_0^{\bar \tau_*^{t,x}\wedge(T-t)}\beta^*_s\big(1-c_A\beta^*_s\big)\ud s\Big].
\end{aligned}
\end{equation*}
For $\omega\in\Omega^c_0$ we have
\begin{equation*}
\int_0^{\bar \tau_*^{t,x}(\omega)\wedge(T-t)}\beta^*_s(\omega)\big(1-c_A\beta^*_s(\omega)\big)\ud s<\frac{T-t}{4c_A}.
\end{equation*}
Thus, in order to obtain $\bar\varphi(t,x)=\phi(t)$ we would need $\P_*(\Omega_0)=1$. However, the latter is impossible because on $\Omega_0$ we have
\begin{equation*}
\bar W^{*;t,x}_s=x+\frac{1}{4c_A}s+\frac{\sigma}{2 c_A}B^*_s,\quad s\in[0,T-t],
\end{equation*}
and denoting
\begin{equation*}
E_0\coloneqq\Big\{\inf_{0\le s\le T-t}\Big(\frac{1}{4c_A}s+\frac{\sigma}{2 c_A}B^*_s\Big)>-x\Big\},
\end{equation*}
it must be $\Omega_0\subseteq E_0$. However, $\P_*(E_0)<1$, showing that the upper bound in \eqref{eq:attain} cannot be attained for $(t,x)\in[0,T)\times(0,\infty)$.

To prove the second inequality recall that $\partial_t\bar\varphi(t,x)\ge \phi'(t)=-1/(4c_A)$ because of the lower bound in Proposition \ref{prop:d_time}-(i) and, arguing by contradiction, let us suppose that there is $(t_0,x_0)\in[0,T)\times(0,\infty)$ such that $\partial_t\bar\varphi(t_0,x_0)=-1/(4c_A)$. Then, $(t_0,x_0)$ is a minimum of $\partial_t\bar\varphi$ and because $\partial_{tt}\bar\varphi\le 0$ and $\partial_{tx}\bar\varphi\le 0$ by Proposition \ref{prop:beta}, then $\partial_t\bar\varphi(t,x)=\phi'(t)$ for $(t,x)\in[t_0,T]\times[x_0,\infty)$. The latter implies, for $(t,x)\in[t_0,T]\times[x_0,\infty)$,
\begin{equation*}
\bar\varphi(t,x)=\bar\varphi(T,x)-\int_t^T\partial_t\bar\varphi(s,x)\ud s=\phi(t),
\end{equation*}
where we used $\bar\varphi(T,x)=\phi(T)=0$. The above equation contradicts the first statement in this proposition.

The third inequality holds because of \eqref{eq:concphi}, using that $\partial_t\bar\varphi(t,x)>-1/(4c_A)$ and $\partial_x\bar\varphi\ge 0$. Finally, the fourth inequality holds because in \eqref{eq:be4} we have $c_A/\sigma^2(\partial_t\bar\varphi(t,x)-\phi'(t))>0$ uniformly with respect to $p\ge 0$ and therefore the second inequality is strict, uniformly when we let $p\to 0$.
\end{proof}

The next theorem shows that with positive probability the Agent stops exerting effort prior to the end of the contract.
\begin{theorem}\label{thm:stop}
We have $\P_*(\bar\tau^{0,x}_{*}<T)> 0$ for every $x\in(0,\infty)$. Moreover, $\lim_{x\to\infty}\P_*(\bar\tau^{0,x}_{*}<T)=0$ and $\lim_{x\to 0}\P_*(\bar\tau^{0,x}_{*}<T)=1$.
\end{theorem}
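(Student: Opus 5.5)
The natural route is through Proposition \ref{prop:betaexact}. Evaluated at $t=0$, it gives
\begin{equation*}
\P_*\big(\bar\tau^{0,x}_*<T\big)=1-\P_*\big(\bar\tau^{0,x}_*\ge T\big)=1-2c_A\beta^*(0,x),\qquad x\in(0,\infty).
\end{equation*}
This turns all three claims into statements about the optimal effort map at time zero. Using $\beta^*=-2\partial_t\bar\varphi$, the identity reads $\P_*(\bar\tau^{0,x}_*<T)=1+4c_A\partial_t\bar\varphi(0,x)$. Each claim then follows from results already proved for $\partial_t\bar\varphi$.

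\emph{Positivity.} The strict inequality $\partial_t\bar\varphi(t,x)>\phi'(t)=-1/(4c_A)$ of Proposition \ref{cor:strict} holds on $[0,T)\times(0,\infty)$, in particular at $t=0$. It gives $\beta^*(0,x)<1/(2c_A)$, hence $\P_*(\bar\tau^{0,x}_*<T)>0$ for every $x>0$.

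\emph{Limit as $x\to\infty$.} Proposition \ref{prop:beta}-(ii) gives $\lim_{x\uparrow\infty}\sup_{0\le t\le T-\delta}|\tfrac1{2c_A}-\beta^*(t,x)|=0$. Take $t=0$ with any $\delta\in(0,T)$. Then $2c_A\beta^*(0,x)\to1$, so the probability tends to $0$.

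\emph{Limit as $x\to0$.} The same proposition gives $\lim_{x\downarrow0}\sup_{0\le t\le T-\delta}|\beta^*(t,x)|=0$. At $t=0$ this yields $\beta^*(0,x)\to0$, so the probability tends to $1$.

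The only point needing care is the legitimacy of the first identity. Proposition \ref{prop:betaexact} holds on $[0,T]\times[0,\infty)\setminus\{(T,0)\}$, which contains $\{0\}\times(0,\infty)$, so it applies directly. The events $\{\bar\tau^{0,x}_*<T\}$ and $\{\bar\tau^{0,x}_*\ge T\}$ are complementary under $\P_*$.

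If one wanted to avoid the exact representation, the lower bound in \eqref{eq:betabounds} would not suffice, since it only controls $\P_*(\bar\tau>T)$. The upper bound in \eqref{eq:betabounds}, $2c_A\beta^*\le\P_*(\bar\tau\ge T)$, gives
\begin{equation*}
\P_*\big(\bar\tau^{0,x}_*<T\big)\le 1-2c_A\beta^*(0,x),
\end{equation*}
which is enough for the limit at infinity. The lower bound gives $\P_*(\bar\tau^{0,x}_*\le T)\ge 1-2c_A\beta^*(0,x)$, which handles positivity and the limit at zero only up to the event $\{\bar\tau=T\}$. So the exact formula of Proposition \ref{prop:betaexact} is the key ingredient, and it makes every step a one-line consequence. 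No step is a genuine obstacle.
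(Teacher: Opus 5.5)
Your proposal is correct and matches the paper's proof: the paper also writes $\P_*(\bar\tau^{0,x}_*<T)=1-2c_A\beta^*(0,x)$ via Propositions \ref{prop:betaP} and \ref{prop:betaexact}, gets positivity from the strict bound $\partial_t\bar\varphi(0,x)>-1/(4c_A)$ of Proposition \ref{cor:strict}, and reads off both limits from Proposition \ref{prop:beta}-(ii) at $t=0$. Your remark on why the exact representation is needed, rather than just the sandwich bounds \eqref{eq:betabounds}, is accurate.
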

\begin{proof}
Since $\P_*(\bar\tau^{0,x}_{*}\ge T)=-4c_A \partial_t\bar \varphi(0,x)=2c_A\beta^*(0,x)$, thanks to Propositions \ref{prop:betaP} and \ref{prop:betaexact}, the claim holds because $-1/(4c_A) < \partial_t\bar \varphi(0,x)<0$ for all $x\in(0,\infty)$ by Proposition \ref{cor:strict}. Indeed,
\begin{equation*}
\P_*(\bar\tau^{0,x}_{*}<T)=1-\P_*(\bar\tau^{0,x}_{*}\ge T)=1-2c_A\beta^*(0,x)>0.
\end{equation*}
The two claimed limits hold because of the limits of $\beta^*$ (cf.\ Proposition \ref{prop:beta}).
\end{proof}

At time zero, the Principal's expected utility is given by $u(0,x,y)=y-x+\bar \varphi(0,x)$ (cf.\ \eqref{eq:orig_value}), where $y$ is the current output and $x$ is the promised compensation to the Agent at time zero. Notice that the function $u$ is defined on $[0,T]\times[0,\infty)^2$ but imposing the participation constraint is equivalent to restricting the study of $u(0,x,y)$ to the subset $[x_0,\infty)\times[0,\infty)$, where we recall that $x_0=\max\{0,(-1/\gamma_A)\log R_0\}$. 

The Principal should choose $x$ optimally, so as to maximise her utility. In particular, we are going to show that the choice $x=0$ is never optimal. Moreover, should it be optimal to choose $x\in(0,x_0)$, then there is no optimal contract that satisfies the participation constraint. In economic jargon we refer to the strictly positive optimal $x>0$ as ``golden hello''.

\begin{definition}[Golden Hello]
For a given level of output $y\ge 0$ at time zero, the Golden Hello $GH(y)\in[0,\infty)$ is defined as
\begin{equation*}
GH(y)\coloneqq\argmax_{x\ge 0}u(0,x,y).
\end{equation*}
Indeed, $\argmax_{x\ge0}u(0,x,y)=\argmax_{x\ge 0}(\bar \varphi(0,x)-x)$ and $GH(y)=GH$ is independent of $y$.
\end{definition}

The next theorem shows the existence of a nontrivial Golden Hello for any time-horizon of the contract. The statement also shows that $\partial_x\bar \varphi$ diverges to $+\infty$ as $x\downarrow 0$. Therefore, Theorem \ref{thm:main} provides maximal regularity of the function $\bar\varphi$ on $[0,T)\times[0,\infty)$. In the next theorem we consider $GH$ as function of the maturity of the contract. That is, we consider $GH=GH(T)$, because we implicitly use that $\bar\varphi$ depends also on $T$ (see Remark \ref{rem:timehomo}). 
\begin{theorem}\label{thm:GH}
For any $T>0$ we have $GH(T)\in(0,\infty)$. Moreover, $T\mapsto GH(T)$ is increasing with $\lim_{T\downarrow 0}GH(T)=0$. In particular, the first claim is implied by $\partial_x \bar\varphi(t,0+)=+\infty$, for all $t\in[0,T)$. 
\end{theorem}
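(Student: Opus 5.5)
The plan is to study, for fixed $T$, the one-dimensional map $f_T(x)\coloneqq\bar\varphi(0,x;T)-x$ on $[0,\infty)$. By Theorem \ref{thm:main}, $f_T$ is continuous on $[0,\infty)$, smooth on $(0,\infty)$ and concave. Proposition \ref{cor:strict} gives $\partial_{xx}\bar\varphi<0$, so $f_T$ is strictly concave. Moreover $f_T(0)=0$ and, by Proposition \ref{cor:contphi}, $f_T(x)\le T/(4c_A)-x\to-\infty$. Hence the maximiser $GH(T)$ exists, is unique and is finite.

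The limit as $T\downarrow0$ comes for free. Since $f_T(GH(T))\ge f_T(0)=0$, we get $GH(T)\le \bar\varphi(0,GH(T);T)\le T/(4c_A)$, which tends to $0$.

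Positivity of $GH(T)$ follows once we show $\partial_x\bar\varphi(t,0+)=+\infty$. Indeed, then $f_T'(0+)=+\infty>0$, so $0$ is not the maximiser. In that case $GH(T)$ is the unique interior solution of the first-order condition $\partial_x\bar\varphi(0,x;T)=1$.

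Monotonicity in $T$ uses Remark \ref{rem:timehomo}. For $T'>T$ we write $\bar\varphi(0,x;T)=\bar\varphi(T'-T,x;T')$. Since $\partial_{tx}\bar\varphi<0$ strictly (Proposition \ref{cor:strict}), $\partial_x\bar\varphi(0,x;T')>\partial_x\bar\varphi(0,x;T)$ for every $x>0$. Evaluate at $x=GH(T)$, where the right-hand side equals $1$. Strict concavity of $f_{T'}$ then forces its zero of $f'_{T'}$ to lie strictly to the right, so $GH(T')>GH(T)$.

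It remains to prove $\partial_x\bar\varphi(t,0+)=+\infty$ for $t<T$. I would use the second PDE in \eqref{eq:PDE_aux} together with $\beta^*=-2\partial_t\bar\varphi$, which reads $\tfrac{\sigma^2}{2}\partial_{xx}\bar\varphi+c_A\partial_x\bar\varphi=c_A-\tfrac{1}{2\beta^*}$. Set $g=\partial_x\bar\varphi(t,\cdot)$, multiply by the integrating factor $\e^{2c_Ax/\sigma^2}$ and integrate from $x$ to a fixed $x_1>0$. This gives
\begin{equation*}
\e^{2c_Ax/\sigma^2}g(x)=\e^{2c_Ax_1/\sigma^2}g(x_1)+\frac{2}{\sigma^2}\int_x^{x_1}\e^{2c_Ay/\sigma^2}\Big(\frac{1}{2\beta^*(t,y)}-c_A\Big)\ud y .
\end{equation*}
Hence $g(0+)=+\infty$ if and only if $\int_{0}\frac{\ud y}{\beta^*(t,y)}=+\infty$. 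Since $\beta^*(t,0+)=0$ (Proposition \ref{prop:beta}-(ii)), it suffices to show $\beta^*(t,y)\le C_t\,y$ near $0$, i.e.\ that $\beta^*$ vanishes at most linearly.

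This is the main obstacle, because concavity of $\beta^*$ in $x$ only yields a lower bound of linear type. My first attempt is a comparison argument for the quasilinear PDE \eqref{eq:PDEbeta} on $[0,T-\delta]\times(0,x_1)$, with a supersolution of the form $w(x)=A(1-\e^{-kx})$ and $k>2c_A/\sigma^2$. For this $w$ we have $c_Aw_x+\tfrac{\sigma^2}{2}w_{xx}<0$, hence $\partial_tw+w^2(c_Aw_x+\tfrac{\sigma^2}{2}w_{xx})<0$. Choose $A$ large so that $w\ge\beta^*$ on the lateral boundary $x=x_1$ (where $\beta^*\le1/(2c_A)$).

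The delicate point is the terminal side $t=T-\delta$, where we would need $\beta^*(T-\delta,\cdot)\le w$. To handle it I would rather use the probabilistic representation of Proposition \ref{prop:betaexact}, $\beta^*(t,x)=\tfrac{1}{2c_A}\P_*(\bar\tau_*^{t,x}\ge T-t)$. In the time-changed picture of its proof, this probability is bounded by $\P(\rho_x\ge c\,(T-t))$ for the drifted Brownian motion $X^x$, using that $\sigma^*_{t,x}\ge (T-t)c_A^2$ because $\alpha^*\le 1/(2K_*)$ is bounded on compacts away from $0$. That probability is $O(x)$ as $x\to0$ by the explicit hitting law already used in Proposition \ref{phi_bnd}-(ii). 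If this bound turns out to be too lossy near $x=0$, I would instead combine it with the supersolution $w$ only in the interior, so that the bad corner $(T,0)$ is never touched.
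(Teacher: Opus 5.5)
The soft parts are fine, and in places more direct than the paper's arguments:
\begin{itemize}
\item existence and uniqueness of $GH(T)$ by strict concavity;
\item the bound $GH(T)\le\bar\varphi(0,GH(T);T)\le T/(4c_A)$, which replaces the paper's $x^-(T)$ argument;
\item comparing first-order conditions instead of using the implicit function theorem;
\item the integrating-factor reduction to $\int_0\ud y/\beta^*(t,y)=+\infty$, which is exactly the paper's reduction.
\end{itemize}

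The gap is the key step. The sufficient condition you aim for, $\beta^*(t,y)\le C_t\,y$ near $0$ for every $t<T$, is not merely hard to prove: it is false. You do need it for every $t<T$, since by Remark \ref{rem:timehomo} the claim at $t=0$ for all horizons is the claim for all $t<T$ at one horizon.

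To see that it is false, suppose $\beta^*(t_1,y)\le Cy$ on $(0,y_0]$ for some $t_1<T$. By Proposition \ref{prop:beta}-(i) the same bound holds for all $s\le t_1$. Use the time change $\bar W^*=X\circ T^*$ from the proof of Proposition \ref{prop:betaexact}. On the event $\{\bar\tau^{t,x}_*\le t_1-t\}$ one gets $t_1-t\ge\int_0^{\rho^*_x}\beta^*(Z^*_u,X_u)^{-2}\ud u\ge C^{-2}\int_0^{\rho^*_x}X_u^{-2}\mathds{1}_{\{X_u\le y_0\}}\ud u$. The last integral is $+\infty$ a.s., because a drifted Brownian motion does not integrate $X^{-2}$ up to its hitting time of $0$ (Feller's test, or Williams' time reversal to a three-dimensional Bessel process).

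If this held for every $t_1<T$, you would get $\bar\tau^{t,x}_*\ge T-t$ a.s. Proposition \ref{prop:betaexact} would then force $\beta^*\equiv 1/(2c_A)$, contradicting $\beta^*(t,0+)=0$ and Theorem \ref{thm:stop}. So $\partial_x\beta^*(t,0+)=+\infty$ at least for $t$ near $T$. The divergence of $\int_0\ud y/\beta^*$ is then a borderline effect (think $\beta^*\approx y\log(1/y)$) that no linear upper bound can capture.

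Both of your tools break for the same reason:
\begin{itemize}
\item The comparison with $w=A(1-\e^{-kx})$ needs $\beta^*(T-\delta,\cdot)\le w$ near $0$. That is circular, and false for $T-\delta$ close to $T$.
\item A bound $\sigma^*_{t,x}\ge c\,(T-t)$ needs an upper bound on $\alpha^*=1/\beta^*$ along paths approaching $0$. There $\alpha^*\le 1/(2K_*(X))$ blows up. The bound you do have, $\alpha^*\ge 2c_A$, only gives $\sigma^*_{t,x}\le (T-t)/(4c_A^2)$, i.e.\ the lower bound $\beta^*\gtrsim x$ you already get from concavity.
\end{itemize}

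The missing idea is the paper's argument by contradiction, which exploits exactly the infinite slope you were trying to exclude.
\begin{enumerate}
\item Assume $\int_0\ud y/\beta^*(0,y)<\infty$. By monotonicity in $t$ the integral is then finite for every $t<T$, and concavity in $x$ forces $\partial_x\beta^*(t,0+)=+\infty$.
\item Apply the Lamperti transform $f_\delta(t,x)=\int_\delta^x\ud y/\beta^*(t,y)$. Because the integral is finite, $\bar\tau^{0,x}_*$ becomes the first time $F^\delta_s=f_\delta(s,\bar W^*_s)$ hits the finite barrier $f_\delta(s,0)\ge -f_0(0,1)$.
\item $F^\delta$ has volatility $\sigma$. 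By \eqref{eq:PDEbeta} its drift is at most $1-\tfrac12\sigma^2\partial_x\beta^*(s,\delta)$, which tends to $-\infty$ as $\delta\downarrow 0$.
\item Hence $2c_A\beta^*(0,x)=\P_*(\bar\tau^{0,x}_*\ge T)\le \P_*\big(\sigma B^*_{T/2}+\Lambda^\delta_{T/2}>-f_0(0,x)-f_0(0,1)\big)\to 0$. This contradicts $\beta^*(0,x)>0$.
\end{enumerate}
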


\begin{proof}
Recall that $\bar \varphi=\varphi^*$. The main step in the proof shows that 
\begin{equation}\label{eq:integrdiv}
\lim_{x\downarrow 0}\int_x^1\frac{1}{\beta^*(0,y)}\ud y=\infty.
\end{equation}
Notice that the limit exists because $\beta^*\ge 0$ and $\beta^*(t,x)>0$ for $(t,x)\in[0,T)\times(0,\infty)$ (cf.\ Proposition \ref{prop:d_time}). Moreover, we know from Proposition \ref{prop:beta} that $\beta^*(0,x)\to 0$ as $x\to 0$. 

If the integral above diverges, using $\beta^*(t,x)=-2\partial_t\varphi^*(t,x)$ and the second PDE in \eqref{eq:PDE_aux} in Corollary \ref{cor:PDE_aux}, we deduce
\begin{equation*}
\begin{aligned}
\infty&=-\lim_{x\downarrow 0}\int_x^1\frac{1}{4\partial_t \varphi^*(0,y)}\ud y\\
&=-\lim_{x\downarrow 0}\int_x^1\Big(\frac12\sigma^2 \partial_{xx} \varphi^*(0,y)+c_A\partial_x\varphi^*(0,y)-c_A\Big)\ud y\\
&=-\tfrac12\sigma^2\partial_{x}\varphi^*(0,1)+\lim_{x\downarrow 0}\tfrac12\sigma^2\partial_x\varphi^*(0,x)-c_A\varphi^*(0,1)+c_A,
\end{aligned}
\end{equation*}
where we also used that $\varphi^*(0,0)=0$ and the limit $\partial_x\varphi^*(0,0+)\coloneqq\lim_{x\downarrow 0}\partial_x\varphi^*(0,x)$ exists by concavity in $x$ of $\varphi^*$ (Theorem \ref{thm:main}). The above equation shows that $\partial_x\varphi^*(0,0+)=+\infty$ and therefore $GH>0$ by definition of $GH$ and concavity of $x\mapsto \varphi^*(0,x)-x$. Recall from Remark \ref{rem:timehomo} that denoting $\varphi^*(t,x;T)$ the value of the problem with maturity $T$, it holds $\varphi^*(t,x;T)=\varphi^*(0,x;T-t)$. Concavity of the map $x\mapsto \varphi^*(0,x;T)-x$ and boundedness of $\varphi^*(0,x;T)$ imply that $GH(T)\in(0,\infty)$ and it satisfies the first order conditions $\partial_x\varphi^*(0,GH(T);T)=1$. By the implicit function theorem 
\begin{equation*}
\frac{\ud GH(T)}{\ud T}=-\frac{\partial_{xT}\varphi^*(0,GH(T);T)}{\partial_{xx}\varphi^*(0,GH(T);T)}=\frac{\partial_{xt}\varphi^*(0,GH(T);T)}{\partial_{xx}\varphi^*(0,GH(T);T)}> 0,
\end{equation*}
where the inequality holds because $\partial_{tx}\varphi^*< 0$ and $\partial_{xx}\varphi^*< 0$ by Proposition \ref{cor:strict}. Therefore $T\mapsto GH(T)$ is increasing, as claimed. 

In order to show $\lim_{T\downarrow 0}GH(T)=0$ we denote $x^-(T)\coloneqq\inf\{x>0:\varphi^*(0,x;T)-x<0\}$. 
By construction, $0<GH(T)< x^-(T)$ and since for $x>0$ we have $\lim_{T\to 0}\varphi^*(0,x;T)=0$, then it is clear that 
\begin{equation*}
0\le \lim_{T\downarrow 0}GH(T)\le\lim_{T\downarrow 0}x^-(T)=0. 
\end{equation*}

In the remainder we prove \eqref{eq:integrdiv}. Let us argue by contradiction and assume 
\begin{equation}\label{eq:integrnodiv}
\lim_{x\downarrow 0}\int_x^1\frac{1}{\beta^*(0,y)}\ud y<\infty.
\end{equation}
Notice that since $\beta^*$ is non-decreasing in $t$ (cf.\ Proposition \ref{prop:beta}-(i)), the above condition implies that the integral is indeed finite when we replace $\beta^*(0,y)$ with $\beta^*(t,y)$ for any $t\in[0,T)$. Moreover, the map $y\mapsto\beta^*(t,y)$ is increasing and concave (cf.\ Propositions \ref{prop:beta}-(ii) and \ref{cor:strict}), therefore the limit $\partial_x\beta^*(t,0+)$ is well-defined. Because of \eqref{eq:integrnodiv} it must be 
\begin{equation}\label{eq:limbetax}
\partial_x\beta^*(t,0+)=+\infty,\quad \text{for all $t\in[0,T)$},
\end{equation} 
as otherwise $\beta^*(t,y)\sim y$ when $y\downarrow 0$ and the integral in \eqref{eq:integrnodiv} should diverge. 

For a fixed $\delta\in(0,1]$, let us define a process $F^\delta_t\coloneqq f_\delta(t,\bar W^*_t)$ via the Lamperti transform with 
\begin{equation*}
f_\delta(t,x)=\int_\delta^x\frac{1}{\beta^*(t,y)}\ud y.
\end{equation*} 
Let us start by noticing that thanks to \eqref{eq:integrnodiv}
\begin{equation}\label{eq:taus}
\bar \tau^{0,x}_{*}=\inf\{s\in[0,T]: W^{*;0,x}_{s}\le 0\}=\inf\{s\in[0,T]: F^{\delta;f(0,x)}_s\le f_\delta(s,0)\}\eqqcolon\theta^x_\delta,
\end{equation}
where we use $F^{\delta;f(0,x)}$ to keep track of the dependence of the process on $x$.
The derivatives 
\begin{equation*}
\partial_x f_\delta(t,x)=\frac{1}{\beta^*(t,x)},\quad \partial_{xx} f_\delta(t,x)=-\frac{\partial_x\beta^*(t,x)}{\big(\beta^*(t,x)\big)^2}, \quad\partial_t f_\delta(t,x)=-\int_\delta^x\frac{\partial_t\beta^*(t,y)}{(\beta^*(t,y))^2}\ud y
\end{equation*}
are well-defined and continuous for $(t,x)\in[0,T)\times(0,\infty)$. 
From \eqref{eq:PDEbeta} we know
\begin{equation*}
\partial_t\beta^*(t,x)=-c_A\big[\beta^*(t,x)\big]^2\partial_x\beta^*(t,x)-\tfrac12\sigma^2\big[\beta^*(t,x)\big]^2\partial_{xx}\beta^*(t,x),\quad (t,x)\in[0,T)\times(0,\infty).
\end{equation*} 
Substituting into the expression for $\partial_t f_\delta$ gives
\begin{equation*}
\begin{aligned}
\partial_t f_\delta(t,x)&=\int_\delta^x\Big(c_A\partial_x\beta^*(t,y)+\tfrac12\sigma^2\partial_{xx}\beta^*(t,y)\Big)\ud y\\
&=c_A\beta^*(t,x)-c_A\beta^*(t,\delta)+\tfrac12\sigma^2\partial_x\beta^*(t,x)-\tfrac12\sigma^2\partial_x\beta^*(t,\delta)\\
&\le c_A\beta^*(t,x)+\tfrac12\sigma^2\partial_x\beta^*(t,x)-\tfrac12\sigma^2\partial_x\beta^*(t,\delta),
\end{aligned}
\end{equation*}
where the inequality holds because $\beta^*\ge 0$. 
This leads to a useful observation:
\begin{equation*}
\begin{aligned}
&\partial_tf_\delta(t,x)+c_A\big[\beta^*(t,x)\big]^2\partial_x f_\delta(t,x)+\tfrac12\sigma^2\big[\beta^*(t,x)\big]^2\partial_{xx}f_\delta(t,x)\\
&\le c_A\beta^*(t,x)+\tfrac12\sigma^2\partial_x\beta^*(t,x)-\tfrac12\sigma^2\partial_x\beta^*(t,\delta)+c_A\beta^*(t,x)-\tfrac12\sigma^2\partial_x\beta^*(t,x)\\
&\le 1-\tfrac12\sigma^2\partial_x\beta^*(t,\delta),
\end{aligned}
\end{equation*}
where the final inequality holds because $0\le \beta^*(t,x)\le 1/(2c_A)$.

From It\^o's formula and the considerations above we deduce an upper bound for the dynamics of the process $F^\delta$. That is,
\begin{equation}\label{eq:boundZd}
\begin{aligned}
F^\delta_t&= F^\delta_0+\sigma B^*_t\\
&\quad+\int_0^t\Big(\partial_t f_\delta(s,W^*_s)+c_A\big[\beta^*(s,W^*_s)\big]^2\partial_xf_\delta(s,W^*_s)+\tfrac12\sigma^2\big[\beta^*(s,W^*_s)\big]^2\partial_{xx}f_\delta(s,W^*_s)\Big)\ud s\\
&\le F^\delta_0+\sigma B^*_t+\int_0^t\big(1-\tfrac12\sigma^2\partial_x\beta^*(s,\delta)\big)\ud s\eqqcolon f_\delta(0,x)+\sigma B^*_t+\Lambda^\delta_t.
\end{aligned}
\end{equation}
Recalling now \eqref{eq:taus} and Proposition \ref{prop:betaexact} we can obtain an upper bound on $\beta^*$ as follows
\begin{equation*}
\begin{aligned}
2c_A \beta^*(0,x)&= \P_*\big(\bar \tau^{0,x}_*\ge T\big)=\P_*\big(\theta^x_\delta \ge T\big)=\P_*\Big(\inf_{0\le s < T}\big(F^\delta_s-f_\delta(s,0)\big)>0\Big)\\
&\le\P_*\Big(\inf_{0\le s\le T}\big(f_\delta(0,x)+\sigma B^*_s+\Lambda^\delta_s-f_\delta(s,0)\big)>0\Big)\\
&=\P_*\Big(\inf_{0\le s\le T}\big(\sigma B^*_s+\Lambda^\delta_s-f_\delta(s,0)\big)>-f_\delta(0,x)\Big)\\
&\le \P_*\Big(\inf_{0\le s\le T}\big(\sigma B^*_s+\Lambda^\delta_s-f_\delta(s,0)\big)>-f_0(0,x)\Big),
\end{aligned}
\end{equation*}
where in the first inequality we used \eqref{eq:boundZd} and in the second one we used $f_\delta(0,x)\le f_0(0,x)$. Because $s\mapsto \beta^*(s,y)$ is non-decreasing, we have
\begin{equation*}
-f_\delta(s,0)=\int_0^\delta\frac{1}{\beta^*(s,y)}\ud y\le \int_0^\delta\frac{1}{\beta^*(0,y)}\ud y=f_0(0,\delta)\le f_0(0,1).
\end{equation*}
Therefore, we can continue with the chain of inequalities and write
\begin{equation*}
\begin{aligned}
2c_A \beta^*(0,x) &\le \P_*\Big(\inf_{0\le s\le T}\big(\sigma B^*_s+\Lambda^\delta_s\big)>-f_0(0,x)-f_0(0,1)\Big)\\
&\le \P_*\Big(\sigma B^*_{T/2}+\Lambda^\delta_{T/2}>-f_0(0,x)-f_0(0,1)\Big),
\end{aligned}
\end{equation*}
where the second inequality is obvious because $\inf_{0\le s\le T}\big(\sigma B^*_s+\Lambda^\delta_s\big)\le \sigma B^*_{T/2}+\Lambda^\delta_{T/2}$. Taking limits as $\delta\downarrow 0$ we get
\begin{equation}\label{eq:betacontr}
\begin{aligned}
2c_A \beta^*(0,x)&\le \lim_{\delta \to 0}\P_*\Big(\sigma B^*_{T/2}+\Lambda^\delta_{T/2}>-f_0(0,x)-f_0(0,1)\Big)=0,
\end{aligned}
\end{equation}
where the final equality holds because $+\infty>\Lambda^\delta_{T/2}>\Lambda^{\delta'}_{T/2}$ for $\delta>\delta'>0$ and, by monotone convergence $\Lambda^{\delta}_{T/2}\downarrow -\infty$ as $\delta\downarrow 0$ thanks to \eqref{eq:limbetax}. 

The inequality in \eqref{eq:betacontr} yields a contradiction with $\beta^*(0,x)=-2\partial_t\varphi^*(0,x)>0$ for $x>0$ (cf.\ Proposition \ref{prop:d_time}). Thus, \eqref{eq:integrnodiv} cannot hold and it must be that \eqref{eq:integrdiv} holds instead. 
\end{proof}

\appendix

\section{Some technical results}\label{app:stopping}

We start by proving the existence of the solution to the BSDE \eqref{eq:BSDE}. The proof is essentially the same as in \cite[Thm.\ 3.1]{briand2007one} but for the case $\eta> 0$ we do not need to impose that $\xi\ge 0$ has exponential moments.
\begin{proposition}\label{prop:bsde}
Under condition {\em \bf (H1)} the BSDE \eqref{eq:BSDE} admits a solution $(\bar W_t,\bar Z_t)_{t\in[0,T]}$.
\end{proposition}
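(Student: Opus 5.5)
The plan is to follow the Hu--Imkeller--Müller / Briand--Hu exponential change of variables, which linearises the quadratic BSDE \eqref{eq:BSDE}. First I define the martingale $\widehat W_t(\xi)$ as in \eqref{eq:defwidehatW}. Condition \textbf{(H1)} makes it a genuine (square-)integrable $\P_0$-martingale in both cases.
\begin{itemize}
\item If $\eta<0$, then $\E_0[\e^{-\frac{\eta}{\sigma^2}\xi}]<\infty$ is assumed directly.
\item If $\eta>0$, then $\xi\ge0$ gives $0<\e^{-\frac{\eta}{\sigma^2}\xi}\le1$, so the martingale is bounded and no exponential moment is needed.
\end{itemize}
In either case $\widehat W_t>0$ a.s. This follows from conditional expectation of a strictly positive variable, or from $\widehat W_t\ge \E_0[\e^{-\frac{\eta}{\sigma^2}\xi}\wedge 1\,|\,\bar\cF_t]$ together with the fact that positive martingales stay positive.

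Next I apply the Brownian martingale representation theorem in the filtration $\bar\F$. This gives a progressively measurable $H$ with $\int_0^T H_s^2\,\ud s<\infty$ a.s. and $\widehat W_t=\widehat W_0+\int_0^t H_s\,\ud\bar B_s$. I then set
\[
\bar W_t\coloneqq-\tfrac{\sigma^2}{\eta}\log \widehat W_t,\qquad \bar Z_t\coloneqq -\tfrac{\sigma}{\eta}\,\frac{H_t}{\widehat W_t}.
\]
This choice is exactly what reproduces \eqref{eq:dWhat}. Continuity of paths and positivity make $\widehat W$ pathwise bounded away from $0$ on $[0,T]$, so $\int_0^T\bar Z_s^2\,\ud s<\infty$ a.s. Applying It\^o's formula to $-\frac{\sigma^2}{\eta}\log\widehat W_t$ gives
\[
\ud\bar W_t=\bar Z_t\sigma\,\ud\bar B_t+\tfrac{\eta}{2}\bar Z_t^2\,\ud t .
\]
The drift comes from $+\frac{\sigma^2}{2\eta}\widehat W^{-2}H^2=\frac{\eta}{2}\bar Z^2$. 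Since $\bar W_T=\xi$, integrating from $t$ to $T$ yields \eqref{eq:BSDE}, and also \eqref{eq:CH}.

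Finally I verify the integrability $\E_0[\int_0^T\bar Z_t^2\,\ud t]<\infty$ used later; I expect this to be the main obstacle. I would localise with $\tau_n\coloneqq\inf\{t:\int_0^t\bar Z^2_s\,\ud s\ge n\}\wedge T$. From the forward equation,
\[
\tfrac{|\eta|}{2}\E_0\Big[\int_0^{\tau_n}\bar Z_s^2\,\ud s\Big]=\big|\E_0[\bar W_{\tau_n}]-\bar W_0\big|.
\]
The task is then to bound $\E_0|\bar W_{\tau_n}|$ uniformly in $n$.
\begin{itemize}
\item For $\eta>0$: Jensen gives $0\le\bar W_t\le\E_0[\xi\,|\,\bar\cF_t]$, so $\E_0|\bar W_{\tau_n}|\le\E_0[\xi]$. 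To control the sign of the stochastic integral I would instead apply It\^o to $\bar W^2$. That shows $\E_0[\int\bar Z^2]$ is dominated by $\E_0[\sup_t\bar W_t^2]\le 4\E_0[\xi^2]$ by Doob, which is where $\E_0[\xi^2]<\infty$ enters.
\item For $\eta<0$: $\bar W_t\ge0$ and $\e^{-\frac{\eta}{\sigma^2}\bar W_t}=\widehat W_t$ has bounded expectation, so $\bar W_t$ is dominated by $\frac{\sigma^2}{|\eta|}\widehat W_t$. This gives a uniform $L^1$ bound and indeed uniform integrability via Doob's inequality applied to the martingale $\widehat W$. The identity above then yields the bound directly.
\end{itemize}
Monotone convergence in $n$ completes the argument.
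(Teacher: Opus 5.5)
Your proposal is correct. Its core is the same Cole--Hopf construction the paper uses: $\widehat W$ from \eqref{eq:defwidehatW}, martingale representation $\widehat W_t=\widehat W_0+\int_0^tH_s\,\ud\bar B_s$, then $\bar W=-\frac{\sigma^2}{\eta}\log\widehat W$ and $\bar Z=-\frac{\sigma}{\eta}H/\widehat W$. You differ only in the technical handling.

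The paper treats only $\eta>0$ and defers $\eta<0$ to \cite[Thm.\ 3.1]{briand2007one}. It localises with a stopping time built from $\int_0^t|H_s/\widehat W_s|^2\ud s$, $|\int_0^tH_s\ud\bar B_s|$ and $\E_0[\xi|\bar\cF_t]$. It then squares the localised identity and absorbs a term, which gives a uniform bound on $\E_0\big[\big(\int_0^{\tau_n}|H_s/\widehat W_s|^2\ud s\big)^2\big]$ in terms of $\E_0[\xi^2]$. Finally it proves separately that $\tau_n\uparrow T$ a.s.

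You instead use that $\widehat W$ is a continuous nonnegative martingale with $\widehat W_T>0$. It is absorbed at $0$ once it hits it, so $\inf_{[0,T]}\widehat W>0$ a.s.; for $\eta<0$ one even has $\widehat W\ge 1$. This immediately gives $\int_0^T\bar Z_s^2\ud s<\infty$ a.s. and lets It\^o's formula for $\log$ apply on all of $[0,T]$, for both signs of $\eta$. So existence needs no localisation and no appeal to the literature.

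Your localised first-moment identity then gives $\E_0[\int_0^T\bar Z_s^2\ud s]<\infty$, which is exactly what the paper uses afterwards. In exchange for assuming $\E_0[\xi^2]<\infty$, the paper's heavier route delivers the stronger bound $\E_0\big[\big(\int_0^T\bar Z_s^2\ud s\big)^2\big]<\infty$.

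Two small points.
\begin{itemize}
\item For $\eta<0$, \textbf{(H1)} makes $\widehat W$ uniformly integrable but not square-integrable. This is harmless, since you only use $\int_0^TH_s^2\ud s<\infty$ a.s.
\item Your integrability step can be shortened. By the choice of $\tau_n$ the stopped stochastic integral is a square-integrable martingale with zero mean, so there is no sign to control and the detour through $\bar W^2$ is unnecessary.
\begin{itemize}
\item For $\eta>0$, the identity $\frac{\eta}{2}\E_0\big[\int_0^{\tau_n}\bar Z_s^2\ud s\big]=\E_0[\bar W_{\tau_n}]-\bar W_0\le\E_0[\xi]$ already closes the argument. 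So $\E_0[\xi]<\infty$ suffices there.
\item For $\eta<0$, since $\bar W\ge 0$, you get $\frac{|\eta|}{2}\E_0\big[\int_0^{\tau_n}\bar Z_s^2\ud s\big]=\bar W_0-\E_0[\bar W_{\tau_n}]\le \bar W_0$. No Doob inequality or uniform integrability is needed.
\end{itemize}
\end{itemize}
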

\begin{proof}
The proof for $\eta<0$ and $\E_0[\exp(-\eta/\sigma^2 \xi)]<\infty$ is the same as in \cite[Thm.\ 3.1]{briand2007one}. Here we consider $\eta > 0$ and $\E_0[\xi^2]<\infty$.

Let us set $\widehat W_t\coloneqq \E_0[\exp(-\eta\xi/\sigma^2)|\bar\cF_t]$. Because $\eta\xi\ge 0$, then $(\widehat W_t)_{t\in[0,T]}$ is a bounded martingale with $0< \widehat W_t\le 1$ for $t\in[0,T]$ and, in particular, $\widehat W_0=\E_0[\exp(-\eta\xi/\sigma^2)]$. By the martingale representation theorem 
\begin{equation*}
\widehat W_t=\widehat W_0+\int_0^t H_s\ud \bar B_s,\quad t\in[0,T],
\end{equation*}
for some previsible process $(H_s)_{s\in[0,T]}$ such that 
$\E_0\Big[\int_0^T|H_s|^2\ud s\Big]<\infty$.

Let $\bar W_t\coloneqq-\sigma^2/\eta\log \widehat W_t$. Then, $\bar W_T=\xi$ and by conditional Jensen's inequality
\begin{equation}\label{eq:boundsW}
0 \le \bar W_t=-\frac{\sigma^2}{\eta}\log\Big(\E_0[\exp(-\eta\xi/\sigma^2)|\bar \cF_t]\Big)\le \E_0[\xi|\bar \cF_t].
\end{equation}
For $n\in\N$ we set
\begin{equation*}
\tau_n\coloneqq\inf\left\{t\ge 0:\max\Big\{\int_0^t\Big|\frac{H_s}{\widehat W_s}\Big|^2\ud s,\Big|\int_0^tH_s\ud \bar B_s\Big|,\E_0[\xi|\bar \cF_t]\Big\}>n\right\}\wedge T.
\end{equation*}
Then, by It\^o's formula
\begin{equation}\label{eq:pbsde}
\bar W_{t\wedge \tau_n}=\bar W_0-\frac{\sigma^2}{\eta}\int_0^{t\wedge \tau_n}\frac{H_s}{\widehat W_s}\ud \bar B_s+\frac12\frac{\sigma^2}{\eta}\int_0^{t\wedge \tau_n}\Big|\frac{H_s}{\widehat W_s}\Big|^2\ud s.
\end{equation}
Rearranging terms and using \eqref{eq:boundsW} we obtain
\begin{equation*}
\int_0^{t\wedge \tau_n}\Big|\frac{H_s}{\widehat W_s}\Big|^2\ud s\le\frac{2\eta}{\sigma^2}\E_0[\xi|\bar \cF_{t\wedge\tau_n}]+ 2\Big|\int_0^{t\wedge \tau_n}\frac{H_s}{\widehat W_s}\ud \bar B_s\Big|.
\end{equation*}
Taking the square of both sides, using $(a+b)^2\le 2 a^2+2b^2$ and then taking expectations we obtain
\begin{equation*}
\begin{aligned}
\E_0\left[\Big(\int_0^{t\wedge \tau_n}\Big|\frac{H_s}{\widehat W_s}\Big|^2\ud s\Big)^2\right]&\le \frac{8\eta^2}{\sigma^4}\E_0\Big[\Big(\E_0[\xi|\bar \cF_{t\wedge\tau_n}]\Big)^2\Big]+8\E_0\Big[\int_0^{t\wedge \tau_n}\Big|\frac{H_s}{\widehat W_s}\Big|^2\ud s\Big]\\
&\le \frac{8\eta^2}{\sigma^4}\E_0\Big[\E_0[\xi^2|\bar \cF_{t\wedge\tau_n}]\Big]+\frac12\E_0\Big[\Big(\int_0^{t\wedge \tau_n}\Big|\frac{H_s}{\widehat W_s}\Big|^2\ud s\Big)^2\Big]+32,
\end{aligned}
\end{equation*}
where the second inequality uses conditional Jensen's inequality for the first term and $8 x\le \frac12 x^2+32$ for the second one. Then, rearranging terms in the inequality and letting $t\uparrow T$, by Monotone convergence we get
\begin{equation}\label{eq:boundbsde}
\sup_{n\in\N}\E_0\left[\Big(\int_0^{\tau_n}\Big|\frac{H_s}{\widehat W_s}\Big|^2\ud s\Big)^2\right]\le \frac{16\eta^2}{\sigma^4}\E_0\big[\xi^2\big]+64.
\end{equation}

Now, $\tau_n\le \tau_{n+1}\le T$ and therefore $\tau_\infty\coloneqq\lim_{n\to\infty}\tau_n\le T$ is well-defined as a limit. By monotonicity of the sequence of stopping times
\begin{equation*}
\begin{aligned}
\P_0\big(\tau_\infty<T\big)&\le \lim_{n\to\infty}\P_0\big(\tau_n<T\big)\\
&\le\lim_{n\to\infty}\P_0\Big(\tau_n<T,\sup_{0\le t\le T}\Big\{\E_0[\xi|\bar \cF_t]+\Big|\int_0^t H_s\ud \bar B_s\Big|\Big\}>n\Big)\\
&\quad+\lim_{n\to\infty}\P_0\Big(\tau_n<T,\int_0^{\tau_n}\Big|\frac{H_s}{\widehat W_s}\Big|^2\ud s\ge n\Big).
\end{aligned}
\end{equation*}
Markov's inequality along with $(a+b)^2\le 2a^2+2b^2$ and Doob's martingale inequality yield
\begin{equation*}
\begin{aligned}
&\P_0\Big(\sup_{0\le t\le T}\Big\{\E_0[\xi|\bar \cF_t]+\Big|\int_0^t H_s\ud \bar B_s\Big|\Big\}>n\Big)\\
&\le \frac{2}{n^2}\E_0\Big[\sup_{0\le t\le T}\E_0[\xi|\bar \cF_t]^2+\sup_{0\le t\le T}\Big|\int_0^t H_s\ud \bar B_s\Big|^2\Big]\\
&\le \frac{8}{n^2}\left(\sup_{0\le t\le T}\E_0[\E_0[\xi|\bar \cF_t]^2]+\sup_{0\le t\le T}\E_0\Big[\Big|\int_0^t H_s\ud \bar B_s\Big|^2\Big]\right)\\
&\le \frac{8}{n^2}\left(\E_0[\xi^2]+\E_0\Big[\int_0^T \big|H_s\big|^2\ud s\Big]\right)\xrightarrow{n\to\infty}0,
\end{aligned}
\end{equation*}
where in the final inequality we used conditional Jensen's inequality, tower property and It\^o's isometry. Analogously we obtain
\begin{equation}\label{eq:Ptauinf}
\begin{aligned}
\P_0\Big(\tau_n<T,\int_0^{\tau_n}\Big|\frac{H_s}{\widehat W_s}\Big|^2\ud s\ge n\Big)\le \frac{1}{n^2}\E_0\Big[\Big(\int_0^{\tau_n}\Big|\frac{H_s}{\widehat W_s}\Big|^2\ud s\Big)^2\Big]\le \frac{16\eta^2}{\sigma^4 n^2}\E_0\big[\xi^2\big]+\frac{64}{n^2}\xrightarrow{n\to\infty} 0,
\end{aligned}
\end{equation}
where we used \eqref{eq:boundbsde} for the second inequality. Plugging everything back into \eqref{eq:Ptauinf} we deduce $\tau_\infty=T$, $\P_0$-a.s. This fact and Monotone convergence, combined with \eqref{eq:boundbsde} yield
\begin{equation*}
\E_0\left[\Big(\int_0^T\Big|\frac{H_s}{\widehat W_s}\Big|^2\ud s\Big)^2\right]=\sup_{n\in\N}\E_0\left[\Big(\int_0^{\tau_n}\Big|\frac{H_s}{\widehat W_s}\Big|^2\ud s\Big)^2\right]\le \frac{16\eta^2}{\sigma^4}\E_0\big[\xi^2\big]+64.
\end{equation*}

Then, we can let $n\to\infty$ in \eqref{eq:pbsde} and obtain
\begin{equation*}
\bar W_{t}=\bar W_0-\frac{\sigma^2}{\eta}\int_0^{t}\frac{H_s}{\widehat W_s}\ud \bar B_s+\frac12\frac{\sigma^2}{\eta}\int_0^{t}\Big|\frac{H_s}{\widehat W_s}\Big|^2\ud s,\quad t\in[0,T].
\end{equation*}
Since $\bar W_T=\xi$, setting $\bar Z_t=-(\sigma/\eta) H_t/\widehat W_t$ the pair $(\bar W_t,\bar Z_t)_{t\in[0,T]}$ solves the BSDE in \eqref{eq:BSDE}.
\end{proof}

Next we prove a well-known result used for the proof of Proposition \ref{prop:twice_diff}. The lemma says that the Brownian motion $(B_t)_{t\ge 0}$ is a martingale until the first exit time from the moving strip 
\begin{equation*}
\{(s,z)\in[0,\infty)\times\R: a-(c_A/\sigma) s\le z\le b-(c_A/\sigma) s\}
\end{equation*} 
for any finite $a<b$.
\begin{lemma}\label{lem:OS}
Let $(a,b)$ be bounded with $a<0<b$. Fix a constant $\theta>0$ and let 
\begin{equation*}
\tau\coloneqq\inf\{s\ge 0: \theta s+B_s\notin (a,b)\}.
\end{equation*} 
Then $\E[B_{\tau}]=0$. 
\end{lemma}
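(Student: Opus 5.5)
The plan is to apply optional sampling to the martingale $B$ at the bounded stopping times $\tau\wedge n$, and then pass to the limit $n\to\infty$ by dominated convergence.

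\textbf{Step 1: $\tau$ is integrable.} For $s<\tau$ we have $\theta s+B_s\in(a,b)$, so $\theta s\le b-B_s$. The martingale $B_s+\theta s-\theta s$ gives, via optional sampling at $\tau\wedge n$,
\begin{equation*}
\theta\,\E[\tau\wedge n]=\E\big[\theta(\tau\wedge n)+B_{\tau\wedge n}\big]\le b-a,
\end{equation*}
using that $\E[B_{\tau\wedge n}]=0$ and that $\theta(\tau\wedge n)+B_{\tau\wedge n}\in[a,b]$. Strictly, the bound is $\le b$; the point is only that it is uniform in $n$. Monotone convergence then yields $\E[\tau]\le b/\theta<\infty$. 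In particular $\tau<\infty$ a.s., which also follows from $\theta s+B_s\to\infty$.

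\textbf{Step 2: identify $B_\tau$.} On $[\![0,\tau]\!]$ we have $B_s\in[a-\theta s,\,b-\theta s]$. Hence
\begin{equation*}
|B_{\tau\wedge n}|\le |a|+b+\theta\tau,
\end{equation*}
and this bound is integrable by Step 1.

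\textbf{Step 3: pass to the limit.} Optional sampling gives $\E[B_{\tau\wedge n}]=0$ for every $n$. Since $\tau<\infty$ a.s., $B_{\tau\wedge n}\to B_\tau$ a.s. The domination from Step 2 then lets us conclude $\E[B_\tau]=0$ by dominated convergence.

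The only real point is Step 1, namely the integrability of $\tau$. The drift $\theta>0$ makes this easy, because the one-sided bound $\theta(\tau\wedge n)\le b-B_{\tau\wedge n}$ together with $\E[B_{\tau\wedge n}]=0$ already controls $\E[\tau]$. If one prefers to avoid Step 1 entirely, an alternative is the exponential martingale $\exp(\lambda B_s-\lambda^2 s/2)$, which gives exponential tails for $\tau$. I do not expect this to be necessary.
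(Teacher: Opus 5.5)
Your argument is correct. It has the same skeleton as the paper's proof: first show $\E[\tau]<\infty$, then justify optional sampling at $\tau$. Both steps, however, are carried out differently.

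For the integrability of $\tau$, the paper invokes the Green-function representation of $\E[\tau]$ for the drifted Brownian motion, written in terms of its scale function $S(x)=1-\e^{-2\theta x}$ (Peskir--Shiryaev). You instead apply optional sampling at $\tau\wedge n$ and use $\theta(\tau\wedge n)+B_{\tau\wedge n}\le b$, which gives the explicit bound $\E[\tau]\le b/\theta$. Your route is self-contained and needs only a nonzero drift; for $\theta=0$ you would switch to the martingale $B_s^2-s$ and the two-sided bound. The paper's formula yields the exact value of $\E[\tau]$ and is a generic tool for one-dimensional diffusions, at the price of an external reference.

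For the limit, the paper uses $\E[B^2_{t\wedge\tau}]=\E[t\wedge\tau]\le\E[\tau]$, i.e.\ $L^2$-boundedness. This is the standard Wald-identity argument and works for any integrable stopping time, whatever the geometry. Your pathwise domination $|B_{\tau\wedge n}|\le|a|+b+\theta\tau$ instead exploits the shape of the moving strip, and is equally short here.

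The only blemishes are expository. The ``martingale $B_s+\theta s-\theta s$'' is just $B$, and your first bound $b-a$ is superseded by $b$. Both inequalities are true, so nothing breaks.
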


\begin{proof}
We first prove that $\E[\tau]<\infty$.
The expectation $\E[\tau]$ can be computed using Green's function associated to the process $\theta s+B_s$ (see \cite[Eq.\ (13.1.12)]{peskir2006optimal} for details). Recalling the scale function $S(x)=1-\e^{-2 \theta x}$, we have
\begin{equation*}
\E[\tau]=\int_a^0\frac{(S(b)-S(0))(S(y)-S(a))}{S(b)-S(a)}\frac{2}{S'(y)}\ud y+\int_0^b\frac{(S(b)-S(y))(S(0)-S(a))}{S(b)-S(a)}\frac{2}{S'(y)}\ud y<\infty.
\end{equation*}
For $t\in[0,\infty)$, we have
$\E[B^2_{t\wedge\tau}]=\E[t\wedge\tau]\le \E[\tau]<\infty$.
Therefore, the family $(B_{t\wedge\tau})_{t\in[0,\infty)}$ is uniformly integrable and the optional sampling theorem applies (cf.\ \cite[Thm.\ II.3.2]{revuz2013continuous}), yielding $\E[B_{\tau}]=\E[B_0]=0$.
\end{proof}

Next we prove Proposition \ref{prop:W12p}, which says that $\varphi^*\in W^{1,2;p}_{\ell oc}([0,T]\times[0,\infty))\cap C([0,T]\times[0,\infty))$.

\begin{proof}[{\bf Proof of Proposition \ref{prop:W12p}}]
Continuity of $\varphi^*$ on $[0,T]\times[0,\infty)$ was proven in Proposition \ref{cor:contphi}. Since $\varphi^*$ is locally Lipschitz in $[0,T]\times[0,\infty)$, then $\varphi^*\in W^{1,1;\infty}_{\ell oc}([0,T]\times[0,\infty))$ by \cite[Prop.\ 9.3-(iii)]{brezis2010functional}. It remains to prove that $\partial_{xx}\varphi^*\in L^\infty_{\ell oc}([0,T]\times[0,\infty))$. 

We follow \cite[Prop.\ 9.3]{brezis2010functional}.
Let us consider a compact subset $I$ of $(0,\infty)$ and define the linear operator $T_\varphi: C_c^{\infty}([0,T]\times I)\to\R$ given by 
\begin{equation}\label{eq:Tphi}
C_c^{\infty}([0,T]\times I)\ni\psi\mapsto T_\varphi(\psi)\coloneqq\int_{[0,T]\times I} \varphi^*(t,x)\partial_{xx}\psi(t,x)\ud t \ud x,
\end{equation}
where $\ud t\ud x$ is the product Lebesgue measure on the Borel $\sigma$-algebra on $[0,T]\times I$. 

Since $\varphi^*(t,x)\partial_{xx}\psi(t,x)$ is a bounded continuous function, 
by Fubini's theorem \cite[Thm.\ 4.5]{brezis2010functional},
\begin{equation*}
T_\varphi(\psi)=\int_{0}^{T}\Big(\int_{I} \varphi^*(t,x)\partial_{xx}\psi(t,x)\ud x\Big) \ud t.
\end{equation*}
Since $\partial_{xx}\varphi^*(t,\,\cdot\,)$ exists a.e.\ for each fixed $t\in[0,T]$ and it is bounded on $I$ (Proposition \ref{prop:twice_diff}), we introduce the function $\Lambda:[0,T]\to\R$ given by
\begin{equation*}
\Lambda(t)\coloneqq\int_{I} \partial_{xx}\varphi^*(t,x)\psi(t,x)\ud x=\int_{I} \varphi^*(t,x)\partial_{xx}\psi(t,x)\ud x.
\end{equation*}
The second expression above tells us that $t\mapsto \Lambda (t)$ is actually continuous because $\varphi^*\cdot\partial_{xx}\psi$ is bounded continuous on $[0,T]\times I$.
Moreover, recalling the constant $C_0>0$ from \eqref{eq:D2} we have
\begin{equation*}
\begin{aligned}
|T_\varphi(\psi)|&=\Big|\int_0^T\Lambda(t) \ud t\Big|\le \int_0^T \Big|\int_{I} \partial_{xx}\varphi^*(t,x)\psi(t,x)\ud x\Big| \ud t\\
&\le \int_0^T\Big(\int_{I} |\partial_{xx}\varphi^*(t,x)| |\psi(t,x)|\ud x\Big) \ud t\\
&\le C_0\int_0^T\Big(\int_{I}|\psi(t,x)|\ud x\Big) \ud t=C_0\|\psi \|_{L^1([0,T]\times I)}.
\end{aligned}
\end{equation*}
Since $C^{\infty}_c([0,T]\times I)$ is dense in $L^1([0,T]\times I)$ (cf.\ \cite[Cor.\ 4.3]{brezis2010functional}), the Hahn-Banach Theorem \cite[Cor.\ 1.3]{brezis2010functional} guarantees that $T_\varphi$ can be extended to a bounded linear operator on $L^1([0,T]\times I)$ 
so that
\begin{equation*}
|T_\varphi(\psi)|\le C_0\|\psi \|_{L^1([0,T]\times I)},\quad\text{for all $\psi\in L^1([0,T]\times I)$}.
\end{equation*}
By Riesz representation theorem \cite[Thm.\ 4.14]{brezis2010functional}, there exists a unique $f\in L^{\infty}([0,T]\times I)$ such that, for any $\psi\in L^1([0,T]\times I)$
\begin{equation}\label{eq:riesz}
T_\varphi(\psi)=\int_{[0,T]\times I} f(t,x)\psi (t,x)\ud t\ud x,
\end{equation}
and $\|f\|_{L^\infty([0,T]\times I)}\le C_0$.
Comparing \eqref{eq:riesz} and \eqref{eq:Tphi} we deduce that $f=\partial_{xx}\varphi^*$ by definition of weak-derivative.
By the arbitrariness of $I$, we have $\partial_{xx}\varphi^*\in L^\infty_{\ell oc}([0,T]\times[0,\infty))$.
\end{proof}

\section{Some results in stochastic control}\label{app:Ito-Krylov}
In order to make our paper self-contained, in this section we collate useful results from stochastic control and PDE theory sourced from Krylov's work.
The first theorem is a generalisation of It\^o's formula, known as It\^o--Krylov's formula. In fact, we state only a part of the original result from \cite[Thm.\ 2.10.1]{krylov2009controlled} which concerns Dynkin's formula, because that is the one we actually use in parts of the proof of Theorem \ref{thm:DPP_time}. To further simplify the notation, we adapt the statement to our needs and only consider one-dimensional It\^o diffusions. Of course the result holds in the $d$-dimensional case as well.
\begin{theorem}[{\cite[Thm.\ 2.10.1]{krylov2009controlled}}]
Let $(\Omega,\cF,(\cF_t)_{t\ge 0},\P)$ be a filtered probability space equipped with a one-dimensional Brownian motion $(W_t)_{t\ge 0}$. For $x_0\in\R$, let the process $(X_t)_{t\ge 0}$ be given by 
\begin{equation*}
X_t=x_0+\int_0^t b_r\ud r+\int_0^t\sigma_r\ud W_r,\quad t\ge 0,
\end{equation*}
where $b_r=b_r(\omega)$ and $\sigma_r=\sigma_r(\omega)$ are progressively measurable and real-valued. For an open set $Q\subset[0,\infty)\times \R$ and for $(t_0,x_0)\in Q$, denote
$\tau_Q=\inf\{s\ge 0:(t_0+s,X_s)\notin Q\}$.
Let $\tau$ be a stopping time such that $\tau\le \tau_Q$ and suppose there exist constants $K,\delta>0$ such that for all $(t,\omega)$ such that $t\le \tau(\omega)$ it holds
$|\sigma_t(\omega)|+|b_t(\omega)|\le K$ and $\sigma_t(\omega)\ge \delta$.
Denote 
$\cL_{t}=\frac12\sigma^2_t \partial_{xx} +b_t \partial_{x}$.

Then, for any $v\in W^{1,2;2}(Q)$,
\begin{equation*}
v(t_0,x_0)=\E\Big[v(t_0+\tau,X_\tau)-\int_0^{\tau}\big(\partial_t+\cL_r\big)v(t_0+r,X_r)\ud r\Big].
\end{equation*}
\end{theorem}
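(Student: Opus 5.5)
The statement is Krylov's It\^o--Krylov (Dynkin) formula for $v\in W^{1,2;2}(Q)$. I would prove it by approximation from the classical It\^o formula, with Krylov's estimate as the tool that makes the passage to the limit possible.

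\emph{Step 1 (localisation).} I first reduce to the case where $Q$ is bounded and $\tau$ is bounded. Replace $\tau$ by $\tau\wedge n$ and $Q$ by $Q\cap([0,n)\times(-n,n))$. The formula for the original $\tau$ then follows by letting $n\to\infty$. This uses continuity of $v$ up to the relevant boundary, which comes from the Sobolev embedding of $W^{1,2;2}$ in one space dimension into H\"older functions, together with dominated convergence on the integral term. The domination is supplied by the estimate in Step 3.

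\emph{Step 2 (classical case).} For $v\in C^{1,2}(\bar Q)$ the identity is It\^o's formula applied to $v(t_0+s,X_s)$ on $[0,\tau]$. The stochastic integral $\int_0^{\tau}\partial_x v\,\sigma_r\,\ud W_r$ has zero mean because $\partial_x v$ is bounded on $\bar Q$ and $|\sigma_r|\le K$ for $r\le\tau$.

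\emph{Step 3 (Krylov's estimate).} The key input is the bound
\begin{equation*}
\E\Big[\int_0^{\tau} |f(t_0+r,X_r)|\,\ud r\Big]\le N\|f\|_{L^{2}(Q)},
\end{equation*}
valid for Borel $f$, with $N$ depending only on $K$, $\delta$ and the size of $Q$. This is where the bounds $|b|+|\sigma|\le K$ and the nondegeneracy $\sigma\ge\delta$ are used. The exponent $p=2=d+1$ with $d=1$ is exactly the one admissible in Krylov's parabolic estimate. In the paper this is \cite[Thm.\ 2.2.4]{krylov2009controlled}, already invoked in the proof of Theorem \ref{thm:PDE_q-orig}. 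I expect this to be the main obstacle if it has to be proved from scratch. It rests on the Aleksandrov--Krylov parabolic maximum principle, or equivalently on bounds for Bellman's equation with Pucci-type operators, and I would cite it rather than reprove it.

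\emph{Step 4 (approximation).} Extend $v$ and mollify to get $v_k\in C^\infty$ with $v_k\to v$ in $W^{1,2;2}(Q)$ and uniformly on compact subsets of $Q$; uniform convergence again uses the one-dimensional Sobolev embedding. Write the identity of Step 2 for each $v_k$. By Step 3,
\begin{equation*}
\E\Big[\int_0^{\tau}\big|(\partial_t+\cL_r)(v_k-v)(t_0+r,X_r)\big|\,\ud r\Big]\le N\big(1+K^2\big)\|v_k-v\|_{W^{1,2;2}(Q)}\to 0,
\end{equation*}
since the coefficients of $\cL_r$ are bounded by $K$. The terms $v_k(t_0+\tau,X_\tau)$ converge to $v(t_0+\tau,X_\tau)$ by uniform convergence and boundedness. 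Passing to the limit gives the formula.

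A minor point needs care: uniform convergence has to hold up to the exit point, which may lie on $\partial Q$. To handle this I would run Step 4 on subdomains $Q'\Subset Q$ with exit times $\tau_{Q'}\uparrow\tau$. Then I would let $Q'\uparrow Q$, using continuity of $v$ on $\bar Q$ for the boundary term and Step 3 for the integral term.
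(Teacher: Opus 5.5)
The paper gives no proof of this statement; it quotes it from \cite[Thm.\ 2.10.1]{krylov2009controlled}. Your Steps 2--4 are essentially Krylov's own argument: It\^o's formula for smooth functions, mollification on subdomains $Q'\Subset Q$, Krylov's $L^2$-estimate to control $(\partial_t+\cL_r)(v_k-v)$ along the path, then exhaustion $Q'\uparrow Q$.

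The gap is in Step 1 and in your final appeal to continuity of $v$ on $\bar Q$. Both rest on claims that do not follow from $v\in W^{1,2;2}(Q)$ for an arbitrary open $Q$.

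\emph{Continuity up to the boundary.} The parabolic Sobolev embedding gives H\"older continuity of $v$ only locally inside $Q$. Up to $\partial Q$ it needs boundary regularity, which is also what the ``extend $v$'' in Step 4 needs. For example, take $Q=(0,1)\times\big((-1,0)\cup(0,1)\big)$ and let $v$ equal $1$ for $x>0$ and $0$ for $x<0$. Then $v\in W^{1,2;2}(Q)$ with all derivatives zero, yet $v$ has no continuous extension to $\bar Q$. So $v(t_0+\tau,X_\tau)$ is not even defined when $X$ exits through $\{x=0\}$.

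\emph{Domination as $n\to\infty$.} The constant in your Step 3 depends on the time-length of $Q$. It therefore cannot supply the domination uniform in $n$ that Step 1 invokes. Take $X=x_0+W$ and $Q=(0,\infty)\times\R$. The occupation density $p_r(y)$ of the path satisfies $\int_0^\infty\!\int_\R p_r(y)^2\,\ud y\,\ud r=\int_0^\infty(4\pi r)^{-1/2}\,\ud r=\infty$. Hence no bound $\E\int_0^{\infty}|f(t_0+r,X_r)|\,\ud r\le N\|f\|_{L^2(Q)}$ can hold, and moreover $\tau$ itself may be infinite.

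So the statement must be read as in Krylov, with $Q$ bounded and $v\in W^{1,2;2}(Q)\cap C(\bar Q)$. These are exactly the conditions met where the paper uses it: a bounded box around $(t_0,x_0)$, and $\varphi^*\in C([0,T]\times[0,\infty))$. Under these hypotheses, discard Step 1 and the global extension, and your argument goes through:
\begin{itemize}
\item $\tau$ is bounded, so the stochastic integral in Step 2 has mean zero.
\item Mollify only on open $Q'\Subset Q$ containing $(t_0,x_0)$. Interior continuity of $v$ then gives uniform convergence on $\bar Q'$.
\item For an exhaustion with $\bar Q'_m\subset Q'_{m+1}$ and $\bigcup_m Q'_m=Q$, path continuity gives $\tau_{Q'_m}\uparrow\tau_Q$, hence $\tau\wedge\tau_{Q'_m}\uparrow\tau$.
\item The boundary term then converges by bounded convergence, since $v$ is bounded and continuous on the compact set $\bar Q$.
\item The integral term converges by dominated convergence, using Krylov's bound on the bounded set $Q$.
\end{itemize}
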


The next result is \cite[Cor.\ 2.4.8]{krylov2009controlled}, which we use in the proof of Theorem \ref{thm:PDE_q-orig}. The statement requires the introduction of some notation. Once again, we adapt Krylov's original statement to our needs and we take a simplified setting for one-dimensional controlled It\^o diffusions. For the general result we refer the reader to \cite{krylov2009controlled}.

We consider a weak formulation of a stochastic control problem. Let $Q\subset[0,\infty)\times\R$ and fix $(t_0,x_0)\in Q$. The class of admissible controls is denoted $\cB(t_0,x_0)$ and $\alpha\in\cB(t_0,x_0)$ is a tuple 
\begin{equation*}
\alpha=(\Omega^\alpha,\cF^\alpha,\P^\alpha,W_t^\alpha,\cF_t^\alpha,\sigma_t^\alpha,b_t^\alpha),
\end{equation*}
where $(\Omega^\alpha,\cF^\alpha,\P^\alpha)$ is a probability space, $(W_t^\alpha,\cF_t^\alpha)_{t\ge 0}$ is a $1$-dimensional Brownian motion on $(\Omega^\alpha,\cF^\alpha,\P^\alpha)$, processes $\sigma_t^\alpha$ and $b_t^\alpha$ are progressively measurable with respect to $(\cF_t^\alpha)_{t\ge 0}$ and there are constants $K,\delta>0$ such that $|\sigma_t^\alpha(\omega)|+|b_t^\alpha(\omega)|\le K$ and $\sigma_t^\alpha(\omega)\ge \delta$, for all $(t,\omega)\in[0,\infty)\times\Omega^\alpha$.
The dynamics associated to a control $\alpha\in\cB(t_0,x_0)$ reads as
\begin{equation*}
X_t^{\alpha,x_0}=x_0+\int_0^t\sigma_u^\alpha\ud W_u^\alpha+\int_0^t b_u^\alpha \ud u,\quad t\ge 0,
\end{equation*}
and we denote $\tau^\alpha=\tau^{\alpha,t_0,x_0}=\inf\{s\ge 0:(t_0+s,X_s^{\alpha,x_0})\notin Q\}$.

\begin{corollary}[{\cite[Cor.\ 2.4.8]{krylov2009controlled}}]\label{cor:krylov248}
Let $f\in L^{p}(Q)$ for some $p\in[2,\infty)$, with $f\le 0$ a.e.\ in $Q$. For all $(s,x)\in Q$ let
\begin{equation*}
\sup_{\alpha\in\cB(s,x)}\E^\alpha\Big[\int_0^{\tau^\alpha}f(s+t,X_t^{\alpha,x})\ud t\Big]=0.
\end{equation*}
Then $f=0$ a.e.\ in $Q$.
\end{corollary}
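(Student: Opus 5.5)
The plan follows Krylov's original argument for \cite[Cor.\ 2.4.8]{krylov2009controlled}, specialised to our one-dimensional setting. We argue by contradiction: suppose the set $\{f<0\}$ has positive Lebesgue measure in $Q$. Then there are $\eps>0$ and a compact set $\Gamma\subset Q$ of positive measure on which $f\le-\eps$. Since $f\le 0$, for every $(s,x)$ and every admissible $\alpha$ we have
\begin{equation*}
\E^\alpha\Big[\int_0^{\tau^\alpha}f(s+t,X^{\alpha,x}_t)\ud t\Big]\le -\eps\,\E^\alpha\Big[\int_0^{\tau^\alpha}\mathds{1}_{\Gamma}(s+t,X^{\alpha,x}_t)\ud t\Big].
\end{equation*}
The goal is therefore to find a single point $(s,x)\in Q$ at which the \emph{occupation time} of $\Gamma$ is bounded below uniformly over $\alpha\in\cB(s,x)$. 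That is, we want some $\kappa>0$ with
\begin{equation*}
\inf_{\alpha}\E^\alpha\Big[\int_0^{\tau^\alpha}\mathds{1}_\Gamma(s+t,X_t^{\alpha,x})\ud t\Big]\ge\kappa .
\end{equation*}
Such a bound forces the supremum in the hypothesis to be at most $-\eps\kappa<0$, which is the desired contradiction.

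\emph{Step 1 (reduction to a small cylinder).} By the Lebesgue density theorem, pick a point $(s_0,x_0)$ of density one of $\Gamma$. Take a small parabolic cylinder $C_r(s_0,x_0)\subset Q$ in which $\Gamma$ occupies a fraction at least $1-\theta$ of the volume, with $\theta$ small and to be fixed later. We start the process from $(s,x)=(s_0,x_0)$. It is enough to bound from below the expected time spent in $\Gamma\cap C_r$ before exiting $C_r$. This time is smaller than the time spent in $\Gamma$ before $\tau^\alpha$, since $C_r\subset Q$ gives $\tau_{C_r}\le\tau^\alpha$.

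\emph{Step 2 (two estimates, uniform in $\alpha$).} Both rely only on the uniform bounds $|\sigma^\alpha|+|b^\alpha|\le K$ and $\sigma^\alpha\ge\delta$.
\begin{itemize}
\item[(a)] The expected exit time from $C_r$ is large: $\E^\alpha[\tau_{C_r}]\ge c_1 r^2$ with $c_1=c_1(K,\delta)$. This follows from a simple Chebyshev/BDG bound, namely $\P(\sup_{t\le \lambda r^2}|X_t-x_0|\ge r)\le C(K)\lambda$ for small $\lambda$.
\item[(b)] The complement is rarely visited: by Krylov's estimate \cite[Thm.\ 2.2.4]{krylov2009controlled},
\begin{equation*}
\E^\alpha\Big[\int_0^{\tau_{C_r}}\mathds{1}_{C_r\setminus\Gamma}(s_0+t,X_t)\ud t\Big]\le N(K,\delta)\,r^{2-3/p'}\,|C_r\setminus\Gamma|^{1/p'} ,
\end{equation*}
where $p'\ge 2$. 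Using the scale-invariant form of the estimate and $|C_r|\sim r^3$, the right-hand side is at most $N r^2\theta^{1/p'}$.
\end{itemize}

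\emph{Step 3 (conclusion).} Choose $\theta$ so small that $N\theta^{1/p'}\le c_1/2$. Subtracting (b) from (a) gives an occupation time of $\Gamma$ of at least $\tfrac{c_1}{2}r^2=:\kappa>0$, uniformly in $\alpha$. This contradicts the hypothesis, so $f=0$ a.e.

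The main obstacle is Step 2(b): we need Krylov's estimate with constants independent of $\alpha$ and with the correct scaling in $r$. The scaling is obtained by the parabolic change of variables $t\mapsto t/r^2$, $x\mapsto x/r$, which preserves the class of admissible coefficients. We take this estimate as given from \cite{krylov2009controlled}, as the paper already does in the proof of Theorem \ref{thm:PDE_q-orig}.
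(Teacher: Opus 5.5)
Your argument is correct, but it takes a genuinely different route from the paper's.

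\textbf{The paper's route.} Following Krylov, the paper first treats $f\in C^\infty_c(Q)$. Because $f\le 0$, the discounted value $z^\lambda(s,x;f)$ also vanishes, and It\^o's formula shows $\lambda z^\lambda(s,x;f)\to f(s,x)$ as $\lambda\to\infty$, whence $f=0$. The extension to $f\in L^p(Q)$ is delegated to an approximation argument (\cite[Thms.\ 2.4.6, 2.4.7]{krylov2009controlled}), and that is where Krylov's estimate enters.

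\textbf{Your route.} You argue by contradiction directly for the given $f$, at a single parabolic density point of a compact $\Gamma\subset\{f\le-\eps\}$. Two bounds, both uniform in $\alpha$, do the work: the elementary moment bound gives $\E^\alpha[\tau_{C_r}]\ge c_1r^2$, and the rescaled Krylov estimate (exponent $d+1=2$) bounds the time spent in $C_r\setminus\Gamma$ by $N\theta^{1/2}r^2$. Uniformity holds because, for $r\le 1$, the parabolic rescaling maps the admissible class into itself.

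\textbf{Comparison.} Your approach avoids the resolvent/approximation machinery entirely. It never uses $f\in L^p(Q)$ beyond measurability, and it shows the hypothesis is only needed at almost every point of $Q$. The paper's approach needs nothing but It\^o's formula in the smooth case, but it pushes all the analytic difficulty into the $L^p$ approximation step.

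Three details you should make explicit:
\begin{enumerate}
\item The cylinder $C_r(s_0,x_0)$ is open, so it does not contain the starting point $(s_0,x_0)$. Either use the exit time from $\{s_0\le t<s_0+r^2,\ |x-x_0|<r\}$, or apply the estimate on a slightly larger open cylinder contained in $Q$.
\item The density theorem must be applied to one-sided parabolic cylinders. This is legitimate because Lebesgue measure is doubling for the parabolic distance and $C_r$ is half of a parabolic ball.
\item Passing from $f\le-\eps\mathds{1}_\Gamma$ a.e.\ to the inequality between expectations requires that the occupation measure of $(s+t,X_t)$ charges no Lebesgue-null sets. This again follows from Krylov's estimate, applied on bounded subdomains exhausting $Q$.
\end{enumerate}
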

We find this result particularly nice and we outline the main idea of the proof. The interested reader can find fuller details in Section 2.4 of \cite{krylov2009controlled}.
\begin{proof}
We prove the result assuming that $f\in C^\infty_c(Q)$ and $f\le 0$ in $Q$. The general case with $f\in L^{p}(Q)$, $p\in[2,\infty)$, is obtained by approximation (cf.\ \cite[Thms.\ 2.4.6, 2.4.7]{krylov2009controlled} which also rely upon \cite[Thm.\ 2.2.4]{krylov2009controlled}). 

Since $f\le 0$ we have 
\begin{equation*}
\begin{aligned}
0&=\sup_{\alpha\in\cB(s,x)}\E^\alpha\Big[\int_0^{\tau^\alpha}f(s+u,X_u^{\alpha,x})\ud u\Big]\\
&\le\sup_{\alpha\in\cB(s,x)}\E^\alpha\Big[\int_0^{\tau^\alpha}\e^{-\lambda u}f(s+u,X_u^{\alpha,x})\ud u\Big]\eqqcolon z^{\lambda}(s,x;f)\le 0.
\end{aligned}
\end{equation*}
Then, $z^{\lambda}(s,x;f)=0$ and we now show that $\lambda z^\lambda(s,x;f)\to f(s,x)$ as $\lambda \to \infty$. That implies $f=0$ as needed. 
Applying It\^o's formula we have,
\begin{equation*}
f(s,x)=\E^\alpha\Big[\int_0^{t\wedge \tau^\alpha}\e^{-\lambda u}\big(\lambda f(s+u,X_u^{\alpha,x})-\cL_u^\alpha f(s+u,X_u^{\alpha,x})\big)\ud u+\e^{-\lambda(t\wedge\tau^\alpha)}f(s+t\wedge\tau^\alpha,X_{t\wedge\tau^\alpha}^{\alpha,x})\Big],
\end{equation*}
where
$\cL_u^\alpha =\frac{\partial}{\partial t}+\frac12(\sigma_u^\alpha)^2\partial_{xx}+b_u^\alpha\partial_x$. Since $\sigma_t$ and $b_t$ are bounded and $f$ has compact support we have 
\begin{equation*}
\big|\lambda f(s,x)-\cL_u^\alpha f(s,x)\big|\le h(t,x), 
\end{equation*}
for a bounded positive function $h$. Then, letting $t\to \infty$ and using again that $f$ has compact support in $Q$ we deduce by dominated convergence theorem
\begin{equation*}
f(s,x)=\E^\alpha\Big[\int_0^{\tau^\alpha}\e^{-\lambda u}\big(\lambda f(s+u,X_u^{\alpha,x})-\cL_u^\alpha f(s+u,X_u^{\alpha,x})\big)\ud u\Big].
\end{equation*}
Using the above representation of $f$ we now have
\begin{equation*}
\begin{aligned}
|\lambda z^\lambda(s,x;f)-f(s,x)|=&\Big|\sup_{\alpha\in\cB(s,x)}\E^\alpha\Big[\int_0^{\tau^\alpha}\e^{-\lambda u}\cL_u^\alpha f(s+u,X_u^{\alpha,x})\ud u\Big]\Big|\\
\le&\sup_{\alpha\in\cB(s,x)}\E^\alpha\Big[\int_0^{\tau^\alpha}\e^{-\lambda u}\big|\cL_u^\alpha f(s+u,X_u^{\alpha,x})\big|\ud u\Big]\\
\le&\sup_{\alpha\in\cB(s,x)}\E^\alpha\Big[\int_0^{\tau^\alpha}\e^{-\lambda u}h(s+u,X_u^{\alpha,x})\ud u\Big]=\frac{\|h\|_{L^{\infty}(Q)}}{\lambda}.
\end{aligned}
\end{equation*}
It follows that $\lambda z^{\lambda}(s,x;f)\to f(s,x)$ when $\lambda\to \infty$, as needed.
\end{proof}

The final result in this section is a lemma that we use in the proof of Theorem \ref{thm:smooth}. This is taken from \cite[Lem.\ 4.1]{krylov1981properties} and we need to introduce some notation. 
Let $\|\cdot\|_{d}$ be the euclidean norm of $\R^d$. Consider the parabolic operator
\begin{equation}\label{eq:kry_PDE}
\cL=\sum_{i,j=1}^da^{ij}(t,x)\frac{\partial}{\partial x_i\partial x_j}+\sum_{j=1}^db^j(t,x)\frac{\partial}{\partial x_j}-\frac{\partial}{\partial t},
\end{equation}
where the coefficients $a^{ij}$ and $b^j$ are assumed to be measurable for $i,j\in\{1,\ldots d\}$ and there is a constant $\mu>0$ such that
\begin{equation}\label{eq:kry_ass}
\begin{aligned}
\mu \|\lambda\|^2_d\le \sum_{i,j=1}^da^{ij}(t,x)\lambda_i\lambda_j\le \mu^{-1}\|\lambda\|^2_d\quad\text{and}\quad \| b(t,x)\|_d \le \mu^{-1},
\end{aligned}
\end{equation}
for all $(t,x)\in\R^{d+1}$ and $\lambda\in\R^d$. In \cite{krylov1981properties} the authors are interested in parabolic PDEs that are forward in time and for $z_0=(t_0,x_0)\in\R^{d+1}$ they consider sets of the form 
\begin{equation*}
C_{r}(z_0)=\{(t,x)\in\R\times \R^d:t_0-r^2<t<t_0,\|x-x_0\|_d<r\},
\end{equation*} 
whereas in our Theorem \ref{thm:smooth} we have a backward PDE and we redefined $C_r(z_0)$ accordingly. 
For $z_1=(t_1,x_1)$ and $z_2=(t_2,x_2)$ we denote the parabolic distance
\begin{equation*}
\ud(z_1,z_2)\coloneqq \|x_1-x_2\|_d+|t_1-t_2|^{\frac{1}{2}}.
\end{equation*}

\begin{lemma}[{\cite[Lem.\ 4.1]{krylov1981properties}}]\label{lem:krylov}
Let $r\le 1$ and $z_0\in\R^{d+1}$. Assume $u\in W^{1,2;d+1}(C_{2r}(z_0))$ satisfies $\cL u=0$, with $\cL$ as in \eqref{eq:kry_PDE} and satisfying \eqref{eq:kry_ass}. Then, there exist constants $N=N(\mu,d)>0$ and $\alpha=\alpha(\mu,d)\in(0,1)$ such that for any $z_1,z_2\in \overline{C_{r}(z_0)}$
\begin{equation}\label{eq:kry_estim}
|u(z_1)-u(z_2)|\le Nr^{-\alpha}\ud(z_1,z_2)^\alpha \sup_{z\in C_{2r}(z_0)}|u(z)|.
\end{equation}
If, moreover, $b\equiv 0$, then \eqref{eq:kry_estim} is true for $r\ge1$ as well.
\end{lemma}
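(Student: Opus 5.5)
The plan is a De Giorgi--Nash--Moser (Krylov--Safonov) type argument for nondivergence equations, via the Krylov--Safonov machinery. By translation and parabolic scaling $(t,x)\mapsto(t_0+r^2t,x_0+rx)$ we reduce to $r=1$, $z_0=0$. Under this scaling $a^{ij}$ is unchanged, while $b$ becomes $rb$. Thus \eqref{eq:kry_ass} is preserved exactly when $r\le1$, and is preserved for every $r$ when $b\equiv0$; this explains the final sentence of the statement. The factor $r^{-\alpha}\ud(z_1,z_2)^\alpha$ is precisely the scaled form of the parabolic H\"older seminorm. It therefore suffices to prove \eqref{eq:kry_estim} on $\overline{C_1(0)}$ with constants depending only on $(\mu,d)$.

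The core step is an oscillation decay estimate. We aim to show that there is $\theta=\theta(\mu,d)\in(0,1)$ such that, for every cylinder $C_{2\rho}(z)\subset C_2(0)$ with $\rho\le1$,
\[
\mathrm{osc}_{C_{\rho/2}(z)}u\le\theta\,\mathrm{osc}_{C_{2\rho}(z)}u .
\]
Iterating this over dyadic radii gives $\mathrm{osc}_{C_{\rho}(z)}u\le N\rho^\alpha\sup_{C_2}|u|$ with $\alpha=\log_4(1/\theta)$. A standard comparison of parabolic cylinders then converts this into the H\"older bound between any two points $z_1,z_2\in\overline{C_1(0)}$. The case $\ud(z_1,z_2)$ large is trivial, since the left side is at most $2\sup|u|$.

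The oscillation decay follows from a measure-theoretic \emph{growth lemma}. If $v\ge0$ solves $\cL v=0$ in $C_{2\rho}$, and $v\ge1$ on a subset of a lower subcylinder with measure at least $\frac12|C_\rho|$, then $v\ge\gamma(\mu,d)>0$ on $C_{\rho/2}$. We apply this to $v=(M-u)/(M-m)$ or to $v=(u-m)/(M-m)$, where $M$ and $m$ are the sup and inf of $u$ on $C_{2\rho}$; one of these two choices satisfies the measure hypothesis. The growth lemma itself is proved via the Krylov--Tso parabolic Aleksandrov--Bakel'man--Pucci maximum principle, valid for $W^{1,2;d+1}$ functions:
\[
\sup_{Q}w^-\le\sup_{\partial_PQ}w^-+N\|(\cL w)^-\|_{L^{d+1}(\{w=\Gamma\})},
\]
where $\Gamma$ denotes the concave envelope. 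It is combined with an explicit barrier adapted to \eqref{eq:kry_ass}, for instance $(1-|x|^2)^{q}\,t^{-p}e^{-|x|^2/t}$-type functions with $p,q$ chosen large depending on $\mu$. The first-order term is absorbed because $\rho\le1$. This yields a ``critical density'' statement, which is then upgraded by a Calder\'on--Zygmund/Krylov--Safonov covering argument in parabolic cylinders.

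The main obstacle is this growth lemma. The hard part is the step from a positive-density set to a pointwise lower bound, which needs both the ABP estimate in the parabolic $W^{1,2;d+1}$ setting and the careful ``crawling ink spots'' covering lemma with the time-lag geometry. I would try the Krylov--Safonov covering argument first. If that becomes unwieldy, I would fall back on a probabilistic route: view $u(z)=\E u(\text{exit point})$ for a diffusion with generator $\cL$, justified by the It\^o--Krylov formula recalled above. One then shows that the diffusion started in $C_{\rho/2}$ hits a set of density $\frac12$ with probability at least $\gamma$, using Krylov's estimate $\E\int_0^\tau \mathds{1}_\Gamma\,\ud s\ge c|\Gamma|$, which follows from the ABP estimate.
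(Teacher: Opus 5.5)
The paper gives no proof of this lemma. It is quoted from Krylov--Safonov and used as a black box in Theorem~\ref{thm:smooth}, so the comparison has to be with that source. Your main route is the standard proof and is sound in outline:
\begin{itemize}
\item The rescaling $(t,x)\mapsto(t_0+r^2t,x_0+rx)$ leaves $a^{ij}$ unchanged and replaces $b$ by $rb$. This is exactly why the estimate needs $r\le1$ in general but holds for all $r$ when $b\equiv0$.
\item Oscillation decay follows from a measure-to-pointwise growth lemma applied to $(M-u)/(M-m)$ or $(u-m)/(M-m)$. You must double it first, since one of the two is only $\ge\frac12$ on half of the lower cylinder.
\item Dyadic iteration then gives the H\"older bound.
\end{itemize}
The difference from Krylov--Safonov lies in how the growth lemma is proved. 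They obtain it probabilistically, as a lower bound $q(\delta,\mu,d)>0$ on the probability that the diffusion generated by $\cL$ hits a set of density $\delta$ before leaving the cylinder. You obtain it analytically, from the Krylov--Tso ABP estimate, a barrier and a Calder\'on--Zygmund covering. This is the Tso/Wang version, and it has the advantage of extending to viscosity solutions. Both routes need the same covering step. There is one sign slip. With $\cL=a^{ij}\partial_{ij}+b^j\partial_j-\partial_t$ as in \eqref{eq:kry_PDE}, the ABP bound for $\sup w^-$ must involve $\|(\cL w)^+\|_{L^{d+1}}$ on the lower contact set, not $(\cL w)^-$. For a counterexample to your version, take the heat operator and $w=(|x|^2-1)(t+1)$ on $(-1,0)\times\{|x|<1\}$: it has $\cL w\ge0$, vanishes on $\partial_PQ$, and is negative inside.

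Your fallback does not work as stated, and it would not let you avoid the hard step. ABP gives Krylov's \emph{upper} estimate $\E\int_0^\tau f\,\ud s\le N\|f\|_{L^{d+1}}$, not a lower one. Apply it to $f=\mathds{1}_{C\setminus\Gamma}$ and combine with a lower bound on $\E\tau$. You get $\E\int_0^\tau\mathds{1}_\Gamma\,\ud s\ge\E\tau-N|C\setminus\Gamma|^{1/(d+1)}$, and $\tau$ is bounded by the height of the cylinder. This gives a positive hitting probability only when $\Gamma$ fills all but a small fraction of $C$.

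The linear bound $\E\int_0^\tau\mathds{1}_\Gamma\,\ud s\ge c(\mu,d)|\Gamma|$ is false for measurable coefficients. Take $d\ge2$ and $a^{ij}=\delta_{ij}+b\,x_ix_j/|x|^2$ with $-1<b<0$. Up to a constant time change, the radial part is a Bessel process of dimension $1+(d-1)/(1+b)>d$. Its occupation density near the origin therefore behaves like $|y|^{\gamma}$ with $\gamma=-b(d-1)/(1+b)>0$. So the set $\{|x|<\varepsilon\}$ over a fixed time interval has measure of order $\varepsilon^d$ but receives expected occupation time of order $\varepsilon^{d+\gamma}$.

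Passing from density close to $1$ down to density $\frac12$ is the whole content of the Krylov--Safonov hitting theorem, and it is done by the same ``crawling ink spots'' covering iteration. The probabilistic route, which is the original one, is a legitimate alternative only if it includes that covering step.
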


\section*{Acknowledgement}
We would like to thank Bertrand Lods for many interesting discussions on this work and for pointing us to \cite{krylov1981properties} and related literature. S.\ Villeneuve warmly thanks Dylan Possama\"i for their numerous discussions over many years on dynamic contracting models with state constraints. We also thank Salvatore Federico for useful comments on the first draft of the paper. A.\ Bovo and T.\ De Angelis were partially supported by EU -- Next Generation EU -- PRIN2022 (2022BEMMLZ) and PRIN-PNRR2022 (P20224TM7Z). S.\ Villeneuve was partially supported by the LTI@UniTo Research Fellowship funded by Collegio Carlo Alberto (Turin). S.\ Villeneuve also acknowledges support from the FDR-SCOR {\it Risk Markets and Value Creation} chair and ANR (Grant ANR-17-EUR-0010).

\bibliographystyle{plain}
\bibliography{Bibliography}

\begin{thebibliography}{10}

\bibitem{abijaber2025gaussian}
E.~Abi~Jaber and S.~Villeneuve.
\newblock Gaussian agency problems with memory and linear contracts.
\newblock {\em Finance Stoch.}, 29:143--176, 2025.

\bibitem{aid2022optimal}
R.~A{\"\i}d, D.~Possama{\"\i}, and N.~Touzi.
\newblock Optimal electricity demand response contracting with responsiveness
  incentives.
\newblock {\em Math.\ Oper.\ Res.}, 47(3):2112--2137, 2022.

\bibitem{baldi2017stochastic}
P.~Baldi.
\newblock {\em Stochastic calculus}.
\newblock Universitext. Springer, 2017.

\bibitem{biais2010large}
B.~Biais, T.~Mariotti, J.-C. Rochet, and S.~Villeneuve.
\newblock Large risks, limited liability, and dynamic moral hazard.
\newblock {\em Econometrica}, 78(1):73--118, 2010.

\bibitem{bouchard2012dynamic}
B.~Bouchard and M.~Nutz.
\newblock Weak dynamic programming for generalized state constraints.
\newblock {\em SIAM J.\ Control Optim.}, 50(6):3344--3373, 2012.

\bibitem{bouchard2011weak}
B.~Bouchard and N.~Touzi.
\newblock Weak dynamic programming principle for viscosity solutions.
\newblock {\em SIAM J.\ Control Optim.}, 49(3):948--962, 2011.

\bibitem{brezis2010functional}
H.~Brezis.
\newblock {\em Functional Analysis, Sobolev Spaces and Partial Differential
  Equations}.
\newblock Universitext. Springer, New York, 2011.

\bibitem{briand2007one}
P.~Briand, J.-P. Lepeltier, and J.~San~Martin.
\newblock One-dimensional backward stochastic differential equations whose
  coefficient is monotonic in $y$ and non-{L}ipschitz in $z$.
\newblock {\em Bernoulli}, 13(1):80--91, 2007.

\bibitem{caffarelli1996viscosity}
L.~Caffarelli, M.G. Crandall, M.~Kocan, and A.~\'Swi\c{e}ch.
\newblock On viscosity solutions of fully nonlinear equations with measurable
  ingredients.
\newblock {\em Comm.\ Pure Appl.\ Math.}, 49(4):365--398, 1996.

\bibitem{claisse2016pseudo}
J.~Claisse, D.~Talay, and X.~Tan.
\newblock A pseudo-{M}arkov property for controlled diffusion processes.
\newblock {\em {SIAM} J.\ Control Optim.}, 54(2):1017--1029, 2016.

\bibitem{crandall1992user}
M.G. Crandall, H.~Ishii, and P.-L. Lions.
\newblock User's guide to viscosity solutions of second order partial
  differential equations.
\newblock {\em Bull.\ Amer.\ Math.\ Soc.\ (N.S.)}, 27(1):1--67, 1992.

\bibitem{crandall1999existence}
M.G. Crandall, M.~Kocan, P.-L. Lions, and A.~\'Swi\c{e}ch.
\newblock Existence results for boundary problems for uniformly elliptic and
  parabolic fully nonlinear equations.
\newblock {\em Electron.\ J.\ Differential Equations}, 1999:1--20, 1999.

\bibitem{crandall2000Lptheory}
M.G. Crandall, M.~Kocan, and A.~\'Swi\c{e}ch.
\newblock ${L}^p$-theory for fully nonlinear uniformly parabolic equations.
\newblock {\em Comm.\ Partial Differential Equations}, 25(11-12):1997--2053,
  2000.

\bibitem{cvitanic2018dynamic}
J.~Cvitani{\'c}, D.~Possama{\"\i}, and N.~Touzi.
\newblock Dynamic programming approach to principal--agent problems.
\newblock {\em Finance Stoch.}, 22:1--37, 2018.

\bibitem{dayanik2003optimal}
S.~Dayanik and I.~Karatzas.
\newblock On the optimal stopping problem for one-dimensional diffusions.
\newblock {\em Stochastic Process.\ Appl.}, 107(2):173--212, 2003.

\bibitem{degiorgi1956diff}
E.~De~Giorgi.
\newblock Sull'analiticit\`a delle estremali degli integrali multipli.
\newblock {\em Atti Accad. Naz. Lincei Rend. Cl. Sci. Fis. Mat. Natur.},
  8(20):438--441, 1956.

\bibitem{degiorgi1957diff}
E.~De~Giorgi.
\newblock Sulla differenziabilit\`a e l'analiticit\`a delle estremali degli
  integrali multipli regolari.
\newblock {\em Mem. Accad. Sci. Torino Cl. Sci. Fis. Mat. Natur.}, 3(3):25--43,
  1957.

\bibitem{demarzo2006optimal}
P.M.\ DeMarzo and Y.~Sannikov.
\newblock Optimal security design and dynamic capital structure in a
  continuous-time agency model.
\newblock {\em J.\ Finance}, 61(6):2681--2724, 2006.

\bibitem{edmans2016executive}
A.~Edmans and X.~Gabaix.
\newblock Executive compensation: a modern primer.
\newblock {\em J.\ Econom.\ Lit.}, 54(4):1232--1287, 2016.

\bibitem{eleuch2021optimal}
O.~El~Euch, T.~Mastrolia, M.~Rosenbaum, and N.~Touzi.
\newblock Optimal make-take fees for market making regulation.
\newblock {\em Math.\ Finance}, 31(1):109--148, 2021.

\bibitem{elie2021mean}
R.~{\'E}lie, E.~Hubert, T.~Mastrolia, and D.~Possama{\"\i}.
\newblock Mean-field moral hazard for optimal energy demand response
  management.
\newblock {\em Math.\ Finance}, 31(1):399--473, 2021.

\bibitem{elie2019tale}
R.~{\'E}lie, T.~Mastrolia, and D.~Possama{\"\i}.
\newblock A tale of a principal and many, many agents.
\newblock {\em Math.\ Oper.\ Res.}, 44(2):440--467, 2019.

\bibitem{evans10partial}
L.C. Evans.
\newblock {\em Partial differential equations}, volume~19 of {\em Graduate
  Studies in Mathematics}.
\newblock American Mathematical Society, Providence, RI, second edition, 2010.

\bibitem{evans2015measure}
L.C. Evans and R.F. Gariepy.
\newblock {\em Measure Theory and Fine Properties of Functions, Revised
  Edition}, volume~1 of {\em Applications of {M}athematics}.
\newblock Chapman and {H}all/{CRC}, 2015.

\bibitem{fabbri2017stochastic}
G.~Fabbri, F.~Gozzi, and A.~Swiech.
\newblock Stochastic optimal control in infinite dimension.
\newblock {\em Probability and Stochastic Modelling. Springer}, 2017.

\bibitem{fleming2012deterministic}
W.H. Fleming and R.W. Rishel.
\newblock {\em Deterministic and stochastic optimal control}, volume~1 of {\em
  Applications of {M}athematics}.
\newblock Springer New York, NY, 1975.

\bibitem{friedman2008partial}
A.~Friedman.
\newblock {\em Partial differential equations of parabolic type}.
\newblock Dover Publications, Mineola, NY, 2008.

\bibitem{gibbons1992optimal}
R.~Gibbons and K.J. Murphy.
\newblock Optimal incentive contracts in the presence of career concerns:
  Theory and evidence.
\newblock {\em J.\ Polit.\ Econ.}, 100(3):468--505, 1992.

\bibitem{holmstrom1987aggregation}
B.~Holmstr{\"o}m and P.~Milgrom.
\newblock Aggregation and linearity in the provision of intertemporal
  incentives.
\newblock {\em Econometrica}, 55(2):303--328, 1987.

\bibitem{innes1990limited}
R.D. Innes.
\newblock Limited liability and incentive contracting with ex--ante action
  choices.
\newblock {\em J.\ Econom.\ Theory}, 52(1):45--67, 1990.

\bibitem{karatzas2014brownian}
I.~Karatzas and S.E. Shreve.
\newblock {\em Brownian Motion and Stochastic Calculus}, volume 113 of {\em
  Graduate Texts in Mathematics}.
\newblock Springer New York, NY, 2 edition, 2014.

\bibitem{krvsek2026randomisation}
D.~Kr{\v{s}}ek and D.~Possama{\"\i}.
\newblock Randomisation with moral hazard: a path to existence of optimal
  contracts.
\newblock {\em Ann.\ Appl.\ Probab.}, 36(4):3206--3265, 2026.

\bibitem{krylov2008lectures}
N.V. Krylov.
\newblock {\em Lectures on elliptic and parabolic equations in {S}obolev
  spaces}, volume~96 of {\em Graduate Studies in Mathematics}.
\newblock American Mathematical Society, Providence, RI, 2008.

\bibitem{krylov2009controlled}
N.V. Krylov.
\newblock {\em Controlled diffusion processes}, volume~14 of {\em Stochastic
  Modelling and Applied Probability}.
\newblock Springer Berlin, 2009.
\newblock Translated from the 1977 Russian original by A.B. Aries, Reprint of
  the 1980 edition.

\bibitem{krylov2018sobolev}
N.V. Krylov.
\newblock {\em {S}obolev and viscosity solutions for fully nonlinear elliptic
  and parabolic equations}, volume 233 of {\em Mathematical surveys and
  monographs}.
\newblock American Mathematical Soc., 2018.

\bibitem{krylov1981properties}
N.V. Krylov and M.V. Safonov.
\newblock A certain property of solutions of parabolic equations with
  measurable coefficients.
\newblock {\em Math.\ {USSR}-Izv.}, 16(1):151--164, 1981.

\bibitem{lin2022random}
Y.~Lin, Z.~Ren, N.~Touzi, and J.~Yang.
\newblock Random horizon principal--agent problems.
\newblock {\em SIAM J.\ Control Optim.}, 60(1):355--384, 2022.

\bibitem{marinovic2019ceo}
I.~Marinovic and F.~Varas.
\newblock {CEO} horizon, optimal pay duration, and the escalation of
  short-termism.
\newblock {\em J.\ Finance}, 74(4):2011--2053, 2019.

\bibitem{mastrolia2018moral}
T.~Mastrolia and D.~Possama{\"\i}.
\newblock Moral hazard under ambiguity.
\newblock {\em J.\ Optim.\ Th.\ Appl.}, 179(2):452--500, 2018.

\bibitem{mirrlees1976optimal}
J.A. Mirrlees.
\newblock The optimal structure of incentives and authority within an
  organization.
\newblock {\em Bell J.\ Econ.}, 7(1):105--131, 1976.

\bibitem{nash1958continuity}
J.~Nash.
\newblock Continuity of solutions of parabolic and elliptic equations.
\newblock {\em Amer.\ J.\ Math}, 80(4):931--954, 1958.

\bibitem{niculescu2006convex}
C.~Niculescu and L.-E. Persson.
\newblock {\em Convex functions and their applications}, volume~23 of {\em CMS
  Books in Mathematics}.
\newblock Springer, 2006.

\bibitem{peskir2006optimal}
G.~Peskir and A.~Shiryaev.
\newblock {\em Optimal stopping and free-boundary problems}.
\newblock Lectures in Mathematics. ETH Z\"urich. Birkh\"{a}user Verlag, Basel,
  2006.

\bibitem{possamai2025golden}
D.~Possama{\"\i} and N.~Touzi.
\newblock Is there a golden parachute in {S}annikov's principal--agent problem?
\newblock {\em Math.\ Oper.\ Res.}, 50(2):1173--1203, 2025.

\bibitem{revuz2013continuous}
D.~Revuz and M.~Yor.
\newblock {\em Continuous martingales and Brownian motion}, volume 293 of {\em
  Comprehensive Studies in Mathematics}.
\newblock Springer Berlin, Heidelberg, 3 edition, 2013.

\bibitem{sannikov2008continuous}
Y.~Sannikov.
\newblock A continuous-time version of the {P}rincipal--{A}gent problem.
\newblock {\em The Review of Economic Studies}, 75(3):957--984, 2008.

\bibitem{sappington1983limited}
D.~Sappington.
\newblock Limited liability contracts between principal and agent.
\newblock {\em J.\ Econom.\ Theory}, 29(1):1--21, 1983.

\bibitem{taylor2023nonlinear}
M.E. Taylor.
\newblock {\em Partial Differential Equations III: Nonlinear Equations}, volume
  117 of {\em Applied Mathematical Sciences}.
\newblock Springer Cham, 3 edition, 2023.

\end{thebibliography}
\end{document}